\documentclass[12pt,a4paper,english,reqno]{amsart}
\usepackage[latin1]{inputenc}
\usepackage{graphicx}
\usepackage{amssymb}

\makeatletter
 \theoremstyle{plain}    
 \newtheorem{thm}{Theorem}[section]
 \numberwithin{equation}{section} 
 \numberwithin{figure}{section} 
 \theoremstyle{plain}
 \theoremstyle{definition}
 \newtheorem{defn}[thm]{Definition}
 \theoremstyle{remark}
 \newtheorem{rem}[thm]{Remark}
 \theoremstyle{plain}    
 \newtheorem{lem}[thm]{Lemma} 
 \theoremstyle{plain}    
 \newtheorem{prop}[thm]{Proposition} 
 \theoremstyle{plain}    
 \newtheorem{cor}[thm]{Corollary} 
 \usepackage{verbatim}
 \theoremstyle{remark}    
 \newtheorem{notation}[thm]{Notation} 


\usepackage{amscd, amssymb, 
}

\usepackage[all %
]{xy}

\setlength{\textwidth}{155mm}
\setlength{\textheight}{219mm}
\setlength{\oddsidemargin}{3mm}
\setlength{\evensidemargin}{3mm}



%
%
\def\al{\alpha}


%
%
\def\top{\operatorname{top}} 
\def\soc{\operatorname{soc}}
\def\Ker{\operatorname{Ker}}
\def\Cok{\operatorname{Coker}}
\def\Im{\operatorname{Im}}

\def\Hom{\operatorname{Hom}}
\def\rad{\operatorname{rad}}
\def\Aut{\operatorname{Aut}}
\def\End{\operatorname{End}}
\def\Ext{\operatorname{Ext}}
\def\Tor{\operatorname{Tor}}

\def\mod{\operatorname{mod}}

\def\supp{\operatorname{supp}}

\newcommand{\udim}{\operatorname{\underline{dim}}\nolimits}

\def\ind{\operatorname{ind}}

\def\Irr{\operatorname{Irr}}

\def\Proj{\operatorname{Proj}}

\def\rank{\operatorname{rank}}
\def\chr{\operatorname{char}}

\def\ad{\operatorname{ad}}
\def\injdim{\operatorname{injdim}}

\def\Mat{\operatorname{Mat}}
\def\org{\operatorname{org}}
\def\Rt{\operatorname{Rt}}
%
%

\def\Db{{\mathcal D}^{\text{\rm b}}}

%
%

%
%


%
%

%
%

\def\dsm#1,#2..#3{\bigoplus_{{#1}={#2}}^{#3}}
\def\sm#1,#2..#3{\sum_{{#1}={#2}}^{#3}}

\def\id{1\kern-.25em{\text{{\rm l}}}} 

\def\isoto{\ \raise.8ex\hbox{$^{\sim}$}\kern-.7em\hbox{$\to$}\ } 

\def\ya#1{\overset{#1}{\longrightarrow}}
\def\blank{\text{ - }}

\def\Ltimes{\overset{\mathbf{L}}{\otimes}}

\def\bg{%
\family{cmr}\size{20}{12pt}\selectfont}

\def\bigzerou{%
\smash{\lower1.7ex\hbox{\bg 0}}}

%
%
\def\repr[#1;#2;#3;#4;#5]{
\left(
\begin{matrix}#1\\#2\end{matrix}
#3
\begin{matrix}#4\\#5\end{matrix}
\right)}
%



\let\sbjcls=\subjclass
\renewcommand{\subjclass}{\sbjcls[2000]}

\numberwithin{equation}{section}

\makeatletter
\def\@secnumfont{\bfseries}
\makeatother

\usepackage{babel}
\makeatother
\begin{document}

\title{Domestic canonical algebras and simple Lie algebras}

\author{Hideto Asashiba}

\email{shasash@ipc.shizuoka.ac.jp}

\subjclass{Primary 16G20; Secondary 17B20, 17B60}

\keywords{simple Lie algebras, Hall algebras, canonical algebras}

\begin{abstract}
For each simply-laced Dynkin graph $\Delta$ we realize the simple
complex Lie algebra of type $\Delta$ as a quotient algebra of the
complex degenerate composition Lie algebra $L(A)_{1}^{\mathbb{C}}$
of a domestic canonical algebra $A$ of type $\Delta$ by some ideal
$I$ of $L(A)_{1}^{\mathbb{C}}$ that is defined via the Hall algebra
of $A$, and give an explicit form of $I$. Moreover, we show that
each root space of $L(A)_{1}^{\mathbb{C}}/I$ has a basis given by
the coset of an indecomposable $A$-module $M$ with root easily computed
by the dimension vector of $M$.
\end{abstract}

\dedicatory{Dedicated to Professor Claus Michael Ringel on the occasion of his
60th birthday}

\maketitle

\section*{\textbf{Introduction}}

Let $A$ be a finite-dimensional algebra over a finite field $k$
with $q$ elements, and consider the free abelian group $\mathcal{H}(A)$
with basis the isoclasses of finite $A$-modules. Then by Ringel \cite{Ri90-Banach}
$\mathcal{H}(A)$ turns out to be an associative ring with identity,
called the \emph{integral} \emph{Hall algebra} of $A$, with respect
to the multiplication whose structure constants are given by the numbers
of filtrations of modules with factors isomorphic to modules that
are multiplied (see \ref{sub:Hall-alg}). The free abelian subgroup
$\overline{L}(A)$ of $\mathcal{H}(A)$ with basis the isoclasses
of finite indecomposable $A$-modules becomes a Lie subalgebra modulo
$q-1$ whose Lie bracket is given by the commutator of the Hall multiplication.
We call this Lie bracket the \emph{Hall commutator}. It would be interesting
to realize all types of simple (complex) Lie algebras using this Hall
commutator.

Along this line, Ringel \cite{Ri90-poly} realized the positive part
of the simple Lie algebra $\frak{g}(\Delta)$ for each Dynkin type
$\Delta$. Further Peng and Xiao \cite{PX97} realized all types of
simple Lie algebras by the so-called root categories of finite-dimensional
representation-finite hereditary algebras. But the Lie bracket was
not completely given by the Hall commutator, because the root category
$\mathcal{R}$ provides only the positive and the negative parts.
The Cartan subalgebra $\mathfrak{h}$ was given by a subgroup of the
Grothendieck group of $\mathcal{R}$ over the field $\mathbb{Q}$
of rational numbers. The Hall commutator was used to define the Lie
bracket only inside $\mathcal{R}$, and when the bracket should not
be closed in $\mathcal{R}$, namely when we deal with an indecomposable
object $X$ in $\mathcal{R}$ of a root $\alpha$ and an indecomposable
object $Y$ in $\mathcal{R}$ of the root $-\alpha$, the definition
of the bracket $[X,Y]$ was changed in order to have $[X,Y]\in\mathfrak{h}$.
In \cite{Asa02} we succeeded to realize general linear algebras and
special linear algebras (see also Iyama \cite{Iya}) by the Hall commutator
defined on cyclic quiver algebras. In this realization also the Cartan
subalgebra was naturally provided together with the positive and the
negative parts. In \cite{Asa02-2} we gave a way how to realize all
types of simple Lie algebras by the Hall commutator using tame hereditary
algebras, in particular we gave an explicit realization of simple
Lie algebras of type $D_{n}$.

However this realization needed some rational constants to define
a necessary ideal of the Lie algebra. In this paper we give another
realization by using domestic canonical algebras. Here we do not use
a surjective Lie algebra homomorphism from an affine Lie algebra (\cite{Kac90-book})
that was an essential tool in \cite{Asa02-2}. In the realization
using a tame hereditary algebra we had to choose some orientation
for the quiver of the algebra. But if we use a canonical algebra we
are free from choosing an orientation of the quiver of the algebra
except for the $A_{n}$ case because the orientation is given from
the beginning. For simplicity we deal only with simply-laced cases.
Non-simply-laced cases may be treated using the generalized definition
of canonical algebras by Ringel \cite{Ri90-can}. We expect that the
same approach works to realize affine Kac-Moody algebras by using
tubular canonical algebras instead of domestic ones. (In fact, Zhengxin
Chen is carrying out this plan, the primary version \cite{ChenZh}
contained a similar error as in the first version of this paper.)
It should be pointed out that in the realization using canonical algebras
the preprojective (resp.$\,$preinjective) component contains only
basis vectors of the positive (resp.$\,$negative) part (see Remarks
\ref{rem:pos-neg} and \ref{sub:Basis-vectors}), in contrast, in
the realization using hereditary (non-canonical) algebras the preprojective
component and the preinjective component contain basis elements of
both positive and negative parts. Finally we mention that there is
a possibility to construct representations of simple Lie algebras
by the form of our realization using infinite-dimensional modules
as done in \cite{Iya}.

The first version contained a serious error that the constructed Lie
algebra may turn out to be zero because the relations required on
it was too much. This problem was fixed in the present version.

The paper is organized as follows. After preliminaries in Sect.$\,$1
we collect necessary facts on Lie algebra constructions using Hall
algebras, and domestic canonical algebras in sections 2 and 3, respectively.
In Sect.$\,$4 we state our main theorem, and Sect.$\,$5 is devoted
to preparations of our proof of the main theorem. We give a proof
of the main theorem in Sect.$\,$6. We next examine root spaces of
the Lie algebra constructed here to prove the remaining theorem in
Sect.$\,$7. Finally in the last section we exhibit an example of
basis vectors of the realization of simple Lie algebra of type $D_{5}$,
and an example that shows an error in the first version of the paper.

\section{\textbf{Preliminaries}}

\subsection{Notation\label{sub:notations}}

Throughout this paper $k$ is a finite field of cardinality $q\ge3$.
When we deal with domestic canonical algebras of type $E_{8}$ (see
Sect.\ \ref{sub:dfn-dom-can} for definition) we assume that $\chr k\ne2$.
For a (finite-dimensional) $k$-algebra $A$, we denote by $\mod A$
the category of finite-dimensional (left) $A$-modules, and by $\ind A$
the full subcategory of $\mod A$ consisting of indecomposable modules.
For an $A$-module $M$, $\top M:=M/\rad M$, $\soc M$, $l(M)$ and
$[M]$ denote the top, the socle, the composition length and the isoclass
of $M$, respectively. For $\mathcal{C}=\mod A,\ind A$ we denote
by $[\mathcal{C}]$ the set of isoclasses of objects in $\mathcal{C}$.
For a field extension $K$ of $k$, we set $V^{K}:=V\otimes_{k}K$
for all $k$-vector spaces $V$. For a set $E$, $|E|$ denotes the
cardinality of $E$. The set of positive integers and the set of non-negative
integers are denoted by $\mathbb{N}$ and by $\mathbb{N}_{0}$, respectively.
For a ring $R$, $R^{\times}$ denotes the set of invertible elements
of $R$. By $\delta_{ij}$ we denote the Kronecker symbol, i.e., $\delta_{ij}=1$
if $i=j$, and $\delta_{ij}=0$ if $i\neq j$. For an abelian group
$L$, we set $L^{\mathbb{C}}:=L\otimes_{\mathbb{Z}}\mathbb{C}$ and
$L^{\mathbb{Q}}:=L\otimes_{\mathbb{Z}}\mathbb{Q}$. For elements $x_{1},\ldots,x_{n}$
of a Lie algebra, we set

\begin{equation}
[x_{1},\ldots,x_{n}]:=[[\cdots[[x_{1},x_{2}],x_{3}],\cdots],x_{n}].\label{eq:multibracket}\end{equation}

For Auslander-Reiten theory we refer to \cite{ARS,Ga831,Ri84} and
for tilting theory to \cite{HR,Ri84,Ha88}. We set $\Gamma_{A}$ to
be the Auslander-Reiten quiver of $A$. $D=\Hom_{k}(-,k)$ and $\tau=\tau_{A}$
denote the usual $k$-duality $\mod A\to\mod A^{\text{op}}$ and the
Auslander-Reiten translation of $A$, respectively.

\subsection{Hall numbers\label{sub:Hall-numbers}}

For $X,Y,Z\in\mod A$, we set \begin{eqnarray*}
\mathcal{F}_{X,*}^{Z} & = & \{ M\mid M\textrm{\text{ is a submodule of }}Z\text{ with }Z/M\cong X\},\\
\mathcal{F}_{*,Y}^{Z} & = & \{ M\mid M\text{ is a submodule of }Z\text{ with }M\cong Y\},\\
\mathcal{F}_{XY}^{Z} & = & \mathcal{F}_{X,*}^{Z}\cap\mathcal{F}_{*,Y}^{Z}\end{eqnarray*}
and the cardinalities of these are denoted by $F_{X,*}^{Z},F_{*,Y}^{Z},$
and $F_{XY}^{Z}$, respectively. $F_{XY}^{Z}$ is called a \emph{Hall
number}. If $X\cong X'$, $Y\cong Y'$ and $Z\cong Z'$ in $\mod A$,
then we clearly have $F_{XY}^{Z}=F_{X'Y'}^{Z'}$. Therefore we may
set $F_{[X][Y]}^{[Z]}:=F_{XY}^{Z}$. Recall the following well-known
formula (the Riedtmann formula) for $A$-modules $X$, $Y$ and $Z$
(see \cite[4.1, 4.3]{Rie94}, \cite[Lemma 3.1]{Pe97}):

\begin{equation}
F_{XY}^{Z}=\frac{|\Ext_{A}^{1}(X,Y)_{Z}|\cdot|\Aut_{A}Z|}{|\Hom_{A}(X,Y)|\cdot|\Aut_{A}X|\cdot|\Aut_{A}Y|}\textrm{ in $\mathbb{Z}$},\label{eq:Hall-coefficient}\end{equation}
where $\Ext_{A}^{1}(X,Y)_{Z}$ is the set of equivalence classes in
$\Ext_{A}^{1}(X,Y)$ of extensions with the middle term $Z$. To compute
the number $F_{XY}^{Z}$ we will use the number\[
W_{XY}^{Z}:=|\{(f,g)\in\Hom_{A}(Y,Z)\times\Hom_{A}(Z,X)\mid0\to Y\ya{f}Z\ya{g}X\to0\text{ is exact}\}|.\]
As easily seen we have the following relationship between $F_{XY}^{Z}$
and $W_{XY}^{Z}$:\[
F_{XY}^{Z}=\frac{W_{XY}^{Z}}{|\Aut_{A}X|\cdot|\Aut_{A}Y|}.\]

\subsection{Representations of quivers}

\begin{defn}
(1) Recall that a \emph{quiver} is a quadruple $Q=(Q_{0},Q_{1},$
$t_{Q},h_{Q})$, where $Q_{0},Q_{1}$ are sets (or classes) and $t_{Q},h_{Q}$
are maps from $Q_{1}$ to $Q_{0}$. Elements of $Q_{0},Q_{1}$ are
called \emph{vertices} and \emph{arrows} of $Q$, respectively, and
for each $\alpha\in Q_{1}$ the vertices $t_{Q}(\alpha),h_{Q}(\alpha)$
are called the \emph{tail} and the \emph{head} of $\alpha$, respectively.
By drawing an arrow $t_{Q}(\alpha)\ya{\alpha}h_{Q}(\alpha)$ for each
$\alpha\in Q_{1}$ we can express $Q$ as an oriented graph. We can
regard categories as quivers by forgetting compositions.

(2) A \emph{morphism} from a quiver $Q$ to a quiver $Q'$ is a pair
$f=(f_{0},f_{1})$ of maps $f_{i}\colon Q_{i}\to Q'_{i}$ for $i=0$
and 1 such that $f_{0}t_{Q}=t_{Q'}f_{1}$ and $f_{0}h_{Q}=h_{Q'}f_{1}$.
This is also written as $f=(f(x),f(\alpha))_{x\in Q_{0},\alpha\in Q_{1}}$,
where we put $f(x):=f_{0}(x),f(\alpha):=f_{1}(\alpha)$ for each $x\in Q_{0}$
and $\alpha\in Q_{1}$.
\end{defn}
(3) A $k$-\emph{representation} of a quiver $Q$ is just a morphism
$V=(V(x),V(\alpha))_{x,\alpha}$ from $Q$ to the category $\mod k$
of finite-dimensional $k$-vector spaces regarded as a quiver. The
definition of \emph{morphisms} between representations of $Q$ is
similar to that of natural transformations between functors.

\begin{rem}
Unless otherwise stated we only deal with \emph{finite} quivers, i.e.
quivers with only finitely many vertices and arrows.
\end{rem}
The vector space $kQ$ with basis the set of all paths in $Q$ turns
out to be a $k$-algebra with identity via the multiplication given
by concatenation of paths. We refer to \cite{Ga831} for details.
When an algebra $A$ is defined by a quiver $Q$ with relations $\rho_{1},\ldots,\rho_{t}$,
say $A=kQ/I$, where $I$ is the admissible ideal of $kQ$ generated
by $\rho_{1},\ldots,\rho_{t}$, we identify $\mod A$ with the category
of representations of $Q$ satisfying the relations $\rho_{1},\ldots,\rho_{t}$
as in \cite{Ga831}. Thus for an $A$-module $M$, regarded as a $k$-representation,
we write $M=(M(x),M(\alpha))_{x\in Q_{0},\alpha\in Q_{1}}$. In fact,
$M(x)=\mathbf{e}_{x}M$ and $M(\alpha)$ is given by the left multiplication
by $\alpha+I\in A$.

\begin{rem}
For simplicity we assume throughout the rest of this paper that $A$
is a finite-dimensional $k$-algebra defined by a quiver $Q=(Q_{0},Q_{1},t_{Q},h_{Q})$
with relations.
\end{rem}
\begin{defn}
\label{sub:supp-alg}For each vertex $x$ of $Q$, we denote by $\mathbf{e}_{x}$
the idempotent of $A$ corresponding to $x$. The \emph{support algebra}
of an $A$-module $M$, denoted by $\supp M$, is defined by\[
\supp M:=A/A\mathbf{e}_{M}A,\]
where $\mathbf{e}_{M}:=\sum_{M(x)=0}\mathbf{e}_{x}$. Note that $\mod\,\supp M$
forms a full subcategory of $\mod A$ closed under extensions.
\end{defn}

\subsection{Grothendieck group\label{sub:Groth-group-der-cat}}

\begin{defn}
\label{sub:K0}(1) The image of an $M\in\mod A$ in the Grothendieck
group $K_{0}(A)$ of $A$ is denoted by $\udim M$ and is called the
\emph{dimension vector} of $M$.

(2) For each vertex $x\in Q_{0}$ we set $S_{x}:=A\mathbf{e}_{x}/\rad A\mathbf{e}_{x}$
to be the simple $A$-module corresponding to $x$, and $e_{x}:=\udim S_{x}$.

(3) For each $v,w\in K_{0}(A)$ we write $v\le w$ if $v_{x}\le w_{x}$
for all $x\in Q_{0}$. This defines a partial order on $K_{0}(A)$.
Further we write $v<w$ if $v\le w$ but $v\ne w$.
\end{defn}
\begin{rem}
(1) The set $\{ e_{x}\mid x\in Q_{0}\}$ forms a basis of $K_{0}(A)$,
by which we regard each element $v=\sum_{x\in Q_{0}}v_{x}e_{x}$ in
$K_{0}(A)$ as a row vector $(v_{x})_{x\in Q_{0}}\in\mathbb{Z}^{Q_{0}}$,
and identify $K_{0}(A)$ with $\mathbb{Z}^{Q_{0}}$. Under this identification
we have $\udim M=(\dim_{k}M(x))_{x\in Q_{0}}$. Note that since we
deal with row vectors, each $\mathbb{Z}$-endomorphism $f$ of $K_{0}(A)$
is expressed by the right multiplication by the corresponding matrix
$F$ as $f(v)=vF$ for all $v\in K_{0}(A)$.

(2) Let $K$ be an arbitrary field extension of $k$ and consider
the $K$-algebra $A^{K}:=A\otimes_{k}K$. By identifying $K_{0}(A^{K})$
with $\mathbb{Z}^{Q_{0}}$ by the same way as above we also have $\udim M^{K}=(\dim_{K}M^{K}(x))_{x\in Q_{0}}$.
Then since $\dim_{k}M(x)=\dim_{K}M^{K}(x)$ for all $x\in Q_{0}$,
we have \[
\udim M^{K}=\udim M.\]

\end{rem}
\begin{defn}
Since the derived category $\Db(\mod A)$ of bounded complexes in
$\mod A$ is a triangulated category, the Grothendieck group $K_{0}(\Db(\mod A))$
is defined by using triangles (\cite{Gr}, \cite[1.1]{Ha88}). For
each $X\in\Db(\mod A)$ we denote by $\udim^{D}X$ the image of $X$
in $K_{0}(\Db(\mod A))$.
\end{defn}
\begin{rem}
\label{rem:der-udim}We regard $\mod A$ as a subcategory of $\Db(\mod A)$
by the canonical embedding $\mod A\to\Db(\mod A)$ sending modules
to complexes concentrated in degree zero, which induces an isomorphism
$K_{0}(A)\to K_{0}(\Db(\mod A))$, $\udim X\mapsto\udim^{D}X$ with
the inverse $\udim^{D}X\mapsto\sum_{i=1}^{\infty}(-1)^{i}\udim X^{i}$(\cite{Gr}).
By this isomorphism we identify $K_{0}(A)$ with $K_{0}(\Db(\mod A))$.
Therefore for all $X\in\Db(\mod A)$ we may write $\udim X=\udim^{D}X$,
and we have
\end{rem}
\[
\udim X=\sum_{i=1}^{\infty}(-1)^{i}\udim X^{i}.\]
In particular, for each $X\in\mathcal{D^{\textrm{b}}}(\mod A)$ and
$i\in\mathbb{Z}$ we have\[
\udim X[i]=(-1)^{i}\udim X.\]

\subsection{Bilinear form and quadratic form}

Let $C$ be the \emph{Cartan matrix} of $A$, namely the matrix whose
$(i,j)$-entry is given by $\dim\mathbf{e}_{i}A\mathbf{e}_{j}$ for
all $i,j\in Q_{0}$ (Definition\ref{sub:supp-alg}).

\begin{defn}
\label{bilinear} If the global dimension of $A$ is finite, say at
most $d\in\mathbb{N}$, then $C$ is invertible and we can define
a bilinear form $B_{A}$ by\[
B_{A}(v,w)=vC^{-T}w^{T}\]
for all $v,w\in K_{0}(A)\cong\mathbb{Z}^{Q_{0}}$ ($C^{-T}$ denotes
the inverse matrix of the transposed matrix $C^{T}$ of $C$).
\end{defn}
\begin{rem}
In the setting above the following is well-known:\[
B_{A}(\udim X,\udim Y)=\sum_{i=0}^{d}(-1)^{i}\dim\Ext_{A}^{i}(X,Y)\]
for all $A$-modules $X$, $Y$ (\cite[Lemma 2.4]{Ri84}).
\end{rem}
\begin{defn}
\label{quadratic}(1) We denote by $\chi_{A}$ the corresponding quadratic
form, namely\[
\chi_{A}(v):=B_{A}(v,v)\]
for all $v\in K_{0}(A)$.

(2) An element $v\in K_{0}(A)$ is called a \emph{root} (resp.\ 
a \emph{radical}) of $\chi_{A}$ if $\chi_{A}(v)=1$ (resp.\  $\chi_{A}(v)=0$).

(3) We set $\rad\chi_{A}:=\{ v\in K_{0}(A)\mid\chi_{A}(v)=0\}$ and
call it the \emph{radical} of $\chi_{A}$. 
\end{defn}

\subsection{Exceptional modules\label{sub:exceptional}}

Recall that an $A$-module $X$ is called \emph{exceptional} if $X$
is indecomposable and $\Ext_{A}^{1}(X,X)=0$. We take an algebraic
closure $\overline{k}$ of $k$, and set $\Omega=\Omega_{A}$ to be
the set of all finite field extensions $K$ of $k$ contained in $\overline{k}$
such that $(\End_{A}X)^{K}$ is a field for all exceptional $A$-modules
$X$. We set $\mathcal{E}_{ex}(A):=\{\End_{A}(X)\mid X\textrm{ is exceptional}\}/\cong$
and $\mathcal{E}(A):=\{\End_{A}(X)\mid X\textrm{ is simple}\}/\cong$.
(In the simply-laced cases our domestic canonical algebras $A$ defined
in the next section are defined by quivers with relations and we always
have $\mathcal{E}(A)=\{ k\}$. Therefore in our case we can omit this
notation, but we keep it here because it is needed in the non-simply-laced
cases and it tells us how to generalize our argument.)

\begin{lem}
\label{lem:endo-exc}If $A$ is an algebra derived equivalent to a
hereditary algebra $H$, then $\mathcal{E}(A)\subseteq\mathcal{E}_{ex}(A)\subseteq\mathcal{E}_{ex}(H)=\mathcal{E}(H)$.
Therefore in particular, $\Omega_{A}$ is an infinite set.
\end{lem}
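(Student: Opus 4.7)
The plan is to prove the chain of (in)equalities one link at a time---call them (a) $\mathcal{E}(A)\subseteq\mathcal{E}_{ex}(A)$, (b) $\mathcal{E}_{ex}(A)\subseteq\mathcal{E}_{ex}(H)$, and (c) $\mathcal{E}_{ex}(H)=\mathcal{E}(H)$---and then to deduce the infinitude of $\Omega_A$ from the resulting finiteness of $\mathcal{E}_{ex}(A)$. Logically I would carry out (b) and (c) first, because together they force every $\End_A(X)$ with $X$ exceptional to be a finite-dimensional division algebra over $k$, and this observation unlocks (a).

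For (b) I fix a derived equivalence $F\colon\Db(\mod A)\isoto\Db(\mod H)$. Given an exceptional $X\in\mod A$, the image $F(X)$ is indecomposable in $\Db(\mod H)$ with vanishing self-$\Ext^1$; since $H$ is hereditary, every indecomposable object of $\Db(\mod H)$ is a shift of an indecomposable $H$-module, so $F(X)\cong M[i]$ for some $M\in\ind H$ and $i\in\bbZ$. Shift-invariance of $\Ext^1$ then forces $M$ to be exceptional, and $\End_A(X)\cong\End_H(M)$, which proves (b). For (c), the inclusion $\mathcal{E}(H)\subseteq\mathcal{E}_{ex}(H)$ is immediate because a basic finite-dimensional hereditary algebra has no loops in its valued quiver, so every simple is exceptional. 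The reverse inclusion $\mathcal{E}_{ex}(H)\subseteq\mathcal{E}(H)$ is the main obstacle: I would invoke (or reprove by induction on $\dim M$ using Schofield--Crawley-Boevey perpendicular subcategories) the standard fact that the endomorphism ring of an exceptional module over a hereditary algebra is always isomorphic to $\End_H(S)$ for some simple $S$.

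For (a) I combine the two inclusions just obtained: together they show that $\End_A(X)$ is a finite-dimensional division algebra over $k$ whenever $X$ is an exceptional $A$-module. Applying this to the (always exceptional) indecomposable projective $P_x=A\bfe_x$ covering a simple $S_x$, the ring $\End_A(P_x)$ is simultaneously a local algebra with residue field $\End_A(S_x)$ and a finite-dimensional division algebra, which forces $\End_A(P_x)\cong\End_A(S_x)$. Thus $\End_A(S_x)\in\mathcal{E}_{ex}(A)$ for every simple $S_x$, yielding (a).

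Finally, for the infinitude of $\Omega_A$: the chain of inclusions places $\mathcal{E}_{ex}(A)$ inside the finite set $\mathcal{E}(H)$, so only finitely many finite extensions $E_1=\mathbb{F}_{q^{m_1}},\dots,E_r=\mathbb{F}_{q^{m_r}}$ of $k$ occur as endomorphism rings of exceptional $A$-modules. Over the finite field $k=\mathbb{F}_q$, the tensor product $E_i\otimes_k\mathbb{F}_{q^n}$ is a field exactly when $\gcd(m_i,n)=1$, so any $n$ coprime to $\operatorname{lcm}(m_1,\dots,m_r)$ produces an element $\mathbb{F}_{q^n}\in\Omega_A$; infinitely many such $n$ exist, so $\Omega_A$ is infinite.
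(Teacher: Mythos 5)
Your proposal is correct, and its core coincides with the paper's proof: the paper likewise sends an exceptional $A$-module $X$ through a triangle equivalence to a shift $FX[i]$ lying in $\mod H$, notes that $FX[i]$ is exceptional with $\End_A(X)\cong\End_H(FX[i])$, and quotes Ringel's result that $\mathcal{E}_{ex}(H)=\mathcal{E}(H)$ for hereditary $H$ — exactly your steps (b) and (c). The genuine divergence is in (a) and in the final claim. The paper disposes of $\mathcal{E}(A)\subseteq\mathcal{E}_{ex}(A)$ with the single remark that simple modules are exceptional (immediate for the domestic canonical algebras actually used, whose quivers have no loops; for a general $A$ derived equivalent to a hereditary algebra this tacitly uses that such algebras have no loops), whereas you avoid assuming $\Ext_A^1(S_x,S_x)=0$ altogether by comparing $\End_A(S_x)$ with $\End_A(P_x)$ for the indecomposable projective cover $P_x=A\mathbf{e}_x$: once (b) and (c) show that the local ring $\End_A(P_x)$ is a division algebra, its radical vanishes and it coincides with its residue field $\End_A(S_x)$, so $\End_A(S_x)\in\mathcal{E}_{ex}(A)$. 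This buys independence from the no-loop issue at the cost of making (a) depend on (b) and (c), while the paper's version is shorter and keeps the inclusions independent. Finally, you make explicit the deduction that $\Omega_A$ is infinite — only finitely many fields $\mathbb{F}_{q^{m_i}}$ occur (Wedderburn), and every $\mathbb{F}_{q^n}$ with $n$ prime to all $m_i$ lies in $\Omega_A$ — a step the paper leaves implicit; your counting argument is correct.
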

\begin{proof}
Since simple modules are exceptional, both $\mathcal{E}(A)\subseteq\mathcal{E}_{ex}(A)$
and $\mathcal{E}(H)\subseteq\mathcal{E}_{ex}(H)$ are trivial. In
the hereditary case it is known that the converse inclusion is also
true (\cite{Ri94-braid}), thus we have $\mathcal{E}(H)=\mathcal{E}_{ex}(H)$.
We only have to show that $\mathcal{E}_{ex}(A)\subseteq\mathcal{E}_{ex}(H)$.
Let $F\colon\Db(\mod A)\to\Db(\mod H)$ be a triangle-equivalence,
and $X$ an exceptional $A$-module. Then $FX[i]\in\mod H$ for some
$i\in\mathbb{Z}$ because $FX$ is an indecomposable complex and $H$
is hereditary. It is obvious from the construction that $FX[i]$ is
an exceptional $H$-module. The algebra isomorphisms $\End_{A}(X)\cong\Db(\mod A)(X,X)\cong\Db(\mod H)(FX,FX)\cong\End_{H}(FX[i])$
show that $\mathcal{E}_{ex}(A)\subseteq\mathcal{E}_{ex}(H)$.
\end{proof}

\section{\textbf{Lie algebras defined by the Hall multiplication}}

\subsection{Hall algebras}

Since $A$ is a finite-dimensional $k$-algebra with $k$ a finite
field, $A$ is a \emph{finitary} ring as shown in Ringel \cite{Ri90-Banach},
i.e., $\Ext_{A}^{1}(X,Y)$ is a finite group for all $X,Y\in\mod A$.

\begin{defn}
\label{sub:Hall-alg}The free abelian group $\mathcal{H}(A)$ with
basis $\{ u_{[X]}\}_{[X]\in[\mod A]}$ together with the multiplication
defined by \[
u_{[X]}u_{[Y]}:=\sum_{[Z]\in[\mod A]}F_{[X][Y]}^{[Z]}u_{[Z]}\]
is called the \emph{integral Hall algebra} of $A$.
\end{defn}
Ringel \cite{Ri90-Banach} proved the following.

\begin{lem}
\emph{$\mathcal{H}(A)$} is an associative ring with the identity
$1=u_{[0]}$.\qed
\end{lem}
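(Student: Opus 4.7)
The plan is to verify the two ring axioms separately: that $u_{[0]}$ is a two-sided identity, and that the multiplication is associative. The identity assertion is essentially formal: from the definition, $F_{[0][X]}^{[Z]}$ counts submodules $M$ of $Z$ with $M\cong X$ and $Z/M\cong 0$, which forces $M=Z$ and hence $[Z]=[X]$, giving the count $\delta_{[X],[Z]}$. Thus $u_{[0]}u_{[X]} = u_{[X]}$, and the symmetric argument using $M=0$ yields $u_{[X]}u_{[0]}=u_{[X]}$.

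The substantive step is associativity. I would expand the coefficient of $u_{[V]}$ on both sides of $(u_{[X]}u_{[Y]})u_{[Z]} = u_{[X]}(u_{[Y]}u_{[Z]})$, obtaining
\[
\sum_{[W]} F_{[X][Y]}^{[W]}\,F_{[W][Z]}^{[V]} \quad\text{and}\quad \sum_{[W]} F_{[Y][Z]}^{[W]}\,F_{[X][W]}^{[V]},
\]
respectively, and then exhibit a common combinatorial interpretation of both as counting two-step flags $0\subseteq N\subseteq M\subseteq V$ of submodules of a fixed representative of $[V]$ with $N\cong Z$, $M/N\cong Y$ and $V/M\cong X$. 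In the left sum, one parameterizes such a flag by first choosing $N\subseteq V$ with $N\cong Z$, stratifying by $[W]=[V/N]$; then $M/N\subseteq V/N$ with $M/N\cong Y$ and $(V/N)/(M/N)\cong X$ contributes $F_{[X][Y]}^{[W]}$, while the outer choice of $N$ contributes $F_{[W][Z]}^{[V]}$. In the right sum one stratifies dually by $[W]=[M]$: the choice of $M\subseteq V$ with $V/M\cong X$ contributes $F_{[X][W]}^{[V]}$, and then $N\subseteq M$ with $N\cong Z$ and $M/N\cong Y$ contributes $F_{[Y][Z]}^{[W]}$.

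The only technical point that needs care is to verify that this stratification is well-defined, i.e.\ that different choices of representatives of $[W]$, or of the identifying isomorphisms $V/N\cong W$ (resp.\ $M\cong W$), do not affect the count in a given stratum. This is precisely the content of the well-definedness of $F_{[X][Y]}^{[W]}$ on isoclasses, which is already built into the construction of the multiplication and was the reason we could replace representatives by brackets in the first place. Thus I do not anticipate a real obstacle: once the bijection between flags and pairs of submodule choices is written down carefully, both inner sums collapse to the same number of flags and associativity follows. Everything here is standard bookkeeping, essentially Ringel's original argument in \cite{Ri90-Banach}.
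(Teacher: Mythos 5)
Your proof is correct and takes essentially the same route as the paper, which gives no argument of its own but cites Ringel \cite{Ri90-Banach}; your flag-counting verification of associativity together with the trivial identity check is exactly that standard argument. The only point left implicit is that each product $u_{[X]}u_{[Y]}$ involves only finitely many nonzero Hall numbers, which holds because $A$ is finitary (as noted in the paper just before the definition), so the multiplication is indeed well-defined on $\mathcal{H}(A)$.
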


\subsection{Lie algebras\label{Hall}}

\begin{defn}
Let $\overline{L}(A)$ be the free abelian subgroup of $\mathcal{H}(A)$
with basis $\{ u_{\alpha}\}_{\alpha\in[\ind A]}$. We set $L/(a):=L/aL$
for all $\mathbb{Z}$-modules $L$ and $a\in\mathbb{Z}$, and denote
elements $x+a\overline{L}(A)$ of $\overline{L}(A)/(a)$ ($x\in\overline{L}(A)$)
simply by $x$.
\end{defn}
We have the following by Ringel \cite[Proposition 3]{Ri92-Tsukuba}
(see also Ringel \cite[Proposition 1]{Ri92-From}).

\begin{lem}
The free $\mathbb{Z}/(q-1)\mathbb{Z}$-module $\overline{L}(A)/(q-1)$
is a Lie subalgebra of $\mathcal{H}(A)/(q-1)$ with the Lie bracket
\[
[u_{[X]},u_{[Y]}]=\sum_{[Z]\in[\ind A]}(F_{[X][Y]}^{[Z]}-F_{[Y][X]}^{[Z]})u_{[Z]}\]
 for each $[X],[Y]\in[\ind A]$.\emph{\hfil\qed}
\end{lem}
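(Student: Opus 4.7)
The plan is to check closure of the commutator bracket on $\overline{L}(A)/(q-1)$ inside $\mathcal{H}(A)/(q-1)$; antisymmetry and the Jacobi identity will follow for free from associativity of the Hall product. Expanding,
\[
[u_{[X]}, u_{[Y]}] = u_{[X]} u_{[Y]} - u_{[Y]} u_{[X]} = \sum_{[Z] \in [\mod A]} (F_{[X][Y]}^{[Z]} - F_{[Y][X]}^{[Z]}) u_{[Z]},
\]
so the entire task is to show that for indecomposable $X,Y$ and decomposable $Z$, the coefficient $F_{[X][Y]}^{[Z]} - F_{[Y][X]}^{[Z]}$ vanishes modulo $q-1$. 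The stated formula is then just the restriction of the expansion to $[\ind A]$.

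The main computational tool is the Riedtmann formula (\ref{eq:Hall-coefficient}). I would write $\nu$ for the $(q-1)$-adic valuation of a positive integer and collect three arithmetic observations: (a) any finite $k$-vector space has cardinality a power of $q$, hence $\equiv 1 \pmod{q-1}$; (b) for indecomposable $M$, $\End_A M$ is local with residue field $\mathbb{F}_{q^{d(M)}}$, so $|\Aut_A M| = q^{s(M)}(q^{d(M)}-1)$ and thus $\nu(|\Aut_A M|) = 1$; (c) for $Z = \bigoplus_i M_i^{n_i}$ with the $M_i$ pairwise non-isomorphic indecomposables, the reductive quotient of $\End_A Z$ is $\prod_i M_{n_i}(\End_A M_i / \rad)$, and the standard count $|\operatorname{GL}_n(\mathbb{F}_{q^d})| = q^{dn(n-1)/2} \prod_{j=1}^{n}(q^{jd}-1)$ yields $\nu(|\Aut_A Z|) = \sum_i n_i$. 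Substituting (a)--(c) into (\ref{eq:Hall-coefficient}), the denominator has valuation $0 + 1 + 1 = 2$ while the numerator has valuation at least $\sum_i n_i$. Hence as soon as $Z$ has three or more indecomposable summands, both $F_{[X][Y]}^{[Z]}$ and $F_{[Y][X]}^{[Z]}$ are individually divisible by $q-1$ and their difference vanishes.

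The borderline case, where $Z$ has exactly two indecomposable summands, is the part I expect to be the main obstacle: here $\nu(|\Aut_A Z|) = 2$ matches the denominator's valuation exactly, so $F_{[X][Y]}^{[Z]}$ need not individually reduce to zero modulo $q-1$, and the difference has to cancel rather than vanish term by term. My approach is to rewrite both Hall numbers via the $W$-formula
\[
F_{XY}^Z \cdot |\Aut_A X| \cdot |\Aut_A Y| = W_{XY}^Z,
\]
and analyze the action of $\Aut_A Z$ on exact pairs $0 \to Y \to Z \to X \to 0$ and on the swapped pairs $0 \to X \to Z \to Y \to 0$. The ``unipotent'' (radical) part of the stabilizer data contributes only powers of $q$, which become $1$ modulo $q-1$; once those factors are discarded, the residual orbit data depends only on the isomorphism classes of $X,Y,Z$ and is symmetric under $X \leftrightarrow Y$, forcing the required cancellation. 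Settling this symmetric comparison for the two-summand case is the heart of the proof; with it in hand, closure of the bracket on $\overline{L}(A)/(q-1)$ follows, and the stated formula is immediate from the Hall product restricted to indecomposables.
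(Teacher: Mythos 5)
The paper does not prove this lemma at all: it is quoted from Ringel \cite[Proposition 3]{Ri92-Tsukuba} (see also \cite[Proposition 1]{Ri92-From}), which is why it is stated without an argument. Measured against that, your proposal contains a genuine gap at exactly the point you yourself call the heart of the proof. Your reduction is the right one: antisymmetry and Jacobi come for free from the associative commutator, and closure amounts to showing $F_{[X][Y]}^{[Z]}-F_{[Y][X]}^{[Z]}\equiv 0\pmod{q-1}$ for $X,Y$ indecomposable and $Z$ decomposable. But for $Z$ with exactly two indecomposable summands you only assert that, after discarding the unipotent contributions, the residual orbit data ``depends only on the isomorphism classes of $X,Y,Z$ and is symmetric under $X\leftrightarrow Y$.'' No justification is given, and none can be routine: for an indecomposable middle term the two Hall numbers genuinely differ (that is what makes the bracket nontrivial), so the claimed symmetry in the two-summand case is precisely the content of the lemma rather than a bookkeeping step. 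The known proofs (Ringel; Riedtmann, \emph{Lie algebras generated by indecomposables}) extract the cancellation from the decomposition $Z=Z_1\oplus Z_2$ itself, e.g.\ via the order-$(q-1)$ group of automorphisms $\lambda\,\mathrm{id}_{Z_1}\oplus\mathrm{id}_{Z_2}$ acting on $\mathcal{F}^{Z}_{XY}$ and $\mathcal{F}^{Z}_{YX}$, with an analysis of the submodules $U=(U\cap Z_1)\oplus(U\cap Z_2)$ compatible with the splitting; some argument of this kind must be supplied before closure is established.

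There is also a flaw in your treatment of the case of three or more summands. Divisibility by $q-1$ cannot be handled as a valuation calculus, because $q-1$ is not prime: from ``$(q-1)^{\sum_i n_i}$ divides the numerator and $(q-1)^2$ divides the denominator'' of the Riedtmann formula one cannot conclude that the integer $F_{[X][Y]}^{[Z]}$ is divisible by $q-1$. Concretely, $|\Aut_A X|=q^{s_X}(q^{d_X}-1)$ with cofactor $1+q+\cdots+q^{d_X-1}\equiv d_X\pmod{q-1}$, which need not be prime to $q-1$ when $\End_A X/\rad\End_A X$ is a proper extension of $k$; such indecomposables do occur for the algebras of this paper (homogeneous regular modules attached to points of degree $>1$), so the denominator can absorb more of the prime factors of $q-1$ than your count allows. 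This part is likely repairable with more careful arithmetic, but as written it is a second gap; the two-summand case remains the essential missing argument.
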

Note that in the right hand side of the formula above the sum may
be taken only over $[Z]\in[\ind A]$ such that $\udim Z=\udim X+\udim Y$.
Hence if we put $(\overline{L}(A)/(q-1))_{d}$ to be the free $\mathbb{Z}/(q-1)\mathbb{Z}$-submodule
with the basis $\{ u_{[X]}\mid[X]\in[\ind A],\udim X=d\}$ for all
$d\in K_{0}(A)$, we have $[(\overline{L}(A)/(q-1))_{d},(\overline{L}(A)/(q-1))_{e}]\subseteq(\overline{L}(A)/(q-1))_{d+e}$
for all $d,e\in K_{0}(A)$. Thus we have the following.

\begin{prop}
$\overline{L}(A)/(q-1)=\bigoplus_{d\in K_{0}(A)}(\overline{L}(A)/(q-1))_{d}$
\emph{}is a $K_{0}(A)$-graded Lie algebra. \hfil\qed
\end{prop}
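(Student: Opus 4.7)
The plan is to observe that the two claims of the proposition, namely the direct-sum decomposition and the compatibility of the Hall commutator with the grading, are both essentially bookkeeping on top of the preceding lemma.

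First I would verify the decomposition. The dimension vector map $\udim\colon[\ind A]\to K_{0}(A)$ partitions the index set $[\ind A]$ according to the fibers $\{[X]\in[\ind A]\mid\udim X=d\}$ for $d\in K_{0}(A)$. Since $\overline{L}(A)/(q-1)$ is by definition the free $\mathbb{Z}/(q-1)\mathbb{Z}$-module with basis $\{u_{[X]}\}_{[X]\in[\ind A]}$, partitioning the basis yields the internal direct sum $\overline{L}(A)/(q-1)=\bigoplus_{d\in K_{0}(A)}(\overline{L}(A)/(q-1))_{d}$ with $(\overline{L}(A)/(q-1))_{d}$ the free submodule on $\{u_{[X]}\mid\udim X=d\}$. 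Note that for all but countably many $d$ the summand is zero, so the direct sum is well-defined.

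Second I would verify the grading of the bracket. By the previous lemma, for $[X],[Y]\in[\ind A]$,
\[
[u_{[X]},u_{[Y]}]=\sum_{[Z]\in[\ind A]}(F_{[X][Y]}^{[Z]}-F_{[Y][X]}^{[Z]})u_{[Z]}.
\]
A Hall number $F_{[X][Y]}^{[Z]}$ is nonzero only when there exists a short exact sequence $0\to Y\to Z\to X\to 0$ in $\mod A$, and the additivity of $\udim$ on short exact sequences then forces $\udim Z=\udim X+\udim Y$; the same holds for $F_{[Y][X]}^{[Z]}$. Hence the only summands that can contribute on the right lie in $(\overline{L}(A)/(q-1))_{\udim X+\udim Y}$, which gives
\[
[(\overline{L}(A)/(q-1))_{d},(\overline{L}(A)/(q-1))_{e}]\subseteq(\overline{L}(A)/(q-1))_{d+e}
\]
for arbitrary homogeneous generators, and by bilinearity on the whole of $(\overline{L}(A)/(q-1))_{d}\times(\overline{L}(A)/(q-1))_{e}$.

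There is essentially no hard step here: the Lie algebra structure has already been established in the previous lemma, and the support condition on the sum is built into the formula. The only thing to be slightly careful about is that one uses the correct convention for $K_{0}(A)$-gradings (allowing the index set to be all of $K_{0}(A)$, with most homogeneous components zero) and that one does not attempt to write the bracket as a map on homogeneous components only, but rather extends it by bilinearity to the full direct sum.
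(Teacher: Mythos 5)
Your proof is correct and follows essentially the same route as the paper: the paper also partitions the basis $\{u_{[X]}\}_{[X]\in[\ind A]}$ by dimension vector and notes that in the bracket formula the sum can be restricted to $[Z]$ with $\udim Z=\udim X+\udim Y$, since the Hall numbers vanish otherwise by additivity of $\udim$ on short exact sequences. Nothing further is needed.
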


\subsection{Composition Lie algebras}

\begin{defn}
For $A$-modules $X$, $Y$ and $Z$ a polynomial $\varphi_{ZX}^{Y}(T)\in\mathbb{Z}[T]$
in an indeterminate $T$ with integral coefficients is called a \emph{Hall
polynomial} for the triple $(X,Y,Z)$ if $F_{Z^{K}X^{K}}^{Y^{X}}=\varphi_{ZX}^{Y}(|K|)$
for all $K\in\Omega_{A}$.
\end{defn}
Note that when the set $\Omega_{A}$ is infinite, a Hall polynomial
for a triple is uniquely determined if it exists.

\begin{defn}
Assume that $\Omega_{A}$ is an infinite set. By Lemma \ref{Hall},
$\overline{L}(A^{K})/(|K|-1)$ is a Lie subalgebra of $\mathcal{H}(A^{K})/(|K|-1)$
over $\mathbb{Z}/(|K|-1)\mathbb{Z}$ for each $K\in\Omega$. Consider
the Lie algebra over $\mathbb{Z}$ given by the direct product of
Lie algebras:\[
\Pi=\Pi_{A}:=\prod_{K\in\Omega}\overline{L}(A^{K})/(|K|-1).\]

(1) An $A$-module $X$ is called \emph{absolutely indecomposable}
(with respect to $\Omega$) if\begin{equation}
X^{K}\textrm{ is indecomposable for all }K\in\Omega.\label{comp-ind}\end{equation}
We write $\mathbf{u}_{[X]}:=(u_{[X^{K}]})_{K\in\Omega}\in\Pi$ if
$X$ is absolutely indecomposable. Note that all simple modules are
absolutely indecomposable.

(2) The Lie subalgebra of $\Pi$ generated by $\{\mathbf{u}_{[S]}\mid S\textrm{ is simple}\}$
is denoted by $L(A)_{1}$ and is called the \emph{degenerate composition
Lie algebra} of $A$. The Lie algebra $L(A)_{1}$ is not a torsion
$\mathbb{Z}$-module because $\Omega$ is an infinite set.
\end{defn}
\begin{lem}
\label{eq:bracket-Hall-poly}Let $X$, $Y$, $Z$ be absolutely indecomposable
$A$-modules with $\udim X+\udim Z=\udim Y$ such that $Y$ is the
unique indecomposable $A$-module with dimension vector $\udim X+\udim Z$
up to isomorphisms. If there exist Hall polynomials $\varphi_{ZX}^{Y}$
and $\varphi_{XZ}^{Y}$, then\[
[\mathbf{u}_{Z},\mathbf{u}_{X}]=(\varphi_{ZX}^{Y}(1)-\varphi_{XZ}^{Y}(1))\mathbf{u}_{Y}\]
in $\Pi$.
\end{lem}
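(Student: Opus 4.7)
The plan is to compute $[\mathbf{u}_Z,\mathbf{u}_X]$ componentwise in the direct product $\Pi = \prod_{K\in\Omega}\overline{L}(A^K)/(|K|-1)$. Fix $K\in\Omega$. By the Lie bracket formula for $\overline{L}(A^K)/(|K|-1)$,
$$[u_{[Z^K]},u_{[X^K]}] = \sum_{[W]\in[\ind A^K]} \bigl(F_{[Z^K][X^K]}^{[W]} - F_{[X^K][Z^K]}^{[W]}\bigr)\,u_{[W]}.$$
A nonzero Hall number $F_{UV}^W$ forces $\udim W=\udim U+\udim V$, and $\udim Z^K+\udim X^K=\udim Z+\udim X=\udim Y$, so only indecomposable $A^K$-modules with dimension vector $\udim Y$ contribute.

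Next I would use the uniqueness hypothesis on $Y$, transported across scalar extension, to collapse the sum to the single term $W=Y^K$. Absolute indecomposability of $Y$ makes $Y^K$ itself an indecomposable $A^K$-module of the correct dimension vector, and uniqueness forces any competing $W$ in the sum to be isomorphic to $Y^K$. This identification is the main obstacle, since the stated hypothesis concerns indecomposable $A$-modules whereas the bracket actually lives in $\overline{L}(A^K)/(|K|-1)$; the corresponding uniqueness at the level of $A^K$-modules must be either read into the hypothesis or derived via absolute indecomposability together with a dimension-vector argument uniform in $K\in\Omega$.

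Once the sum collapses to the single term indexed by $Y^K$, the Hall polynomial hypothesis yields
$$F_{[Z^K][X^K]}^{[Y^K]}=\varphi_{ZX}^Y(|K|)\quad\text{and}\quad F_{[X^K][Z^K]}^{[Y^K]}=\varphi_{XZ}^Y(|K|).$$
Since $|K|\equiv 1\pmod{|K|-1}$, every $\varphi(T)\in\mathbb{Z}[T]$ satisfies $\varphi(|K|)\equiv\varphi(1)\pmod{|K|-1}$, so the $K$-th component of $[\mathbf{u}_Z,\mathbf{u}_X]$ reduces to $\bigl(\varphi_{ZX}^Y(1)-\varphi_{XZ}^Y(1)\bigr)\,u_{[Y^K]}$. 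Because this equality holds for every $K\in\Omega$ and $\mathbf{u}_Y=(u_{[Y^K]})_{K\in\Omega}$, reassembling the components produces the claimed identity in $\Pi$.
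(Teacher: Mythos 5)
Your proposal is correct and follows essentially the same route as the paper's one-line proof: compute the bracket componentwise in $\Pi$, observe that only indecomposables of dimension vector $\udim Y$ can contribute, and use $\varphi(|K|)\equiv\varphi(1)\pmod{|K|-1}$ to evaluate the Hall numbers modulo $|K|-1$. The uniqueness-over-$K$ point you flag is a genuine subtlety, but the paper glosses over it in exactly the same way — its proof implicitly reads the hypothesis as saying that $Y^K$ is the only indecomposable $A^K$-module of that dimension vector, which does hold in every application of the lemma (the dimension vectors there are roots, with a unique indecomposable over any field).
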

\begin{proof}
This follows from $F_{Z^{K}X^{K}}^{Y^{K}}=\varphi_{ZX}^{Y}(1)$ in
$\mathbb{Z}/(|K|-1)\mathbb{Z}$ for all $K\in\Omega_{A}$.
\end{proof}
The following seems to be well-known.

\begin{prop}
\label{lem:rep-fin-hered}Let $\Delta$ be a simply-laced Dynkin graph.
If $A$ is a connected representation-finite hereditary algebra of
type $\Delta$, and $M$ an indecomposable $A$-module. Then $\mathbf{u}_{M}\in L(A)_{1}$.
\end{prop}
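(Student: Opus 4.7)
The plan is to proceed by induction on the composition length $l(M)$. For the base case $l(M)=1$, the module $M$ is simple and $\mathbf{u}_M$ belongs to $L(A)_1$ by the very definition of $L(A)_1$ as the Lie subalgebra of $\Pi$ generated by the symbols attached to simple modules. Before launching the induction I would record several facts specific to the representation-finite hereditary simply-laced Dynkin situation: every indecomposable $A$-module satisfies $\End_A M\cong k$ and is therefore absolutely indecomposable, so the framework of $\Pi$ applies; by Gabriel's theorem, $M$ is the unique indecomposable of its dimension vector up to isomorphism, and that dimension vector is a positive root of $\Delta$; and by Ringel's Hall polynomial theorem, $\varphi^Z_{XY}$ exists for every triple of $A$-modules, so the hypothesis of Lemma \ref{eq:bracket-Hall-poly} is always available whenever the uniqueness hypothesis on middle terms holds.

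For the inductive step, assume $l(M)\ge 2$. I would next produce indecomposable $A$-modules $X, Z$ with $l(X), l(Z)<l(M)$, $\udim X+\udim Z=\udim M$, and a non-split short exact sequence $0\to Z\to M\to X\to 0$. Such a pair exists by the representation theory of Dynkin quivers: if $M$ is non-projective, the Auslander-Reiten sequence $0\to\tau M\to E\to M\to 0$ decomposes $E$ into indecomposables of strictly smaller length, and one extracts $X$ as an indecomposable summand of $E$ and $Z$ as the induced kernel; the projective case is handled dually, or combinatorially via the fact that every non-simple positive root of $\Delta$ splits as a sum of two positive roots whose corresponding indecomposables support a nonzero $\Ext^1$. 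The induction hypothesis then gives $\mathbf{u}_X, \mathbf{u}_Z\in L(A)_1$.

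Applying Lemma \ref{eq:bracket-Hall-poly}, using that $M$ is the unique indecomposable of its dimension vector, yields
\[
[\mathbf{u}_Z,\mathbf{u}_X]=\bigl(\varphi^M_{ZX}(1)-\varphi^M_{XZ}(1)\bigr)\,\mathbf{u}_M
\]
in $\Pi$. The main obstacle is to verify that the integer $c:=\varphi^M_{ZX}(1)-\varphi^M_{XZ}(1)$ equals $\pm 1$; otherwise one only recovers $c\,\mathbf{u}_M$ rather than $\mathbf{u}_M$ itself. To settle this I would refine the choice of $X, Z$ to a directed pair, so that exactly one of $\Ext^1_A(X,Z)$ and $\Ext^1_A(Z,X)$ vanishes; such a choice is always available in Dynkin type by ordering indecomposables compatibly with the preprojective-to-preinjective flow along $\Gamma_A$ (or equivalently by choosing a slice). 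The Riedtmann formula (\ref{eq:Hall-coefficient}), evaluated at $T=1$ and combined with $\End_A X\cong\End_A Z\cong k$ and the uniqueness of $M$, then forces the single non-vanishing Hall polynomial to take value $1$ at $T=1$, giving $c=\pm 1$. Hence $\mathbf{u}_M=\pm[\mathbf{u}_Z,\mathbf{u}_X]\in L(A)_1$, completing the induction.
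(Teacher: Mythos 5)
Your overall strategy (induct on length, split $\udim M$ as a sum of two smaller roots, and use Lemma \ref{eq:bracket-Hall-poly}) is viable, and it differs from the paper only in that the paper always splits off a \emph{simple} module: it writes $\udim M=\udim S+\udim N$ with $S$ simple and $N$ indecomposable, so that only $\mathbf{u}_N$ needs the induction hypothesis, and then quotes Ringel's result from \cite{Ri90-poly} that exactly one of $\varphi_{SN}^{M}(1),\varphi_{NS}^{M}(1)$ is nonzero with value in $\{\pm1,\pm2,\pm3\}$, hence $\pm1$ in the simply-laced case. The genuine gap in your version is the final step, where you claim that for a directed pair ($\Ext_A^1$ vanishing in one direction) the Riedtmann formula together with $\End_A X\cong\End_A Z\cong k$ and the uniqueness of $M$ ``forces the single non-vanishing Hall polynomial to take value $1$ at $T=1$.'' This is not a proof, and the asserted value is even false as stated. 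Take $\Delta=D_4$ with all arrows pointing to the branch vertex $c$, let $Z=S_c$, let $X$ be the indecomposable with dimension vector $(1,1,1,1)$ and $M$ the indecomposable with the maximal root $(1,1,1,2)$ as dimension vector. Then $\Ext_A^1(Z,X)=0$ while $\Ext_A^1(X,Z)$ is $2$-dimensional, so this is a directed pair with all endomorphism rings equal to $k$; yet a direct count of submodules (lines in $M(c)=k^2$ avoiding the three marked lines) gives $\varphi_{XZ}^{M}(T)=T-2$, whose value at $T=1$ is $-1$, not $1$. The conclusion $c=\pm1$ happens to hold here, but your argument does not yield it: the Riedtmann formula only reduces the question to knowing $|\Ext_A^1(X,Z)_M|$, i.e.\ how the nonsplit extension classes distribute over the possible middle terms, and nothing in ``$\End=k$ plus uniqueness of $M$'' controls that count (in the example it is $(q-1)(q-2)$ out of $q^2-1$).

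To close the gap you must actually invoke the place where simple-lacedness enters, namely a root-string/Chevalley-basis statement: that the degenerate structure constant $\varphi_{ZX}^{M}(1)-\varphi_{XZ}^{M}(1)$ for indecomposables over a Dynkin quiver equals $\pm(p+1)$, which is $\pm1$ exactly when the relevant root string is short — the values $\pm2,\pm3$ do occur in the non-simply-laced types, and your argument never rules them out. This is precisely the content of Ringel's theorem cited in the paper, stated there for the case where one factor is simple; if you insist on splitting into two arbitrary indecomposables you need the corresponding (stronger) form of Ringel's result, or you should simply follow the paper and peel off a simple module, which also removes the need to justify your auxiliary claim that a short exact sequence with indecomposable kernel and cokernel exists (your Auslander--Reiten construction is itself shaky, since the kernel of an irreducible epimorphism from a summand of the middle term need not be indecomposable; only a dimension-vector decomposition is actually needed for Lemma \ref{eq:bracket-Hall-poly}).
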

\begin{proof}
Since in this case $\Omega_{A}$ is an infinite set, $L(A)_{1}$ is
defined. We prove the assertion by induction on $\dim M$. If $\dim M=1$,
then $M$ is simple, and $\mathbf{u}_{M}\in L(A)_{1}$. Assume that
$\dim M>1$. Then as easily seen there exists a simple $A$-module
$S$ and an indecomposable $A$-module $N$ such that $\udim M=\udim S+\udim N$.
By putting $a:=\varphi_{SN}^{M}(1)-\varphi_{NS}^{M}(1)$ we have $[\mathbf{u}_{S},\mathbf{u}_{N}]=a\mathbf{u}_{M}$
in $\Pi$ by Lemma \ref{eq:bracket-Hall-poly}. Here by \cite{Ri90-poly}
precisely one of the two values $\varphi_{SN}^{M}(1)$ and $\varphi_{NS}^{M}(1)$
is nonzero, and the nonzero value is in $\{\pm1,\pm2,\pm3\}$. Therefore
$a\in\{\pm1,\pm2,\pm3\}$. Since $\Delta$ is simply-laced, we have
$a=\pm1$. Hence $\mathbf{u}_{M}=\frac{1}{a}[\mathbf{u}_{S},\mathbf{u}_{N}]\in L(A)_{1}$
because by induction hypothesis $\mathbf{u}_{N}\in L(A)_{1}$.
\end{proof}

\section{\textbf{Canonical algebras}}

\subsection{Canonical algebras\label{sub:dfn-dom-can}}

Among canonical algebras we consider, in this paper, only domestic
canonical algebras given by quivers with relations. Namely, a domestic
canonical algebra $A$ is given by the quiver $Q$ in Figure \ref{quiver-dom-can},%
\begin{figure}
\begin{center}\includegraphics[%
  angle=90]{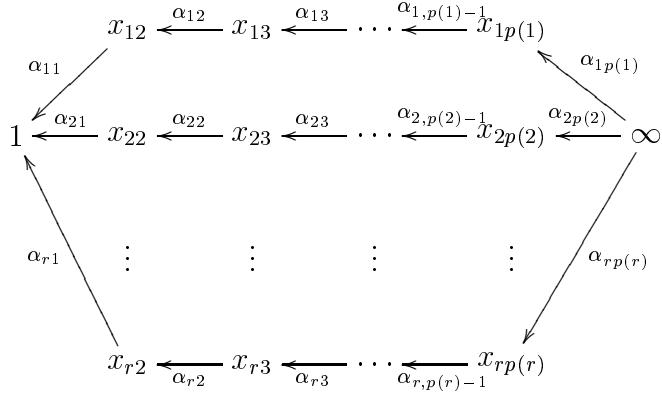}\end{center}

\caption{\label{quiver-dom-can}Quiver of a canonical algebra}
\end{figure}
 where $r\in\{2,3\}$, $p(i)\geq p(i+1)\geq1$ for all $1\leq i\leq r-1$,
with no relation when $r=2$; and with the relation $\sum_{i=1}^{r}\alpha_{i1}\cdots\alpha_{ip(i)}=0$
when $r=3$. Further when $r=3$ it is assumed that \[
(p(1),p(2),p(3))\in\{(d,2,2),(3,3,2),(4,3,2),(5,3,2)\}\]
 for some $d\geq2$. For convenience we set $x_{11}=x_{21}=\cdots=x_{r1}=1$,
$x_{1,p(1)+1}=x_{2,p(2)+1}=\cdots=x_{r,p(r)+1}=\infty$, and give
a partial order on $Q_{0}$ by setting $x_{ij}<x_{i,j+1}$ for all
$1\leq i\leq r$ and $1\leq j\leq p(i)$. Denote by $Q^{l}$ the quiver
obtained from $Q$ by deleting the vertex $\infty$. Note that the
underlying graph $\Delta$ of $Q^{l}$ is a (simply-laced) Dynkin
graph, which is called the \emph{type} of $A$. Conversely every simply-laced
Dynkin graph $\Gamma$ is obtained in this way, and the canonical
algebra of the type $\Gamma$ is uniquely determined if $\Gamma$
is not of type $A_{n}$. We set $\Delta_{0}:=Q_{0}^{l}=Q_{0}\setminus\{\infty\}$
to be the set of vertices of $\Delta$, $n:=|\Delta_{0}|$, and denote
by $(a_{xy})_{x,y\in\Delta_{0}}$ the Cartan matrix expressed by the
graph $\Delta$, namely

\begin{equation}
a_{xy}=\left\{ \begin{array}{ll}
2 & \textrm{if $x=y$;}\\
-1 & \textrm{if $x\neq y$, and $x,y$ are neighbors in $\Delta$; and}\\
0 & \textrm{if $x\neq y$, and $x,y$ are not neighbors in $\Delta$,}\end{array}\right.\label{eq:a-xy}\end{equation}
 where vertices $x,y\in\Delta_{0}$ are said to be \emph{neighbors}
in $\Delta$ if they are connected by an edge in $\Delta$.

Throughout the rest of this paper we assume that $A$ \emph{is a domestic
canonical algebra}.

\subsection{Domestic canonical algebras as tame concealed algebras\label{sub:tame-concealed}}

Note that $\Gamma_{A}$ has a \emph{preprojective} component (\cite[p.80]{Ri84}),
which contains a unique \emph{complete slice} $\mathcal{S}$ (\cite[7.1]{HR},
cf.$\,$\cite[p.180]{Ri84}) with $P_{\infty}:=A\mathbf{e}_{\infty}$
the unique source. Let $T$ be the corresponding \emph{slice module}
(\cite[p.183]{Ri84}), which is a tilting module for $A$. Then $H:=\End_{A}(T)^{\textrm{op}}$
is a tame hereditary algebra, whose quiver is obtained by giving an
orientation to the affine graph $\Delta^{(1)}$ corresponding to the
type $\Delta$ of $A$. Thus $A$ is a \emph{tilted} algebra (\cite{HR},
\cite[4.2]{Ri84}) or more precisely a \emph{tame concealed} algebra
(\cite[4.3]{Ri84}), and hence the global dimension of $A$ is at
most 2, and it is derived equivalent to the hereditary algebra $H$
by \cite[Theorem 2.10]{Ha88} or \cite[Theorem 6.4]{Ric89}. Denote
by $\mathcal{F}$ and $\mathcal{T}$ (resp.$\,$$\mathcal{Y}$ and
$\mathcal{X}$) the torsion-free class and the torsion class in $\mod A$
(resp.$\,$$\mod H$), respectively, defined by the tilting module
$T$. Note that the torsion pair $(\mathcal{T},\mathcal{F})$ splits,
i.e., we have a disjoint union $\ind A=(\ind A\cap\mathcal{T})\sqcup(\ind A\cap\mathcal{F})$,
whereas in general the torsion pair $(\mathcal{X},\mathcal{Y})$ does
not split, thus $\ind H\supsetneq(\ind H\cap\mathcal{X})\sqcup(\ind H\cap\mathcal{Y})$.
Set\[
F:=\Hom_{A}(T,-),F':=\Ext_{A}^{1}(T,-),\hat{F}:=\mathbf{R}\Hom_{A}^{\centerdot}(T,-),\]
\[
G:=T\otimes_{H}-,G':=\Tor_{1}^{H}(T,-),\hat{G}:=T\Ltimes_{H}-\]
Then as well-known we have quasi-inverse pairs of equivalences and
triangle-equiva\-lences$$
\xymatrix{
\mathcal{T}\ar@<1ex>[r]^F&\mathcal{Y}\ar@<1ex>[l]^G
},
\xymatrix{
\mathcal{F}\ar@<1ex>[r]^{F'}&\mathcal{X}\ar@<1ex>[l]^{G'}
},\text{ and}
\xymatrix{
\Db(\mod A)\ar@<1ex>[r]^{\hat{F}}&\Db(\mod H)\ar@<1ex>[l]^{\hat{G}}
}.
$$

Since $H$ is given by a quiver, we have $\mathcal{E}(H)=\{ k\}$.
Then by Lemma \ref{lem:endo-exc} we have the following.

\begin{lem}
\label{sub:Endo-algebras-of}\emph{$\mathcal{E}_{ex}(A)=\mathcal{E}(A)=\{ k\}$,}
and $\Omega_{A}$ is an infinite set.
\end{lem}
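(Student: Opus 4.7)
The plan is to read this off essentially immediately from Lemma \ref{lem:endo-exc} combined with the observation, made in the paragraph just preceding the statement, that $A$ is derived equivalent to the tame hereditary algebra $H = \End_A(T)^{\text{op}}$ whose quiver is an orientation of $\Delta^{(1)}$. Since $H$ is given by a quiver (without relations), every simple $H$-module is one-dimensional with $\End_H(S) = k$, so $\mathcal{E}(H) = \{k\}$. Applying Lemma \ref{lem:endo-exc} to the derived equivalence $\Db(\mod A) \simeq \Db(\mod H)$ then yields the chain
\[
\mathcal{E}(A) \subseteq \mathcal{E}_{ex}(A) \subseteq \mathcal{E}_{ex}(H) = \mathcal{E}(H) = \{k\}.
\]

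To close the chain, I would observe that $A = kQ/I$ is itself given by a quiver with relations over $k$, so for each vertex $x \in Q_0$ the simple module $S_x = A\mathbf{e}_x/\rad A\mathbf{e}_x$ has $\End_A(S_x) = k$. Hence $k \in \mathcal{E}(A)$, and all three sets in the chain coincide with $\{k\}$.

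For the statement about $\Omega_A$, I would just unwind the definition in Sect.\ \ref{sub:exceptional}. Since every exceptional $A$-module $X$ now has $\End_A(X) = k$, for any finite field extension $K$ of $k$ contained in $\overline{k}$ we have
\[
(\End_A X)^K = k \otimes_k K = K,
\]
which is a field. Therefore every such $K$ lies in $\Omega_A$, and since $\overline{k}/k$ has infinitely many finite intermediate extensions, $\Omega_A$ is infinite.

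There is no real obstacle here: the content is already carried by Lemma \ref{lem:endo-exc} and by the derived equivalence $A \sim H$ established via the tilting module $T$ on the complete slice in the preprojective component. The only things to check are the two trivial endpoints of the inclusion chain, namely that simples on both sides have endomorphism ring $k$, which follow from the fact that both $A$ and $H$ are quiver algebras over $k$.
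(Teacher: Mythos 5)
Your proposal is correct and takes essentially the same route as the paper: the paper likewise deduces the statement from Lemma \ref{lem:endo-exc} together with $\mathcal{E}(H)=\{k\}$ (since $H$ is a quiver algebra), the derived equivalence being the one furnished by the slice tilting module $T$. Your explicit checks that $k\in\mathcal{E}(A)$ (simples over $kQ/I$ have endomorphism ring $k$) and that every finite extension $K\subseteq\overline{k}$ then lies in $\Omega_A$ are precisely the trivial endpoints the paper leaves implicit in the phrase ``Therefore in particular, $\Omega_A$ is an infinite set.''
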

\qed

\subsection{Bilinear form, quadratic form and rank}

Since the global dimension of $A$ is finite (Sect.$\:$\ref{sub:tame-concealed}),
the bilinear form $B:=B_{A}$ is defined (Definition \ref{bilinear}).
Denote by $r_{T}\colon K_{0}(A)\to K_{0}(H)$ the isomorphism defined
by $r_{T}(\udim X)=\udim\hat{F}X$ for all $X\in\Db(\mod A)$. Then
as well-known $B_{A}(x,y)=B_{H}(r_{T}(x),r_{T}(y))$ for all $x,y\in K_{0}(A)$,
in particular, we have $\chi_{A}(x)=\chi_{H}(r_{T}(x))$ for all $x\in K_{0}(A)$
(\cite[Proposition III.1.5]{Ha88}). Thus $\rad\chi_{A}$ is isomorphic
to $\rad\chi_{H}$, which is well-known to be a free abelian group
of rank 1. Then since $\delta:=(1,1,\ldots,1)\in\rad\chi_{A}$, we
have $\rad\chi_{A}=\mathbb{Z}\delta$. Thus $\delta$ is the minimal
positive radical vector of $\chi_{A}$.

We set $\rho:=(1,0,\ldots,0,-1)\in\mathbb{Z}^{n+1}$. For an element
$v\in K_{0}(A)\cong\mathbb{Z}^{n+1}$ we set

\[
\rank v:=v_{1}-v_{\infty}=v\rho^{T}\]
and call it the \emph{rank} of $v$, and for an $A$-module $M$ we
set

\[
\rank M:=\rank(\udim M)=\dim M(1)-\dim M(\infty)=(\udim M)\rho^{T},\]
which is called the \emph{rank} of $M$.

A direct calculation shows that

\[
B(v,w)=\left\{ \begin{array}{ll}
\sum_{x\in Q_{0}}v_{x}w_{x}-\sum_{x\to y}v_{x}w_{y} & \textrm{if }\Delta\in\{ A_{n}|n\in\mathbb{N}\}\\
\sum_{x\in Q_{0}}v_{x}w_{x}-\sum_{x\to y}v_{x}w_{y}+v_{\infty}w_{1} & \textrm{if }\Delta\not\in\{ A_{n}|n\in\mathbb{N}\}\end{array}\right.\]
for all $v,w\in K_{0}(A)$, where the sum $\sum_{x\to y}$ is taken
over all pairs $(x,y)\in Q_{0}\times Q_{0}$ such that there exists
an arrow from $x$ to $y$ in $Q$. This immediately yields

\begin{eqnarray}
B(\delta,v) & = & -\rank v\nonumber \\
B(v,\delta) & = & \rank v.\label{eq:B-delta-ex}\end{eqnarray}
for all $v\in K_{0}(A)$.

\subsection{Lost indecomposable modules}

Since $H$ is hereditary, each indecomposable complex in $\Db(\mod H)$
is isomorphic to a complex concentrated in one degree. In other words
each indecomposable complex in $\Db(\mod H)$ is regarded as an $H$-module
up to shifts. But the corresponding statement does not hold for $A$
in general. An indecomposable $H$-module $X$ is sent by $\hat{G}$
to a complex of $A$-modules that cannot be isomorphic to an $A$-module
up to shifts if and only if $X\not\in\mathcal{X}\cup\mathcal{Y}$.
Thus, when we pass from $\overline{L}(H)_{(q-1)}$ to $\overline{L}(A)_{(q-1)}$
we lose the basis $u_{[X]}$ for such an $X$. Therefore $L(A)_{1}^{\mathbb{C}}$
would not realize the positive part of the affine Kac-Moody algebra
of type $\Delta^{(1)}$, which was realized as $L(H)_{1}^{\mathbb{C}}$
by a part of \cite[Theorem 4.7]{PX2000} (see also \cite[Theorems 2 and 3]{Ri92-From}).
In this connection it would be interesting to know which indecomposable
complex of $A$-modules can be an $A$-module up to shifts. This is
the case if and only if positive and negative entries are not \emph{mixed}
in its dimension vector. Namely, we have the following.

\begin{lem}
\label{lem:cpx-mod}Let $X\in\mathcal{D^{\textrm{b}}}(\mod A)$ be
indecomposable. Then $X[i]\in\ind A$ for some $i\in\mathbb{Z}$ if
and only if $\udim X>0$ or $\udim X<0$. 
\end{lem}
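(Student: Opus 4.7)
The plan is to prove the ``only if'' direction directly and, for the ``if'' direction, to pass through the triangle equivalence $\hat{F}\colon\Db(\mod A)\to\Db(\mod H)$ from Section~\ref{sub:tame-concealed} and analyze three cases according to where $\hat{F}X$ lies relative to the torsion pair $(\mathcal{X},\mathcal{Y})$ on $\mod H$. For the ``only if'' direction, if $X[i]\in\ind A$ then $X[i]$ is a nonzero indecomposable $A$-module with strictly positive dimension vector; since $\udim X=(-1)^i\udim X[i]$, one obtains $\udim X>0$ when $i$ is even and $\udim X<0$ when $i$ is odd.

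For the ``if'' direction, since $H$ is hereditary every indecomposable object of $\Db(\mod H)$ is a shift of an indecomposable $H$-module, so I write $\hat{F}X\cong Y[j]$ for some $Y\in\ind H$ and $j\in\bbZ$; equivalently $X\cong\hat{G}Y[j]$. In Case (A), $Y\in\mathcal{Y}$, one has $\hat{G}Y=GY\in\mathcal{T}\subseteq\ind A$, so $X[-j]\in\ind A$. In Case (B), $Y\in\mathcal{X}$, the identity $\hat{F}N\cong F'N[-1]$ for $N\in\mathcal{F}$ gives $\hat{G}Y\cong G'Y[1]$ with $G'Y\in\mathcal{F}\subseteq\ind A$, so $X[-j-1]\in\ind A$. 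Neither of these cases uses the hypothesis on $\udim X$.

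In Case (C), $Y\notin\mathcal{X}\cup\mathcal{Y}$, and I aim at a contradiction with the hypothesis. The torsion decomposition $0\to tY\to Y\to Y/tY\to 0$ has both $tY\in\mathcal{X}$ and $Y/tY\in\mathcal{Y}$ nonzero. Applying $\hat{G}$ to the associated triangle and using the cohomology formulas from Cases (A) and (B), I find that $\hat{G}Y$ has exactly two nonzero cohomologies, $H^{-1}(\hat{G}Y)=G'(tY)\in\mathcal{F}$ and $H^0(\hat{G}Y)=G(Y/tY)\in\mathcal{T}$. Consequently
\[
\udim X=(-1)^j\bigl(\udim G(Y/tY)-\udim G'(tY)\bigr),
\]
and to conclude, this vector must have both a positive and a negative entry.

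The hard part will be this last step. The plan is to exploit the explicit summand structure of the slice module $T$: since $S_\infty=P_\infty$ is a direct summand of $T$, every $N\in\mathcal{F}$ satisfies $\Hom_A(S_\infty,N)=0$ and hence $N(\infty)=0$, so the $\infty$-component of $\udim X$ in Case (C) equals $(-1)^j G(Y/tY)(\infty)$ and has a definite sign. A dual argument, applied to the cotilting module coming from the dual slice in the preinjective component of $\Gamma_A$ (for which $S_1=I_1$ is the unique sink), produces a complementary constraint on the $1$-component of $\udim G(Y/tY)$. Combining these two opposing sign constraints at vertices $1$ and $\infty$ exhibits the required sign mismatch in $\udim X$ and completes the proof.
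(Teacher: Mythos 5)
Your overall architecture (only-if via $\udim X[i]=(-1)^i\udim X$; if-direction via $\hat F$, the three cases relative to $(\mathcal X,\mathcal Y)$, and the formula $\udim X=(-1)^j(\udim G(Y/tY)-\udim G'(tY))$ in Case (C)) is exactly the paper's, and Cases (A), (B) and the cohomology computation are fine. The gap is in the final step of Case (C), and it is twofold. First, your identifications are wrong for the paper's conventions: in $Q$ the arrows run from the $\infty$-side toward the vertex $1$ (this is forced by Theorem \ref{thm:AR-quiver}(2) together with Corollary \ref{sub:min-dim}: preprojectives have all maps injective and minimal dimension at $\infty$), so $S_1=P_1$ is the simple projective and $S_\infty=I_\infty$ is the simple injective; in particular $S_\infty\neq P_\infty$ and $S_1\neq I_1$, and the implication ``$\Hom_A(S_\infty,N)=0\Rightarrow N(\infty)=0$'' is false (a nonzero map out of the simple $S_\infty$ detects $\soc N$, not $N(\infty)$). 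What is true, and what you need, is that $P_\infty$ is a summand of the slice module $T$ and $\Hom_A(P_\infty,N)\cong\mathbf{e}_\infty N$, so $N\in\mathcal F$ forces $N(\infty)=0$; this repairs your $\infty$-side statement $(\udim G'(tY))_\infty=0$, but only gives $(\pm\udim X)_\infty=(\udim G(Y/tY))_\infty\ge 0$. For strict positivity you must observe (as the paper does, and you never do) that in Case (C) $Y\notin\mathcal Y\supseteq\mathcal P_H\cup\mathcal R_H$ forces $Y\in\mathcal I_H$, hence $Y/tY\in\mathcal Y\cap\mathcal I_H$ and $G(Y/tY)$ is preinjective over $A$, so that its dimension at $\infty$ is strictly positive.

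Second, and more seriously, the constraint at vertex $1$ is where the real work lies and your sketch does not supply it. What is needed is $(\udim G(Y/tY))_1=0$, i.e. $\Hom_A(G(Y/tY),I_1)=0$, together with $(\udim G'(tY))_1>0$ (the latter because $G'(tY)\in\mathcal F\subseteq\mathcal P$ has positive rank, so $\dim$ at $1$ exceeds $\dim$ at $\infty$). The vanishing $\Hom_A(G(Y/tY),I_1)=0$ does not follow from the mere existence of the dual slice/cotilting module: many preinjective $A$-modules (e.g.\ $I_1$ itself, or any module sincere at $1$) map nonzero to $I_1$, and nothing in your argument places $G(Y/tY)$ beyond the dual slice. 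The point is that one must use the hypothesis $Y\notin\mathcal Y$: the paper transports the question to $H$ via $\Hom_A(G(Y/tY),I_1)\cong\Hom_H(Y/tY,FI_1)$ and argues that a nonzero map $Y/tY\to FI_1$ would make $Y$ a predecessor of $FI_1$ in $\mathcal I_H$, whence $Y\in\mathcal Y$ because $\mathcal Y\cap\mathcal I_H$ is closed under predecessors in $\mathcal I_H$ --- a contradiction. Your ``dual argument with the cotilting module'' names a plausible-looking tool but gives no mechanism by which the Case (C) hypothesis enters, so the sign mismatch at vertex $1$ (and hence the whole contradiction) is not established.
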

\begin{proof}
($\Rightarrow$). If $X[i]\in\ind A$ for some $i\in\mathbb{Z}$,
then $0<\udim X[i]=(-1)^{i}\udim X$ by \ref{sub:Groth-group-der-cat}.
Hence $\udim X>0$ or $\udim X<0$. 

($\Leftarrow$). Since the torsion pair $(\mathcal{F},\mathcal{T})$
in \ref{sub:tame-concealed} splits, $X[i]\in\ind A$ if and only
if $X[i]\in\mathcal{F}$ or $X[i]\in\mathcal{T}$ for all indecomposable
complexes $X\in\Db(\mod A)$ and $i\in\mathbb{Z}$.

Now assume that $X[i]\not\in\ind A$ for all $i\in\mathbb{Z}$. It
is enough to show that $\udim X\not>0$ and $\udim X\not<0$. Since
$H$ is hereditary, there exists some $i\in\mathbb{Z}$ such that
$Y:=\hat{F}X[i]\in\mod H$. It follows from the assumption that $X[i]\not\in\mathcal{F}$
and $X[i]\not\in\mathcal{T}$. Therefore $Y\not\in\mathcal{Y}$ and
$Y\not\in\mathcal{X}$. Let $\mathcal{P}_{H}$, $\mathcal{R}_{H}$,
and $\mathcal{I}_{H}$ be the preprojective component, the tubular
family, and the preinjective component of the Auslander-Reiten quiver
$\Gamma_{H}$ of $H$, respectively. Since $\mathcal{P}_{H},\mathcal{R}_{H}\subseteq\mathcal{Y}$,
we have $Y\in\mathcal{I}_{H}$. Consider the canonical exact sequence\[
0\to Y'\ya{\mu}Y\ya{\varepsilon}Y''\to0\]
with $Y'\in\mathcal{X}$ and $Y''\in\mathcal{Y}$. Then\begin{eqnarray*}
\udim X[i] & = & \udim\hat{G}Y\\
 & = & \udim\hat{G}Y'+\udim\hat{G}Y''\\
 & = & \udim GY'-\udim G'Y'+\udim GY''-\udim G'Y''\\
 & = & \udim GY''-\udim G'Y'.\end{eqnarray*}
Hence \begin{equation}
\udim X[i]=\udim GY''-\udim G'Y'.\label{eq:udimX[i]}\end{equation}
We first show that $(\udim X[i])_{1}<0$. Let $I_{1}$ be the injective
hull of $S_{1}$. Suppose that $\Hom_{H}(Y'',FI_{1})\ne0$. Then since
$\Hom_{H}(\varepsilon,FI_{1})\colon\Hom_{H}(Y,FI_{1})\to\Hom_{H}(Y'',FI_{1})$
is an epimorphism, we have $\Hom_{H}(Y,FI_{1})\ne0$. This shows that
$Y$ is a predecessor of $FI_{1}$ in $\mathcal{I}_{H}$. Since $FI_{1}\in\mathcal{Y}$
and $\mathcal{Y}\cap\mathcal{I}_{H}$ is closed under predecessors
in $\mathcal{I}_{H}$, we have $Y\in\mathcal{Y}$, a contradiction.
Therefore we must have $\Hom_{H}(Y'',FI_{1})=0$. Then since $Y'',FI_{1}\in\mathcal{Y}$,
we have $\Hom_{H}(GY'',I_{1})=0$, which shows that $(\udim GY'')_{1}=0$.
Whereas since $G'Y'\in\mathcal{F}\subseteq\mathcal{P}$, we have $(\udim G'Y')_{1}>(\udim G'Y')_{\infty}\ge0$.
Hence by (\ref{eq:udimX[i]}) we have $(\udim X[i])_{1}<0$. We next
show that $(\udim X[i])_{\infty}>0$. Since $Y''\in\mathcal{Y}\cap\mathcal{I}_{H}$,
we have $GY''\in\mathcal{I}$, and hence $(\udim GY'')_{\infty}>(\udim GY'')_{1}\ge0$.
Further since $G'Y'$ is not a successor of $P_{\infty}$, we have
$(\udim G'Y')_{\infty}=0$. Hence by (\ref{eq:udimX[i]}) we have
$(\udim X[i])_{\infty}>0$. As a consequence we have $\udim X[i]\not>0$
and $\udim X[i]\not<0$, which implies that $\udim X\not>0$ and $\udim X\not<0$
by Remark \ref{rem:der-udim}.
\end{proof}

\subsection{Indecomposable modules of dimension vector $\boldsymbol{\delta}$\label{sub:inds-of-dimvec-delta}}

Let $K\in\Omega$. We list indecomposable $A^{K}$-modules with dimension
vector $\delta$ for later use.

\begin{enumerate}
\item For each $c\in K$ we define a $A^{K}$-module $W_{c}(K)$ as follows.
Let $W_{c}(K)(x)=K$ for all $x\in Q_{0}$; and\[
W_{c}(K)(\alpha_{ij})=\left\{ \begin{array}{rl}
c\id & \textrm{if $(i,j)=(2,1)$;}\\
-(1+c)\id & \textrm{if $(i,j)=(3,1)$; and}\\
\id & \textrm{otherwise.}\end{array}\right.\]

\item For each arrow $\alpha=\alpha_{ij}\in Q_{1}$ we define a $A^{K}$-module
$X_{\alpha}(K)=X_{ij}(K)$ as follows. Let $X_{\alpha}(K)(x)=X_{ij}(K)(x)=k$
for all $x\in Q_{0}$; and\[
X_{ij}(K)(\alpha_{st})=\left\{ \begin{array}{rl}
0 & \textrm{if $(s,t)=(i,j)$};\\
-\id & \textrm{if $(i,s,t)\in\{(1,3,1),(2,3,1),(3,2,1)\}$};and\\
\id & \textrm{otherwise.}\end{array}\right.\]

\end{enumerate}
Note that $W_{0}(K)=X_{21}(K)$ and that when $A$ is not of type
$A_{n}$, we have $W_{-1}(K)=X_{31}(K)$. For $K=k$ we simply write
$W_{c}=W_{c}(k)$ and $X_{ij}=X_{ij}(k)$ for all $c\in k$ and $\alpha_{ij}\in Q_{1}$.
Then clearly we have $X_{ij}(K)\cong X_{ij}^{K}$ for all $\alpha_{ij}\in Q_{1}$
and $K\in\Omega$. The following is well-known (\cite{Ri84}).

\begin{prop}
\label{ind-delta}The set $\{ W_{c}(K),X_{ij}^{K}\mid c\in K\setminus E_{\Delta},\alpha_{ij}\in Q_{1}\}$
forms a complete set of representatives of isoclasses of indecomposable
$A^{K}$-modules with dimension vector $\delta$, where \begin{equation}
E_{\Delta}:=\left\{ \begin{array}{ll}
\{0\} & \;\textrm{if $A$ is of type $A_{n}$; and}\\
\{0,-1\} & \;\textrm{otherwise.}\end{array}\right.\label{eq:zyogai}\end{equation}

\end{prop}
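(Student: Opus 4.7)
The plan is to identify the indecomposable $A^{K}$-modules of dimension vector $\delta$ via the tubular description of the regular part of $\Gamma_{A^{K}}$, and then match the explicit representations given in the statement to the combinatorics of that description.

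First, I would observe that any indecomposable $A^{K}$-module of dimension vector $\delta$ must be regular: since $\chi_{A}(\delta)=0$ while every preprojective or preinjective indecomposable $M$ satisfies $\chi_{A}(\udim M)=1$, a $\delta$-dimensional indecomposable lies neither in the preprojective nor in the preinjective component. By the classical description (\cite{Ri84}) of the regular part of a domestic canonical algebra, it is a $\mathbb{P}^{1}(K)$-family of standard stable tubes with exceptional tubes of ranks $p(1),\ldots,p(r)$ at $r$ distinguished points and homogeneous tubes elsewhere. Since the mouth modules of an exceptional tube of rank $p(i)$ have dimension vectors summing to $\delta$, the indecomposable regular modules of dimension vector exactly $\delta$ are of two kinds:
(i) the mouth module of each homogeneous tube, one per generic point of $\mathbb{P}^{1}(K)$; and
(ii) the $p(i)$ indecomposables of quasi-length $p(i)$ in the $i$-th exceptional tube.
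This yields a total count of $(|K|+1-r)+\sum_{i=1}^{r}p(i)$, which equals $|K\setminus E_{\Delta}|+|Q_{1}|$ since $r-1=|E_{\Delta}|$ and $|Q_{1}|=\sum_{i}p(i)$, matching the cardinality of the proposed set.

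Next I would verify directly from the definitions that each $W_{c}(K)$ and each $X_{ij}(K)$ is indecomposable of dimension vector $\delta$: all vector spaces are one-dimensional and enough of the linear maps are identities to force any endomorphism to act as a single scalar, so $\End_{A^{K}}$ of each such module is $K$. A comparison of the positions of zero or non-identity maps then shows that these modules are pairwise non-isomorphic apart from the coincidences $W_{0}(K)=X_{21}(K)$ and, when $r=3$, $W_{-1}(K)=X_{31}(K)$, which is precisely the reason for excluding $E_{\Delta}$ from the range of $c$. Finally I would show that the $W_{c}(K)$ with $c\in K\setminus E_{\Delta}$ are the mouth modules of the homogeneous tubes indexed by the remaining points of $\mathbb{A}^{1}(K)$, while for each $i$ the modules $X_{i,1}(K),\ldots,X_{i,p(i)}(K)$ are the $p(i)$ quasi-length-$p(i)$ indecomposables of the $i$-th exceptional tube. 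A cardinality comparison with step one then forces the proposed set to be exhaustive.

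The main obstacle is the identification in the last step: verifying that for each fixed $i$ the $p(i)$ modules $X_{ij}(K)$ actually sit in a single exceptional tube of rank $p(i)$, equivalently that $\tau_{A^{K}}$ permutes them cyclically. The cleanest route is to transport the situation to the tame hereditary algebra $H$ via the derived equivalence $\hat{F}$ from Sect.\,\ref{sub:tame-concealed}, where the tubular family is classical and the exceptional tubes are explicitly known; alternatively one can compute an Auslander--Reiten sequence starting at $X_{ij}(K)$ and read off $\tau_{A^{K}}X_{ij}(K)\cong X_{i,j-1}(K)$ directly. Either way, once this matching is established, the counting in step one closes the argument.
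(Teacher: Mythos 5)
The paper gives no argument for this proposition at all: it is stated as ``well-known'' with a citation to Ringel's book, so your reconstruction is filling a gap rather than paralleling an existing proof. Your route --- reduce to the regular part, use the known tubular structure to count the isoclasses of indecomposables of dimension vector $\delta$, check that the listed modules are indecomposable, of the right dimension vector and pairwise non-isomorphic, and conclude by comparing cardinalities --- is sound and is essentially how one would extract the statement from \cite{Ri84}. Three small remarks. First, regularity of a $\delta$-dimensional indecomposable follows more directly from Theorem \ref{thm:AR-quiver}(2), since $\rank\delta=0$; your argument via $\chi_{A}$-values is also fine but needs the (standard) fact that preprojective and preinjective indecomposables over a tame concealed algebra are directing, hence roots. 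Second, be careful with ``one mouth module per generic point of $\mathbb{P}^{1}(K)$'': over a finite field the index set is $\Proj K[x_{0},x_{1}]$ and the homogeneous tubes at closed points of degree $d\ge2$ have mouth modules of dimension vector $d\delta$, so they contribute nothing; the homogeneous contribution is exactly the $|K|+1-r$ degree-one non-exceptional points (cf.\ Sect.~\ref{sub:tubular-family}), which is what your count uses, so the formula is right but the justification should say ``degree-one points'' rather than ``each homogeneous tube''. Third, the step you single out as the main obstacle --- locating each $W_{c}(K)$ and $X_{ij}(K)$ in its tube, e.g.\ by computing $\tau$ or transporting to $H$ --- is not actually needed for the proposition: once you know the total number of isoclasses with dimension vector $\delta$ is $(|K|+1-r)+\sum_{i}p(i)$ and you have exhibited that many pairwise non-isomorphic indecomposables of dimension vector $\delta$, completeness follows; the finer identification only becomes relevant later (Sect.~\ref{sub:tubular-family}), where the paper again quotes it from the literature.
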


\subsection{The Auslander-Reiten quiver}

Recall that the set of isoclasses of simple regular representations
of the Kronecker algebra $\left(\begin{smallmatrix}k & 0\\ k^2 & k\end{smallmatrix}\right)$
over $k$ is identified with the projective line $\mathbb{P}^{1}(k)=\Proj k[x_{0},x_{1}]$
of the ring $k$, which is needed to apply general results in \cite{Ri90-can}.
We obtain the following by \cite[Theorem 4.3]{Ri84}, \cite[Theorem 1]{Ri90-can},
and \cite[Theorem 3.7]{Ri84} (see \cite{Ri84} for definitions of
orbit quivers, tubular families and so on):

\begin{thm}
\label{thm:AR-quiver}Let $A$ be a domestic canonical algebra. Then

\emph{(1)} $\Gamma_{A}$ consists of a unique preprojective component
$\mathcal{P}$ with orbit quiver of type $\Delta^{(1)}$ containing
all projective indecomposables, a unique preinjective component $\mathcal{I}$
with orbit quiver of type $\Delta^{(1)}$ containing all injective
indecomposables and a stable separating tubular $\mathbb{P}^{1}(k)$-family
$\mathcal{R}=(\mathcal{T}_{c})_{c\in\mathbb{P}^{1}(k)}$ of type $(p(1),\ldots,p(r))$
separating $\mathcal{P}$ from $\mathcal{I}$ \emph{(see Definition
\ref{dfn:separating} for definition);}

\emph{(2)} An indecomposable $A$-module $M$ is preprojective, i.e.,
$M\in\mathcal{P}$ $($resp. preinjective, i.e., $M\in\mathcal{I}$$)$
if and only if $\rank M>0$ $($resp. $<0$$)$, if and only if all
maps $M(\alpha)$, $\alpha\in Q_{1}$ are monomorphisms $($resp.
epimorphisms$)$ and there is some non-isomorphism among them; and
$M$ is regular, i.e., $M\in\mathcal{R}$ if and only if $\rank M=0$,
if and only if either all maps $M(\alpha)$, $\alpha\in Q_{1}$ are
isomorphisms, or there is some non-monomorphism and some non-epimorphism
among them; and

\emph{(3)} $\mod A$ is \emph{controlled} by $\chi_{A}$. Namely,

\emph{$\quad$(a)} $\{\chi_{A}(\udim X)|X\in\ind A\}=\{0,1\}$;

\emph{$\quad$(b)} for any positive root $v$ of $\chi_{A}$ $|\{[X]\in[\ind A]|v=\udim X\}|=1$;
and

\emph{$\quad$(c)} for any positive radical vector $v$ of $\chi_{A}$
$|\{[X]\in[\ind A]|v=\udim X\}|\ge|k|+1$.\emph{\hfil\qed}
\end{thm}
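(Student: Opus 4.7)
The theorem packages the standard structure theory of tame concealed/canonical algebras into the notation of this paper, and my plan has three stages matching parts (1)--(3). Throughout I would use the setup of Section \ref{sub:tame-concealed}, in which $A$ is shown to be tame concealed, derived equivalent to a tame hereditary algebra $H$ of affine type $\Delta^{(1)}$, with global dimension at most $2$ and $\rad\chi_A=\mathbb{Z}\delta$ for $\delta=(1,\ldots,1)$.

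For part (1), I would transport the known description of $\Gamma_H$ (preprojective component, tubular $\mathbb{P}^1(k)$-family, preinjective component) to $A$ via the triangle equivalence $\hat{G}$ together with the quasi-inverse pairs $(F,G)$ and $(F',G')$ of Section \ref{sub:tame-concealed}. The preprojective component of $\Gamma_H$ lies in $\mathcal{Y}$ and is carried by $G$ to a preprojective component of $\Gamma_A$ containing the complete slice $\mathcal{S}$ and all indecomposable projectives. The preinjective component of $\Gamma_H$ lies in $\mathcal{X}$ and produces, via $G'$ with an appropriate shift, the preinjective component of $\Gamma_A$. The tubular family of $\Gamma_H$ splits along the torsion pair and reassembles into the separating tubular $\mathbb{P}^1(k)$-family $\mathcal{R}$ of $\Gamma_A$ with tube ranks $(p(1),\ldots,p(r))$; the precise shape and the separating property are given by \cite[Theorem 1]{Ri90-can}.

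For part (2), I would use the defect. Equation (\ref{eq:B-delta-ex}) identifies $\rank M$ with $B(\udim M,\delta)$, which pulls back the standard defect invariant of $H$ under $r_T$; for tame concealed algebras this defect is positive on preprojectives, negative on preinjectives, and zero on regulars, settling the first equivalence in each case. The second equivalence, in terms of the maps $M(\alpha)$ being monomorphisms, epimorphisms, or isomorphisms, is the explicit description of canonical representations from \cite[Theorem 4.3]{Ri84}: a module whose structure maps are all monomorphisms has its dimensions monotone along each arm of the quiver, with strict inequality somewhere if some map is not an isomorphism; dually for epimorphisms; and the regular modules form the complement, with those having all $M(\alpha)$ isomorphisms sitting on the $\delta$-layer of the homogeneous tubes, and the remaining regulars built from these by extensions so that both a non-monomorphism and a non-epimorphism appear.

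Part (3) is then the content of \cite[Theorem 3.7]{Ri84}: the module category of a tame concealed algebra, in particular of $A$, is controlled by the Euler quadratic form $\chi_A$. That theorem yields (a) directly, and (b) because each positive root of $\chi_A$ corresponds via part (1) to a unique indecomposable in $\mathcal{P}\cup\mathcal{I}$. Assertion (c) follows from part (1) as well: every positive radical vector is a positive integer multiple $m\delta$, and for each $m\ge1$ the indecomposables of dimension vector $m\delta$ include one module per point of $\mathbb{P}^1(k)$ (the $m$-th regular module on each homogeneous tube), giving at least $|k|+1$ isoclasses. The main obstacle is not conceptual but bookkeeping: verifying that the transport of structure along $\hat{F},\hat{G}$ preserves the labelling of components, tube ranks, and the $\mathbb{P}^1(k)$-parametrization, all of which Ringel's cited theorems package in the form required.
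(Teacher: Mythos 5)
Your bottom-line strategy is the paper's: Theorem \ref{thm:AR-quiver} is given there with no argument beyond the citations of \cite[Theorem 4.3]{Ri84}, \cite[Theorem 1]{Ri90-can} and \cite[Theorem 3.7]{Ri84}, which are exactly the results you invoke for (1), (2) and (3). The genuine problem is the transport-of-structure mechanism you interpose for part (1): its bookkeeping is wrong, and it is contradicted by the paper's own use of the tilting functors in the proof of Lemma \ref{lem:cpx-mod}. In the setup of Section \ref{sub:tame-concealed}, $\mathcal{Y}$ is the torsion-free class in $\mod H$ and it contains \emph{all} of $\mathcal{P}_H$ and all of the tubular family $\mathcal{R}_H$; so the tubes of $\Gamma_H$ do not ``split along the torsion pair'' --- they lie entirely in $\mathcal{Y}$ and are carried wholesale by $G$ to $\mathcal{R}$. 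The component that is split is the preinjective one of $\Gamma_H$: it meets $\mathcal{Y}$, meets $\mathcal{X}$, and contains the ``lost'' modules lying in neither (the subsection on lost indecomposable modules), so ``the preinjective component of $\Gamma_H$ lies in $\mathcal{X}$'' is false. Moreover $G'(\mathcal{X})=\mathcal{F}\subseteq\mathcal{P}$ (used explicitly in the proof of Lemma \ref{lem:cpx-mod}), i.e.\ $G'$ produces the predecessors of the slice inside the \emph{preprojective} component of $\Gamma_A$, not the preinjective component; the preinjective component of $\Gamma_A$ is obtained via $G$ from preinjective $H$-modules lying in $\mathcal{Y}$. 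For the same reason $G(\mathcal{P}_H)$ does not contain all indecomposable projectives: the simple projective $S_1$ at the sink $1$ receives no nonzero map from the slice module $T$, hence lies in $\mathcal{F}$ and is missed by $G(\mathcal{P}_H)$. Followed literally, your recipe assembles the components of $\Gamma_A$ incorrectly, so it cannot serve as the proof of (1); the statement must (as in the paper, and as your fallback sentence does) be taken from \cite[Theorem 1]{Ri90-can} together with \cite[Theorem 4.3]{Ri84}.

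Parts (2) and (3) are acceptable as citations of \cite[Theorems 4.3 and 3.7]{Ri84}, but two of your supporting remarks need repair. In (3)(b), the unique indecomposable attached to a positive root need not lie in $\mathcal{P}\cup\mathcal{I}$: exceptional regular modules (e.g.\ the quasi-simples of the non-homogeneous tubes, or $M(\delta-e_x)$) also have root dimension vectors, so uniqueness should be quoted from the control theorem rather than derived from (1) as you state it. In (3)(c), a point $c\in\mathbb{P}^1(k)$ of degree $d>1$ only carries modules of dimension vector $md\delta$, so ``one module per point of $\mathbb{P}^1(k)$'' is not correct for an arbitrary radical vector $m\delta$; you should count over the $|k|+1$ degree-one points, each of whose tubes (homogeneous at quasi-length $m$, or non-homogeneous of rank $p$ at quasi-length $mp$) contains at least one indecomposable of dimension vector $m\delta$, which still yields the bound $\ge|k|+1$.
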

More detailed account on the tubular $\mathbb{P}^{1}(k)$-family $\mathcal{R}$
will be given in Sect.$\:$\ref{sub:tubular-family} below. 

\begin{defn}
\label{dfn:separating}(1) For each positive root $v$ of $\chi_{A}$
we denote by $m(v)$ the unique element of $\{[X]\in[\ind A]\mid v=\udim X\}$
and choose an indecomposable $A$-module $M(v)\in m(v)$. For each
$K\in\Omega$ we set $m(v)^{K}:=[M(v)^{K}]$.

(2) We here recall the definition for $\mathcal{R}$ to be \emph{separating}
$\mathcal{P}$ from $\mathcal{I}$. First for a translation subquiver
$\mathcal{T}$ of $\Gamma_{A}$ we denote by $\langle\mathcal{T}\rangle$
the full subcategory of $\mod A$ consisting of the modules in $\mathcal{T}$
(sometimes we simply write $\mathcal{T}$ for $\langle\mathcal{T}\rangle$
if there seems to be no confusion). Then $\langle\mathcal{T}\rangle$
is said to be \emph{standard} if it is isomorphic to the \emph{mesh
category} $k(\mathcal{T})$ of $\mathcal{T}$ (\cite[p. 51]{Ri84}).
Now $\mathcal{R}$ is said to be \emph{separating} $\mathcal{P}$
from $\mathcal{I}$ if

$\quad$(a) $\langle\mathcal{R}\rangle$ is \emph{standard} (thus
there are no nonzero morphisms between distinct tubes, and $\langle\mathcal{T}_{c}\rangle\cong k(\mathcal{T}_{c})$
for all $c\in\mathbb{P}^{1}(k)$);

$\quad$(b) $\Hom_{A}(\mathcal{I},\mathcal{P})=\Hom_{A}(\mathcal{I},\mathcal{T})=\Hom_{A}(\mathcal{T},\mathcal{P})=0$;
and

$\quad$(c) For each $f\in\Hom_{A}(\mathcal{P},\mathcal{I})$ and
each $c\in\mathbb{P}^{1}(k)$, $f$ can be factored through $\mathcal{T}_{c}$.
\end{defn}
\begin{cor}
\label{sub:min-dim}Let $M$ be an indecomposable $A$-module. Then
\end{cor}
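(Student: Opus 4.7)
My plan is to extract this corollary directly from Theorem \ref{thm:AR-quiver}(3)(a), which supplies the dichotomy $\chi_{A}(\udim M) \in \{0,1\}$ for every indecomposable $A$-module $M$. The natural conclusion to be drawn is that $\udim M$ is either a positive root of $\chi_{A}$ or a positive integer multiple of the minimal positive radical vector $\delta$; and the proof then splits cleanly along this dichotomy, with no deeper representation-theoretic input needed beyond what Theorem \ref{thm:AR-quiver} has already collected.

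First I would dispose of the case $\chi_A(\udim M) = 1$. Here $\udim M$ is a root of $\chi_A$ by Definition \ref{quadratic}, and since $M$ is a nonzero module we have $\udim M > 0$, so $\udim M$ is automatically a positive root; there is nothing further to check. In the case $\chi_A(\udim M) = 0$, the vector $\udim M$ lies in $\rad \chi_A$. Here I would invoke the isomorphism $r_T \colon K_0(A) \to K_0(H)$ recorded in Section \ref{sub:tame-concealed}, which intertwines $\chi_A$ with $\chi_H$ for the tame hereditary algebra $H$ derived-equivalent to $A$; combined with the standard fact that, for $H$ of affine type $\Delta^{(1)}$, the radical of the Euler form is a free abelian group of rank $1$, and with the explicit observation $\delta = (1,1,\dots,1) \in \rad \chi_A$ recorded just before the definition of rank, this gives $\rad \chi_A = \mathbb{Z}\delta$. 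Therefore $\udim M = m\delta$ for some $m \in \mathbb{Z}$, and positivity of $\udim M$ forces $m \geq 1$.

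The whole argument is essentially bookkeeping: the substantive content — the structure of $\Gamma_A$ as a preprojective component, a stable separating tubular $\mathbb{P}^{1}(k)$-family, and a preinjective component, with $\mod A$ controlled by $\chi_A$ — is already packaged in Theorem \ref{thm:AR-quiver} (drawn from \cite{Ri84,Ri90-can}). I do not foresee any genuine obstacle; the only point that merits a moment of care is the verification that $\delta$ itself, and not a proper divisor, generates $\rad \chi_A$, but this is precisely the minimality statement already recorded. If further refinements are needed (e.g.\ distinguishing preprojective, preinjective, and regular indecomposables via the sign of $\rank M = (\udim M)\rho^{T}$), I would read them off from Theorem \ref{thm:AR-quiver}(2) without extra work.
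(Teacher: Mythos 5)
The statement you have proved is not the statement of Corollary \ref{sub:min-dim}. The conclusion of that corollary is the displayed formula
\[
\min_{x\in Q_{0}}\dim M(x)=\begin{cases}
\dim M(\infty) & \textrm{if $M$ is preprojective,}\\
\dim M(1) & \textrm{if $M$ is preinjective,}
\end{cases}
\]
i.e.\ it locates the smallest entry of the dimension vector of an indecomposable module at the vertex $\infty$ (preprojective case) or at the vertex $1$ (preinjective case). Your argument instead establishes the dichotomy that $\udim M$ is either a positive root of $\chi_{A}$ or a positive multiple of $\delta$ (via Theorem \ref{thm:AR-quiver}(3)(a) and $\rad\chi_{A}=\mathbb{Z}\delta$). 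That dichotomy is correct and your derivation of it is sound, but it does not yield the corollary: knowing $\chi_{A}(\udim M)\in\{0,1\}$ says nothing about which coordinate of $\udim M$ is minimal. Even the refinement you defer to Theorem \ref{thm:AR-quiver}(2), namely the sign of $\rank M=\dim M(1)-\dim M(\infty)$, only compares the two entries at $1$ and $\infty$; it does not show that $\dim M(\infty)$ (resp.\ $\dim M(1)$) is minimal among \emph{all} entries of $\udim M$.

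The paper's proof uses the other half of Theorem \ref{thm:AR-quiver}(2): $M$ is preprojective if and only if every structure map $M(\alpha)$, $\alpha\in Q_{1}$, is a monomorphism (with some non-isomorphism), and preinjective if and only if every $M(\alpha)$ is an epimorphism. In the canonical quiver $Q$ the vertex $\infty$ is the unique source and $1$ is the unique sink, and every vertex lies on one of the arms joining them (this orientation is forced by the rank characterization in the same theorem, e.g.\ $S_{1}$ has positive rank and $S_{\infty}$ negative rank). Hence for a preprojective $M$ the function $x\mapsto\dim M(x)$ is weakly increasing along every arrow, so its minimum is attained at $\infty$; for a preinjective $M$ it is weakly decreasing along every arrow, so the minimum is attained at $1$. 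This monotonicity argument is the entire content of the corollary and is exactly the ingredient missing from your proposal.
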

\[
\min_{x\in Q_{0}}\dim M(x)=\left\{ \begin{array}{ll}
\dim M(\infty) & \textrm{if $M$ is preprojective;}\\
\dim M(1) & \textrm{if $M$ is preinjective.}\end{array}\right.\]

\begin{proof}
This is immediate from Theorem \ref{thm:AR-quiver}(2).
\end{proof}
\begin{cor}
\label{sub:delta-strings}Let $v$ be a root of $\chi_{A}$ and $t\in\mathbb{Z}$.
Then $v+t\delta$ is a root of $\chi_{A}$. In particular, $|\rank X|\le6$
for all $X\in\ind A$.
\end{cor}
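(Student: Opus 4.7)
The plan is to split the corollary into two parts. The first assertion, that $v+t\delta$ is a root of $\chi_A$ whenever $v$ is, will follow from a direct bilinear computation; the rank bound will then be deduced from it by reducing to a root of the underlying Dynkin graph $\Delta$.

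For the first assertion, expanding bilinearly gives
\[
\chi_A(v+t\delta)=\chi_A(v)+t\bigl(B(v,\delta)+B(\delta,v)\bigr)+t^{2}\chi_A(\delta).
\]
Since $\delta\in\rad\chi_A$ we have $\chi_A(\delta)=0$, and the formulas in (\ref{eq:B-delta-ex}) give $B(v,\delta)=\rank v$ and $B(\delta,v)=-\rank v$, whose sum is zero. Hence $\chi_A(v+t\delta)=\chi_A(v)=1$, so $v+t\delta$ is again a root.

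For the rank bound, let $X\in\ind A$ and set $v:=\udim X$. By Theorem \ref{thm:AR-quiver}(3)(a) we have $\chi_A(v)\in\{0,1\}$. If $\chi_A(v)=0$ then $v\in\rad\chi_A=\mathbb{Z}\delta$, and since $\rank\delta=0$ we get $\rank X=0$. Otherwise $v$ is a root, and by the first assertion $v':=v-v_{\infty}\delta$ is a root of $\chi_A$ satisfying $v'_{\infty}=0$. Inspecting the explicit formula for $B$ in Section 3.3, every summand involving $v_{\infty}$ or $w_{\infty}$ (the arrow terms incident to $\infty$ and, when $\Delta\neq A_n$, the term $v_{\infty}w_{1}$) vanishes on vectors with zero $\infty$-coordinate; hence $\chi_A$ restricted to such vectors coincides with the quadratic form $\chi_\Delta$ of the simply-laced Dynkin graph $\Delta$ underlying $Q^{l}$. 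Thus $v'$, viewed in $\mathbb{Z}^{\Delta_{0}}$, is a root of $\chi_\Delta$.

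It then suffices to invoke the standard fact that every root of a simply-laced Dynkin root system is coordinate-wise dominated in absolute value by the highest root, whose maximum coefficient equals $1,2,3,4,6$ for the types $A_{n},D_{n},E_{6},E_{7},E_{8}$ respectively. Hence every coordinate of any root of $\chi_\Delta$ has absolute value at most $6$; in particular $|v'_{1}|\le 6$, and since $\rank X=v_{1}-v_{\infty}=v'_{1}$, the bound $|\rank X|\le 6$ follows. The only mildly nontrivial step is the identification $\chi_A|_{\{v_{\infty}=0\}}=\chi_\Delta$, which is immediate from the displayed bilinear formula; everything else reduces to standard facts about Dynkin root systems.
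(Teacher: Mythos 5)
Your proof of the first assertion is exactly the paper's: expand $\chi_{A}(v+t\delta)$ bilinearly and use (\ref{eq:B-delta-ex}) together with $\chi_{A}(\delta)=0$. For the rank bound, however, you take a genuinely different route. The paper argues module-theoretically: it splits into the regular, preprojective and preinjective cases via Theorem \ref{thm:AR-quiver}(2), uses Corollary \ref{sub:min-dim} to see that $w:=\udim X-\dim X(\infty)\delta$ is a positive root, invokes Theorem \ref{thm:AR-quiver}(3)(b) to produce an indecomposable $Y$ with $\udim Y=w$, observes that $Y$ lives over a representation-finite hereditary support algebra, and then bounds $\dim Y(1)\le 6$ by Gabriel's (or Ovsienko's) theorem. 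You instead stay entirely at the level of quadratic forms: starting from $\chi_{A}(\udim X)\in\{0,1\}$ (Theorem \ref{thm:AR-quiver}(3)(a)) and $\rad\chi_{A}=\mathbb{Z}\delta$, you pass to the root $v'=v-v_{\infty}\delta$ with $v'_{\infty}=0$, check from the displayed formula for $B$ that $\chi_{A}$ restricted to $\{v_{\infty}=0\}$ is the Tits form of $\Delta$, and finish with the highest-root coefficient bound ($\le 6$, attained for $E_{8}$) for simply-laced root systems, noting $\rank X=v'_{1}$. This is correct: the restriction argument is sound (all arrow terms incident to $\infty$ and the extra term $v_{\infty}w_{1}$ vanish), and it is essentially the identification the paper itself formalizes later via $\deg$ in Definition \ref{def:degree} and Lemma \ref{lem:deg-root}. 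What your route buys is that it avoids Corollary \ref{sub:min-dim}, the realization of $w$ by an indecomposable module, and Gabriel's theorem, replacing them by a standard combinatorial fact about Dynkin root systems; what the paper's route buys is that it never needs to quote root-system data, deriving the bound $6$ directly from representation-finite quiver theory, which fits the representation-theoretic toolkit already set up in Section 3.
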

\begin{proof}
By the formula (\ref{eq:B-delta-ex}) we have $\chi_{A}(v+t\delta)=\chi_{A}(v)+tB(v,\delta)+tB(\delta,v)+t^{2}\chi_{A}(\delta)=1+t\rank v-t\rank v+0=1$.
Thus $v+t\delta$ is a root of $\chi_{A}$. Now let $X\in\ind A$.
If $X$ is regular, then the assertion is trivial because $\rank X=0$.
If $X$ is preprojective, then $\rank X>0$ and $\udim X$ is a positive
root, and hence $w:=\udim X-\dim X(\infty)\delta$ is a positive root
by Corollary \ref{sub:min-dim}. Thus there exists some $Y\in\ind A$
such that $\udim Y=w$. Then $\rank X=\rank Y=\dim Y(1)$ because
$\dim Y(\infty)=0$ by construction. Here $Y$ is regarded as an indecomposable
module over $\supp Y$ that is a representation-finite hereditary
algebra defined by a quiver. Hence $\dim Y(1)\le6$ by Gabriel's Theorem
\cite{Ga72} on the classification of representation-finite quivers
(or Ovsienko's Theorem \cite{Ov} explained in \cite[1.0 Theorem 1]{Ri84}).
If $X$ is preinjective, then the similar argument works to have $-6\le\rank X<0$.
\end{proof}

\subsection{Tubular family\label{sub:tubular-family}}

We describe the tubular $\mathbb{P}^{1}(k)$-family $\mathcal{R}=(\mathcal{T}_{c})_{c\in\mathbb{P}^{1}(k)}$
in Theorem \ref{thm:AR-quiver} in more detail following \cite{Ri84,Ri90-can}.
Recall first that as a set of points, $\mathbb{P}^{1}(k)$ decomposes
into a disjoint union $\mathbb{P}^{1}(k)=\bigsqcup_{d\in\mathbb{N}}\mathbb{P}^{1}(k)_{d}$
of the subsets \[
\mathbb{P}^{1}(k)_{d}:=\{\langle p\rangle\in\mathbb{P}^{1}(k)|p\in k[x_{0},x_{1}]\textrm{ is homogeneous, irreducible, and }\deg p=d\},\]
where $d\in\mathbb{N}$. In \cite{Asa02-2}, to parameterize indecomposable
modules with dimension vector the minimal positive imaginary root
of a simple Lie algebra considered there, we used the set $\mathbb{P}_{k}^{1}:=(k\times k\setminus\{(0,0)\})/\sim$,
where for each $(a,b),(a',b')\in k\times k\setminus\{(0,0)\}$ we
define $(a,b)\sim(a',b')$ if and only if $(a,b)=t(a',b')$ for some
$t\in k^{\times}$, which is an equivalence relation on $k\times k\setminus\{(0,0)\}$.
We here identify $\mathbb{P}_{k}^{1}$ with the subset $\mathbb{P}^{1}(k)_{1}$
of $\mathbb{P}^{1}(k)$ by the bijection $(a:b)\mapsto(ax_{0}+bx_{1})$,
where $(a:b)$ denotes the equivalence class in $\mathbb{P}_{k}^{1}$
containing $(a,b)\in k\times k\setminus\{(0,0)\}$. We also identify
$\mathbb{P}_{k}^{1}$ with the set $k\cup\{\infty\}$ by the bijection
$(a:1)\mapsto a$ for $a\in k$ and $(1:0)\mapsto\infty$. For each
$c\in\mathbb{P}^{1}(k)$, $\mathcal{T}_{c}$ has the following shape:
If $c\not\in E_{\Delta}\cup\{\infty\}$ (see (\ref{eq:zyogai})),
then $\mathcal{T}_{c}$ is a \emph{homogeneous tube}, i.e., is isomorphic
to the translation quiver $\mathbb{Z}A_{\infty}/\langle\tau\rangle$
(see \cite[Chap. 3]{Ri84}, it is denoted by $\mathbb{Z}A_{\infty}/1$
there). The module $W_{c}$ defined in Sect.$\:$\ref{sub:inds-of-dimvec-delta}
is the unique module on the \emph{mouth} of $\mathcal{T}_{c}$ (\cite[3.1]{Ri84}).
Each module in $\mathcal{T}_{c}$ is uniquely determined by $W_{c}$
and by its \emph{quasi-length} (= the number of modules in the shortest
path from the mouth to it) $m$, and thus we denote it by $W_{c}[m]$.
(Since $W_{c}$ is of quasi-length 1, we can write $W_{c}=W_{c}[1]$.)
The set of modules in $\mathcal{T}_{c}$ is equal to $\{ W_{c}[m]|m\in\mathbb{N}\}$.
Then we have \[
\udim W_{c}[m]=md\delta,\]
if $c\in\mathbb{P}^{1}(k)_{d}$ with $d\in\mathbb{N}$. Next assume
that $c\in E_{\Delta}\cup\{\infty\}$, which depends on the value
of $r\in\{2,3\}$ in Figure \ref{quiver-dom-can}. Set $c(1)=\infty$,
$c(2)=0$ (and $c(3):=-1$ when $r=3$). Then for $i=1,\ldots,r$,
$\mathcal{T}_{c(i)}$ is a \emph{stable tube} of \emph{rank} $p(i)$,
namely, it is isomorphic to the translation quiver $\mathbb{Z}A_{\infty}/\langle\tau^{p(i)}\rangle$
(= $\mathbb{Z}A_{\infty}/p(i)$ in \cite[Chap. 3]{Ri84}). The simple
modules $S_{x_{i2}},\ldots,S_{x_{ip(i)}}$, and the module $W'_{c(i)}$
are the modules on the mouth of $\mathcal{T}_{c(i)}$, where $W'_{c(i)}:=M(\delta-\sum_{j=2}^{p(i)}e_{x_{ij}})$
(see Definition \ref{dfn:separating} for the notation), which is
possible because a direct calculation shows that $\delta-\sum_{j=2}^{p(i)}e_{x_{ij}}$
is a positive root of $\chi_{A}$. Each module in $\mathcal{T}_{c(i)}$
is uniquely determined by its quasi-length $m$ and the starting point
$W\in\{ S_{x_{i2}},\ldots,S_{x_{ip(i)}},W'_{c(i)}\}$ of the shortest
path from the mouth to it. Therefore we denote it by $W[m]$. The
set of modules in $\mathcal{T}_{c(i)}$ is equal to $\{ W[m]|W\in\{ S_{x_{i2}},\ldots,S_{x_{ip(i)}},W'_{c(i)}\},m\in\mathbb{N}\}$.
The modules $X_{ij}$, $j\in\{1,\ldots,p(i)\}$ defined in Sect.$\:$
\ref{sub:inds-of-dimvec-delta} are in $\mathcal{T}_{c(i)}$ and of
quasi-length $p(i)$, thus $X_{ij}=W[p(i)]$ for some $W\in\{ S_{x_{i2}},\ldots,S_{x_{ip(i)}},W'_{c(i)}\}$.
By the additivity of dimension vectors on exact sequences, we easily
see that \[
\udim W[dp(i)]=d\delta\]
 for all $d\in\mathbb{N}$, and $\udim W[m]\in\mathbb{Z}\delta$ if
and only if $m\in\mathbb{Z}p(i)$. In particular, we have $\{ c\in\mathbb{P}^{1}(k)|\mathcal{T}_{c}\textrm{ contains a module of dimension vector }\delta\}=\mathbb{P}^{1}(k)_{1}=\mathbb{P}_{k}^{1}$.
We call $\mathcal{T}_{c(i)}$, $i\in\{1,\ldots,r\}$ \emph{non-homogeneous}
tubes.

\subsection{$\mathbf{\tau}$-orbits in the preprojective component}

Set $\Phi:=-C^{-T}C$ to be the \emph{Coxeter} \emph{matrix} of $A$.
For later use we give an explicit form of $\Phi^{-1}$ when $A$ is
not of type $A_{n}$:

\[ \Phi^{-1}=\left(\begin{array}{c|cccc|cccc|c|c} 
0 & 0 & -1 & \cdots & -1 & 0 & -1 & \cdots & -1 & 0 & -1\\ 
\hline
0 & 0 & 1 & 0 & \cdots & 0 & 0 & \cdots & 0 & 0 & 0\\ 
\vdots & \vdots & \ddots & \ddots & \ddots & \vdots & \vdots & \ddots & \vdots & \vdots & \vdots\\ 
0 & 0 & \cdots & 0 & 1 & 0 & 0 & \cdots & 0 & 0 & 0\\ 
1 & 0 & \cdots & 0 & 0 & 1 & 1 & \cdots & 1 & 1 & 1\\ 
\hline

0 & 0 & \cdots & 0 & 0 & 0 & 1 & 0 & \cdots & 0 & 0\\ 
\vdots & \vdots & \ddots & \vdots & \vdots & \vdots & \ddots & \ddots & \ddots & \vdots & \vdots\\ 
0 & 0 & \cdots & 0 & 0 & 0 & \cdots & 0 & 1 & 0 & 0\\ 
1 & 1 & \cdots & 1 & 1 & 0 & \cdots & 0 & 0 & 1 & 1\\ 
\hline

1 & 1 & \cdots & 1 & 1 & 1 & \cdots & 1 & 1 & 0 & 1\\ 
\hline
-2 & -1 & \cdots & -1 & -1 & -1 & \cdots & -1 & -1 & -1 & -1\end{array}\right),\]

\noindent where the rows and columns are ordered by the sequence
$(1,x_{12}$, $\ldots$, $x_{1p(1)}$,$x_{22}$, $\ldots$, $x_{2p(2)}$,
$x_{32}$, $\infty$), and in the first row the two zeros between
entries with value $-1$ correspond to $x_{22}$ and $x_{32}$ , whereas
in the first column the 1 between zeros corresponds to $x_{1p(1)}$.
In many cases $\Phi^{-1}$ can be seen as a {}``shadow'' of $\tau^{-1}$
in $K_{0}(A)$ as the following statement shows (see \cite[2.4 (4*)]{Ri84}
for the proof).

\begin{lem}
\label{lem:Coxeter}Let \emph{}$M$ \emph{}be an \emph{$A$}-module.
If \emph{}$\injdim M\leq1$ \emph{}and \emph{}$\Hom_{A}(D(A_{A}),M)=0$,
then
\end{lem}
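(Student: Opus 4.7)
The plan is to reduce the claim—which I expect to take the form $\udim \tau^{-1}M = (\udim M)\Phi^{-1}$—to a direct dimension-vector calculation based on a minimal injective copresentation. The Coxeter matrix $\Phi=-C^{-T}C$ is designed precisely so that $\tau^{-1}$ acts on $K_{0}(A)$ as right multiplication by $\Phi^{-1}$, provided enough cohomology vanishes.

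First I would start from a minimal injective copresentation of $M$; since $\injdim_{A} M \le 1$, this is a short exact sequence
$$0 \to M \to I^{0} \to I^{1} \to 0.$$
Writing $I^{j} = \bigoplus_{y \in Q_{0}} I_{y}^{m_{y}^{j}}$ with row vectors $m^{j} := (m_{y}^{j})_{y\in Q_{0}}$ and using $\udim I_{y} = e_{y}C$, I would read off $\udim M = (m^{0}-m^{1})C$, so $m^{0}-m^{1} = (\udim M)C^{-1}$. Next I would apply the Nakayama functor $\nu^{-1} = \Hom_{A}(DA,-)$ and use $\Ext^{1}_{A}(DA, I^{0}) = 0$ together with the standard identification $\tau^{-1}M \cong \Cok(\nu^{-1}I^{0} \to \nu^{-1}I^{1})$ (the dual of the usual construction of $\tau$ via a minimal projective presentation), yielding a four-term exact sequence
$$0 \to \Hom_{A}(DA, M) \to \nu^{-1}I^{0} \to \nu^{-1}I^{1} \to \tau^{-1}M \to 0.$$
The hypothesis $\Hom_{A}(DA, M) = 0$ then collapses this to a short exact sequence, and using $\nu^{-1}I_{y} = P_{y}$ with $\udim P_{y} = e_{y}C^{T}$ will give
$$\udim \tau^{-1}M = (m^{1}-m^{0})C^{T} = -(\udim M)C^{-1}C^{T} = (\udim M)\Phi^{-1},$$
the last equality because $\Phi^{-1} = (-C^{-T}C)^{-1} = -C^{-1}C^{T}$.

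The main obstacle will be justifying the identification $\tau^{-1}M \cong \Cok(\nu^{-1}I^{0} \to \nu^{-1}I^{1})$: this is the dual of the familiar formula for $\tau$ via a minimal projective presentation and follows from $\tau^{-1} = \operatorname{Tr}\circ D$ together with the fact that applying $D$ to a minimal injective copresentation of $M$ produces a minimal projective presentation of the right $A$-module $DM$. This is precisely the content of \cite[2.4(4*)]{Ri84}. Once that is in place, everything else is the elementary linear-algebra manipulation between $C$, $C^{T}$, and $\Phi$ displayed above.
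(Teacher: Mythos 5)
Your proposal is correct: the paper gives no argument of its own for this lemma but simply cites \cite[2.4 (4*)]{Ri84}, and your proof (minimal injective copresentation, the four-term sequence obtained from $\nu^{-1}=\Hom_A(D(A_A),-)$ with $\tau^{-1}M$ as cokernel via $\tau^{-1}=\operatorname{Tr}\circ D$, then the computation with $\udim I_y=e_yC$, $\udim P_y=e_yC^{T}$ and $\Phi^{-1}=-C^{-1}C^{T}$) is exactly the standard argument behind that citation, with both hypotheses ($\injdim M\le 1$ and $\Hom_A(D(A_A),M)=0$) used where they should be. No gaps.
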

\[
\udim\tau^{-1}M=(\udim M)\Phi^{-1}.\]
\hfil\qed

In order to check that the injective dimension is at most 1 we cite
the following lemma from \cite{Ri84}.

\begin{lem}
\label{lem:injdim-atmost1}Let $M$ be an $A$-module. Then $\injdim M\le1$
if and only if $\Hom_{A}(\tau^{-1}M,A)=0$. In particular $\injdim M\le1$
holds if $M$is an indecomposable module such that $\tau^{-1}M$ is
not a predecessor of any projective indecomposable $A$-module.\hfil\qed
\end{lem}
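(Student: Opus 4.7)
My plan is to reduce the first equivalence to a direct computation via the Auslander--Reiten transpose. Recall that $\tau^{-1}M=\operatorname{Tr}(DM)$, so starting from a minimal projective presentation $Q_{1}\xrightarrow{f}Q_{0}\to DM\to 0$ in $\mod A^{\mathrm{op}}$ and applying $(-)^{*}:=\Hom_{A^{\mathrm{op}}}(-,A)$ yields the defining exact sequence of left $A$-modules
\[
Q_{0}^{*}\xrightarrow{f^{*}}Q_{1}^{*}\to\tau^{-1}M\to 0.
\]
Applying $\Hom_{A}(-,A)$ once more, and using the canonical identifications $Q_{i}^{**}\cong Q_{i}$ (valid for finitely generated projectives) together with $(f^{*})^{*}=f$, I obtain a left-exact sequence of right $A$-modules
\[
0\to\Hom_{A}(\tau^{-1}M,A)\to Q_{1}\xrightarrow{f}Q_{0},
\]
so that $\Hom_{A}(\tau^{-1}M,A)\cong\ker f$, the second syzygy of $DM$ in $\mod A^{\mathrm{op}}$.

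Consequently $\Hom_{A}(\tau^{-1}M,A)=0$ if and only if $f$ is injective, if and only if $\projdim_{A^{\mathrm{op}}}(DM)\le 1$. Since the $k$-duality $D\colon\mod A\to\mod A^{\mathrm{op}}$ is exact and exchanges injective with projective objects, it converts an injective coresolution of $M$ into a projective resolution of $DM$ and vice versa; hence $\projdim_{A^{\mathrm{op}}}(DM)=\injdim_{A}M$, establishing the first assertion.

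For the ``in particular'' part, decompose $A=\bigoplus_{i\in Q_{0}}P_{i}$ into indecomposable projectives, so it suffices to check $\Hom_{A}(\tau^{-1}M,P_{i})=0$ for every $i\in Q_{0}$. If $M$ is injective then $\tau^{-1}M=0$ and nothing remains; otherwise $\tau^{-1}M$ is indecomposable and, by Theorem \ref{thm:AR-quiver}, lies in exactly one of the three components $\mathcal{P}$, $\mathcal{R}$, $\mathcal{I}$ of $\Gamma_{A}$. If $\tau^{-1}M\in\mathcal{R}\cup\mathcal{I}$, then since every $P_{i}$ lies in $\mathcal{P}$, the separation property (Definition \ref{dfn:separating} (b)) immediately yields the vanishing. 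If $\tau^{-1}M\in\mathcal{P}$, the standardness of the preprojective component (a general property of preprojective components of tame concealed algebras) implies that every nonzero morphism between indecomposables in $\mathcal{P}$ is a sum of compositions of irreducible maps; thus the hypothesis that no path in $\Gamma_{A}$ joins $\tau^{-1}M$ to $P_{i}$ forces $\Hom_{A}(\tau^{-1}M,P_{i})=0$. The main obstacle is the homological identification in the first paragraph, which is the bookkeeping of double-dual identifications of minimal projective presentations; once it is in place, the rest of the argument is mechanical and relies only on the standard $k$-duality and the AR-quiver structure of a domestic canonical algebra recorded above.
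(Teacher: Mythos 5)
Your proof is correct; note that the paper itself gives no argument for this lemma at all, but simply cites it from Ringel's book, so you have supplied the proof the paper omits. Your first paragraph is exactly the standard transpose computation: from a minimal projective presentation $Q_{1}\ya{f}Q_{0}\to DM\to0$ one gets $\Hom_{A}(\tau^{-1}M,A)\cong\Ker f$, and the only point worth spelling out is that the implication ``$\projdim_{A^{\mathrm{op}}}DM\le1\Rightarrow f$ injective'' uses the minimality of the presentation (so that $Q_{1}$ is a projective cover of $\Im f$, which is then projective, forcing the cover to be an isomorphism); since you chose the presentation minimal, this is fine. For the ``in particular'' part you prove the stronger reading of the hypothesis, namely that the absence of a path in $\Gamma_{A}$ from $\tau^{-1}M$ to a projective already forces $\Hom_{A}(\tau^{-1}M,A)=0$: the cases $\tau^{-1}M\in\mathcal{R}\cup\mathcal{I}$ follow from the separation property and the fact (Theorem \ref{thm:AR-quiver}(1)) that all indecomposable projectives lie in $\mathcal{P}$, while the case $\tau^{-1}M\in\mathcal{P}$ rests on the fact that for two modules in a preprojective component $\rad^{\infty}$ vanishes, so every nonzero morphism is a sum of compositions of irreducible maps and hence yields a path in the component; this is a genuine (standard, but not free) ingredient and deserves the explicit reference to Ringel rather than the vaguer appeal to ``standardness''. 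If instead ``predecessor'' is read in the Hom-chain sense, the second assertion is immediate from the first, so your argument covers both readings. In short: correct, self-contained, and somewhat stronger than what the paper needs, at the cost of invoking the structure theory of preprojective components where the cited source disposes of the matter directly.
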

Note that if $M$ is an indecomposable module that \emph{}is a successor
of a complete slice of the preprojective component, then $\tau^{-1}M$
cannot be a predecessor of any projective indecomposable $A$-module,
and hence $\injdim M\leq1$.

Direct calculation shows the following.

\begin{lem}
\emph{\label{lem:preservedby-Phi}$\delta\Phi^{-1}=\delta$ and $\Phi^{-1}\rho^{T}=\rho^{T}$.\hfil\qed}
\end{lem}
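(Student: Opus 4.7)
My plan is to prove both identities by direct verification from the explicit matrix $\Phi^{-1}$ displayed just above the lemma, treating the non-$A_n$ case explicitly (the $A_n$ case is the obvious analogue obtained by removing the third arm). Since both statements are linear, each reduces to a short computation with the entries of $\Phi^{-1}$.

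For $\delta\Phi^{-1} = \delta$: since $\delta = (1,1,\ldots,1)$, the $j$-th entry of $\delta\Phi^{-1}$ is simply the column sum $\sum_i(\Phi^{-1})_{ij}$. I verify that every column of $\Phi^{-1}$ has column sum $1$. The check splits into several cases according to where the index $j$ sits: the vertex $1$, an inner vertex $x_{ij}$ with $2\le j<p(i)$, a rim vertex $x_{i,p(i)}$, or $\infty$. In each case the only nonzero contributions come from a handful of rows (the row $\infty$, the rows $x_{i,p(i)}$, one sub- or super-diagonal entry, and possibly the top row), and the arithmetic collapses to $1$.

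For $\Phi^{-1}\rho^T = \rho^T$: since $\rho^T = e_1^T - e_\infty^T$, the product is the first column of $\Phi^{-1}$ minus its last column. Subtracting the explicit entries I obtain $0-(-1)=1$ in row $1$, $-2-(-1)=-1$ in row $\infty$, $1-1=0$ in each row $x_{i,p(i)}$, and $0-0=0$ in every remaining row, which matches $\rho^T$ exactly.

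If a conceptual argument is preferred, both identities follow from properties of the Euler form. The symmetrization of $B_A$ has Gram matrix $C^{-T}+C^{-1}$, and the condition $v\Phi=v$ is equivalent to $v(C^{-T}+C^{-1})=0$; by equation (\ref{eq:B-delta-ex}) this holds for $v=\delta$, giving $\delta\Phi^{-1}=\delta$. The standard identity $B_A(v,w) = -B_A(w,v\Phi)$, combined with (\ref{eq:B-delta-ex}) applied at $w=\delta$, then gives $\rank(v\Phi)=\rank(v)$ for every $v\in K_0(A)$, which is exactly $\Phi\rho^T=\rho^T$. No step is substantively difficult; the only real work is the careful bookkeeping in the direct verification.
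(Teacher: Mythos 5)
Your proposal is correct and follows essentially the same route as the paper, which simply asserts the lemma by direct calculation with the displayed matrix $\Phi^{-1}$; your column-sum check for $\delta\Phi^{-1}=\delta$ and the first-minus-last-column check for $\Phi^{-1}\rho^{T}=\rho^{T}$ are exactly that computation, carried out correctly. The conceptual argument you add (using $B_A(v,w)=-B_A(w,v\Phi)$ and the formulas (\ref{eq:B-delta-ex})) is also valid and conveniently covers the $A_n$ case, where the displayed matrix does not literally apply.
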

On the set of dimension vectors of indecomposable preprojective $A$-modules
there are two natural partitions: the $\tau^{-1}$-orbit decomposition
and the coset decomposition modulo $\delta$. The following gives
a relationship between them, which was obtained in answering a question
by A.$\,$Hubery.

\begin{prop}
\label{pro:tau-delta}If $M$ is an indecomposable preprojective $A$-module
such that $\tau^{-1}M$ is not a predecessor of any projectives in
$\Gamma_{A}$, then there exist $t,m\in\mathbb{N}$ such that\[
\udim\tau^{-t}M=\udim M+m\delta.\]

\end{prop}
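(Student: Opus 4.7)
The strategy is to iterate the Coxeter formula of Lemma~\ref{lem:Coxeter} and to exploit the classical fact that $\Phi$ has finite order modulo $\mathbb{Z}\delta$.

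First I would establish that $\udim\tau^{-i}M=(\udim M)\Phi^{-i}$ for every $i\ge 0$. To apply Lemma~\ref{lem:Coxeter} to each $N:=\tau^{-j}M$ with $j\ge 0$, one must check $\injdim N\le 1$ and $\Hom_{A}(D(A),N)=0$. For the first, Lemma~\ref{lem:injdim-atmost1} reduces the claim to showing that $\tau^{-1}N=\tau^{-(j+1)}M$ is not a predecessor of any projective; this is inherited from the hypothesis on $\tau^{-1}M$ because a successor in $\Gamma_{A}$ of a non-predecessor of projectives is itself a non-predecessor (by composing paths). For the second, $\tau^{-j}M\in\mathcal{P}$ while $D(A)\in\mathcal{I}$, and $\Hom_{A}(\mathcal{I},\mathcal{P})=0$ by the separating property (Definition~\ref{dfn:separating}(2)(b)).

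Next I would produce a $t\ge 1$ with $(\udim M)\Phi^{-t}-\udim M\in\mathbb{Z}\delta$. Via the derived equivalence $\hat{F}$ of Sect.~\ref{sub:tame-concealed}, the Coxeter matrix of $A$ is conjugate to that of the tame hereditary algebra $H$ of type $\Delta^{(1)}$. For $H$ it is classical that the Coxeter transformation admits a $2\times 2$ Jordan block at the eigenvalue $1$ with generalized $1$-eigenspace spanned by $\delta$ and one other vector, while the remaining eigenvalues are roots of unity whose orders divide the Coxeter number $h$ of $\Delta$. Consequently $(\Phi^{h}-\id)(K_{0}(A))\subseteq\mathbb{Z}\delta$, whence also $(\Phi^{-h}-\id)(K_{0}(A))\subseteq\mathbb{Z}\delta$. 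Taking $t=h$ gives $\udim\tau^{-t}M=\udim M+m\delta$ for some $m\in\mathbb{Z}$.

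It remains to verify $m\in\mathbb{N}$, i.e., $m\ge 1$. Iterating the two preceding steps gives $\udim\tau^{-kt}M=\udim M+km\delta$ for every $k\ge 1$. Each of these vectors has non-negative entries and $\delta=(1,\ldots,1)$, so letting $k$ grow forces $m\ge 0$. If $m=0$, then $\udim\tau^{-t}M=\udim M$; as $\rank M>0$ this vector is not a radical of $\chi_{A}$, hence is a positive root by Theorem~\ref{thm:AR-quiver}(3)(a), so the uniqueness part~(3)(b) of the same theorem yields $\tau^{-t}M\cong M$, contradicting the fact that the $\tau$-orbit of a preprojective indecomposable is infinite. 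I expect the most delicate point to be step two: the finite-order statement for $\Phi$ modulo $\mathbb{Z}\delta$ must be drawn from the classical Jordan structure of the Coxeter matrix of the tame hereditary companion $H$, rather than from any direct computation with the explicit $\Phi^{-1}$ displayed above.
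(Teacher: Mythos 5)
Your steps 1 and 3 essentially coincide with the paper's own argument: the paper also iterates Lemma~\ref{lem:Coxeter} (using exactly the inheritance of the ``$\tau^{-1}$ is not a predecessor of a projective'' condition and $\Hom_{A}(\mathcal{I},\mathcal{P})=0$), and it rules out $m=0$ and $m<0$ the same way you do. The real divergence is step 2. The paper never touches the spectral theory of $\Phi$: it observes that the rank is preserved by $\Phi^{-1}$, that the classes of the vectors $\udim X$ ($X$ preprojective of fixed rank, with the non-predecessor condition) modulo $\mathbb{Z}\delta$ form a \emph{finite} set because $\udim X-\dim X(\infty)\delta$ is the dimension vector of an indecomposable over the representation-finite algebra $kQ^{l}$, and that $\Phi^{-1}$ acts injectively, hence bijectively, on this finite set; a pigeonhole argument then produces $t$. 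This is self-contained and avoids any appeal to the Jordan form of the affine Coxeter transformation.

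Your spectral route can be made to work, but as written it has two concrete defects. First, the assertion that the non-unit eigenvalues have orders dividing the Coxeter number $h$ of $\Delta$, and hence that $t=h$ works, is false for most types. Any $t$ with $(\Phi^{-t}-\id)(K_{0}(A))\subseteq\mathbb{Z}\delta$ must be a common multiple of the tube ranks $p(1),\dots,p(r)$: $\Phi^{-1}$ permutes the (pairwise distinct, non-sincere) dimension vectors of the quasi-simples of each non-homogeneous tube cyclically with period $p(i)$, and a shift by a nonzero multiple of $\delta$ cannot turn one quasi-simple dimension vector into another. For $D_{5}$ the ranks are $(3,2,2)$ with $\operatorname{lcm}=6$ while $h=8$; for $D_{6}$ one gets $4\nmid 10$; for $E_{7}$, $12\nmid 18$; for type $A_{4}$ with $(p(1),p(2))=(3,2)$, $6\nmid 5$. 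Second, even granting that all eigenvalues $\neq 1$ have order dividing $t$, to conclude that $\Phi^{-t}-\id$ has image in $\mathbb{Q}\delta$ you also need these eigenvalues to be \emph{semisimple} (trivial Jordan blocks), which you do not state; unipotency of $\Phi^{-t}$ alone does not give a rank-one difference from the identity. Both gaps are repaired by invoking the Dlab--Ringel defect formula ($\Phi^{p}v=v+\partial(v)\delta$ for all $v$, with $p$ the least common multiple of the tube ranks), or by replacing $h$ with the order of the semisimple part of $\Phi$ and adding the semisimplicity statement; since the proposition only asserts the existence of some $t$, your plan then goes through. The trade-off is that your version leans on classical structure results for affine Coxeter transformations that the paper deliberately does not use, whereas the paper's pigeonhole argument needs only Corollary~\ref{sub:delta-strings}, Lemmas~\ref{lem:Coxeter} and~\ref{lem:preservedby-Phi}, and representation-finiteness of $kQ^{l}$.
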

\begin{proof}
Let $\mathcal{P}$ be the set of vertices of the preprojective component
of $\Gamma_{A}$. For each $r\in\{1,2,\ldots,6\}$ set

\begin{eqnarray*}
\mathcal{P}_{r} & := & \{ X\in\mathcal{P}\mid\tau^{-1}X\textrm{ is not a predecessor of any projectives},\rank X=r\}\\
\udim\mathcal{P}_{r} & := & \{\udim X\mid X\in\mathcal{P}_{r}\}.\end{eqnarray*}
 Define an equivalence relation $\sim$ on $\udim\mathcal{P}_{r}$
by $v\sim w$ if and only if $v-w\in\mathbb{Z}\delta$ for all $v,w\in\udim\mathcal{P}_{r}$.
Since $\udim X-\dim X(\infty)\delta$ is a root of $\chi_{A}$ for
each $X\in\mathcal{P}_{r}$, there exists an indecomposable $A^{l}$-module
$Y$ such that $\udim X-\dim X(\infty)\delta=\udim Y$. This shows
that the quotient set $(\udim\mathcal{P}_{r})/\sim$ is finite because
$A^{l}$ is representation-finite. We show that $\Phi^{-1}$ acts
on the finite set $(\udim\mathcal{P}_{r})/\sim$. Let $X\in\mathcal{P}_{r}$.
Then clearly $\tau^{-2}X$ is not a predecessor of any projectives,
either, and $\rank\tau^{-1}X=(\udim X)\Phi^{-1}\rho^{T}=\rank X=r$
by Lemmas \ref{lem:Coxeter} and \ref{lem:preservedby-Phi}. Hence
$\tau^{-1}$ induces an injective map $\mathcal{P}_{r}\to\mathcal{P}_{r}$.
Thus by Lemma \ref{lem:Coxeter} the right multiplication by $\Phi^{-1}$
induces an injective map $\udim\mathcal{P}_{r}\to\udim\mathcal{P}_{r}$,
and by Lemma \ref{lem:preservedby-Phi} it also induces an injective
map $(\udim\mathcal{P}_{r})/\sim\to(\udim\mathcal{P}_{r})/\sim$ ,
which is a bijection because the set $(\udim\mathcal{P}_{r})/\sim$
is finite. Now let $M$ be as in the assertion and put $r:=\rank M$.
Then $\udim M\in\udim\mathcal{P}_{r}$ and $(\udim M)\Phi^{-t}\sim\udim M$
for some $t\in\mathbb{N}$, which means that $\udim\tau^{-t}M=\udim M+m\delta$
for some $m\in\mathbb{Z}$. If $m=0$, then $\udim\tau^{-t}M=\udim M$,
and we have $\tau^{-t}M\cong M$, a contradiction. Thus $m\ne0$.
If $m<0$, then there exists some $s\in\mathbb{N}$ such that $\udim\tau^{-st}M=\udim M+sm\delta<0$,
a contradiction. Hence $m\in\mathbb{N}$.
\end{proof}
\begin{rem}
If the assumption that $\tau^{-1}M$ is not a predecessor of any projectives
in $\Gamma_{A}$ is dropped or is replaced with the weaker condition
that $\injdim M\le1$, then there is a counter-example, e.g., in the
case where $A$ is of type $E_{6}$ and $M=\tau^{-1}(A\mathbf{e}_{x_{32}})$.
There is also an example for which the smallest value of $m$ is not
equal to 1 (see Sect.$\:$\ref{sub:exm}).
\end{rem}
Recall that an $A$-module $M$ is called \emph{sincere} if $\mathbf{e}_{x}M\ne0$
for all $x\in Q_{0}$. We say that $M$ is \emph{non-sincere} if it
is not sincere. The following well-known fact follows also from the
proposition above (see \cite{HV}, \cite{Bo84} for general results).

\begin{cor}
\label{cor:min-rep-inf}$A$ is a minimal representation-infinite
algebra, i.e., $A/A\mathbf{e}_{x}A$ is repre\-senta\-tion-finite
for all $x\in Q_{0}$.
\end{cor}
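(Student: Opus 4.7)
To show that $A/A\mathbf{e}_x A$ is representation-finite for each $x\in Q_0$, it suffices to verify that only finitely many isoclasses of indecomposable $A$-modules $M$ satisfy $M(x)=0$, since these are exactly the indecomposable $A/A\mathbf{e}_x A$-modules. By Theorem~\ref{thm:AR-quiver}, every indecomposable $A$-module lies either in the preprojective component $\mathcal{P}$, in the preinjective component $\mathcal{I}$, or in one of the tubes of the tubular family $\mathcal{R}$. The plan is to bound the number of non-sincere indecomposables in each of these three parts separately.

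The regular part is immediate from Sect.~\ref{sub:tubular-family}: every module in a homogeneous tube has dimension vector a positive multiple of the sincere vector $\delta$, and in each of the finitely many non-homogeneous tubes $\mathcal{T}_{c(i)}$ the dimension vector of a module $W[m]$ on one of the finitely many rays grows by $\delta$ when the quasi-length $m$ is increased by $p(i)$, so on each ray only finitely many $W[m]$ can fail to be sincere.

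For the preprojective component I would decompose $\mathcal{P}$ into its $n+1$ $\tau^{-1}$-orbits $\{M_i:=\tau^{-i}P\}_{i\geq 0}$, one for each indecomposable projective $P$. Since projectives have only finitely many predecessors in $\mathcal{P}$, the hypothesis of Proposition~\ref{pro:tau-delta} is satisfied by $M_i$ for all but finitely many $i$. Applying that proposition together with the bijectivity of $\Phi^{-1}$ on the finite set $(\udim\mathcal{P}_r)/\sim$ established in its proof, and using the identity $\delta\Phi^{-1}=\delta$ from Lemma~\ref{lem:preservedby-Phi}, one obtains a single period $T\geq 1$ and shift $m\geq 1$ with $\udim M_{i+T}=\udim M_i+m\delta$ valid for every sufficiently large $i$ in the orbit. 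Iterating gives $\udim M_{i+kT}=\udim M_i+km\delta$, which is strictly positive at every coordinate once $k$ is large, so only finitely many $M_i$ in each orbit can be non-sincere. Summing over the $n+1$ orbits bounds the non-sincere preprojectives, and the preinjective case follows from the dual argument via $A^{\op}$.

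The main obstacle is the uniformity in $i$ of the period $T$ and of the positive multiplier $m$ along a $\tau^{-1}$-orbit. The key observation is that applying $\Phi^{-1}$ to both sides of $\udim M_{i+T}-\udim M_i=m\delta$ and using $\delta\Phi^{-1}=\delta$ yields the analogous identity with $i$ replaced by $i+1$ and the same $m$; this propagates the relation unchanged along the orbit, while positivity of $m$ is forced exactly as in the last paragraph of the proof of Proposition~\ref{pro:tau-delta}.
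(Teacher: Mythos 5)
Your proof is correct and follows essentially the same route as the paper: reduce to showing there are only finitely many non-sincere indecomposables, handle the preprojective component via Proposition~\ref{pro:tau-delta} (with the preinjective case dual) and the regular part via the tube structure of Sect.~\ref{sub:tubular-family}. You merely make explicit, by propagating the relation $\udim\tau^{-t}M=\udim M+m\delta$ along a $\tau^{-1}$-orbit with $\Phi^{-1}$ and $\delta\Phi^{-1}=\delta$, the step the paper declares ``immediate.''
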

\begin{proof}
It is enough to show that the Auslander-Reiten quiver of $A$ contains
only a finite number of non-sincere indecomposable $A$-modules. By
the previous proposition it is immediate that the preprojective component
contains only a finite number of non-sincere indecomposable $A$-modules.
Dually the preinjective component has the same property. In the tubular
family any non-sincere indecomposable $A$-module $X$ lies in a non-homogeneous
tube $\mathcal{T}$ and has quasi-length less than the rank of $\mathcal{T}$.
Thus also there are only a finite number of non-sincere indecomposables
in the tubular family.
\end{proof}
\begin{prop}
\label{lem:non-sincere}If an indecomposable $A$-module $M$ is not
sincere, then $\mathbf{u}_{[M]}\in L(A)_{1}$.
\end{prop}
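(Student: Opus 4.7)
The plan is to induct on $\dim M$. The base case $\dim M=1$ is immediate, since then $M$ is simple and $\mathbf{u}_{[M]}$ is by definition a generator of $L(A)_1$. Throughout the induction the key global input is Corollary~\ref{cor:min-rep-inf}: the support algebra $B:=\supp M$ is always representation-finite. Moreover, because $\mod B$ sits inside $\mod A$ as a full extension-closed subcategory whose simples are simples of $A$, any realization of $\mathbf{u}_{[M]}$ inside $L(B)_1$ immediately transports to $L(A)_1$, and the same holds after passing to any field extension $K\in\Omega_A$, so absolute indecomposability and Hall polynomials are preserved.

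For the induction step I split into three cases according to the component of $\Gamma_A$ containing $M$. If $M$ is preprojective, Corollary~\ref{sub:min-dim} gives $\dim M(\infty)=0$; then $M$ is a module over $A/A\mathbf{e}_\infty A$, whose quiver is obtained from $Q$ by deleting $\infty$ and in which the canonical relation (if any) becomes zero. This quotient is a hereditary algebra whose underlying graph is the simply-laced Dynkin graph $\Delta$, so $B$ is a connected representation-finite hereditary algebra of Dynkin type and Proposition~\ref{lem:rep-fin-hered} directly yields $\mathbf{u}_{[M]}\in L(B)_1\subseteq L(A)_1$. Dually, if $M$ is preinjective, then $\dim M(1)=0$ and the same argument applies with $A/A\mathbf{e}_1 A$.

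The substantive case is $M$ regular. By Theorem~\ref{thm:AR-quiver}(2) and Sect.$\:$\ref{sub:tubular-family}, a regular $M$ that is non-sincere must lie in a non-homogeneous tube $\mathcal{T}_{c(i)}$ with quasi-length $m<p(i)$, since any regular of quasi-length $dp(i)$ has dimension vector $d\delta$ and is therefore sincere. Using the mesh-category description of the standard tube $\mathcal{T}_{c(i)}$, I pick a simple $B$-module $S$ lying at the mouth of the tube that is a submodule (or quotient) of $M$, and let $N$ be the corresponding quotient (or kernel); then $N$ is indecomposable of strictly smaller dimension in the same tube and still non-sincere, so by induction $\mathbf{u}_{[N]}\in L(A)_1$. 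Since $\udim M=\udim S+\udim N$ is a positive root of $\chi_A$, Theorem~\ref{thm:AR-quiver}(3)(b) makes $M$ the unique indecomposable with this dimension vector. Applying Lemma~\ref{eq:bracket-Hall-poly} yields
\[
[\mathbf{u}_{[S]},\mathbf{u}_{[N]}]=\bigl(\varphi_{SN}^{M}(1)-\varphi_{NS}^{M}(1)\bigr)\mathbf{u}_{[M]},
\]
and the standardness of the tube together with the Riedtmann formula forces exactly one of the two Hall polynomials to be nonzero at $1$, with value $1$. Hence the coefficient is $\pm 1$ and $\mathbf{u}_{[M]}\in L(A)_1$.

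The main obstacle is the last case: one must verify, for each of the non-sincere indecomposables $S_{x_{ij}}[m]$ and $W'_{c(i)}[m]$ with $m<p(i)$, the existence of a mouth simple $S$ and an indecomposable $N$ fitting into such a short exact sequence, together with the computation that the bracket coefficient equals $\pm 1$. This is a finite check in tubes of rank $p(i)\in\{2,3,4,5\}$ and reduces to the mesh relations of a standard stable tube, but care is required for the boundary modules $W'_{c(i)}$ which retain nonzero components at both $1$ and $\infty$ while still being non-sincere, so that neither of the simpler quotients $A/A\mathbf{e}_1A$ nor $A/A\mathbf{e}_\infty A$ can absorb them.
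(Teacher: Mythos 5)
Your overall skeleton is the same as the paper's: reduce to the support algebra $B=\supp M$, use Corollary \ref{cor:min-rep-inf} to get representation-finiteness, dispose of the preprojective/preinjective cases via Corollary \ref{sub:min-dim} and Proposition \ref{lem:rep-fin-hered} (since then $B$ is hereditary of Dynkin type), and treat the regular case by writing $\mathbf{u}_{[M]}$ as a bracket of a simple with a smaller indecomposable. However, in the regular case there is a genuine gap, and it is exactly the point you flag but do not close. You insist that the simple $S$ be a mouth module of the tube and that the complement $N$ lie \emph{in the same tube}. This fails for $M=W'_{c(i)}$, the unique non-simple quasi-simple on the mouth of $\mathcal{T}_{c(i)}$: it has quasi-length $1$, so there is no mouth simple occurring as a sub- or quotient module of $M$ and no indecomposable of smaller dimension in the tube at all, so your construction cannot even start. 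Moreover, your proposed remedy (``a finite check \dots reduces to the mesh relations of a standard stable tube'') cannot work in principle: any short exact sequence $0\to S\to M\to N\to 0$ or $0\to N\to M\to S\to 0$ with $S$ a vertex simple necessarily has $N$ of nonzero rank, hence $N$ is preprojective or preinjective and leaves the tube, so the verification is not an internal mesh computation.

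The paper closes this case without staying in the tube. The only property it uses is that a non-sincere regular indecomposable lies in a non-homogeneous tube with quasi-length less than its rank, so $\udim M$ has all entries in $\{0,1\}$. From this one produces a short exact sequence with $S$ a vertex simple and $N$ indecomposable (not necessarily regular), for which $(F_{S^K N^K}^{M^K},F_{N^K S^K}^{M^K})=(1,0)$ or $(0,1)$ for every $K\in\Omega$; hence $\mathbf{u}_{[M]}=\pm[\mathbf{u}_{[S]},\mathbf{u}_{[N]}]$. Crucially, $\udim N\le\udim M$ still has a zero entry, so $N$ is again non-sincere: if $N$ is preprojective or preinjective it is covered by the hereditary-support case already proved, and if $N$ is regular the induction on dimension applies. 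For $M=W'_{c(i)}$ this is precisely how the ``boundary'' module is absorbed (e.g. by splitting off $S_1$ or $S_\infty$, with $N$ preinjective or preprojective and non-sincere). Your argument as written handles $S_{x_{ij}}[m]$ and $W'_{c(i)}[m]$ for $m\ge 2$, but leaves $W'_{c(i)}$ itself unproved, and the route you sketch for it is a dead end; replacing the ``mouth simple, same tube'' requirement by the paper's dimension-vector argument repairs the proof.
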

\begin{proof}
Let $M$ be a non-sincere indecomposable $A$-module, and set $B:=\supp M$.
Then by Corollary \ref{cor:min-rep-inf}, $B$ is representation-finite.
We regard $\mod B$ as a full subcategory of $\mod A$ by the canonical
embedding. Then $M$ is regarded as a sincere indecomposable $B$-module,
and by the formula (\ref{eq:Hall-coefficient}) we see that $L(B)_{1}\subseteq L(A)_{1}$.
Therefore it is enough to show that $\mathbf{u}_{[M]}\in L(B)_{1}$.
If $B$ is hereditary, then this follows by Lemma \ref{lem:rep-fin-hered}.
If $M$ is a preprojective (resp.~preinjective) $A$-module, then
$M(\infty)=0$ (resp.~$M(1)=0$) by Corollary \ref{sub:min-dim}
because $M$ is not a sincere $A$-module, and then $B$ turns out
to be hereditary. Thus in this case the assertion holds. Hence we
may assume that $M$ is a regular $A$-module. Since $M$ is not a
sincere $A$-module, $M$ is in a non-homogeneous tube of a rank $p>1$
and with quasi-length less than $p$. Thus, in particular, $\udim M$
consists of 0 and 1. We show that $\mathbf{u}_{[M]}\in L(A)_{1}$
by induction on $\dim M$. If $\dim M=1$, then $M$ is simple and
the assertion is trivial by definition. Now assume $\dim M>1$. Then
the form of $\udim M$ shows that there exists an exact sequence of
the form\[
0\to N\to M\to S\to0\textrm{ \: or\: }0\to S\to M\to N\to0\]
with $S$ a simple $A$-module and $N$ an indecomposable $A$-module,
and that\[
(F_{S^{K}N^{K}}^{M^{K}},F_{N^{K}S^{K}}^{M^{K}})=(1,0)\;\textrm{or}\;(0,1),\]
 respectively, for all $K\in\Omega_{A}$. Thus we have $\mathbf{u}_{[M]}=\pm[\mathbf{u}_{[S]},\mathbf{u}_{[N]}]\in L(A)_{1}$
because by induction hypothesis both $\mathbf{u}_{[S]}$ and $\mathbf{u}_{[N]}$
are in $L(A)_{1}$.
\end{proof}

\section{\textbf{Realization of simple Lie algebras}}

In this section we state our main theorems realizing simple Lie algebras
and their root spaces, and give a precise form of Chevalley generators
of our realization.

\subsection{Main results}

\begin{defn}
When $\Delta\ne A_{1}$ (resp. $\Delta=A_{1}$) we set $I(A)$ to
be the ideal of $L(A)_{1}$ (resp. of $L(A)_{1}^{\mathbb{Z}[2^{-1}]}$)
generated by the set\[
\{\mathbf{u}_{m(e_{x}+\delta)}-\mathbf{u}_{m(e_{x})}\mid x\in Q_{0}\}.\]
 For each $u\in L(A)_{1}^{\mathbb{C}}$ we denote by $\overline{u}$
the coset of $u$ in $L(A)_{1}^{\mathbb{C}}/I(A)^{\mathbb{C}}$.
\end{defn}
\begin{rem}
By Lemmas \ref{lem:rg-prpl}, \ref{lem:end-prpl}, and \ref{lem:end-prpl-A1}
that will be proved in the next section we see that $\mathbf{u}_{m(e_{x}+\delta)}\in L(A)_{1}$
(resp. $\mathbf{u}_{m(e_{x}+\delta)}\in L(A)_{1}^{\mathbb{Z}[2^{-1}]}$)
for all $x\in Q_{0}$ if $\Delta\ne A_{1}$ (resp. $\Delta=A_{1}$). 
\end{rem}
\begin{notation}
\label{ntn:Chevalley-gen}(1) For each $x\in Q_{0}$, the vector $\delta-e_{x}$
is a root of $\chi_{A}$, which enables us to consider the indecomposable
$A$-module $T_{x}:=M(\delta-e_{x})$. By Lemma \ref{lem:non-sincere}
we have $\mathbf{u}_{[T_{x}]}\in L(A)_{1}$.

(2) For each $x\in\Delta_{0}$, we set\[
\varepsilon_{x}:=\mathbf{\bar{u}}_{[S_{x}]},\zeta_{x}:=\begin{cases}
-\mathbf{\bar{u}}_{T_{1}} & \textrm{if $x=1$},\\
\mathbf{\bar{u}}_{T_{x}} & \textrm{otherwise,}\end{cases}\textrm{and }\eta_{x}:=[\varepsilon_{x},\zeta_{x}].\]
Note that all of these are in $L(A)_{1}^{\mathbb{C}}/I(A)^{\mathbb{C}}$.

(3) Let $\{ E_{x},F_{x},H_{x}\}_{x\in\Delta_{0}}$ be Chevalley generators
of $\mathfrak{g}(\Delta)$, where $\{ H_{x}\}_{x\in\Delta_{0}}$ is
a basis of the Cartan subalgebra $\mathfrak{h}$ of $\mathfrak{g}(\Delta)$,
$\{ E_{x}\}_{x\in\Delta_{0}}$ (resp.\ $\{ F_{x}\}_{x\in\Delta_{0}}$)
are generators of the positive (resp. negative) part $\mathfrak{n}_{+}$
(resp.\ $\mathfrak{n}_{-}$) of $\mathfrak{g}(\Delta)$. 
\end{notation}
We are now in a position to state our main theorem.

\begin{thm}
\label{thm:realization}Let $\Delta$ be a simply-laced Dynkin diagram,
$A$ a canonical algebra of type $\Delta$ and $\mathfrak{g}(\Delta)$
the complex simple Lie algebra of type $\Delta$\emph{.} Further let
$\{ E_{x},F_{x},H_{x}\}_{x\in\Delta_{0}}\subseteq\mathfrak{g}(\Delta)$
and $\{\varepsilon_{x},\zeta_{x},\eta_{x}\}_{x\in\Delta_{0}}\subseteq L(A)_{1}^{\mathbb{C}}/I(A)^{\mathbb{C}}$
\emph{}be as in Notation \emph{\ref{ntn:Chevalley-gen}}. Then there
is an isomorphism\[
\phi\colon\mathfrak{g}(\Delta)\isoto L(A)_{1}^{\mathbb{C}}/I(A)^{\mathbb{C}}\]
such that $\phi(E_{x})=\varepsilon_{x}$, $\phi(F_{x})=\zeta_{x}$
and $\phi(H_{x})=\eta_{x}$ for all $x\in\Delta_{0}$.
\end{thm}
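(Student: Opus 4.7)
My plan is to invoke the Chevalley-Serre presentation of $\mathfrak{g}(\Delta)$: it suffices to show that the triples $\{\varepsilon_x, \zeta_x, \eta_x\}_{x\in\Delta_0}$ in $L(A)_{1}^{\mathbb{C}}/I(A)^{\mathbb{C}}$ satisfy the Chevalley-Serre relations in the roles of $\{E_x, F_x, H_x\}$, to note that the resulting homomorphism $\phi$ is injective by simplicity of $\mathfrak{g}(\Delta)$ (since $\varepsilon_x = \mathbf{\bar{u}}_{[S_x]}\ne 0$, the kernel is a proper ideal of a simple Lie algebra), and to verify that the image is everything. For the weight framework, I use that $L(A)_{1}^{\mathbb{C}}$ inherits a $K_0(A)$-grading from the Hall algebra, and that $I(A)$ is generated by homogeneous differences of elements whose degrees differ by $\delta$; hence the quotient is $K_0(A)/\mathbb{Z}\delta$-graded, and I identify this lattice with the root lattice $\mathbb{Z}\Delta_0$ of $\Delta$ via $\bar{e}_x\mapsto\alpha_x$ for $x\in\Delta_0$ (with $\bar{e}_\infty\mapsto-\sum_{x\in\Delta_0}\alpha_x$). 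Under this identification the weights of $\varepsilon_x,\zeta_x,\eta_x$ are $\alpha_x,-\alpha_x,0$ respectively.

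The verification of Chevalley-Serre relations then splits according to weight. Several are immediate from weight considerations: if $x, y \in \Delta_0$ are non-neighbors, the weight $\alpha_x - \alpha_y$ is not a root of $\Delta$, so $[\varepsilon_x,\zeta_y] = 0$ automatically; similarly $[\eta_x,\eta_y]$ has weight $0$, reducing $[\eta_x,\eta_y]=0$ to an identity inside the degree-$0$ part of the quotient. The eigenvalue relations $[\eta_x,\varepsilon_y]=a_{xy}\varepsilon_y$ and $[\eta_x,\zeta_y]=-a_{xy}\zeta_y$ follow from Hall-polynomial computations via Lemma \ref{eq:bracket-Hall-poly}, using the basic fact (Ringel \cite{Ri90-poly}) that nonzero Hall numbers for extensions of an indecomposable by a simple are $\pm 1$ in the simply-laced case. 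The non-trivial Chevalley relation $[\varepsilon_x,\zeta_x]=\eta_x$ is just the defining formula. The remaining case $[\varepsilon_x,\zeta_y]=0$ for $x\ne y$ neighbors, together with the higher Serre conditions $(\operatorname{ad}\varepsilon_x)^{1-a_{xy}}\varepsilon_y=0$ and $(\operatorname{ad}\zeta_x)^{1-a_{xy}}\zeta_y=0$, reduce --- after applying Proposition \ref{lem:non-sincere} --- to computations inside the representation-finite hereditary subalgebra $\supp(S_x\oplus S_y)$ (resp. $\supp(T_x\oplus T_y)$), where Proposition \ref{lem:rep-fin-hered} together with the classical simply-laced argument of \cite{Ri90-poly} dispose of them.

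Once $\phi$ is built, surjectivity requires showing that $L(A)_{1}^{\mathbb{C}}/I(A)^{\mathbb{C}}$ is generated as a Lie algebra by $\{\varepsilon_x,\zeta_x,\eta_x\}_{x\in\Delta_0}$. Since $L(A)_1$ is by definition generated by the simple classes $\{\mathbf{u}_{[S_x]}\}_{x\in Q_0}$ and $\mathbf{\bar{u}}_{[S_x]}=\varepsilon_x$ is in the image for every $x\in\Delta_0$, the only missing generator is $\mathbf{\bar{u}}_{[S_\infty]}$, which has weight $-\sum_{x\in\Delta_0}\alpha_x$; the root-space analysis carried out in Sect. 7 shows that the corresponding weight space of the image of $\phi$ is nontrivial and hence contains it. The main obstacle I expect is the detailed Hall-number bookkeeping behind the relations involving $I(A)$, in particular the $[\varepsilon_x,\zeta_y]=0$ case for neighbors, where the bracket is a genuinely nonzero element of $L(A)_{1}^{\mathbb{C}}$ supported on the degree-$\delta$ part and only vanishes after applying the identifications $\mathbf{u}_{m(e_x+\delta)}\equiv\mathbf{u}_{[S_x]}$ imposed by $I(A)$; the very role of $I(A)$ is to enforce these closures, and keeping signs and coefficients consistent through Lemma \ref{eq:bracket-Hall-poly} is the heart of the computational work, as well as precisely the point at which the first version of the paper erred.
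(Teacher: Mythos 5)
There is a genuine gap, and it sits exactly at the point where the paper's first version went wrong. Your injectivity step is: ``$\phi$ is injective by simplicity of $\mathfrak{g}(\Delta)$, since $\varepsilon_x=\overline{\mathbf{u}}_{[S_x]}\ne 0$.'' Simplicity does reduce injectivity to $\Im\phi\ne 0$, but the claim $\overline{\mathbf{u}}_{[S_x]}\ne 0$ in $L(A)_1^{\mathbb{C}}/I(A)^{\mathbb{C}}$ is precisely what must be proved and is not obvious: a priori the ideal $I(A)$, generated by the differences $\mathbf{u}_{m(e_x+\delta)}-\mathbf{u}_{m(e_x)}$, could swallow the whole algebra (the author points out that in the first version the constructed quotient was in fact zero). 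The paper spends all of Sect.~\ref{sub:Injectivity-of-phi} on this: one writes a hypothetical relation $a\,\mathbf{u}_{m(e_{x_{12}})}\in I(A)^{\mathbb{Q}}$ out in iterated brackets, uses the $K_0(A)$-grading and the quadratic form to isolate the terms of degree in $e_{x_{12}}+\mathbb{Z}\delta$, and then uses the stability of structure constants $b_{[X],m(e_{x_{12}}+t\delta)^K}=b_{[X],m(e_{x_{12}})^K}$ (Propositions \ref{pro:img-stb} and \ref{pro:Krn-stb}) to show every such element is a combination of differences $u_{m(e_{x_{12}}+(s+1)\delta)^K}-u_{m(e_{x_{12}}+s\delta)^K}$, forcing $a=0$. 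Nothing in your proposal supplies this nonvanishing, so as written the argument does not establish injectivity (nor, consequently, the nonvanishing of $[\zeta_{x_1},\ldots,\zeta_{x_n}]$ you would need later).

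Two secondary problems. For surjectivity you appeal to the root-space analysis of Sect.~7, but Proposition \ref{prp:root-space} and its proof use the isomorphism $\phi$ (including the surjectivity step), so this is circular; the paper instead uses Corollary \ref{cor:prin-Hall} (existence of Hall polynomials along a chain of preinjectives) to get a single integer $c$ with $[\zeta_{x_1},\ldots,\zeta_{x_n}]=c\,\overline{\mathbf{u}}_{M}$, $\udim M=(n-1)\delta+e_\infty$, and Proposition \ref{prp:dif-smp} to identify $\overline{\mathbf{u}}_{M}=\varepsilon_\infty$. Finally, you locate the role of $I(A)$ in the relation $[\varepsilon_x,\zeta_y]=0$ for neighbours, but that relation already holds in $L(A)_1$ (either no indecomposable of dimension vector $e_x+\delta-e_y$ exists, or the Hall coefficient is divisible by $q-1$); where $I(A)$ is genuinely needed is in the Cartan relations $[\eta_x,\varepsilon_y]=a_{xy}\varepsilon_y$ and $[\eta_x,\zeta_y]=-a_{xy}\zeta_y$, whose brackets land in degrees $e_y+\delta$ and $(\delta-e_y)+\delta$ and must be pulled back via Proposition \ref{prp:dif-smp} and Lemma \ref{lem:fx-delta}. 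Relatedly, the proposed reduction of $(\operatorname{ad}\zeta_x)^{1-a_{xy}}\zeta_y=0$ to the support algebra of $T_x\oplus T_y$ fails, since that support is all of $Q_0$; the paper disposes of these by computing $\chi_A(2\delta-e_x-e_y)=2$ and $\chi_A(3\delta-2e_x-e_y)=7$, so no indecomposables of the relevant dimension vectors exist.
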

The proof is given in Sect.$\:$\ref{sec:Proof-of-Theorem}.

\begin{defn}
\label{def:degree}(1) Let $v\in K_{0}(A)$. Then we set $\deg v:=v-v_{\infty}\delta$,
which we regard as an element of $K_{0}(kQ^{l})$ because $(\deg v)_{\infty}=0$.
This defines a linear map $\deg\colon K_{0}(A)\to K_{0}(kQ^{l})$,
$v\mapsto\deg v$. Obviously this is surjective and $\Ker\deg=\mathbb{Z}\delta$.
For an indecomposable $A$-module $M$, we set $\deg M:=\deg(\udim M)$
and call it the \emph{degree} of $M$.

(2) Let $v$ be a positive root of $\chi_{A}$, and $v_{i}$ ($i\in Q_{0}$)
the smallest entry of $v$. Then we define $\org v:=v-v_{i}\delta$.
Since $v\not\in\mathbb{Z}\delta$, we have $\org v>0$. Note that
$\org v$ is also a positive root of $\chi_{A}$ with $M(\org v)$
non-sincere by Corollary \ref{sub:delta-strings} and that $\org v=\deg v$
if $\rank v>0$.

(3) We denote by $\Rt(\chi_{A})$ (resp. $\Rt_{+}(\chi_{A})$) the
set of all roots (resp. positive roots) of $\chi_{A}$, and by $\Rt(\mathfrak{g}(\Delta))$
(resp. $\Rt_{+}(\mathfrak{g}(\Delta))$) the set of all roots (resp.
all positive roots) of $\mathfrak{g}(\Delta)$, and set $\Rt_{-}(\blank):=-\Rt_{+}(\blank)$.
Then $\Rt_{+}(\chi_{kQ^{l}})$ is regarded as $\Rt_{+}(\mathfrak{g}(\Delta))$
by identifying $e_{x}$ with $E_{x}$ for all $x\in Q_{0}^{l}=\Delta_{0}$
by Gabriel's Theorem. Thus we have \[
\Rt(\mathfrak{g}(\Delta))=\Rt_{+}(\chi_{kQ^{l}})\cup(\Rt_{-}(\chi_{kQ^{l}}))=\Rt(\chi_{kQ^{l}})\subseteq K_{0}(kQ^{l}).\]

\end{defn}
\begin{lem}
\label{lem:deg-root}The map $\deg$ induces a surjective linear map
$\Rt_{+}(\chi_{A})\to\Rt(\mathfrak{g}(\Delta))$. In particular, for
each indecomposable $A$-module $M$ with $\udim M\not\in\mathbb{Z}\delta$,
$\deg M$ is a root of $\mathfrak{g}(\Delta)$.
\end{lem}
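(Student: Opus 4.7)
The plan is to reduce everything to two elementary facts: first, that the quadratic form $\chi_A$ is invariant under shifts by $\delta$; second, that $\chi_A$ restricted to the sublattice $\{v\in K_0(A)\mid v_\infty=0\}$ coincides with $\chi_{kQ^l}$. The first is immediate from Corollary \ref{sub:delta-strings} (or from $\delta\in\rad\chi_A$ and the explicit formula (\ref{eq:B-delta-ex}) showing $B_A(v,\delta)+B_A(\delta,v)=0$). The second follows directly from the explicit description of $B_A$ given in Sect.~3.3: if $v_\infty=w_\infty=0$, then the correction term $v_\infty w_1$ drops out, while the sum $\sum_{x\to y}v_xw_y$ picks up only those arrows $x\to y$ of $Q$ for which both endpoints differ from $\infty$, and these are precisely the arrows of $Q^l$.

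Granting these two facts, the first assertion is immediate: for $v\in\Rt_+(\chi_A)$ the element $\deg v=v-v_\infty\delta$ has $(\deg v)_\infty=0$, so we may view it in $K_0(kQ^l)$, and
\[
\chi_{kQ^l}(\deg v)=\chi_A(\deg v)=\chi_A(v-v_\infty\delta)=\chi_A(v)=1,
\]
so $\deg v\in\Rt(\chi_{kQ^l})=\Rt(\mathfrak{g}(\Delta))$ (the last identification is Gabriel's theorem as recorded in Definition \ref{def:degree}(3)). Note that $\deg v\ne 0$: otherwise $v\in\mathbb{Z}\delta$, which would force $\chi_A(v)=0$, contradicting $v\in\Rt_+(\chi_A)$.

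For surjectivity, a root of $\mathfrak{g}(\Delta)$ has the form $\pm w$ with $w\in\Rt_+(\chi_{kQ^l})$. Regarded as an element of $K_0(A)$ with $w_\infty=0$, we have $\chi_A(w)=\chi_{kQ^l}(w)=1$ and $\deg w=w$, so $+w$ is in the image. For $-w$, set $v:=m\delta-w\in K_0(A)$ with $m:=\max_{x\in Q_0^l}w_x\ge 1$. Then $v\ge 0$ componentwise, $v_\infty=m$, and $\chi_A(v)=\chi_A(-w+m\delta)=\chi_A(w)=1$, so $v\in\Rt_+(\chi_A)$; moreover $\deg v=v-m\delta=-w$. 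The map $\deg$ is therefore surjective onto $\Rt(\mathfrak{g}(\Delta))$.

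Finally, for the ``in particular'' part, let $M$ be indecomposable with $\udim M\notin\mathbb{Z}\delta$. By Theorem \ref{thm:AR-quiver}(3)(a) we have $\chi_A(\udim M)\in\{0,1\}$, and the value $0$ is excluded because $\rad\chi_A=\mathbb{Z}\delta$ would then force $\udim M\in\mathbb{Z}\delta$. Hence $\udim M\in\Rt_+(\chi_A)$, and the first assertion gives $\deg M=\deg(\udim M)\in\Rt(\mathfrak{g}(\Delta))$. There is no real obstacle here; the only point that needs care is the compatibility of the two quadratic forms on the $v_\infty=0$ sublattice, and this is direct from the stated formulae for $B_A$.
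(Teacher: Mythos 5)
Your proof is correct and follows essentially the same route as the paper: shift-invariance of $\chi_A$ under adding $\mathbb{Z}\delta$ (Corollary \ref{sub:delta-strings}) gives that $\deg$ carries positive roots of $\chi_A$ into $\Rt(\chi_{kQ^l})=\Rt(\mathfrak{g}(\Delta))$, and surjectivity is obtained by lifting a root $w$ of $\chi_{kQ^l}$ to $w+t\delta>0$. The only differences are that you spell out explicitly the compatibility $\chi_A=\chi_{kQ^l}$ on the sublattice $\{v_\infty=0\}$ (which the paper uses tacitly) and the ``in particular'' statement via Theorem \ref{thm:AR-quiver}(3)(a) and $\rad\chi_A=\mathbb{Z}\delta$, both of which are fine.
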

\begin{proof}
For each $v\in\Rt_{+}(\chi_{A})$ we have $\deg v\in\Rt(\chi_{A})$
by Corollary \ref{sub:delta-strings}, and hence $\deg v\in\Rt(\chi_{kQ^{l}})$=$\Rt(\mathfrak{g}(\Delta))$.
Therefore $\deg(\Rt_{+}(\chi_{A}))\subseteq\Rt(\mathfrak{g}(\Delta))$.
Conversely, for each $w\in\Rt(\mathfrak{g}(\Delta))=\Rt(\chi_{kQ^{l}})\subseteq\Rt(\chi_{A})$
there exists some $t\in\mathbb{N}$ such that $v:=w+t\delta>0$. Then
again by Corollary \ref{sub:delta-strings} $v\in\Rt(\chi_{A})$,
and hence $v\in\Rt_{+}(\chi_{A})$. Here it is obvious that $\deg v=w$.
Therefore we have $\Rt(\mathfrak{g}(\Delta))\subseteq\deg(\Rt_{+}(\chi_{A}))$,
and hence $\deg(\Rt_{+}(\chi_{A}))=\Rt(\mathfrak{g}(\Delta))$.
\end{proof}
\begin{rem}
\label{rem:pos-neg}In the above, set $M:=M(v)$. Then
\begin{enumerate}
\item If $M$ is preprojective, then $\deg v>0$ by Corollary \ref{sub:min-dim}.
\item If $M$ is preinjective, then $\deg v<0$ by Corollary \ref{sub:min-dim}.
\item If $M$ is regular, then $\deg v=\pm\sum_{j=s}^{t}e_{x_{ij}}$ for
some $i\in\{1,\ldots,r\}$ and some $s,t$ with $2\le s\le t\le p(i)$.
\end{enumerate}
\end{rem}
For a root $\alpha$ of $\mathfrak{g}(\Delta)$ we denote by $\mathfrak{g}(\Delta)_{\alpha}$
the root space of $\mathfrak{g}(\Delta)$ with root $\alpha$.

\begin{prop}
\label{prp:root-space}Let $v$ be a positive root of \emph{$\chi_{A}$}.
Then
\begin{enumerate}
\item $0\ne\mathbf{\overline{u}}_{m(v)}\in L(A)_{1}^{\mathbb{C}}/I(A)^{\mathbb{C}}$;
\item $\phi^{-1}(\mathbf{\overline{u}}_{m(v)})\in\mathfrak{g}(\Delta)_{\deg v}$;
and
\item $\mathbb{C}\mathbf{\bar{u}}_{m(v+\delta)}=\mathbb{C}\mathbf{\bar{u}}_{m(v)}$.
\end{enumerate}
\end{prop}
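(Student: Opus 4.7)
The plan is to compare gradings on both sides of the isomorphism $\phi\colon\mathfrak{g}(\Delta)\isoto L(A)_{1}^{\mathbb{C}}/I(A)^{\mathbb{C}}$ of Theorem \ref{thm:realization}. The Hall algebra $\mathcal{H}(A)$ is naturally $K_{0}(A)$-graded, and this grading restricts to $L(A)_{1}$. Post-composing with the surjection $\deg\colon K_{0}(A)\twoheadrightarrow K_{0}(A)/\mathbb{Z}\delta\cong K_{0}(kQ^{l})$ yields a $K_{0}(kQ^{l})$-grading in which each generator $\mathbf{u}_{m(e_{x}+\delta)}-\mathbf{u}_{m(e_{x})}$ of $I(A)$ is homogeneous of $\deg$-degree $e_{x}$ (since $\deg\delta=0$). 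Hence $I(A)$ is a homogeneous ideal and the quotient inherits the grading. Moreover the generators $\varepsilon_{x}=\overline{\mathbf{u}}_{[S_{x}]}$, $\zeta_{x}=\pm\overline{\mathbf{u}}_{T_{x}}$ (of $\deg$-degree $\deg(\delta-e_{x})=-e_{x}$), and $\eta_{x}=[\varepsilon_{x},\zeta_{x}]$ sit in $\deg$-degrees $e_{x},-e_{x},0$, matching the roots of the Chevalley generators $E_{x},F_{x},H_{x}$, so $\phi$ is a graded isomorphism.

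Parts (2) and (3) then follow readily from (1). Since $\mathbf{u}_{m(v)}$ is homogeneous of degree $v$, its image $\overline{\mathbf{u}}_{m(v)}$ has $\deg$-degree $\deg v$, which by Lemma \ref{lem:deg-root} is a root of $\mathfrak{g}(\Delta)$. Thus $\phi^{-1}(\overline{\mathbf{u}}_{m(v)})\in\mathfrak{g}(\Delta)_{\deg v}$, giving (2). For (3), since $\Delta$ is simply-laced every root space of $\mathfrak{g}(\Delta)$ is one-dimensional; as $\deg(v+\delta)=\deg v$, both $\overline{\mathbf{u}}_{m(v)}$ and $\overline{\mathbf{u}}_{m(v+\delta)}$ lie in this single line, so if both are nonzero they span it.

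The crux is the non-vanishing statement (1). My first step is to dispose of the case $v=\org v$, where $M(v)$ is non-sincere. By Proposition \ref{lem:non-sincere}, $\mathbf{u}_{m(v)}\in L(A)_{1}$, and its support algebra $\supp M(v)$ is representation-finite by Corollary \ref{cor:min-rep-inf}. Following the inductive construction in the proof of Proposition \ref{lem:rep-fin-hered}, one expresses $\mathbf{u}_{m(v)}$ as a nonzero rational multiple of an iterated bracket of simple generators $\mathbf{u}_{[S_{x}]}$. Its image in the quotient is the same iterated bracket of $\phi^{-1}(\overline{\mathbf{u}}_{[S_{x}]})$ in $\mathfrak{g}(\Delta)$; by the standard root-space structure of simply-laced simple Lie algebras this bracket is a nonzero element of $\mathfrak{g}(\Delta)_{\deg v}$, yielding $\overline{\mathbf{u}}_{m(v)}\ne0$.

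The main obstacle is propagating non-vanishing along each $\delta$-string: one must show that $\overline{\mathbf{u}}_{m(w+\delta)}\ne0$ whenever $\overline{\mathbf{u}}_{m(w)}\ne0$ for a positive root $w$ of $\chi_{A}$, even though the defining relations of $I(A)$ only enforce the identification at the simple roots. My approach is to compute brackets of $\mathbf{u}_{m(w)}$ with suitably chosen indecomposables of dimension vector $\delta$ (for instance the modules $W_{c}$ or $X_{ij}$ described in Section \ref{sub:inds-of-dimvec-delta}) via the Hall-polynomial formalism of Section 2, and to argue, using the graded structure and the $1$-dimensionality of root spaces, that the outcome in the quotient is a nonzero scalar multiple of $\overline{\mathbf{u}}_{m(w+\delta)}$. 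Iterating along the $\delta$-string from the base case $w=\org v$ then yields (1) for every positive root.
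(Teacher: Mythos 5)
Your reduction of (2) and (3) to (1) via the $\deg$-grading is sound (the generators of $I(A)$ are $\deg$-homogeneous, so the quotient is graded and $\phi$ matches gradings), and your base case for non-sincere $M(v)$ is essentially the paper's Lemma \ref{lem:non-sincere-root}. But the step you yourself identify as the crux --- propagating non-vanishing along a $\delta$-string by bracketing $\mathbf{u}_{m(w)}$ with classes of dimension vector $\delta$ --- has a genuine gap, and it is exactly the point where the paper takes a different route. To make your step work you need, for an arbitrary preprojective (or preinjective) root $w$, that $[h,\mathbf{u}_{m(w)}]$ equals a \emph{fixed} scalar times $\mathbf{u}_{m(w+\delta)}$ in $L(A)_{1}^{\mathbb{C}}$, where $h$ is built from the $X_{ij}$ and the $W_{c}(K)$; componentwise the bracket is $b_{K}\,u_{m(w+\delta)^{K}}$, so you need the structure constants $b_{K}$ to be given by one integer independently of $K$ (stability/Hall polynomials). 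Nothing in the paper provides this: Section 2 only sets up the formalism; the existence results of Section 5 cover simples with preprojectives (Proposition \ref{prp:Hall-preproj-smp}), \emph{non-sincere} modules with preprojectives (Proposition \ref{prp:Hall-preproj-nonsincere}), and Gabriel--Roiter situations, whereas the $X_{ij}$ and $W_{c}$ are sincere of dimension vector $\delta$; Proposition \ref{pro:img-stb} compares different $t$ for a fixed $K$ and only for the classes $m(e_{x}+t\delta)$, and Lemma \ref{lem:rg-prpl} computes $[\mathbf{u}_{m(w)},h_{x}]$ only for \emph{regular} $w$, where the calculation takes place inside a tube via the cyclic-quiver formula (\ref{eq:Lie-bracket-cyclic}). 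Without such stability you cannot even conclude that $\mathbf{u}_{m(w+\delta)}$ lies in $L(A)_{1}^{\mathbb{C}}$ (which is part of statement (1)), let alone that the bracket is a nonzero multiple of it in the quotient. Supplying the missing counting results for pairs (sincere regular of dimension $\delta$, preprojective) would be substantive new work, not a routine application of the cited sections; the delicacy of these $\delta$-string brackets in the preprojective component is precisely what the $E_{8}$ example in Section 8 and the error in the first version illustrate.

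The paper circumvents this by not propagating along $\delta$-strings at all in the preprojective/preinjective cases. Instead it argues by induction on $\dim M(v)$: for $\rank M(v)\ge 2$ it takes a Gabriel--Roiter submodule $L\subseteq M(v)$, shows via Proposition \ref{prp:GR-monoirr} and Proposition \ref{pro:GR-M} that $N:=M(v)/L$ is indecomposable with $\udim N\notin\mathbb{Z}\delta$ and that a Hall polynomial $\varphi_{NL}^{M(v)}=\varphi_{*,L}^{M(v)}$ exists (Lemma \ref{lem:Hall-sub-factor} applies because every submodule isomorphic to $L$ has quotient isomorphic to $N$), so that $a\mathbf{u}_{m(v)}=[\mathbf{u}_{m(v'')},\mathbf{u}_{m(v')}]$ with $a\in\mathbb{Z}$; non-vanishing and $a\ne0$ then follow from the fact that the bracket of nonzero root vectors is nonzero when the sum of the roots is a root, with $\delta$-string identifications used only in the regular case (Proposition \ref{prp:dif-rg}), where the tube combinatorics make them computable. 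If you want to keep your approach, you would need to prove the stability of the relevant Hall numbers for dimension-$\delta$ regular classes against arbitrary preprojective indecomposables; otherwise you should replace the propagation step by a decomposition of $M(v)$ of the Gabriel--Roiter type.
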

The proof is given in Sect.$\:$\ref{sec:root-spaces}. This immediately
yields the following.

\begin{thm}
\label{thm:root-space}Let \emph{}$\phi$ \emph{}be as in Theorem
\emph{\ref{thm:realization}}, \emph{}$\alpha$ \emph{}a root of \emph{}$\mathfrak{g}(\Delta)$
\emph{}and \emph{}$v$ \emph{}a positive root of \emph{}$\chi_{A}$
\emph{}with \emph{}$v-\alpha\in\mathbb{Z}\delta$. Then $\alpha=\deg v$
and the restriction of \emph{}$\phi$ \emph{}induces an isomorphism
from the root space \emph{}$\mathfrak{g}(\Delta)_{\alpha}$ \emph{}to
\emph{}$\mathbb{C}\mathbf{\bar{u}}_{m(v)}$.
\end{thm}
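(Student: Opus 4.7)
The plan is to derive this theorem as an immediate corollary of Proposition~\ref{prp:root-space} combined with the standard fact that every root space of a simply-laced complex simple Lie algebra is one-dimensional. No new Hall-algebra calculation will be needed here; everything substantive has already been absorbed into Theorem~\ref{thm:realization} and Proposition~\ref{prp:root-space}.

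First I would establish the identity $\alpha=\deg v$. By hypothesis $v-\alpha=t\delta$ for some $t\in\mathbb{Z}$. Since $\alpha\in\Rt(\mathfrak{g}(\Delta))=\Rt(\chi_{kQ^{l}})\subseteq K_{0}(kQ^{l})$, its $\infty$-coordinate vanishes, so reading off the $\infty$-component of $v=\alpha+t\delta$ gives $v_{\infty}=t$, whence $\deg v=v-v_{\infty}\delta=\alpha$. This already disposes of the first assertion of the theorem.

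Next I would invoke Proposition~\ref{prp:root-space}: part~(2) places $\phi^{-1}(\mathbf{\bar{u}}_{m(v)})$ in $\mathfrak{g}(\Delta)_{\deg v}=\mathfrak{g}(\Delta)_{\alpha}$, and part~(1) guarantees this element is nonzero. Since $\Delta$ is simply-laced, the root space satisfies $\dim_{\mathbb{C}}\mathfrak{g}(\Delta)_{\alpha}=1$, so it must equal $\mathbb{C}\phi^{-1}(\mathbf{\bar{u}}_{m(v)})$. Applying the isomorphism $\phi$ of Theorem~\ref{thm:realization} to both sides yields $\phi(\mathfrak{g}(\Delta)_{\alpha})=\mathbb{C}\mathbf{\bar{u}}_{m(v)}$, and because $\phi$ is an isomorphism the restriction to $\mathfrak{g}(\Delta)_{\alpha}$ is an isomorphism $\mathfrak{g}(\Delta)_{\alpha}\isoto\mathbb{C}\mathbf{\bar{u}}_{m(v)}$, as required.

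The only step where one might fear an obstacle is the claim that $\mathbb{C}\mathbf{\bar{u}}_{m(v)}$ is a well-defined target in $L(A)_{1}^{\mathbb{C}}/I(A)^{\mathbb{C}}$ attached to $\alpha$, since different choices of positive root $v$ with $v-\alpha\in\mathbb{Z}\delta$ could a priori yield different lines. This is precisely what part~(3) of Proposition~\ref{prp:root-space} rules out: any two such choices differ by an integer multiple of $\delta$, and iterating the equality $\mathbb{C}\mathbf{\bar{u}}_{m(v+\delta)}=\mathbb{C}\mathbf{\bar{u}}_{m(v)}$ shows the line depends only on $\alpha=\deg v$. Once this coherence observation is in place the statement follows formally from the two paragraphs above.
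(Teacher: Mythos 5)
Your proposal is correct and follows exactly the paper's route: the paper simply derives Theorem \ref{thm:root-space} as an immediate consequence of Proposition \ref{prp:root-space} together with the identification $\alpha=\deg v$ and the one-dimensionality of root spaces, which is what you spell out (note only that one-dimensionality of root spaces holds for any complex simple Lie algebra, so the appeal to simply-lacedness there is unnecessary, and part (3) is not actually needed since the statement fixes $v$).
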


\subsection{Basis of the Cartan subalgebra\label{sub:basis-Cartan}}

We now give the precise forms of $\eta_{x}$'s using the list in Proposition
\ref{ind-delta}. Noting that $[u_{S_{1}},-u_{T_{1}}]=[u_{T_{1}},u_{S_{1}}]=u_{T_{1}}u_{S_{1}}$we
have \[
[u_{S_{1}},-u_{T_{1}}]=\sum_{c\in k}u_{W_{c}}+u_{X_{11}}\]
 in the Lie algebra $\overline{L}(A)_{(q-1)}$. Then in $L(A)_{1}$
we have

\begin{equation}
[\mathbf{u}_{S_{1}},-\mathbf{u}_{T_{1}}]=\left(\sum_{c\in K}u_{W_{c}(K)}\right)_{K\in\Omega}+\mathbf{u}_{X_{11}}=:h_{1}\label{eq:h-1}\end{equation}
Hence

\begin{equation}
\eta_{1}=\left\{ \begin{array}{ll}
\overline{\left(\sum_{c\in K^{\times}}u_{W_{c}(K)}\right)}_{K\in\Omega}+\overline{\mathbf{u}}_{X_{11}}+\overline{\mathbf{u}}_{X_{21}} & \textrm{if $A$ is of type $A_{n}$;}\\
\\\overline{\left(\sum_{c\in K\setminus\{0,-1\}}u_{W_{c}(K)}\right)}_{K\in\Omega}+\overline{\mathbf{u}}_{X_{11}}+\overline{\mathbf{u}}_{X_{21}}+\overline{\mathbf{u}}_{X_{31}} & \textrm{otherwise.}\end{array}\right.\label{eq:et-1}\end{equation}
 For each $x_{ij}\in\Delta_{0}\setminus\{1\}$ we have \begin{equation}
[\mathbf{u}_{S_{x_{ij}}},\mathbf{u}_{T_{x_{ij}}}]=\mathbf{u}_{X_{ij}}-\mathbf{u}_{X_{i,j-1}}=:h_{x_{ij}}\label{eq:h-ij}\end{equation}
 in $L(A)_{1}$ , and hence

\begin{equation}
\eta_{x_{ij}}=\overline{h}_{x_{ij}}=\overline{\mathbf{u}}_{X_{ij}}-\overline{\mathbf{u}}_{X_{i,j-1}}.\label{eq:et-ij}\end{equation}

\section{\textbf{Preparations of proof of Main Theorem}}

\subsection{Non-homogeneous tubes}

Throughout this section $\mathcal{T}$ is a non-homogeneous tube of
rank $p>1$ in $\Gamma_{A}$. 

We recall some fundamental facts on nilpotent representations of cyclic
quivers (\cite{Asa02}). Consider the cyclic quiver with $p$ vertices

$$
R:=\xymatrix@1{
1 \ar[r]^{\al} & 2 \ar[r]^{\al} &\cdots
\ar[r]^{\al} &n\ar[r]^{\al}& p,
\ar@<0.5ex>@/^1pc/[llll]^{\al}}$$and set $\Lambda$ to be the path-algebra $kR$, which is infinite-dimensional
over $k$. Let $J$ be the ideal of $\Lambda$ generated by all arrows.
A finite-dimensional $\Lambda$-module $M$ is called \emph{nilpotent}
if $J^{m}M=0$ for some $m\in\mathbb{N}$. Denote by $\mod_{0}\Lambda$
(resp. $\ind_{0}\Lambda$) the full subcategory of $\mod\Lambda$
(resp. $\ind\Lambda$) consisting of nilpotent $\Lambda$-modules.
Then $\mod_{0}\Lambda$ has almost split sequences, its Auslander-Reiten
quiver is isomorphic to $\mathcal{T}$, and $\ind_{0}\Lambda$ is
standard, i.e. $\ind_{0}\Lambda\cong k(\mathcal{T})$ (\cite[3.6(6)]{Ri84}).
On the other hand $\langle\mathcal{T}\rangle$ is also standard by
Theorem \ref{thm:AR-quiver}. Therefore we have \begin{equation}
\langle\mathcal{T}\rangle\cong\ind_{0}\Lambda.\label{eq:tube-cyclic-qvalg}\end{equation}
Of course, if $X\in\mathcal{T}$ is corresponding to $Y\in\ind_{0}\Lambda$
under this isomorphism, then the quasi-length of $X$ is equal to
$l(Y)$. In particular, the modules on the mouth of $\mathcal{T}$
are corresponding to simple modules in $\ind_{0}\Lambda$.

\begin{rem}
\label{rmk:shape-of-T}The shape of $\mathcal{T}$ does not depend
on $k$. In particular, $\ind_{0}\Lambda^{K}=\{ X^{K}|X\in\ind_{0}\Lambda\}$
as sets for all $K\in\Omega$.
\end{rem}
\begin{lem}
For each $M\in\ind_{0}\Lambda$, we have $\Hom_{\Lambda}(M,\tau M)=0$
if and only if $l(M)<p$.
\end{lem}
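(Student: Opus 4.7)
The plan is to work in $\ind_{0}\Lambda$ via the equivalence $\langle\mathcal{T}\rangle\cong\ind_{0}\Lambda$ recalled in \textup{(5-1)}, and to parametrize the indecomposables explicitly by setting $M[i,l]:=\Lambda e_{i}/J^{l}\Lambda e_{i}$ for $i\in\{1,\ldots,p\}$ and $l\in\mathbb{N}$; every $M\in\ind_{0}\Lambda$ is isomorphic to a unique $M[i,l]$ with $l=l(M)$. Since the AR-quiver of $\mod_{0}\Lambda$ has shape $\mathbb{Z}A_{\infty}/\langle\tau^{p}\rangle$, inspection of the almost split sequence $0\to\tau S_{i}\to M[i,2]\to S_{i}\to 0$ forces $\tau S_{i}=S_{i+1}$, so the AR translate acts by shifting the starting vertex by one: $\tau M[i,l]\cong M[i+1,l]$ with indices taken mod $p$.

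From the projective presentation $J^{l}\Lambda e_{i}\hookrightarrow\Lambda e_{i}\twoheadrightarrow M[i,l]$ together with the natural isomorphism $\Hom_{\Lambda}(\Lambda e_{i},N)\cong e_{i}N$, I obtain
\[ \Hom_{\Lambda}(M[i,l],N)\cong\{v\in e_{i}N\mid \alpha^{l}v=0\} \]
for every $N\in\mod_{0}\Lambda$. Taking $N=\tau M=M[i+1,l]$, the length of $N$ is also $l$, so $\alpha^{l}$ annihilates all of $N$ and the defining relation $\alpha^{l}v=0$ is automatic; hence $\Hom_{\Lambda}(M,\tau M)\cong e_{i}N$.

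It then remains to decide when the vertex-$i$ component of $M[i+1,l]$ vanishes. Fix the basis $v_{0},\alpha v_{0},\ldots,\alpha^{l-1}v_{0}$ of $M[i+1,l]$ with $v_{k}:=\alpha^{k}v_{0}$ at vertex $i+1+k\pmod{p}$; then $v_{k}$ lies at vertex $i$ iff $k\equiv p-1\pmod{p}$. If $l<p$, no such $k$ lies in $\{0,\ldots,l-1\}$, so $e_{i}N=0$ and $\Hom_{\Lambda}(M,\tau M)=0$. If $l\ge p$, then $v_{p-1}\in e_{i}N$, producing a nonzero morphism and hence $\Hom_{\Lambda}(M,\tau M)\ne 0$. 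No step presents any real obstacle; the only care needed is pinning down the direction of the shift in $\tau M[i,l]\cong M[i\pm 1,l]$, which is fixed by the single AR-sequence computation above, and the conclusion is insensitive to that choice anyway because the cutoff occurs at $l=p$ in either case.
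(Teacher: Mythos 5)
Your proof is correct and takes essentially the paper's route: the paper likewise reduces the lemma to the formula $\tau M(i,j)=M(i+1,j)$ (cited from ARS) and declares the rest ``easily verified,'' which your projective-presentation computation $\Hom_{\Lambda}(M[i,l],N)\cong\{v\in e_{i}N\mid\alpha^{l}v=0\}=e_{i}N$ simply makes explicit. One caution: your closing remark that the conclusion is insensitive to the direction of the shift is false --- with the opposite convention $\tau M[i,l]\cong M[i-1,l]$ one would have $e_{i}\,\tau M\neq0$ already for $l(M)\geq2$, so the cutoff would be $2$ rather than $p$ --- but since your almost split sequence at the mouth does pin down the correct direction $\tau S_{i}=S_{i+1}$, the argument itself is unaffected.
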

\begin{proof}
Since $\tau M(i,j)=M(i+1,j)$ for all $(i,j)\in R_{0}\times\mathbb{N}$,
where $R_{0}$ is the set of vertices of $R$ (cf.\ e.g., \cite[Proof of Theorem 2.1]{ARS}),
the assertion is easily verified.
\end{proof}
\begin{prop}
\label{sub:Positions-of-exceptional}Let $X\in\ind A$. Then $X$
is exceptional if and only if
\begin{enumerate}
\item $X$ is preprojective;
\item $X$ is preinjective; or
\item $X$ is in a non-homogeneous tube with quasi-length less than the
rank of the tube.
\end{enumerate}
\end{prop}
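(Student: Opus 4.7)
The plan is to apply the Auslander--Reiten formula
\[
\Ext^1_A(X,X)\cong D\oHom_A(X,\tau X),
\]
valid for every non-projective indecomposable $X$, and reduce the whole question to a vanishing/non-vanishing statement about $\Hom_A(X,\tau X)$. Projective indecomposables are automatically exceptional and belong to $\mathcal{P}$ by Theorem~\ref{thm:AR-quiver}(1), so I may restrict to non-projective $X$ from the outset. The separating property of the tubular family (Definition~\ref{dfn:separating}) will let me ignore the ``bar'': whenever $\tau X\in\mathcal{P}\cup\mathcal{R}$, any factorisation $X\to I\to\tau X$ through an injective $I\in\mathcal{I}$ is killed by $\Hom_A(\mathcal{I},\mathcal{P})=\Hom_A(\mathcal{I},\mathcal{R})=0$, so $\oHom_A(X,\tau X)=\Hom_A(X,\tau X)$; in the remaining preinjective case I will establish $\Hom_A(X,\tau X)=0$ directly, rendering the bar irrelevant.

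For the direction $(1)\vee(2)\vee(3)\Rightarrow X$ exceptional, I would show $\Hom_A(X,\tau X)=0$ case by case. In cases (1) and (2), both $X$ and $\tau X$ lie in the same standard directed component ($\mathcal{P}$ or $\mathcal{I}$, of type $\mathbb{Z}\Delta^{(1)}$), and $\tau X$ strictly precedes $X$ in the mesh order, so no non-zero morphism $X\to\tau X$ exists inside the component; the separating property excludes the contributions from the other components. In case (3), the isomorphism $\langle\mathcal{T}\rangle\cong\ind_0\Lambda$ of~\eqref{eq:tube-cyclic-qvalg} is an equivalence of translation categories, so the immediately preceding lemma applied to the nilpotent $\Lambda$-module $M$ of length $m<p$ corresponding to $X$ yields $\Hom_\Lambda(M,\tau M)=0$ and hence $\Hom_A(X,\tau X)=0$.

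For the converse I would argue contrapositively: if $X$ satisfies none of (1), (2), (3), then by Theorem~\ref{thm:AR-quiver}(1) $X$ is regular and either lies in a homogeneous tube (so $\tau X\cong X$ and $\Hom_A(X,\tau X)\cong\End_A(X)\ne 0$) or has quasi-length $m\ge p$ in a non-homogeneous tube of rank $p$ (so the cyclic-quiver lemma gives $\Hom_\Lambda(M,\tau M)\ne 0$, hence $\Hom_A(X,\tau X)\ne 0$). Because $\tau X\in\mathcal{R}$ and $\Hom_A(\mathcal{I},\mathcal{R})=0$, no non-zero such morphism factors through an injective, so $\oHom_A(X,\tau X)\ne 0$ and $\Ext^1_A(X,X)\ne 0$, contradicting exceptionality. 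The main subtlety I anticipate is precisely this ``bar-to-$\Hom$'' reduction in the regular cases, but the separating property collapses it to the Hom-computations above; the substantive input is then just the standardness/directedness of $\mathcal{P}$ and $\mathcal{I}$ together with the preceding cyclic-quiver lemma.
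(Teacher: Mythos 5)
Your proof is correct and takes essentially the same route as the paper's: the Auslander--Reiten formula $\Ext_A^1(X,X)\cong D\overline{\Hom}_A(X,\tau X)$, vanishing of $\Hom_A(X,\tau X)$ for preprojective and preinjective $X$ and for tube modules of quasi-length less than the rank (via the cyclic-quiver lemma and standardness of the tube), non-vanishing in the homogeneous and quasi-length $\ge p$ cases, and $\Hom_A(\mathcal{I},\mathcal{R})=0$ to remove the bar. The additional detail you give on directedness of $\mathcal{P}$ and $\mathcal{I}$ merely spells out what the paper asserts without proof, so there is no substantive difference.
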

\begin{proof}
If $X$ is preprojective or preinjective, then $\Hom_{A}(X,\tau X)=0$.
Hence by the Auslander-Reiten formula\[
\Ext_{A}^{1}(X,X)\cong D(\overline{\Hom}_{A}(X,\tau X))\]
 (see e.g., \cite[Proposition 2.3]{Ga831}) we see that $X$ is exceptional,
where for each $M,N\in\mod A$, $\overline{\Hom}_{A}(M,N)$ is the
factor space of $\Hom_{A}(M,N)$ by the subspace consisting of homomorphisms
factoring through an injective module. Now let $X$ be regular, then
we have $\Ext_{A}^{1}(X,X)\cong D(\Hom_{A}(X,\tau X))$ because $\Hom_{A}(\mathcal{I},\mathcal{R})=0$.
If $X$ is in a homogeneous tube, then $X\cong\tau X$ , and hence
$X$ cannot be exceptional. Assume that $X$ is in a non-homogeneous
tube of rank $p>1$. Then by the lemma above and by (\ref{eq:tube-cyclic-qvalg})
we have $\Hom_{A}(X,\tau X)=0$ if and only if the quasi-length of
$X$ is less than $p$. Hence $X$ is exceptional if and only if the
quasi-length of $X$ is less than $p$.
\end{proof}
\begin{notation}
Since $\End_{A}(X)$ is a finite-dimensional local $k$-algebra for
each $X\in\ind A$, the factor algebra $\End_{A}(X)/\rad\End_{A}(X)$
is a field, which we denote by $F_{A}(X)$.
\end{notation}
\begin{prop}
\label{sub:Mods-in-non-homog}Let $X$ be a module in the non-homogeneous
tube $\mathcal{T}$. Then $F_{A}(X)=k$, and hence $X$ is $\Omega$-indecomposable.
Thus we may write $\mathbf{u}_{[X]}=(u_{[X^{K}]})_{K\in\Omega}$.
\end{prop}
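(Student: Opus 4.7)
My plan is to reduce the computation of $F_A(X)$ to the case of an indecomposable nilpotent representation of a cyclic quiver, and then obtain $\Omega$-indecomposability by a base-change argument.

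\textbf{Step 1 (reduction to cyclic-quiver modules).} By the isomorphism $\langle\mathcal{T}\rangle\cong\ind_0\Lambda$ in (\ref{eq:tube-cyclic-qvalg}), where $\Lambda=kR$ is the path algebra of the cyclic quiver $R$ with $p$ vertices, the module $X$ corresponds to an indecomposable nilpotent $\Lambda$-module $Y$ of the same quasi-length, and we have a $k$-algebra isomorphism $\End_A(X)\cong\End_\Lambda(Y)$. Thus it suffices to show $F_\Lambda(Y):=\End_\Lambda(Y)/\rad\End_\Lambda(Y)\cong k$.

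\textbf{Step 2 (structure of endomorphism rings).} Every indecomposable nilpotent $\Lambda$-module $Y$ is uniserial: it is determined by its socle $S_j$ and its length $l$, and all its composition factors are among the simples $S_i$ (each with $\End_\Lambda(S_i)=k$). An endomorphism of $Y$ is therefore given by $1_Y$ together with nilpotent ``shift'' endomorphisms along the tube, yielding an isomorphism $\End_\Lambda(Y)\cong k[t]/(t^m)$ for a suitable $m\in\mathbb{N}$ (with $m=1$ precisely when $Y$ is exceptional, i.e.\ $l<p$). In particular $F_\Lambda(Y)\cong k$, and hence $F_A(X)=k$.

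\textbf{Step 3 ($\Omega$-indecomposability by base change).} For any $K\in\Omega$ we have $\End_{A^K}(X^K)\cong\End_A(X)\otimes_k K$, and this algebra is local with residue field $K$: writing $\End_A(X)=k\cdot 1_X\oplus N$ with $N=\rad\End_A(X)$ nilpotent (possible because $F_A(X)=k$), we obtain $\End_{A^K}(X^K)\cong K\oplus(N\otimes_k K)$ with the second summand still a nilpotent ideal whose quotient is the field $K$. A local endomorphism ring forces $X^K$ to be indecomposable, so $X$ is $\Omega$-indecomposable and the notation $\mathbf{u}_{[X]}=(u_{[X^K]})_{K\in\Omega}$ is justified.

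The main obstacle is Step 2 for the non-exceptional case (quasi-length $\ge p$), where one needs to see that all endomorphisms of $Y$ other than scalar multiples of $1_Y$ are nilpotent. This is where the uniserial structure of indecomposable nilpotent representations of the cyclic quiver $R$, and the fact that $\End_\Lambda(S_i)=k$ for every simple at a vertex of $R$, do the essential work; the exceptional case is already covered by Lemma \ref{sub:Endo-algebras-of} combined with Proposition \ref{sub:Positions-of-exceptional}.
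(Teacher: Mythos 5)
Your proof is correct, but it takes a genuinely different route from the paper's. The paper argues inside the tube itself: it takes the shortest path $Y=Y_{1}\to Y_{2}\to\cdots\to Y_{m}=X$ from a mouth module $Y$, observes that $Y$ is exceptional (Proposition \ref{sub:Positions-of-exceptional}), so $\End_{A}(Y)\cong k$ by Lemma \ref{sub:Endo-algebras-of}, and then propagates $F_{A}(Y_{i})\cong k$ step by step along the path using that each bimodule $\Irr(Y_{i},Y_{i+1})$ of irreducible maps is one-dimensional over $k$ and over both residue fields (this comes from the shape of the standard tube); the $\Omega$-indecomposability is then obtained exactly as in your Step 3, via $\End_{A^{K}}(X^{K})/\rad\End_{A^{K}}(X^{K})\cong F_{A}(X)^{K}=K$. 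You instead push the whole question through the isomorphism $\langle\mathcal{T}\rangle\cong\ind_{0}\Lambda$ of (\ref{eq:tube-cyclic-qvalg}), which the paper has already established and uses elsewhere, and compute $\End_{\Lambda}(Y)$ for a uniserial nilpotent representation of the cyclic quiver explicitly as $k[t]/(t^{m})$. Both arguments ultimately rest on the standardness of the tube; yours has the merit of exhibiting the full endomorphism ring (not just its top) with no induction along the tube, while the paper's version stays entirely inside $\mod A$ and uses only the valuation data of $\Gamma_{A}$, a formulation that adapts more directly to non-simply-laced situations where the $\Irr$-bimodules carry nontrivial field actions. One small slip in your Step 2: $m=1$ holds precisely when the quasi-length is at most $p$, not strictly less than $p$; a module of quasi-length exactly $p$ is not exceptional yet still has endomorphism ring $k$. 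This does not affect your argument, since all you need is that $\End_{\Lambda}(Y)$ is local with residue field $k$, which holds for every quasi-length.
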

\begin{proof}
Let\[
Y=Y_{1}\to Y_{2}\to\cdots\to Y_{m}=X\]
 be the shortest path in $\mathcal{T}$ with $Y$ an indecomposable
module on the mouth of $\mathcal{T}$. Since $Y$ is an exceptional
$A$-module by Proposition \ref{sub:Positions-of-exceptional}, we
have $\End_{A}(Y)\cong k$ by Lemma \ref{sub:Endo-algebras-of}. Let
$\rad_{A}$ be the radical of the category $\mod A$ (\cite[p.\ 53]{Ri84}).
For $A$-modules $M,N$ set $\Irr(M,N):=\rad_{A}(M,N)/\rad_{A}^{2}(M,N)$
to be the $(F_{A}(N),F_{A}(M))$-bimodule of irreducible maps from
$M$ to $N$ (\cite[p.\ 55]{Ri84}). Then it follows from the shape
of $\mathcal{T}$ that\[
\dim_{F_{A}(Y_{i+1})}\Irr(Y_{i},Y_{i+1})=\dim_{k}\Irr(Y_{i},Y_{i+1})=\dim\Irr(Y_{i},Y_{i+1})_{F_{A}(Y_{i})}=1\]
 for all $i\in\{1,\ldots,m-1\}$. This shows that $\mathcal{E}(A)\ni k\cong F_{A}(Y)\cong F_{A}(Y_{1})\cong\cdots\cong F_{A}(Y_{m})\cong F_{A}(X_{t})$.

Now for each $K\in\Omega$, $\End_{A}(X^{K})/\rad\End_{A}(X^{K})\cong F_{A}(X)^{K}$
is a field, and hence $X^{K}\in\ind A^{K}$. 
\end{proof}
\begin{defn}
\label{sub:mods-in-non-homg-ql}(1) Set $\overline{L}(\mathcal{T}):=\bigoplus_{[X]\in\mathcal{T}}\mathbb{Z}u_{]X]}$.
Then $\overline{L}(\mathcal{T})/(q-1)$ is a Lie subalgebra of the
Lie algebra $\overline{L}(A)/(q-1)$ because $\mathcal{T}$ is closed
under extensions by \cite[3.1(3)]{Ri84} (or by the fact that $\mathcal{R}$
is separating $\mathcal{P}$ from $\mathcal{I}$).
\end{defn}
(2) Set $\overline{L}_{0}(\Lambda):=\bigoplus_{[X]\in[\ind_{0}\Lambda]}\mathbb{Z}u_{[X]}$.
Then $\overline{L}_{0}(\Lambda)/(q-1)$ is a Lie algebra with Hall
commutator the Lie bracket as in Sect.$\:$\ref{Hall}. 

\begin{lem}
\label{lem:Lie-T-iso-Lie-cyc-modulo}For each $K\in\Omega$ let $\mathcal{T}(A^{K})$
be the non-homogeneous tube of $\Gamma_{A^{K}}$ corresponding to
$\mathcal{T}$. Then we have\[
\overline{L}(\mathcal{T}(A^{K}))/(|K|-1)\cong\overline{L}_{0}(\Lambda^{K})/(|K|-1).\]
 
\end{lem}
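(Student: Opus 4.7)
The plan is to transport the Lie algebra structure on $\overline{L}(\mathcal{T}(A^K))/(|K|-1)$ to $\overline{L}_0(\Lambda^K)/(|K|-1)$ via the categorical equivalence already recorded in \eqref{eq:tube-cyclic-qvalg}. First I would apply that equivalence after base change: since the shape of $\mathcal{T}$ is independent of the base field (Remark \ref{rmk:shape-of-T}) and the cyclic quiver algebra simply becomes $\Lambda^K$, formula \eqref{eq:tube-cyclic-qvalg} applied over $K$ yields an equivalence
\[
\Phi\colon \langle\mathcal{T}(A^K)\rangle \isoto \ind_0 \Lambda^K,
\]
and hence a bijection $[X]\mapsto[\Phi X]$ on isoclasses of indecomposables.

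Next I would promote this to an equivalence of the additive closures, $\add\langle\mathcal{T}(A^K)\rangle \simeq \mod_0 \Lambda^K$. The key point is that both of these additive subcategories are closed under extensions in their ambient module categories: $\mathcal{T}$ is closed under extensions by Theorem \ref{thm:AR-quiver} (as $\mathcal{R}$ separates $\mathcal{P}$ from $\mathcal{I}$, or by \cite[3.1(3)]{Ri84}), and $\mod_0\Lambda^K$ is closed under extensions inside $\mod\Lambda^K$ by the definition of nilpotence. Consequently, a short exact sequence in $\mod A^K$ with outer terms in $\mathcal{T}(A^K)$ has middle term in $\add\langle\mathcal{T}(A^K)\rangle$, and similarly on the $\Lambda^K$-side; and the computation of $\Ext^1$ coincides with that in the ambient categories. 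Thus the equivalence $\Phi$ preserves short exact sequences, so for all $X,Y,Z\in\langle\mathcal{T}(A^K)\rangle$,
\[
F_{XY}^{Z} \;=\; F_{\Phi X\,\Phi Y}^{\Phi Z}
\]
in $\mathbb{Z}$, by the Riedtmann formula \eqref{eq:Hall-coefficient} or directly from the definition of $\mathcal{F}_{XY}^{Z}$.

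Then I would define
\[
\psi\colon \overline{L}(\mathcal{T}(A^K))/(|K|-1)\longrightarrow \overline{L}_0(\Lambda^K)/(|K|-1),\qquad u_{[X]}\longmapsto u_{[\Phi X]},
\]
and extend $\mathbb{Z}/(|K|-1)\mathbb{Z}$-linearly. Since $\Phi$ is a bijection on isoclasses of indecomposables, $\psi$ is an isomorphism of free $\mathbb{Z}/(|K|-1)\mathbb{Z}$-modules on the bases $\{u_{[X]}\}_{[X]\in[\mathcal{T}(A^K)]}$ and $\{u_{[Y]}\}_{[Y]\in[\ind_0\Lambda^K]}$. The bracket preservation then follows directly from the equality of Hall numbers: for indecomposable $X,Y$ in $\mathcal{T}(A^K)$, the sum
\[
[u_{[X]},u_{[Y]}]\;=\;\sum_{[Z]\in[\ind A^K]}\bigl(F_{[X][Y]}^{[Z]}-F_{[Y][X]}^{[Z]}\bigr)\,u_{[Z]}
\]
is supported on $[Z]\in[\mathcal{T}(A^K)]$ because the extension-closure of $\mathcal{T}(A^K)$ forces any indecomposable $Z$ appearing with nonzero coefficient to lie in $\mathcal{T}(A^K)$, and $\psi$ sends it termwise to the analogous expression in $\overline{L}_0(\Lambda^K)/(|K|-1)$.

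The only delicate point, and what I regard as the main obstacle, is verifying that the Hall numbers really transfer through $\Phi$: one must know that $\Ext^1$ and the parametrization of equivalence classes of extensions computed inside the extension-closed subcategory coincide with those computed in the ambient module category, on both sides. This is standard for extension-closed full subcategories, so the argument closes without further input.
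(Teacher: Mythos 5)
Your argument is correct and is essentially the paper's own proof: the paper simply says the lemma "follows from the isomorphism (\ref{eq:tube-cyclic-qvalg}) by the formula (\ref{eq:Hall-coefficient})", i.e.\ it transports Hall numbers through the standard equivalence $\langle\mathcal{T}\rangle\cong\ind_{0}\Lambda$ exactly as you do, with the closure of the tube under extensions (already noted in Definition \ref{sub:mods-in-non-homg-ql}) guaranteeing that the bracket stays inside $\overline{L}(\mathcal{T}(A^{K}))$. Your write-up merely spells out the base change via Remark \ref{rmk:shape-of-T} and the compatibility of extensions with the ambient module categories, which the paper leaves implicit.
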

\begin{proof}
This follows from the isomorphism (\ref{eq:tube-cyclic-qvalg}) by
the formula (\ref{eq:Hall-coefficient}). 
\end{proof}
\begin{defn}
Since $\mod_{0}\Lambda$ has Hall polynomials by \cite[Theorem 2.7]{Guo},
we can define a Lie algebra $\overline{L}_{0}(\Lambda)_{1}:=\bigoplus_{[X]\in[\mod_{0}\Lambda]}\mathbb{Z}u_{[X]}$
over $\mathbb{Z}$ with the Lie bracket defined by\[
[u_{[X]},u_{[Y]}]:=\sum_{[Z]\in[\ind_{0}\Lambda]}(\varphi_{[X][Y]}^{[Z]}(1)-\varphi_{[Y][X]}^{[Z]}(1))u_{[Z]}\]
for all $[X],[Y]\in[\ind_{0}\Lambda]$ using Hall polynomials $\varphi_{[X][Y]}^{[Z]}$.
\end{defn}
\begin{lem}
We have isomorphisms\begin{equation}
\overline{L}(\mathcal{T}(A^{K}))/(|K|-1)\cong\overline{L}_{0}(\Lambda)_{1}/(|K|-1)\label{eq:tube-cyclic-Liealg}\end{equation}
for all $K\in\Omega$.
\end{lem}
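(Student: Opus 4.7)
The plan is to chain two isomorphisms. By Lemma \ref{lem:Lie-T-iso-Lie-cyc-modulo}, we already have
\[
\overline{L}(\mathcal{T}(A^{K}))/(|K|-1)\cong\overline{L}_{0}(\Lambda^{K})/(|K|-1),
\]
so it suffices to exhibit a Lie algebra isomorphism
\[
\overline{L}_{0}(\Lambda^{K})/(|K|-1)\cong\overline{L}_{0}(\Lambda)_{1}/(|K|-1).
\]

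By Remark \ref{rmk:shape-of-T}, the extension of scalars $X\mapsto X^{K}$ induces a bijection $[X]\mapsto[X^{K}]$ between $[\mathrm{ind}_{0}\Lambda]$ and $[\mathrm{ind}_{0}\Lambda^{K}]$, which extends additively to a bijection on the isoclasses of $\mod_{0}\Lambda$ and $\mod_{0}\Lambda^{K}$. I would define
\[
\Phi_{K}\colon\overline{L}_{0}(\Lambda)_{1}/(|K|-1)\longrightarrow\overline{L}_{0}(\Lambda^{K})/(|K|-1),\quad u_{[X]}\longmapsto u_{[X^{K}]},
\]
extended $\mathbb{Z}/(|K|-1)\mathbb{Z}$-linearly on the distinguished bases. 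This is manifestly an isomorphism of free $\mathbb{Z}/(|K|-1)\mathbb{Z}$-modules; the content is to check that it preserves Lie brackets.

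For compatibility with brackets, the key observation is the congruence
\[
\varphi(|K|)\equiv\varphi(1)\pmod{|K|-1}
\]
valid for every $\varphi(T)\in\mathbb{Z}[T]$, which follows from $|K|^{n}-1=(|K|-1)(|K|^{n-1}+\cdots+1)$. Applying this to the Hall polynomials $\varphi^{[Z]}_{[X][Y]}$ for $\mod_{0}\Lambda$ (whose existence is ensured by \cite[Theorem 2.7]{Guo}), we obtain
\[
F^{[Z^{K}]}_{[X^{K}][Y^{K}]}=\varphi^{[Z]}_{[X][Y]}(|K|)\equiv\varphi^{[Z]}_{[X][Y]}(1)\pmod{|K|-1}.
\]
Hence the Hall-commutator bracket on $\overline{L}_{0}(\Lambda^{K})/(|K|-1)$ and the bracket on $\overline{L}_{0}(\Lambda)_{1}/(|K|-1)$ defined via $\varphi(1)-\varphi(1)$ have identical structure constants modulo $|K|-1$, so $\Phi_{K}$ is a Lie algebra homomorphism and therefore an isomorphism.

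The only real obstacle is the mild bookkeeping ensuring that the Hall polynomials of \cite{Guo} are indeed the ones governing the Hall numbers of $\Lambda^{K}$ (i.e., independence of the base field in the form $F^{[Z^{K}]}_{[X^{K}][Y^{K}]}=\varphi^{[Z]}_{[X][Y]}(|K|)$), which is the defining property, and that the sums involved remain finite on each graded piece so that the reduction modulo $|K|-1$ makes sense term by term; both are standard once the bijection $[X]\leftrightarrow[X^{K}]$ is in place.
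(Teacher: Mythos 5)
Your proposal is correct and follows essentially the same route as the paper: chain the isomorphism of Lemma \ref{lem:Lie-T-iso-Lie-cyc-modulo} with the identification $\overline{L}_{0}(\Lambda^{K})/(|K|-1)\cong\overline{L}_{0}(\Lambda)_{1}/(|K|-1)$ obtained from the bijection of Remark \ref{rmk:shape-of-T} and the congruence $\varphi^{[Z]}_{[X][Y]}(|K|)\equiv\varphi^{[Z]}_{[X][Y]}(1)\pmod{|K|-1}$ for the Hall polynomials of $\mod_{0}\Lambda$. The paper's proof is just a compressed version of exactly this argument.
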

\begin{proof}
Since $F_{[X^{K}][Y^{K}]}^{[Z^{K}]}=\varphi_{[X][Y]}^{[Z]}(1)$ in
$\mathbb{Z}/(|K|-1)\mathbb{Z}$ for all $X,Y,Z\in\ind_{0}\Lambda$
and all $K\in\Omega$, we have $\overline{L}_{0}(\Lambda^{K})/(|K|-1)\cong\overline{L}_{0}(\Lambda)_{1}/(|K|-1)$
(here notations $F_{[X^{K}][Y^{K}]}^{[Z^{K}]}$ are for $\Lambda$-modules)
by Remark \ref{rmk:shape-of-T}. Hence the assertion follows by Lemma
\ref{lem:Lie-T-iso-Lie-cyc-modulo}.
\end{proof}
The Lie bracket of $\overline{L}_{0}(\Lambda)_{1}$ is easily described
as follows. First we have a bijection $[\ind_{0}\Lambda]\to R_{0}\times\mathbb{N}$
defined by $[M]\mapsto(i,j)$ with $\top M\cong S_{i}$ and $l(M)=j$
because all modules in $\ind_{0}\Lambda$ are uniserial. We identify
$R_{0}$ with $\mathbb{Z}/\mathbb{Z}p$, and for each $(i,j)\in R_{0}\times\mathbb{N}$
denote by $m(i,j)$ the isoclass of the indecomposable modules in
$\ind_{0}\Lambda$ corresponding to $(i,j)$. We choose a representative
$M(i,j)\in m(i,j)$ for all $(i,j)\in R_{0}\times\mathbb{N}$. Then
as calculated in \cite[1.2]{Asa02} we have the following: For each
$m(i,j),m(f,g)\in[\ind_{0}\Lambda]$

\begin{equation}
[u_{m(i,j)},u_{m(f,g)}]=\delta_{(i+j),f}u_{m(i,j+g)}-\delta_{(f+g),i}u_{m(f,j+g)}\label{eq:Lie-bracket-cyclic}\end{equation}
in $\overline{L}_{0}(\Lambda)_{1}$.

\begin{defn}
Define $L_{0}(\Lambda)_{1}$ to be the Lie subalgebra of $\overline{L}_{0}(\Lambda)_{1}$
generated by all $u_{[S]}$ with $S$ simple modules in $\ind_{0}\Lambda$. 
\end{defn}
From the formula (\ref{eq:Lie-bracket-cyclic}) we obtain the following.

\begin{lem}
Let \emph{}$X\in\ind_{0}\Lambda$ \emph{}and assume that \emph{}$l:=l(X)\not\in\mathbb{Z}p$.
\emph{}If \emph{}$p>2$ \emph{}$($resp. if \emph{}$p=2$$)$, \emph{}then
\emph{}$u_{[X]}$ \emph{}$($resp. \emph{}$2^{m}u_{[X]}$ \emph{}with
\emph{}$m:=(l-1)/2$$)$ \emph{}is obtained from \emph{}$\{ u_{[S]}\mid S\textrm{ simple in }\ind_{0}\Lambda\}$
by Lie brackets in \emph{}$\overline{L}_{0}(\Lambda)_{1}$, in particular,
we have \emph{}$u_{[X]}\in L_{0}(\Lambda)_{1}^{\mathbb{Q}}$.
\end{lem}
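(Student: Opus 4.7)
The plan is to prove both parts by induction on the length $l = l(X)$, using the explicit Lie bracket formula in $\overline{L}_0(\Lambda)_1$ recalled just above. Write the isoclass $[X]$ as $m(i,l)$ for some $i \in \mathbb{Z}/p\mathbb{Z}$. The base case $l=1$ is immediate since $m(i,1)$ is the isoclass of a simple module.

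For the inductive step when $p > 2$, I would look for a splitting $l = j + (l-j)$ with $1 \le j \le l-1$ and both $j, l-j \notin \mathbb{Z}p$. The bracket formula then gives $[u_{m(i,j)}, u_{m(i+j, l-j)}] = u_{m(i,l)}$, since the second Kronecker delta in the formula would require $l \in \mathbb{Z}p$ and hence vanishes. The choice $j = 1$ works unless $l \equiv 1 \pmod{p}$, in which case I would take $j = 2$: since $p > 2$, both $2$ and $l - 2 \equiv -1 \pmod{p}$ lie outside $\mathbb{Z}p$, and $l \ge p + 1 \ge 4$ ensures $j \le l - 1$. The induction hypothesis then writes both factors as iterated Lie brackets of simples, hence so is $u_{m(i,l)}$.

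The main obstacle is the case $p = 2$: here $l \notin \mathbb{Z}p$ forces $l$ odd, so every splitting has at least one even summand and the naive induction collapses. To bypass this I would compute, via the bracket formula,
\[ [u_{m(i,1)}, u_{m(i+1,l-1)}] = u_{m(i,l)} \quad\text{and}\quad [u_{m(i,1)}, u_{m(i,l-1)}] = -u_{m(i,l)}, \]
where the sign in the second bracket appears because $l-1$ is even makes its second delta equal to $1$ while the first delta vanishes. Subtracting and using bilinearity yields $[u_{m(i,1)}, u_{m(i+1,l-1)} - u_{m(i,l-1)}] = 2\,u_{m(i,l)}$. A direct application of the same formula, again using $p = 2$ and that $l-3$ is even, gives $[u_{m(i+1,1)}, u_{m(i,l-2)}] = u_{m(i+1,l-1)} - u_{m(i,l-1)}$, and combining the two identities produces
\[ 2\,u_{m(i,l)} = \bigl[u_{m(i,1)},\,[u_{m(i+1,1)}, u_{m(i,l-2)}]\bigr]. \]
The induction hypothesis applied to the odd length $l-2$ provides $2^{(l-3)/2}u_{m(i,l-2)}$ as an iterated Lie bracket of simples, and substitution then exhibits $2^{(l-1)/2}u_{m(i,l)}$ as an iterated Lie bracket of simples, matching the stated exponent $m = (l-1)/2$. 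In both cases $u_{m(i,l)} \in L_0(\Lambda)_1^{\mathbb{Q}}$ as required; the real content of the argument is the trick in the $p = 2$ case of producing $u_{m(i,l)}$ only after clearing a factor of $2$ coming from the unavoidable parity clash.
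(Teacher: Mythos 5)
Your proof is correct and follows the paper's overall strategy: induction on $l$ using the explicit bracket formula (\ref{eq:Lie-bracket-cyclic}). In the $p=2$ case your identity $2u_{m(i,l)}=[u_{m(i,1)},[u_{m(i+1,1)},u_{m(i,l-2)}]]$ is literally the same element as the paper's $[[u_{m(1,1)},u_{m(2,1)}],u_{m(1,l-2)}]$ (take $i=1$ and use $[u_{m(i,1)},u_{m(i,l-2)}]=0$, which holds because $l-1$ is even), so there the two arguments coincide. The only genuine difference is the inductive splitting for $p>2$: you peel off a factor of length $1$, or of length $2$ when $l\equiv 1\pmod p$, whereas the paper peels off a factor of length $p+1$ when $r>1$ and of length $2$ when $r=1$ (writing $l=mp+r$ with $1\le r\le p-1$). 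Your choice is simpler and handles the range $1<l<p$ seamlessly, where the paper's Case 1 expression $t=(m-1)p+(r-1)$ fails to be a positive integer, so its written decomposition implicitly requires $l>p$; in both versions the substance is the same check, namely that each summand of the splitting lies outside $\mathbb{Z}p$ (so the induction hypothesis applies) and that the unwanted Kronecker delta in (\ref{eq:Lie-bracket-cyclic}) vanishes because $l\not\in\mathbb{Z}p$.
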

\begin{proof}
We put $L$ to be the subset of $\overline{L}_{0}(\Lambda)_{1}$ consisting
of elements obtained from $u_{[S]}$ with $S$ simple modules in $\ind_{0}\Lambda$
by Lie brackets in $\overline{L}_{0}(\Lambda)_{1}$. There exist a
unique $m\in\mathbb{N}_{0}$ and a unique $r\in\{1,\ldots,p-1\}$
such that $l=mp+r$. We show the assertion by induction on $l\ge1$.
If $l=1$, then $X$ is simple and there is nothing to show. Assume
$l>1$. Without loss of generality we may assume that $X=M(1,l)$. 

Case 1. $p>2$, $r>1$. In this case if we put $t:=(m-1)p+(r-1)$,
then $l=t+(p+1)$, $t,p+1<l$, and $t,p+1\not\in\mathbb{Z}p$, which
implies $u_{m(1,t)},u_{m(r,p+1)}\in L$ by induction hypothesis. Since
$r+1\ne1$ in $\mathbb{Z}/\mathbb{Z}p$, we have $[u_{m(1,t)},u_{m(r,p+1)}]=u_{[X]}$
by the formula (\ref{eq:Lie-bracket-cyclic}). Hence $u_{[X]}\in L$.

Case 2. $p>2,$ $r=1$. In this case if we put $t:=(m-1)p+(p-1)$,
then $l=t+2$, $t,2<l$, and $t,2\not\in\mathbb{Z}p$, which implies
$u_{m(t,t)},u_{m(p,2)}\in L$ by induction hypothesis. Since $1\ne2$
in $\mathbb{Z}/\mathbb{Z}p$ we have $[u_{m(1,t)},u_{m(p,2)}]=u_{[X]}$
by the formula (\ref{eq:Lie-bracket-cyclic}). Hence $u_{[X]}\in L$.

Case 3. $p=2$. In this case by induction hypothesis we have $2^{m-1}u_{m(1,l-2)}\in L$.
Then $[[u_{m(1,1)},u_{m(2,1)}],u_{m(1,l-2)}]=[u_{m(1,2)}-u_{m(2,2)},u_{m(1,l-2)}]=2u_{[X]}$
shows that $2^{m}u_{[X]}\in L$.
\end{proof}
By (\ref{eq:tube-cyclic-qvalg}) and (\ref{eq:tube-cyclic-Liealg})
the lemma above implies the following:

\begin{prop}
\label{prp:mod-in-nonhom-tube}Let \emph{}$X\in\mathcal{T}$ \emph{}and
assume that the quasi-length $l$ of \emph{}$X$ \emph{}is not a multiple
of \emph{}$p$. If \emph{}$p>2$ $($resp. if \emph{}$p=2$$)$, then
\emph{}$\mathbf{u}_{[X]}$ $($resp. \emph{}$2^{m}\mathbf{u}_{[X]}$
\emph{}with \emph{}$m:=(l-1)/2$$)$ \emph{}is obtained from $\{\mathbf{u}_{[Y]}\mid Y\textrm{ modules on the mouth of }\mathcal{T}\}$
\emph{}by Lie brackets in the Lie algebra \emph{}$\Pi$. In particular,
\emph{}$\mathbf{u}_{[X]}\in L(A)_{1}^{\mathbb{Q}}$ \emph{}by Lemma
\emph{\ref{lem:non-sincere}.}
\end{prop}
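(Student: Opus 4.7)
The plan is to transfer the lemma just proved from the cyclic-quiver setting to the non-homogeneous tube $\mathcal{T}$, coordinate-by-coordinate over $K\in\Omega$, and then glue the results into a single identity in $\Pi$. Concretely, the preceding lemma provides, for each indecomposable $X'\in\ind_{0}\Lambda$ with $l(X')\notin\mathbb{Z}p$, a fixed iterated Lie-bracket word $\omega$ in elements $u_{[S]}$ ($S$ simple in $\ind_{0}\Lambda$) such that $\omega=u_{[X']}$ if $p>2$ and $\omega=2^m u_{[X']}$ if $p=2$. I would apply this with $X'$ corresponding to $X$ under the identification (\ref{eq:tube-cyclic-qvalg}).

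Next, I would use the isomorphism (\ref{eq:tube-cyclic-Liealg}) in each $K\in\Omega$ to translate $\omega$ into the same iterated bracket, now in $\overline{L}(\mathcal{T}(A^{K}))/(|K|-1)$, of the generators $u_{[Y^{K}]}$ indexed by the mouth modules $Y$ of $\mathcal{T}$, yielding $u_{[X^{K}]}$ (resp.\ $2^{m}u_{[X^{K}]}$). The key point is that the shape of $\mathcal{T}$ and the combinatorial labelling of the bracket word $\omega$ are independent of $K$ by Remark~\ref{rmk:shape-of-T}, so the same $\omega$ works uniformly in $K$. Since mouth modules are exceptional (Proposition \ref{sub:Positions-of-exceptional}) and hence $\Omega$-indecomposable (Proposition \ref{sub:Mods-in-non-homog}), we have $\mathbf{u}_{[Y]}=(u_{[Y^{K}]})_{K\in\Omega}\in\Pi$ for each mouth module $Y$, and similarly $\mathbf{u}_{[X]}=(u_{[X^{K}]})_{K\in\Omega}$. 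Therefore the coordinate-wise identities assemble to the single identity
\[
\omega(\mathbf{u}_{[Y_{1}]},\ldots,\mathbf{u}_{[Y_{s}]})
=\begin{cases}\mathbf{u}_{[X]} & \text{if }p>2,\\ 2^{m}\mathbf{u}_{[X]} & \text{if }p=2\end{cases}
\]
in $\Pi$, proving the first assertion.

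For the ``in particular'' part, I would invoke the description of the mouth of $\mathcal{T}$ from Section~\ref{sub:tubular-family}: each mouth module is either a simple $S_{x_{ij}}$ with $j\ge 2$ or a module $W'_{c(i)}=M(\delta-\sum_{j=2}^{p(i)}e_{x_{ij}})$, both of which are manifestly non-sincere. Lemma~\ref{lem:non-sincere} then gives $\mathbf{u}_{[Y]}\in L(A)_{1}$ for every mouth module $Y$, and since $L(A)_{1}$ is a Lie subalgebra of $\Pi$, the displayed identity shows $\mathbf{u}_{[X]}\in L(A)_{1}$ when $p>2$ and $\mathbf{u}_{[X]}\in L(A)_{1}^{\mathbb{Q}}$ when $p=2$ (after dividing by $2^{m}$).

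I do not expect a serious obstacle here; the only point requiring some care is checking that the isomorphism (\ref{eq:tube-cyclic-Liealg}) sends $u_{[X^{K}]}$ to the element indexed by the tube-position of $X$ consistently as $K$ varies, so that a single bracket word in $\Pi$ recovers $\mathbf{u}_{[X]}$ simultaneously in every coordinate. This compatibility is implicit in the construction of (\ref{eq:tube-cyclic-qvalg})–(\ref{eq:tube-cyclic-Liealg}) together with Remark~\ref{rmk:shape-of-T}, and it is what allows the argument to be run uniformly in $K$ rather than only up to rational constants depending on $|K|$.
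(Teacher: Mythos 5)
Your proposal is correct and follows essentially the same route as the paper: the paper's proof is precisely the transfer of the preceding cyclic-quiver lemma through the isomorphisms (\ref{eq:tube-cyclic-qvalg}) and (\ref{eq:tube-cyclic-Liealg}), done uniformly in $K\in\Omega$ (using Remark \ref{rmk:shape-of-T}) and then assembled in $\Pi$, with the ``in particular'' part obtained from Lemma \ref{lem:non-sincere} applied to the (non-sincere) mouth modules. Your extra care about the $K$-independence of the bracket word and the exceptionality of the mouth modules only makes explicit what the paper leaves implicit.
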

\emph{\qed}

\begin{rem}
\label{rem:rg-cmp}In other words, the last statement above is stated
as follows: Let $v$ be a positive root of $\chi_{A}$. If $M(v)$
is regular, then $\mathbf{u}_{m(v)}\in L(A)_{1}^{\mathbb{Q}}$. In
particular, if $M(v)$ is exceptional, then $\mathbf{u}_{m(v)}\in L(A)_{1}$.
The last statement also follows by Proposition \ref{lem:non-sincere}
because in this case $M(v)$ is not sincere.
\end{rem}
The following is obvious by the formula (\ref{eq:Lie-bracket-cyclic}):

\begin{lem}
\label{sub:inds-dimvec-delta-in-honhmg}In \emph{$\overline{L}_{0}(\Lambda)_{1}$}
we have \emph{}$[u_{m(i,sp)},u_{m(j,tp)}]=0$ for all $i,j\in R_{0}$
and for all $s,t\in\mathbb{N}$.\emph{\hfil\qed}
\end{lem}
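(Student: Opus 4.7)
The plan is to derive the identity by direct substitution into the bracket formula (\ref{eq:Lie-bracket-cyclic}), exploiting the fact that the first index lives in $R_0 = \mathbb{Z}/\mathbb{Z}p$ and the second indices $sp$, $tp$ are congruent to $0$ modulo $p$. There is essentially no obstacle here; the lemma is a direct consequence of the formula already recorded, and what has to be checked is only that both Kronecker deltas collapse to a single symbol, forcing the right hand side to vanish.

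More precisely, I would set $j_1 := sp$ and $g_1 := tp$ and substitute into
\[
[u_{m(i,j_1)},u_{m(j,g_1)}]=\delta_{(i+j_1),j}\,u_{m(i,j_1+g_1)}-\delta_{(j+g_1),i}\,u_{m(j,j_1+g_1)}.
\]
Since $i + sp \equiv i \pmod p$ and $j + tp \equiv j \pmod p$, the two Kronecker symbols both reduce to $\delta_{i,j}$. Moreover $j_1 + g_1 = (s+t)p$. Hence the right hand side equals
\[
\delta_{i,j}\bigl(u_{m(i,(s+t)p)} - u_{m(j,(s+t)p)}\bigr).
\]

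Finally I would split into two cases. If $i \ne j$, then $\delta_{i,j} = 0$ and the whole expression vanishes. If $i = j$, the two summands $u_{m(i,(s+t)p)}$ and $u_{m(j,(s+t)p)}$ coincide and cancel. In either case $[u_{m(i,sp)},u_{m(j,tp)}] = 0$, completing the proof.
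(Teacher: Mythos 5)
Your proof is correct: the lemma is indeed a direct substitution into the bracket formula (\ref{eq:Lie-bracket-cyclic}), using that $i+sp\equiv i$ and $j+tp\equiv j$ modulo $p$ so that both Kronecker symbols become $\delta_{i,j}$ and the two terms cancel, which is exactly how the paper justifies it (the paper simply declares it obvious from that formula). Nothing further is needed.
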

\begin{prop}
\label{prp:im-rt-commute}Let \emph{}$\mathcal{T}$ \emph{}be a non-homogeneous
tube of \emph{}$\Gamma_{A}$\emph{, and let} $X,Y\in\mathcal{T}$.
If both $\udim X$ and $\udim Y$ are in \emph{}$\mathbb{Z}\delta$,
then \emph{}$[\mathbf{u}_{[X]},\mathbf{u}_{[Y]}]=0$ \emph{}in \emph{}$\Pi$.
\end{prop}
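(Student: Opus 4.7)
The plan is to reduce the statement to the cyclic quiver lemma \ref{sub:inds-dimvec-delta-in-honhmg} via the standardness isomorphism \eqref{eq:tube-cyclic-qvalg}. Let $p>1$ be the rank of $\mathcal{T}$ and let $\Lambda = kR$ be the path algebra of the cyclic quiver with $p$ vertices. First I would record two translations that follow from Sections \ref{sub:tubular-family} and the paragraph containing \eqref{eq:tube-cyclic-qvalg}: (a) the isomorphism $\langle\mathcal{T}\rangle \cong \ind_0\Lambda$ takes each $X\in\mathcal{T}$ to some $M_X\in\ind_0\Lambda$ with $l(M_X)$ equal to the quasi-length of $X$; and (b) as described in Section \ref{sub:tubular-family}, for $X\in\mathcal{T}$ we have $\udim X \in \mathbb{Z}\delta$ if and only if the quasi-length of $X$ lies in $\mathbb{Z}p$, equivalently $l(M_X)\in\mathbb{Z}p$.

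Next, for each $K\in\Omega$ the same tube sits in $\Gamma_{A^K}$ (Remark \ref{rmk:shape-of-T}) and under the isomorphism \eqref{eq:tube-cyclic-Liealg} the basis element $u_{[X^K]}$ of $\overline{L}(\mathcal{T}(A^K))/(|K|-1)$ corresponds to $u_{[M_X]}$ in $\overline{L}_0(\Lambda)_1/(|K|-1)$, and similarly for $Y$. Since $l(M_X),l(M_Y)\in\mathbb{Z}p$, Lemma \ref{sub:inds-dimvec-delta-in-honhmg} gives $[u_{[M_X]},u_{[M_Y]}]=0$ in $\overline{L}_0(\Lambda)_1$, hence also in $\overline{L}_0(\Lambda)_1/(|K|-1)$. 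Transporting back through \eqref{eq:tube-cyclic-Liealg} yields
\[
[u_{[X^K]}, u_{[Y^K]}] = 0 \quad \text{in}\quad \overline{L}(A^K)/(|K|-1)
\]
for every $K\in\Omega$.

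Finally, I would invoke Proposition \ref{sub:Mods-in-non-homog}: modules in a non-homogeneous tube are $\Omega$-indecomposable, so $\mathbf{u}_{[X]}=(u_{[X^K]})_{K\in\Omega}$ and $\mathbf{u}_{[Y]}=(u_{[Y^K]})_{K\in\Omega}$ are genuine elements of $\Pi$. The Lie bracket in $\Pi = \prod_{K\in\Omega} \overline{L}(A^K)/(|K|-1)$ is computed componentwise, and each component vanishes by the previous step, so $[\mathbf{u}_{[X]},\mathbf{u}_{[Y]}]=0$ in $\Pi$, as required.

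The only non-routine points in this plan are correctly identifying the $\Lambda$-counterpart of the condition $\udim X\in\mathbb{Z}\delta$ (handled by (b) above) and ensuring the commutator in $\Pi$ really reduces to the cyclic-quiver lemma in each component; everything else is transport through the already established isomorphisms. I do not anticipate a serious obstacle, since Lemma \ref{sub:inds-dimvec-delta-in-honhmg} was precisely set up for this application.
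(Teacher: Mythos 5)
Your proposal is correct and follows essentially the same route as the paper: observe via Sect.\ \ref{sub:tubular-family} that the quasi-lengths of $X$ and $Y$ are multiples of $p$, then transport the bracket through the isomorphism (\ref{eq:tube-cyclic-Liealg}) and apply Lemma \ref{sub:inds-dimvec-delta-in-honhmg} componentwise in $\Pi$. The extra details you supply (Remark \ref{rmk:shape-of-T} and Proposition \ref{sub:Mods-in-non-homog}) are exactly what the paper leaves implicit.
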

\begin{proof}
Both the quasi-lengths of $X$ and $Y$ are multiples of $p$ by Sect.$\:$\ref{sub:tubular-family}.
Hence the assertion follows from the lemma above by (\ref{eq:tube-cyclic-Liealg}).
\end{proof}

\subsection{Regular root modules modulo $\boldsymbol{{\delta}}$}

\begin{lem}
\label{lem:rg-prpl}Let $v$ be a positive root of $\chi_{A}$with
$M(v)$ a regular module. Then there exists an $x\in Q_{0}\setminus\{1,\infty\}$
which is determined by $v$ modulo $\mathbb{Z}\delta$ such that $[\mathbf{u}_{m(v)},h_{x}]=\mathbf{u}_{m(v+\delta)}$
in $\Pi$.
\end{lem}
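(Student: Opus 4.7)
The plan is to localize the computation to the specific non-homogeneous tube containing $M(v)$, then apply the explicit Hall-commutator formula (\ref{eq:Lie-bracket-cyclic}) for the cyclic-quiver algebra.

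First I would observe that, since $M(v)$ is regular with $v\notin\mathbb{Z}\delta$ (because $\chi_{A}(v)=1$ while $\chi_{A}$ vanishes on $\mathbb{Z}\delta$), by Sect.~\ref{sub:tubular-family} $M(v)$ lies in a non-homogeneous tube $\mathcal{T}_{c(i)}$ of rank $p:=p(i)\ge 2$, with quasi-length $l$ not divisible by $p$. All the modules $X_{i1},\ldots,X_{ip(i)}$ lie in this same tube and all have quasi-length $p$. Since $\langle\mathcal{R}\rangle$ is standard (Definition~\ref{dfn:separating}), any bracket $[\mathbf{u}_{m(v)},\mathbf{u}_{X_{i'j'}}]$ with $i'\ne i$ vanishes, so the search for $x$ can be restricted to vertices on the $i$-th arm of the star-shaped quiver, i.e.\ $x\in\{x_{i2},\ldots,x_{ip(i)}\}$.

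Next I would transfer the calculation to the cyclic-quiver model via (\ref{eq:tube-cyclic-qvalg}) and the Lie-algebra isomorphism (\ref{eq:tube-cyclic-Liealg}) for each $K\in\Omega$. Writing $M(v)\leftrightarrow M(a,l)$ and $X_{ij}\leftrightarrow M(\beta_{j},p)$ in $\ind_{0}\Lambda$ with $\Lambda=kR_{p}$, the modules $X_{i1},\ldots,X_{ip(i)}$ form a single $\tau$-orbit, so $\{\beta_{1},\ldots,\beta_{p}\}$ is an enumeration of $\mathbb{Z}/\mathbb{Z}p$ in which $\beta_{j}-\beta_{j-1}$ is a nonzero constant. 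Formula (\ref{eq:Lie-bracket-cyclic}) yields
\[
[u_{m(a,l)},u_{m(\beta,p)}]=\delta_{a+l,\beta}\,u_{m(a,l+p)}-\delta_{\beta,a}\,u_{m(\beta,l+p)},
\]
and since $m(v+\delta)=m(a,l+p)$, only $\beta\in\{a,a+l\}$ give nonzero contributions, each equal to $\pm\mathbf{u}_{m(v+\delta)}$. Defining $\varepsilon(\beta):=+1$ if $\beta=a+l$, $-1$ if $\beta=a$, and $0$ otherwise, we obtain in $\Pi$
\[
[\mathbf{u}_{m(v)},h_{x_{ij}}]=\bigl(\varepsilon(\beta_{j})-\varepsilon(\beta_{j-1})\bigr)\mathbf{u}_{m(v+\delta)}.
\]

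Finally, I would exploit the fact that $\{\beta_{j}\}$ is a cyclic enumeration of $\mathbb{Z}/\mathbb{Z}p$ with constant step: as $j$ varies in $\{2,\ldots,p(i)\}$, the differences $\varepsilon(\beta_{j})-\varepsilon(\beta_{j-1})$ realise a transition either entering the support of $\varepsilon$ at $a+l$ or leaving it at $a$, so an index $j$ yielding the value $+1$ can be found. Taking $x:=x_{ij}$ gives the required identity, and since $a$ and $l$ modulo $p$ depend only on $v$ modulo $\mathbb{Z}\delta$, so does this $x$. The main obstacle will be the boundary cases $l\equiv\pm 1\pmod{p}$, where $a$ and $a+l$ are neighbours in the cyclic ordering of $\{\beta_{j}\}$ and the values $\varepsilon(\beta_{j})-\varepsilon(\beta_{j-1})$ fall in $\{0,\pm 1,\pm 2\}$; one must verify that the cyclic labelling induced by the specific definition of the $X_{ij}$ in Proposition~\ref{ind-delta} (together with the $\tau$-action in the standard tube) is oriented so that the value $+1$ is actually attained by some $j\in\{2,\ldots,p(i)\}$, rather than only $-1$ or $\pm 2$.
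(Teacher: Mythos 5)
Your reduction is the same as the paper's: restrict to the non-homogeneous tube containing $M(v)$ (brackets with the $X_{i'j'}$ from other tubes vanish by standardness), pass to nilpotent representations of the cyclic quiver via (\ref{eq:tube-cyclic-qvalg}), and compute with (\ref{eq:Lie-bracket-cyclic}), arriving at $[\mathbf{u}_{m(v)},h_{x_{ij}}]=\bigl(\varepsilon(\beta_{j})-\varepsilon(\beta_{j-1})\bigr)\mathbf{u}_{m(v+\delta)}$. Up to that point the computation is fine (although the claim that $\beta_{j}-\beta_{j-1}$ is a nonzero constant does not follow from the $X_{ij}$ merely forming one $\tau$-orbit; one must actually identify the quasi-socle of $X_{ij}$ as $S_{x_{i,j+1}}$ for $j<p(i)$ and $W'_{c(i)}$ for $j=p(i)$, which is true but needs an argument).

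The genuine gap is exactly the step you flag and postpone: showing that the value $+1$ is attained for some $j\in\{2,\ldots,p(i)\}$. This is not a routine orientation check but the whole content of the lemma, and the telescoping picture alone cannot deliver it. Writing the quasi-length of $M(v)$ as $t$ with $p\nmid t$, the two "active" quasi-length-$p$ modules are $S_{\sigma}[p]$ (contributing $+1$) and $(\tau^{-t}S_{\sigma})[p]$ (contributing $-1$), where $S_{\sigma}$ is the quasi-socle of $M(v)$; when $t\equiv-1\pmod{p}$ these two sit adjacently in precisely the bad position, and the differences $\varepsilon(\beta_{j})-\varepsilon(\beta_{j-1})$, $j=2,\ldots,p(i)$, only take the values $0,-1,+2$ (and $0,\pm2$ when $p(i)=2$), so no $j$ yields $+1$. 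Concretely, for $v=e_{x_{i2}}$ with $p(i)=2$ a direct count of Hall numbers in $M(e_{x_{i2}}+\delta)$ gives $[\mathbf{u}_{m(v)},h_{x_{i2}}]=\pm2\,\mathbf{u}_{m(v+\delta)}$ while $[\mathbf{u}_{m(v)},h_{x}]=0$ for all other $x$, so your concluding existence argument breaks down there rather than merely requiring a sign bookkeeping. Note also that the paper's own proof does not scan over $j$: it writes $t=mp+r$ and names the single difference $\mathbf{u}_{[S_{i+r}[p]]}-\mathbf{u}_{[S_{i+r+1}[p]]}$ that is supposed to work, so your route neither reproduces that identification nor supplies an alternative justification for the boundary case; as written the proposal does not establish the statement.
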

\begin{proof}
Set $M:=M(v)$. Then $M$ is in a non-homogeneous tube $\mathcal{T}$
of rank $p>1$. Let $\{ S_{i}\mid i\in\mathbb{Z}/p\mathbb{Z}\}$ be
the set of quasi-simples in $\mathcal{T}$ with $S_{i+1}=\tau S_{i}\ (i\in\mathbb{Z}/p\mathbb{Z})$.
Then $M=S_{i}[t]$ for some $i\in\mathbb{Z}/p\mathbb{Z}$ and $t\in\mathbb{N}$
with $p\nmid t$. Express $t$ as $t=mp+r$ with $m\ge0$ and $1\le r\le p-1$.
Then $\mathbf{u}_{[S_{i+r}[p]]}-\mathbf{u}_{[S_{i+r+1}[p]]}=h_{x}$
for a unique $x\in Q_{o}\setminus\{1,\infty\}$ ($x$ is determined
by $i+r$) and we have \[
[\mathbf{u}_{m(v)},h_{x}]=[\mathbf{u}_{m(v)},\mathbf{u}_{[S_{i+r}[p]]}-\mathbf{u}_{[S_{i+r+1}[p]]}]=\mathbf{u}_{[S_{i}[t+p]]}=\mathbf{u}_{m(v+\delta)}.\]
Next we show that the vertex $x\in Q_{0}$ is determined by $v$ modulo
$\mathbb{Z}\delta$. Let $v'$ be another positive root of $\chi_{A}$
with $M(v')$ regular and assume that $v-v'\in\mathbb{Z}\delta$.
Then $M(\delta')$ is also in $\mathcal{T}$ and $M(\delta')=S_{i'}[t']$
for some $i'\in\mathbb{Z}/p\mathbb{Z}$ and $t'\in\mathbb{N}$ with
$p\nmid t'$. Let $t'=m'p+r'$ with $m'\ge0$ and $1\le r'\le p-1$.
Then since $v-v'\in\mathbb{Z}\delta$, we have $i=i'$ and $t-t'\in p\mathbb{Z}$.
Hence $r=r'$ and thus $i+r=i'+r'$ as desired.
\end{proof}
\begin{rem}
Let $v$ be a positive root of $\chi_{A}$ with $M(v)$ a regular
module. In this case $v':=\org v$ (see Definition \ref{def:degree})
is the unique positive root of $\chi_{A}$ such that $M(v')$ exceptional
and $v-v'\in\mathbb{Z}\delta$. As easily seen it is given by \[
\org v=\begin{cases}
\deg v & \textrm{if }\deg v>0;\\
\deg v+\delta & \textrm{otherwise.}\end{cases}\]
 
\end{rem}
\begin{prop}
\label{prp:dif-rg}Let $v$ be a positive root of $\chi_{A}$ with
$M(v)$ a regular module. Then $\mathbf{u}_{m(v)}-\mathbf{u}_{m(\org v)}\in I(A)$.
\end{prop}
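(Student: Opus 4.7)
The plan is to reduce the proposition, by induction on the integer $m \in \mathbb{N}_{0}$ defined by $v = \org v + m\delta$, to the single statement
\begin{equation*}
(\ast)\qquad \mathbf{u}_{m(w+\delta)} - \mathbf{u}_{m(w)} \in I(A)
\end{equation*}
for every positive root $w$ of $\chi_{A}$ with $M(w)$ \emph{exceptional} regular (of quasi-length less than the rank $p$ of its non-homogeneous tube $\mathcal{T}$), and then to prove $(\ast)$ by induction on the quasi-length $\ell$ of $M(w)$. For the reduction, Lemma \ref{lem:rg-prpl} supplies an $x \in Q_{0}\setminus\{1,\infty\}$ depending only on $w$ modulo $\bbZ\delta$ such that $[\mathbf{u}_{m(w+k\delta)},h_{x}] = \mathbf{u}_{m(w+(k+1)\delta)}$ for all $k \geq 0$, and the identity
\[
\mathbf{u}_{m(w+(k+1)\delta)} - \mathbf{u}_{m(w)} = [\mathbf{u}_{m(w+k\delta)} - \mathbf{u}_{m(w)},\,h_{x}] + \bigl(\mathbf{u}_{m(w+\delta)} - \mathbf{u}_{m(w)}\bigr)
\]
together with $(\ast)$ and the ideal property of $I(A)$ propagates the conclusion from step $k$ to step $k+1$.

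For $(\ast)$ I exploit that $\tau^{p}$ acts as the identity on quasi-simples of $\mathcal{T}$: any short exact sequence $0 \to M(w') \to M(w) \to T \to 0$ (or the reverse direction) in $\mathcal{T}$ with $T$ a quasi-simple admits a parallel sequence $0 \to M(w'+\delta) \to M(w+\delta) \to T \to 0$ with the \emph{same} quasi-simple $T$, because the top and socle of $M(w+\delta)$ coincide with those of $M(w)$. The Hall-bracket computation in the proof of Proposition \ref{lem:non-sincere} gives $\mathbf{u}_{m(w)} = \pm[\mathbf{u}_{T},\mathbf{u}_{m(w')}]$ and $\mathbf{u}_{m(w+\delta)} = \pm[\mathbf{u}_{T},\mathbf{u}_{m(w'+\delta)}]$ with identical signs, so
\[
\mathbf{u}_{m(w+\delta)} - \mathbf{u}_{m(w)} = \pm[\mathbf{u}_{T},\, \mathbf{u}_{m(w'+\delta)} - \mathbf{u}_{m(w')}] \in I(A)
\]
by the induction hypothesis at quasi-length $\ell - 1$. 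When $\ell \geq 2$ the top and socle of $M(w)$ are distinct quasi-simples in $\mathcal{T}$, and since only one quasi-simple $W'_{c(i)}$ is not a simple $A$-module, I can always choose the short exact sequence so that $T = S_{x_{ij}}$ is a simple module. The base case $\ell = 1$ with $M(w) = S_{x_{ij}}$ simple is literally one of the defining generators of $I(A)$.

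This leaves the sub-case $\ell = 1$ with $M(w) = W'_{c(i)}$ as the main obstacle, because the $W'_{c(i)}$-position in each non-homogeneous tube is exactly the one not appearing among the defining generators of $I(A)$. My plan is to handle it inside the cyclic Hall-algebra model of $\mathcal{T}$ given by (\ref{eq:tube-cyclic-Liealg}): (\ref{eq:Lie-bracket-cyclic}) rewrites $\mathbf{u}_{W'_{c(i)}[p+1]}$ as $[\mathbf{u}_{W'_{c(i)}[p]}, \mathbf{u}_{W'_{c(i)}}]$, the element $\mathbf{u}_{W'_{c(i)}[p]}$ of dimension vector $\delta$ equals some $\mathbf{u}_{X_{ij_{0}}}$ linked to the other $\mathbf{u}_{X_{ij}}$'s by the telescoping identities $h_{x_{ij}} = \mathbf{u}_{X_{ij}} - \mathbf{u}_{X_{i,j-1}}$ from (\ref{eq:h-ij}) and by $h_{1}$ from (\ref{eq:h-1}). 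Bracketing the already-established generators $\alpha_{j} := \mathbf{u}_{m(e_{x_{ij}}+\delta)} - \mathbf{u}_{S_{x_{ij}}} \in I(A)$ (for $j \geq 2$) with $\mathbf{u}_{W'_{c(i)}}$ and with suitable simple quasi-simples, and applying (\ref{eq:Lie-bracket-cyclic}) together with the vanishing of inter-tube brackets and Proposition \ref{prp:im-rt-commute} for commutators of dimension-$\delta$ elements, should deliver the missing relation $\mathbf{u}_{W'_{c(i)}[p+1]} - \mathbf{u}_{W'_{c(i)}} \in I(A)$. Matching the cyclic combinatorics to the telescoping $h$-identities so as to extract precisely the $W'_{c(i)}$-relation from a combination of the simple-module generators is the step I expect to require the most care.
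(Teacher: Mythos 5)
Your reduction to the single relation $(\ast)$ via Lemma \ref{lem:rg-prpl} is exactly the paper's first step, and your induction on quasi-length disposes of the quasi-length $\ge 2$ cases in essentially the same way as the paper's Case (i) (only note that the sign and coefficient claims should be justified by standardness of the tube and the formula (\ref{eq:Lie-bracket-cyclic}), not by the proof of Proposition \ref{lem:non-sincere}, since $M(w+\delta)$ is sincere). The genuine gap is the case you yourself single out, $M(w)=W'_{c(i)}$, and the plan you sketch for it cannot succeed. There you allow yourself only the generators $\alpha_{j}=\mathbf{u}_{m(e_{x_{ij}}+\delta)}-\mathbf{u}_{m(e_{x_{ij}})}$ with $j\ge 2$, bracketed with elements of $L(A)_{1}$ (the elements $\mathbf{u}_{W'_{c(i)}}$, quasi-simples, $h_{x_{ij}}$ and $h_{1}$ are all in $L(A)_{1}$, so they supply no new generators of $I(A)$). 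Decomposing those elements into homogeneous pieces, every iterated bracket $[\alpha_{j},a_{1},\ldots,a_{k}]$ has all of its homogeneous components in degrees $\ge e_{x_{ij}}$, hence with strictly positive entry at the vertex $x_{ij}$. But the relation you need, $\mathbf{u}_{m(v_{i}+\delta)}-\mathbf{u}_{m(v_{i})}$ with $v_{i}=\delta-\sum_{j\ge2}e_{x_{ij}}$, has a nonzero homogeneous component in degree $v_{i}$, and $v_{i}$ vanishes at every internal vertex $x_{ij}$ of the arm. So no combination of brackets of the $\alpha_{j}$ $(j\ge2)$ with elements of $L(A)_{1}$ can produce the degree-$v_{i}$ term $-\mathbf{u}_{W'_{c(i)}}$: staying inside the tube is precisely what makes this case unreachable.

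The paper closes this case by leaving the tube and invoking the defining generator of $I(A)$ at the central vertex $1$ (available because the generators are indexed by all of $Q_{0}$, not only by the regular simples): one checks $\mathbf{u}_{m(v_{i})}=[\mathbf{u}_{m(v_{i}-e_{1})},\mathbf{u}_{m(e_{1})}]$ and $\mathbf{u}_{m(v_{i}+\delta)}=[\mathbf{u}_{m(v_{i}-e_{1})},\mathbf{u}_{m(e_{1}+\delta)}]$, whence $\mathbf{u}_{m(v_{i}+\delta)}-\mathbf{u}_{m(v_{i})}=[\mathbf{u}_{m(v_{i}-e_{1})},\mathbf{u}_{m(e_{1}+\delta)}-\mathbf{u}_{m(e_{1})}]\in I(A)$. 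By the grading obstruction above, some generator $g_{x}$ with $e_{x}\le v_{i}$ (i.e.\ $x=1$, $x=\infty$, or a vertex on a different arm) must enter the argument, so you should replace your tube-internal plan for this case by a computation of this kind; the rest of your proof then goes through.
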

\begin{proof}
Set $v':=\org v$ and let $m$ be the non-negative integer such that
$v=v'+m\delta$. It is enough to show that\[
\mathbf{u}_{m(v-t\delta)}-\mathbf{u}_{m(v-(t+1)\delta)}\in I(A)\]
for all $t=0,1,\ldots,m-1$. By Lemma \ref{lem:rg-prpl} we have\[
\mathbf{u}_{m(v-t\delta)}-\mathbf{u}_{m(v-(t+1)\delta)}=[\mathbf{u}_{m(v-(t+1)\delta)}-\mathbf{u}_{m(v-(t+2)\delta)},h_{x}]\]
for all $t=0,1,\ldots,m-2$. Hence it is enough to show that\[
\mathbf{u}_{m(v'+\delta)}-\mathbf{u}_{m(v')}\in I(A).\]
Let $\mathcal{T}$ be the non-homogeneous tube containing $M(v')$.
Then the set of quasi-simple modules in $\mathcal{T}$ is equal to
$\mathcal{S}:=\{ M(e_{i2}),M(e_{i3}),\ldots,M(e_{ip(i)}),M(v_{i})\}$
for some $i\in\{1,2,3\}$, where $v_{i}:=\delta-\sum_{j=2}^{p(i)}e_{ij}$.
Set $p:=p(i)$, $e_{j}:=e_{ij}\quad(j=2,\ldots,p)$ for short. And
for each $j\in\mathbb{Z}/p\mathbb{Z}$, we set $m_{j}:=\left\{ \begin{array}{ll}
m(v_{i}) & \text{if }j=1;\\
m(e_{j}) & \text{otherwise.}\end{array}\right.$ Let $t$ be the quasi-length of $M(v')$. Then $1\le t\le p-1$ and
there exists some $j\in\mathbb{Z}/p\mathbb{Z}$ such that\[
\mathbf{u}_{m(v')}=[\mathbf{u}_{m_{j}},\mathbf{u}_{m_{j+1}},\ldots,\mathbf{u}_{m_{j+t-1}}].\]

Case (i). If $\{ j,j+1,\ldots,j+t-1\}\ne\{1\}$, then there exists
some $s\in\{ j,j+1,\ldots,j+t-1\}\setminus\{1\}$ such that\[
\mathbf{u}_{m(v'+\delta)}=[\mathbf{u}_{m_{j}},\ldots,\mathbf{u}_{m_{s-1}},\mathbf{u}_{m(e_{s}+\delta)},\mathbf{u}_{m_{s+1}},\ldots,\mathbf{u}_{m_{j+t-1}}].\]
Hence we have\[
\mathbf{u}_{m(v'+\delta)}-\mathbf{u}_{m(v')}=[\mathbf{u}_{m_{j}},\ldots,\mathbf{u}_{m_{s-1}},\mathbf{u}_{m(e_{s}+\delta)}-\mathbf{u}_{m(e_{s})},\mathbf{u}_{m_{s+1}},\ldots,\mathbf{u}_{m_{j+t-1}}],\]
which is in $I(A)$ because $\mathbf{u}_{m(e_{s}+\delta)}-\mathbf{u}_{m(e_{s})}\in I(A)$.

Case (ii). Otherwise, we have $v'=v_{i}$. Then $\mathbf{u}_{m(v')}=[\mathbf{u}_{m(v_{i}-e_{1})},\mathbf{u}_{m(e_{1})}]$
and $\mathbf{u}_{m(v'+\delta)}=[\mathbf{u}_{m(v_{i}-e_{1})},\mathbf{u}_{m(e_{1}+\delta)}]$.
Since $\mathbf{u}_{m(v_{i}-e_{1})}\in L(A)_{1}$ and $\mathbf{u}_{m(e_{1}+\delta)}-\mathbf{u}_{m(e_{1})}\in I(A)$
we have\[
\mathbf{u}_{m(v'+\delta)}-\mathbf{u}_{m(v')}=[\mathbf{u}_{m(v_{i}-e_{1})},\mathbf{u}_{m(e_{1}+\delta)}-\mathbf{u}_{m(e_{1})}]\in I(A).\]

\end{proof}

\subsection{Simple modules modulo $\boldsymbol{{\delta}}$}

\begin{lem}
\label{lem:end-prpl}Assume that $\Delta\ne A_{1}$. Then for each
$t\in\mathbb{N}_{0}$ we have the following.
\end{lem}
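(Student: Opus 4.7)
The plan is to prove by induction on $t\ge 0$ the two statements $\mathbf{u}_{m(e_{x}+t\delta)}\in L(A)_{1}$ and $\mathbf{u}_{m(e_{x}+t\delta)}\equiv\mathbf{u}_{m(e_{x})}\pmod{I(A)}$ for each vertex $x\in Q_{0}$. The base case $t=0$ is immediate from the definition of $L(A)_{1}$ and of $I(A)$. For the inductive step I would split according to the position of $x$: if $x\notin\{1,\infty\}$ then $\rank(e_{x}+t\delta)=0$, so by Theorem \ref{thm:AR-quiver}(2) $M(e_{x}+t\delta)$ is regular, and both conclusions follow from Lemma \ref{lem:rg-prpl} (noting that the relevant $h_{y}$ lies in $L(A)_{1}$ via \eqref{eq:h-ij} together with Propositions \ref{lem:non-sincere} and \ref{prp:mod-in-nonhom-tube}) and from Proposition \ref{prp:dif-rg}, combined with the inductive hypothesis.

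The substantive case is $x\in\{1,\infty\}$, where $M(e_{x}+t\delta)$ lies in the preprojective component (for $x=1$) or the preinjective component (for $x=\infty$) and the regular-case lemmas do not apply directly. Here I would locate a short exact sequence
\[
0\longrightarrow M(e_{x}+(t-1)\delta)\longrightarrow M(e_{x}+t\delta)\longrightarrow W\longrightarrow 0
\]
(or its dual for $x=\infty$) with $W$ a non-sincere indecomposable of dimension vector $\delta$ drawn from Proposition \ref{ind-delta}. Its existence should be deduced from the structure of $\Gamma_{A}$ together with the separating property of $\mathcal{R}$, and transferred if needed via the derived equivalence $\hat F\colon\Db(\mod A)\to\Db(\mod H)$ to a computation in the tame hereditary algebra $H$ where preprojective modules are well understood. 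Evaluating the relevant Hall polynomials at $1$ via Lemma \ref{eq:bracket-Hall-poly} should give $\mathbf{u}_{m(e_{x}+t\delta)}=\pm[\mathbf{u}_{[W]},\mathbf{u}_{m(e_{x}+(t-1)\delta)}]$, which lies in $L(A)_{1}$ by induction and because $\mathbf{u}_{[W]}\in L(A)_{1}$ by Proposition \ref{lem:non-sincere}.

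For the congruence modulo $I(A)$ in this non-regular case, I would apply Proposition \ref{prp:dif-rg} to $W$ to replace $\mathbf{u}_{[W]}$ by $\mathbf{u}_{m(\org(\udim W))}$ modulo $I(A)$, and then combine the inductive congruence $\mathbf{u}_{m(e_{x}+(t-1)\delta)}\equiv\mathbf{u}_{m(e_{x})}\pmod{I(A)}$ with the defining generators $\mathbf{u}_{m(e_{y}+\delta)}-\mathbf{u}_{m(e_{y})}$ of $I(A)$ to push the resulting bracket-difference into $I(A)$. The main obstacle is the Hall polynomial computation: one must confirm that exactly one of the two polynomials evaluated at $1$ is $\pm 1$ and the other vanishes. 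The simply-laced hypothesis $\Delta\ne A_{1}$ is precisely what rules out the values $\pm 2,\pm 3$, mirroring the role of this hypothesis in Proposition \ref{lem:rep-fin-hered}; the excluded case $\Delta=A_{1}$ is handled separately by Lemma \ref{lem:end-prpl-A1} at the cost of inverting $2$.
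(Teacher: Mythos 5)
You have proved the wrong statement. Lemma \ref{lem:end-prpl} asserts the two exact bracket identities $[\mathbf{u}_{m(e_{1}+t\delta)},h_{12}]=\mathbf{u}_{m(e_{1}+(t+1)\delta)}$ and $[\mathbf{u}_{m(e_{\infty}+t\delta)},h_{1p(1)}]=\mathbf{u}_{m(e_{\infty}+(t+1)\delta)}$ in $\Pi$; the statements you set out to prove, namely $\mathbf{u}_{m(e_{x}+t\delta)}\in L(A)_{1}$ and $\mathbf{u}_{m(e_{x}+t\delta)}-\mathbf{u}_{m(e_{x})}\in I(A)$, are downstream consequences (the Remark after the definition of $I(A)$ and Proposition \ref{prp:dif-smp}), and in the paper their proofs rest precisely on this lemma, so your route is essentially circular. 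The paper's proof is a direct computation with the explicit module structures: it reduces (2) to the Hall-number identities (\ref{eq:cc}) $F_{M(e_{\infty}+t\delta),X_{1,p(1)}}^{M(e_{\infty}+(t+1)\delta)}=1$ and (\ref{eq:dd}) $F_{M(e_{\infty}+t\delta),X_{1,p(1)-1}}^{M(e_{\infty}+(t+1)\delta)}=0$, proved by writing down all possible exact sequences in matrix form and counting $|W_{M(e_{\infty}+t\delta),X_{1p(1)}}^{M(e_{\infty}+(t+1)\delta)}|=(q-1)^{2}$, while the reverse Hall numbers vanish because there is no epimorphism from a preinjective onto a regular module. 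Your proposal defers exactly this computation (``the main obstacle''), so the entire substantive content of the lemma is missing.

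Even as an argument for your substitute statement, the scaffolding fails at the key points. Every indecomposable module of dimension vector $\delta$ is sincere (all entries of $\delta$ are $1$), so there is no ``non-sincere indecomposable of dimension vector $\delta$'' in Proposition \ref{ind-delta}, Proposition \ref{lem:non-sincere} gives you nothing, and in fact $\mathbf{u}_{[X_{1p(1)}]}$ alone is not known to lie in $L(A)_{1}$ (its quasi-length equals the rank of its tube, so Proposition \ref{prp:mod-in-nonhom-tube} does not apply); only the difference $h_{1p(1)}=\mathbf{u}_{[X_{1p(1)}]}-\mathbf{u}_{[X_{1,p(1)-1}]}=[\mathbf{u}_{S_{x_{1p(1)}}},\mathbf{u}_{T_{x_{1p(1)}}}]$ of (\ref{eq:h-ij}) is, which is exactly why the lemma is stated with $h_{1p(1)}$ and why the vanishing (\ref{eq:dd}) must be proved. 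Likewise Proposition \ref{prp:dif-rg} and the operator $\org$ are defined only for positive roots $v$ with $M(v)$ regular; $\delta$ is a radical vector, not a root, so replacing $\mathbf{u}_{[W]}$ by $\mathbf{u}_{m(\org(\udim W))}$ modulo $I(A)$ is not meaningful. Finally, the role of the hypothesis $\Delta\ne A_{1}$ is not to exclude the values $\pm2,\pm3$ as in Proposition \ref{lem:rep-fin-hered}: it guarantees a non-homogeneous tube of rank $p(1)\ge2$, so that $h_{12}$ and $h_{1p(1)}$ involve only two modules of dimension vector $\delta$ and the bracket comes out with coefficient exactly $1$; in the $A_{1}$ case the only available element is $h_{1}$, a sum over $|K|+1$ modules, which produces the factor $2$ of Lemma \ref{lem:end-prpl-A1}.
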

\begin{enumerate}
\item $[\mathbf{u}_{m(e_{1}+t\delta)},h_{12}]=\mathbf{u}_{m(e_{1}+(t+1)\delta)}$;
and
\item $[\mathbf{u}_{m(e_{\infty}+t\delta)},h_{1p(1)}]=\mathbf{u}_{m(e_{\infty}+(t+1)\delta)}$.
\end{enumerate}
\begin{proof}
We only show the statement (2), the first one is shown similarly.
Now $h_{1p(1)}=\mathbf{u}_{[X_{1p(1)}]}-\mathbf{u}_{[X_{1,p(1)-1}]}\in L(A)_{1}$,
and hence it is enough to show the following two equalities:\begin{eqnarray}
[\mathbf{u}_{m(e_{\infty}+t\delta)},\mathbf{u}_{[X_{1p(1)}]}] & = & \mathbf{u}_{m(e_{\infty}+(t+1)\delta)};\label{eq:aa}\\
{}[\mathbf{u}_{m(e_{\infty}+t\delta)},\mathbf{u}_{[X_{1,p(1)-1}]}] & = & 0.\label{eq:bb}\end{eqnarray}
The equality (\ref{eq:aa}) holds if and only if for all $K\in\Omega$,
we have\[
[u_{[M(e_{\infty}+t\delta)^{K}]},u_{[X_{1p(1)}^{K}]}]\equiv u_{[M(e_{\infty}+(t+1)\delta)^{K}]}\quad(\mod(|K|-1)).\]
Since $F_{X_{1,pI1)}^{K},M(e_{\infty}+t\delta)^{K}}^{M(e_{\infty}+(t+1)\delta)^{K}}=0$,
this holds if $F_{M(e_{\infty}+t\delta)^{K},X_{1,p(1)}^{K}}^{M(e_{\infty}+(t+1)\delta)^{K}}=1$.
To this end we only have to show the following:\begin{equation}
F_{M(e_{\infty}+t\delta),X_{1,p(1)}}^{M(e_{\infty}+(t+1)\delta)}=1\label{eq:cc}\end{equation}
because the value of the right hand side does not depend on $q$.
Similarly the equality (\ref{eq:bb}) holds if the following holds:\begin{equation}
F_{M(e_{\infty}+t\delta),X_{1,p(1)-1}}^{M(e_{\infty}+(t+1)\delta)}=0.\label{eq:dd}\end{equation}
First we show (\ref{eq:dd}). Let $f\in\Hom_{A}(X_{1,p(1)-1},M(e_{\infty}+(t+1)\delta))$.
Then we have a commutative diagram $$
\begin{CD}
k @<0<< k\\
@V{f_{1,p(1)-1}}VV @VV{f_{1p(1)}}V\\
k^{t+1} @<<{\id}< k^{t+1},
\end{CD}
$$which shows that $f_{1,p(1)}=0$, and that $f$ is not a monomorphism.
Therefore there is no exact sequence of the form\[
0\to X_{1,p(1)-1}\to M(e_{\infty}+(t+1)\delta)\to M(e_{\infty}+t\delta)\to0,\]
which proves the equality (\ref{eq:dd}). Next we show the equality
(\ref{eq:cc}). Assume that we have an exact sequence $$
\begin{CD}
0 @>>> X_{1,p(1)} @>f>> M(e_{\infty}+(t+1)\delta) @>g>> M(e_{\infty}+ t\delta) @>>> 0.
\end{CD}
$$

Then the direct calculation shows that\[
\begin{array}{cc}
f_{\infty}=\left[\begin{array}{c}
0\\
\vdots\\
0\\
a\end{array}\right]\in\Mat_{(t+2)\times1}(k), & g_{\infty}=\left[\begin{array}{cccc}
 &  &  & 0\\
 & g_{1} &  & \vdots\\
 &  &  & 0\\
* & \cdots & * & c\end{array}\right]\in\Mat_{(t+1)\times(t+2)}(k)\\
f_{1}=\left[\begin{array}{c}
0\\
\vdots\\
0\\
a\end{array}\right]\in\Mat_{(t+1)\times1}(k), & g_{\infty}=\left[\begin{array}{cccc}
b & * & \cdots & *\\
0\\
\vdots &  & g_{1}\\
0\end{array}\right]\in\Mat_{(t+1)\times(t+2)}(k)\end{array}\]
for some $a,b,c\in k$. Therefore we have\[
\begin{array}{cc}
g_{\infty}=\left[\begin{array}{cccc}
b & c &  & \textrm{\Huge0}\\
 & b & \ddots\\
 &  & \ddots & c\\
\textrm{\Huge0} &  &  & b\end{array}\right]\in\Mat_{(t+1)\times(t+2)}(k), & g_{1}=\left[\begin{array}{cccc}
b & c &  & \textrm{\Huge0}\\
 & b & \ddots\\
 &  & \ddots & c\\
\textrm{\Huge0} &  &  & b\end{array}\right]\in\Mat_{t\times(t+1)}(k)\end{array}.\]
 Then $f$ is a monomorphism, $g$ is an epimorphism and $gf=0$ if
and only if $a\ne0,c=0$ and $b\ne0$. Hence we have $|W_{M(e_{\infty}+t\delta),X_{1p(1)}}^{M(e_{\infty}+(t+1)\delta)}|=(q-1)^{2}$,
which shows the equality (\ref{eq:cc}) (see Sect.$\,$\ref{sub:Hall-numbers}).
\end{proof}
To deal with the case that $\Delta=A_{1}$ we need the following formula:

\begin{lem}
\label{lem:Szanto}Assume that $\Delta=A_{1}$. Let $K\in\Omega$,
$l,m\in\mathbb{N}_{0}$ and $X\in\ind A^{K}$. Then we have\[
F_{X,M(e_{1}+l\delta)^{K}}^{M(e_{1}+m\delta)^{K}}=F_{M(e_{\infty}+l\delta)^{K},X}^{M(e_{\infty}+m\delta)^{K}}=\begin{cases}
1 & \textrm{if $l<m,\:\udim X=(m-l)\delta$}\\
0 & \textrm{otherwise.}\end{cases}\]

\end{lem}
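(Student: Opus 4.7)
The plan is to establish the first equality by a direct calculation on the Kronecker algebra (that is, the canonical algebra of type $A_1$, for which $r=2$ and $p(1)=p(2)=1$). The second equality follows by applying the standard $k$-duality $D=\Hom_k(-,k)\colon\mod A\to\mod A^{\text{op}}$, which interchanges preprojectives with preinjectives and turns submodule data into quotient data.

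The cases $l\ge m$ or $\udim X\ne(m-l)\delta$ are trivial: no submodule of the required isomorphism type can exist for dimension reasons, so $F=0$. So assume $l<m$ and $\udim X=(m-l)\delta$; then $X$ is an indecomposable regular module.

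I would set $P_n:=M(e_1+n\delta)^K$ and realize it explicitly as the representation with vector spaces $V_\infty^{(n)}=K[t]_{\le n-1}$, $V_1^{(n)}=K[t]_{\le n}$, and the two arrows $\alpha_{11},\alpha_{21}\colon\infty\to 1$ acting as inclusion and multiplication by $t$, respectively. A direct verification then shows: every morphism $P_l\to P_m$ is multiplication by a unique polynomial $q\in K[t]_{\le m-l}$; every nonzero such morphism is injective; and $\Aut(P_l)$ acts by scalars. Hence the submodules of $P_m$ isomorphic to $P_l$ are parameterized, via $[q]\mapsto qP_l$, by $\mathbb{P}(K[t]_{\le m-l})$.

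The main step is to identify the isomorphism class of $P_m/qP_l$ for each nonzero $q$. Writing $q=u\cdot p_1^{n_1}\cdots p_r^{n_r}$ with $u\in K^\times$ and $p_1,\dots,p_r\in K[t]$ distinct monic irreducible polynomials with $\deg p_i=d_i$, and setting $n_\infty:=m-l-\deg q\ge0$, I would show
\[
P_m/qP_l\;\cong\;\Big(\bigoplus_{i=1}^{r}W_{\langle p_i\rangle}[n_i]\Big)\oplus W_\infty[n_\infty],
\]
with the last summand absent when $n_\infty=0$ and where $W_{\langle p\rangle}[n]$ denotes the indecomposable regular of quasi-length $n$ in the tube at the closed point $\langle p\rangle$ of degree $\deg p$ in the sense of Section \ref{sub:tubular-family}. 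When $n_\infty=0$, the induced arrow $\alpha_{11}^{\text{quot}}$ is an isomorphism, the quotient identifies (as a Kronecker module) with the $K[t]$-module $K[t]/(q)$, and the claimed decomposition follows from the Chinese remainder theorem for $K[t]/(q)$. The general case $n_\infty>0$ is handled by a dual identification that separates off the summand $W_\infty[n_\infty]$ arising from the degree defect of $q$.

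Consequently, $P_m/qP_l$ is indecomposable exactly when $q$ is, up to a scalar, a prime power (including the degenerate case $r=0$ where $q$ is a nonzero constant and $P_m/qP_l\cong W_\infty[m-l]$). Since the classification of indecomposable regulars of dimension $(m-l)\delta$ in Section \ref{sub:tubular-family}—by a closed point $\langle p\rangle$ of $\mathbb{P}^1(K)$ together with a quasi-length $n$ satisfying $n\cdot\deg p=m-l$—matches this list of prime powers exactly, for each indecomposable $X$ there is exactly one $[q]\in\mathbb{P}(K[t]_{\le m-l})$ with $P_m/qP_l\cong X$. Thus $F_{X,P_l}^{P_m}=1$, completing the proof.

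The main technical obstacle is the quotient identification, particularly the uniform treatment of the contribution at the point $\infty$ coming from the degree defect when $\deg q<m-l$. This is essentially a restricted form of Beilinson's correspondence between Kronecker modules and coherent sheaves over $\mathbb{P}^1$, but it is verifiable directly on these specific modules by elementary polynomial computations.
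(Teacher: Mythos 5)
Your argument is correct, and it is essentially the paper's own approach: the paper proves this lemma only by remarking that it "follows from a direct calculation, or from Sz\'{a}nt\'{o} [Lemma 1.3]", and your explicit Kronecker computation—realizing $M(e_{1}+n\delta)^{K}$ as $(K[t]_{\le n-1}\subseteq K[t]_{\le n})$ with the two arrows acting by inclusion and by $t$, parameterizing the submodules isomorphic to $M(e_{1}+l\delta)^{K}$ by $\mathbb{P}(K[t]_{\le m-l})$, identifying the quotient via the prime factorization of $q$ together with the degree defect at $\infty$, and then dualizing for the preinjective equality—is exactly that direct calculation. The one step worth writing out is the cokernel identification when $n_{\infty}=m-l-\deg q>0$: factor $q\cdot P_{l}\subseteq\tilde{q}\cdot P_{l+n_{\infty}}\subseteq P_{m}$ with $\tilde{q}$ the monic part of $q$, note that the middle quotient is the quasi-length-$n_{\infty}$ module in the tube at $\infty$ and the top quotient is handled by your CRT case, and conclude the direct sum decomposition because regular modules lying in distinct tubes have no nontrivial extensions.
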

\begin{proof}
This follows from a direct calculation, or from Sz\'{a}nt\'{o} \cite[Lemma 1.3]{Sz}.
\end{proof}
Using the lemma above we obtain the statement corresponding to Lemma
\ref{lem:end-prpl} in the $A_{1}$ case.

\begin{lem}
\label{lem:end-prpl-A1}Assume that $\Delta=A_{1}$. Then for each
$t\in\mathbb{N}_{0}$ we have the following.
\end{lem}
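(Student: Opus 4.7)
The plan is to imitate the proof of Lemma \ref{lem:end-prpl}, the new ingredient being that in type $A_{1}$ the element $h_{1}$ of (\ref{eq:h-1}) is, after identifying $W_{0}(K)=X_{21}(K)$, exactly the sum $\sum_{[X]}u_{[X]}$ over all isoclasses of indecomposable $A^{K}$-modules with dimension vector $\delta$. By Proposition \ref{ind-delta} there are precisely $|K|+1$ such isoclasses (parametrized by $\mathbb{P}^{1}(K)$), and the factor $2$ in the conclusion will arise from the congruence $|K|+1\equiv 2\pmod{|K|-1}$.

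Fix $K\in\Omega$ and set $M:=M(e_{1}+t\delta)^{K}$, $N:=M(e_{1}+(t+1)\delta)^{K}$, together with the preinjective analogues $M':=M(e_{\infty}+t\delta)^{K}$, $N':=M(e_{\infty}+(t+1)\delta)^{K}$. By Theorem \ref{thm:AR-quiver} each of the positive roots $e_{1}+(t+1)\delta$ and $e_{\infty}+(t+1)\delta$ is realized by a unique indecomposable, so for every indecomposable $X$ with $\udim X=\delta$ the Hall commutator $[u_{[M]},u_{[X]}]$ in $\overline{L}(A^{K})/(|K|-1)$ collapses to $(F_{MX}^{N}-F_{XM}^{N})u_{[N]}$, and similarly for $[u_{[M']},u_{[X]}]$. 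Lemma \ref{lem:Szanto} with $l=t$, $m=t+1$ directly yields $F_{XM}^{N}=1$ and $F_{M'X}^{N'}=1$ for every such $X$. The two complementary Hall numbers vanish by the separation property of Definition \ref{dfn:separating}: $F_{MX}^{N}=0$ because an exact sequence $0\to X\to N\to M\to 0$ would demand a nonzero map from the regular module $X$ into the preprojective module $N$, contradicting $\Hom_{A}(\mathcal{R},\mathcal{P})=0$; dually $F_{XM'}^{N'}=0$ because an epimorphism $N'\twoheadrightarrow X$ would give a nonzero element of $\Hom_{A}(\mathcal{I},\mathcal{R})=0$. Summing over the $|K|+1$ indecomposables of dimension vector $\delta$ produces $-(|K|+1)u_{[N]}\equiv -2u_{[N]}$ and $+(|K|+1)u_{[N']}\equiv +2u_{[N']}$ modulo $|K|-1$, and assembling the tuple over $K\in\Omega$ yields the two identities of the lemma.

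The main obstacle is the identification of these vanishing Hall numbers, which are not supplied by Lemma \ref{lem:Szanto}; however, the separation property disposes of them uniformly for all $|K|+1$ indecomposables of dimension vector $\delta$, so no case-by-case matrix computation (of the type carried out for $h_{1p(1)}$ in the proof of Lemma \ref{lem:end-prpl}) is necessary. The genuine new content relative to that lemma is purely the bookkeeping of the cardinality $|K|+1$ and its reduction modulo $|K|-1$, which is precisely what forces the $2^{-1}$ appearing in the definition of $I(A)$ in the $A_{1}$ case.
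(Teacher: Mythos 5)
Your argument is correct and is essentially the paper's own proof: reduce to each $K\in\Omega$, observe that the $K$-component of $h_{1}$ is the sum of $u_{[X]}$ over the $|K|+1$ indecomposables with dimension vector $\delta$, apply Lemma \ref{lem:Szanto} to each bracket, and conclude via $|K|+1\equiv 2\pmod{|K|-1}$. The only difference is that you spell out the vanishing of the reverse Hall numbers via the separation property $\Hom_{A}(\mathcal{R},\mathcal{P})=0=\Hom_{A}(\mathcal{I},\mathcal{R})$, a point the paper leaves implicit, and your signs are consistent with the stated identities after the trivial antisymmetry flip for $[h_{1},\mathbf{u}_{m(e_{1}+t\delta)}]$.
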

\begin{enumerate}
\item $[h_{1},\mathbf{u}_{m(e_{1}+t\delta)}]=2\mathbf{u}_{m(e_{1}+(t+1)\delta)}$;
and
\item $[\mathbf{u}_{m(e_{\infty}+t\delta)},h_{1}]=2\mathbf{u}_{m(e_{\infty}+(t+1)\delta)}$.
\end{enumerate}
\begin{proof}
(1) It is enough to show that $[\sum_{c\in K}u_{W_{c}(K)}+u_{X_{11}},u_{m(e_{1}+t\delta)^{K}}]=2u_{m(e_{1}+(t+1)\delta)^{K}}$
in $\overline{L}(A)_{1}/(|K|-1)$ for each $K\in\Omega$. By Lemma
\ref{lem:Szanto} the left hand side is equal to $(|K|+1)u_{m(e_{1}+(t+1)\delta)^{K}}$,
which is equal to the right hand side in $\overline{L}(A)_{1}/(|K|-1)$.

(2) This is shown similarly.
\end{proof}
\begin{prop}
\label{prp:dif-smp}For each $x\in Q_{0}$ and $t\in\mathbb{N}$,
we have\[
\mathbf{u}_{m(e_{x}+t\delta)}-\mathbf{u}_{m(e_{x})}\in\begin{cases}
I(A) & \textrm{if $\Delta\ne A_{1}$};\\
I(A)^{\mathbb{Z}[2^{-1}]} & \textrm{if $\Delta=A_{1}$}.\end{cases}\]

\end{prop}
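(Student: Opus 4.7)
The proof splits into three cases according to the position of $x$ in $Q_0$.

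\textbf{Case 1}: $x \in Q_0 \setminus \{1,\infty\}$ (only possible when $\Delta \ne A_1$). Since $(e_x)_1 = (e_x)_\infty = 0$, we have $\rank(e_x + t\delta) = t - t = 0$, so $M(e_x + t\delta)$ is regular by Theorem \ref{thm:AR-quiver}(2). The smallest entry of $e_x + t\delta$ equals $t$ (attained at every vertex other than $x$), giving $\org(e_x+t\delta) = (e_x+t\delta) - t\delta = e_x$. Proposition \ref{prp:dif-rg} then yields $\mathbf{u}_{m(e_x+t\delta)} - \mathbf{u}_{m(e_x)} \in I(A)$ directly.

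\textbf{Cases 2 and 3}: $x \in \{1,\infty\}$. Here $|\rank(e_x+t\delta)| = 1$, so $M(e_x+t\delta)$ is preprojective (if $x = 1$) or preinjective (if $x = \infty$), and Proposition \ref{prp:dif-rg} is unavailable. Instead I argue by telescoping induction. Writing $a_s := \mathbf{u}_{m(e_x+s\delta)}$, the conclusion reads
$$\mathbf{u}_{m(e_x+t\delta)} - \mathbf{u}_{m(e_x)} = \sum_{s=0}^{t-1}(a_{s+1} - a_s) \in I(A),$$
and I will prove the sharper statement that $a_{s+1} - a_s \in I(A)$ for every $s \ge 0$ by induction on $s$. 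The base case $s = 0$ is the generator $\mathbf{u}_{m(e_x+\delta)} - \mathbf{u}_{m(e_x)}$ of $I(A)$ itself. For the inductive step (with $\Delta \ne A_1$), set $h := h_{12}$ when $x = 1$ and $h := h_{1p(1)}$ when $x = \infty$; both lie in $L(A)_1$ thanks to \eqref{eq:h-ij}, and Lemma \ref{lem:end-prpl} supplies $a_{s+1} = [a_s, h]$ for every $s \ge 0$. Thus
$$a_{s+1} - a_s = [a_s, h] - [a_{s-1}, h] = [a_s - a_{s-1}, h] \in [I(A), L(A)_1] \subseteq I(A),$$
completing the induction.

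\textbf{The $A_1$ case}: Here $Q_0 = \{1,\infty\}$, so only Cases 2 and 3 apply, and the identical induction runs inside $L(A)_1^{\mathbb{Z}[2^{-1}]}$ with Lemma \ref{lem:end-prpl-A1} in place of Lemma \ref{lem:end-prpl}: the relations $a_{s+1} = \tfrac{1}{2}[h_1, a_s]$ (for $x = 1$) and $a_{s+1} = \tfrac{1}{2}[a_s, h_1]$ (for $x = \infty$), together with $h_1 \in L(A)_1$ from \eqref{eq:h-1}, yield $a_{s+1} - a_s = \pm\tfrac{1}{2}[h_1, a_s - a_{s-1}]$, which lies in $I(A)^{\mathbb{Z}[2^{-1}]}$ by the ideal property. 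I do not anticipate any serious obstacle: once Lemmas \ref{lem:end-prpl}, \ref{lem:end-prpl-A1} and Proposition \ref{prp:dif-rg} are in hand, the proof reduces to a one-variable induction plus a telescoping sum; the only mild subtlety is checking that the relevant $h$'s belong to $L(A)_1$, which is immediate from the explicit formulas in Section \ref{sub:basis-Cartan}.
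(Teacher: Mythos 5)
Your proposal is correct and follows essentially the same route as the paper: the case $x\notin\{1,\infty\}$ is reduced to Proposition \ref{prp:dif-rg} via $\org(e_x+t\delta)=e_x$, and the cases $x\in\{1,\infty\}$ are handled by the same induction, bracketing the previous difference with $h_{12}$ resp. $h_{1p(1)}$ (via Lemma \ref{lem:end-prpl}), and with $\pm h_1$ and the factor $2$ absorbed in $\mathbb{Z}[2^{-1}]$ (via Lemma \ref{lem:end-prpl-A1}) when $\Delta=A_1$. The only cosmetic difference is that you phrase the conclusion as a telescoping sum, whereas the paper simply notes it suffices to prove each consecutive difference lies in $I(A)$; these are the same argument.
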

\begin{proof}
If $x\ne1,\infty$, then $v=e_{x}+t\delta$ is a positive root with
$M(v)$ a regular module and $\deg v=e_{x}$, and hence the statement
holds by Proposition \ref{prp:dif-rg}. Now let $x=1,\infty$. First
we consider the case that $\Delta\ne A_{1}$. It is enough to show
that $\mathbf{u}_{m(e_{x}+(t+1)\delta)}-\mathbf{u}_{m(e_{x}+t\delta)}\in I(A)$
by induction on $t\in\mathbb{N}_{0}$. This holds for $t=0$ by definition
of $I(A)$. Let $t\ge1$. By setting $u:=\begin{cases}
h_{12} & \textrm{if }x=1\\
h_{1p(1)} & \textrm{if }x=\infty\end{cases}$ we have $\mathbf{u}_{m(e_{x}+(t+1)\delta)}-\mathbf{u}_{m(e_{x}+t\delta)}=[\mathbf{u}_{m(e_{x}+t\delta)}-\mathbf{u}_{m(e_{x}+(t-1)\delta)},u]$
by Lemma \ref{lem:end-prpl}. Since $u\in L(A)_{1}$, the right hand
side is in $I(A)$ by induction hypothesis. Next consider the case
that $\Delta=A_{1}$. Then by setting $u:=\begin{cases}
-h_{1} & \textrm{if }x=1\\
h_{1} & \textrm{if }x=\infty\end{cases}$ we have $2(\mathbf{u}_{m(e_{x}+(t+1)\delta)}-\mathbf{u}_{m(e_{x}+t\delta)})=[\mathbf{u}_{m(e_{x}+t\delta)}-\mathbf{u}_{m(e_{x}+(t-1)\delta)},u]$
for each $t\ge1$ by Lemma \ref{lem:end-prpl-A1}. This proves the
remaining statement.
\end{proof}

\subsection{Stability of Hall numbers}

Let $K\in\Omega$ and $X$, $Y\in\ind A^{K}$. If $v:=\udim X+\udim Y$
is a root of $\chi$ , then we set\[
b_{[X],[Y]}:=(F_{XY}^{M(v)^{K}}-F_{YX}^{M(v)^{K}})+(|K|-1)\mathbb{Z}\in\mathbb{Z}/(|K|-1)\mathbb{Z}.\]
Note that in this case we have $[u_{[X]},u_{[Y]}]=b_{[X],[Y]}u_{m(v)^{K}}\in\overline{L}(A^{K})/(|K|-1)$
and $b_{[X],[Y]}$ is uniquely determined by this property.

\begin{prop}
\label{pro:img-stb}Assume that $\Delta\ne A_{1}$. Let $K\in\Omega$,
$X\in\ind A^{K}$, and $x\in Q_{0}$. If $\udim X\in\mathbb{Z}\delta$,
then for any $t\in\mathbb{N}$ we have\[
b_{[X],m(e_{x}+t\delta)^{K}}=b_{[X],m(e_{x})^{K}}.\]

\end{prop}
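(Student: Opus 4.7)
The plan is to prove this by induction on $t$, using the Jacobi identity together with the two auxiliary results already established: Lemma \ref{lem:end-prpl} (for the vertices $x=1,\infty$) and Lemma \ref{lem:rg-prpl} (for the other vertices), combined with Proposition \ref{prp:im-rt-commute} and the separation property of the tubular family. Since $X\in\ind A^{K}$ and $\udim X=s\delta\in\mathbb{Z}\delta$, the module $X$ is regular, sitting in some (homogeneous or non-homogeneous) tube of $\Gamma_{A^{K}}$. The target vector $v_{t}:=\udim X+\udim M(e_{x}+t\delta)^{K}=e_{x}+(s+t)\delta$ is a positive root of $\chi_{A}$ by Corollary \ref{sub:delta-strings}, and $m(v_{t})^{K}$ is unambiguously defined by Theorem \ref{thm:AR-quiver}(3)(b), so the Hall numbers $F^{M(v_{t})^{K}}_{\,\cdot\,,\,\cdot\,}$ and the scalar $b_{[X],m(e_{x}+t\delta)^{K}}\in\mathbb{Z}/(|K|-1)\mathbb{Z}$ make sense.

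The key step is to choose, once and for all (depending on $x$ but not on $t$), an element $h\in L(A)_{1}$ with the two properties:
\begin{enumerate}
\item[(i)] $[\mathbf{u}_{m(e_{x}+t\delta)},h]=\mathbf{u}_{m(e_{x}+(t+1)\delta)}$ in $\Pi$ for every $t\in\mathbb{N}_{0}$;
\item[(ii)] $[\mathbf{u}_{[X]},h]=0$ in $\overline{L}(A^{K})/(|K|-1)$ for every $X\in\ind A^{K}$ with $\udim X\in\mathbb{Z}\delta$.
\end{enumerate}
For $x=1$ take $h:=h_{12}$, for $x=\infty$ take $h:=h_{1p(1)}$; property (i) is then Lemma \ref{lem:end-prpl}. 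For $x\in Q_{0}\setminus\{1,\infty\}$ we apply Lemma \ref{lem:rg-prpl} to $v=e_{x}$: it produces a vertex $y\in Q_{0}\setminus\{1,\infty\}$, which by the lemma depends only on the coset of $v$ modulo $\mathbb{Z}\delta$, so the same $h:=h_{y}$ satisfies (i) for every $t$. In all cases $h$ is a $\mathbb{Z}$-linear combination of classes $\mathbf{u}_{X_{ij}}$ with $X_{ij}$ of dimension vector $\delta$ and lying in the non-homogeneous tube $\mathcal{T}_{c(i)}$.

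Property (ii) is the heart of the argument. Fix such an $X_{ij}$. If $X$ lies in a tube different from $\mathcal{T}_{c(i)}$, then because $\mathcal{R}$ is separating $\mathcal{P}$ from $\mathcal{I}$ (Theorem \ref{thm:AR-quiver}, Definition \ref{dfn:separating}) we have $\Hom_{A^{K}}(X,X_{ij}^{K})=\Hom_{A^{K}}(X_{ij}^{K},X)=0$, and from the Auslander--Reiten formula applied inside the regular component, also $\Ext^{1}_{A^{K}}(X,X_{ij}^{K})=\Ext^{1}_{A^{K}}(X_{ij}^{K},X)=0$; by the Riedtmann formula (\ref{eq:Hall-coefficient}) all relevant Hall numbers vanish, so $[u_{[X]},u_{[X_{ij}^{K}]}]=0$. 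If instead $X$ lies in the same non-homogeneous tube $\mathcal{T}_{c(i)}$ as $X_{ij}^{K}$, then both $\udim X$ and $\udim X_{ij}^{K}=\delta$ are in $\mathbb{Z}\delta$, and Proposition \ref{prp:im-rt-commute} gives the vanishing directly. Either way $[u_{[X]},u_{[X_{ij}^{K}]}]=0$, and linearity yields (ii).

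With (i) and (ii) in hand, the inductive step is a one-line Jacobi calculation: assuming $b_{[X],m(e_{x}+t\delta)^{K}}=b_{[X],m(e_{x})^{K}}=:b$, we have
\[
[u_{[X]},u_{m(e_{x}+(t+1)\delta)^{K}}]=[u_{[X]},[u_{m(e_{x}+t\delta)^{K}},h^{K}]]=[[u_{[X]},u_{m(e_{x}+t\delta)^{K}}],h^{K}]
\]
using (ii), and the induction hypothesis together with (i) applied at level $s+t$ turns the right-hand side into $b\cdot u_{m(e_{x}+(s+t+1)\delta)^{K}}$; comparing with the definition of $b_{[X],m(e_{x}+(t+1)\delta)^{K}}$ gives the claim. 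The base case $t=0$ is trivial. The main obstacle I anticipate is the verification of (ii): it requires a careful case split according to the tube containing $X$, and relies crucially on the non-triviality of Proposition \ref{prp:im-rt-commute} for the case in which $X$ and $X_{ij}^{K}$ share a non-homogeneous tube — the orthogonality argument alone does not suffice there.
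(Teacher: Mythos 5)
Your proof is correct and takes essentially the same route as the paper: choose once and for all the element $u=h_{x_{ij}}=\mathbf{u}_{X_{ij}}-\mathbf{u}_{X_{i,j-1}}$ furnished by Lemmas \ref{lem:rg-prpl} and \ref{lem:end-prpl}, observe that it commutes with $u_{[X]}$, and run the Jacobi-identity induction on $t$. You are in fact slightly more careful than the paper's own proof, which cites only Proposition \ref{prp:im-rt-commute} for the vanishing $[u_{[X]},u]=0$ and leaves implicit the case where $X$ lies in a tube different from the one containing the $X_{ij}$ (your orthogonality argument there is essentially Lemma \ref{lem:distinct-tubes}, which the paper only states later in Section 6).
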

\begin{proof}
We have $\udim X=d\delta$ for some $d\in\mathbb{N}$. By Lemma \ref{lem:rg-prpl}
and Lemma \ref{lem:end-prpl} there exists some $x_{ij}\in Q_{0}\setminus\{1,\infty\}$
such that $u:=u_{X_{ij}^{K}}-u_{X_{i,j-1}^{K}}$ satisfies $u_{m(e_{x}+t\delta)^{K}}=[u_{m(e_{x}+(t-1)\delta)^{K}},u]$
for any $t\in\mathbb{N}$. By Proposition \ref{prp:im-rt-commute}
we have $[u_{[X]},u]=0$. Then\begin{eqnarray*}
[u_{[X]},u_{m(e_{x}+t\delta)^{K}}] & = & [u_{[X]},[u_{m(e_{x}+(t-1)\delta)^{K}},u]]\\
 & = & [[u_{[X]},u_{m(e_{x}+(t-1)\delta)^{K}}],u]+[u_{m(e_{x}+(t-1)\delta)^{K}},[u_{[X]},u]]\\
 & = & b_{[X],m(e_{x}+(t-1)\delta)^{K}}[u_{m(e_{x}+(t-1+d)\delta)^{K}},u]\\
 & = & b_{[X],m(e_{x}+(t-1)\delta)^{K}}u_{m(e_{x}+(t+d)\delta)^{K}},\end{eqnarray*}
which shows $b_{[X],m(e_{x}+t\delta)^{K}}=b_{[X],m(e_{x}+(t-1)\delta)^{K}}$.
By repeating this we obtain the assertion.
\end{proof}
When $\Delta=A_{1}$, we have the following by Lemma \ref{lem:Szanto}:

\begin{prop}
\label{pro:Krn-stb}Assume that $\Delta=A_{1}$. Let $m\in\mathbb{N}$,
$K\in\Omega$, and $X\in\ind A^{K}$. If $\udim X=m\delta$, then
for each $x\in Q_{0}=\{1,\infty\}$ and each $t\in\mathbb{N}$ we
have $F_{X,M(e_{x})^{K}}^{M(e_{x}+m\delta)^{K}}=F_{X,M(e_{x}+t\delta)^{K}}^{M(e_{x}+(t+m)\delta)^{K}}(=1)$.
Thus

\[
b_{[X],m(e_{x}+t\delta)^{K}}=b_{[X],m(e_{x})^{K}}.\]

\end{prop}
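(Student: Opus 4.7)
The plan is to derive both assertions as immediate consequences of Lemma~\ref{lem:Szanto}. For the Hall-number equality in the first part, I substitute $(l,m')=(0,m)$ and $(l,m')=(t,t+m)$ into Lemma~\ref{lem:Szanto}; both substitutions satisfy the hypotheses $l<m'$ (automatic from $m\ge1$) and $\udim X=(m'-l)\delta=m\delta$, so the Lemma forces each of the corresponding Hall numbers to equal $1$. For $x=1$ this is the direct content of the first equality in Lemma~\ref{lem:Szanto}; for $x=\infty$ one reads off the assertion from its second equality (after an alignment of argument order, which can be justified either by a direct re-examination of the lemma or by invoking the $k$-duality $D$ exchanging preprojective and preinjective components for the Kronecker algebra).

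For the \textit{Thus} clause, I expand
\[
b_{[X],\,m(e_x+t\delta)^K}=\bigl(F_{X,M(e_x+t\delta)^K}^{M(e_x+(t+m)\delta)^K}-F_{M(e_x+t\delta)^K,X}^{M(e_x+(t+m)\delta)^K}\bigr)+(|K|-1)\mathbb{Z}.
\]
The first summand equals $1$ uniformly in $t$ by the previous step. For the second summand, $t$-independence follows from the same Lemma~\ref{lem:Szanto} in its complementary orientation; more concretely, for $x=\infty$ it vanishes because the separating property $\Hom(\mathcal{R},\mathcal{P})=0$ rules out embeddings of the regular $X$ into the preprojective $M(e_\infty+(t+m)\delta)^K$, while for $x=1$ the count is controlled by $\dim_K\Hom(X,M(e_1+(t+m)\delta)^K)$, which by the Euler form of $A^K$ depends only on $\udim X=m\delta$ and the dimension jump $m\delta$, and is therefore independent of $t$. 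Subtracting two $t$-independent quantities yields $b_{[X],m(e_x+t\delta)^K}=b_{[X],m(e_x)^K}$.

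The main technical point I anticipate is the alignment of argument orders in the Lemma-to-Proposition comparison for the $x=\infty$ side of the first assertion, together with a short uniqueness-of-quotient check for the $x=1$ side of the $b$-computation (ensuring that the embedding $X\hookrightarrow M(e_1+(t+m)\delta)^K$, which is unique up to scalar by the Hom computation, always produces the preinjective quotient $M(e_1+t\delta)^K$). Both are short AR-theoretic verifications relying on the structure of the preprojective and preinjective components of $\Gamma_{A^K}$ recorded in Theorem~\ref{thm:AR-quiver}, and neither constitutes a conceptual obstruction.
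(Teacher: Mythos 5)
Your overall route---read both Hall numbers off Lemma~\ref{lem:Szanto} and use the separating property for the remaining orientation---is exactly the paper's (the proposition is stated there as an immediate consequence of that lemma), but your case bookkeeping swaps preprojective and preinjective, and several intermediate claims are false as written. By Theorem~\ref{thm:AR-quiver}(2), $M(e_1+s\delta)$ has rank $1$ and is \emph{preprojective}, while $M(e_\infty+s\delta)$ has rank $-1$ and is \emph{preinjective}. Consequently: (i) for $x=\infty$ your ``first summand'' $F_{X,M(e_\infty+t\delta)^K}^{M(e_\infty+(t+m)\delta)^K}$ is $0$, not $1$: there is no nonzero homomorphism, let alone a monomorphism, from $M(e_\infty+t\delta)^K$ to $M(e_\infty+(t+m)\delta)^K$ inside the preinjective component. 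The value $1$ supplied by the second formula of Lemma~\ref{lem:Szanto} belongs to the opposite orientation $F_{M(e_\infty+t\delta)^K,X}^{M(e_\infty+(t+m)\delta)^K}$, i.e.\ to your \emph{second} summand; so the ``alignment of argument order'' you defer is not cosmetic---once aligned, the $1$ sits in the other slot (the equality displayed in the Proposition must be read with that orientation for $x=\infty$). (ii) Your vanishing argument for $x=\infty$ applies $\Hom(\mathcal{R},\mathcal{P})=0$ to ``the preprojective $M(e_\infty+(t+m)\delta)^K$'', but that module is preinjective, and embeddings of the regular $X$ into it do exist. (iii) For $x=1$ the second summand $F_{M(e_1+t\delta)^K,X}^{M(e_1+(t+m)\delta)^K}$ is not controlled by $\dim_K\Hom_{A^K}(X,M(e_1+(t+m)\delta)^K)$ via the Euler form (the Euler form computes $\dim\Hom-\dim\Ext^1$, which is negative here); it is simply $0$ because $M(e_1+(t+m)\delta)^K$ is preprojective and $\Hom(\mathcal{R},\mathcal{P})=0$, so the regular $X$ admits no monomorphism into it. The ``uniqueness-of-quotient'' check you anticipate for $x=1$ is likewise moot for the same reason.

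The correct bookkeeping is: for $x=1$, $b_{[X],m(e_1+t\delta)^K}=1-0=1$ (Lemma~\ref{lem:Szanto} for the first orientation, separation for the second), and for $x=\infty$, $b_{[X],m(e_\infty+t\delta)^K}=0-1=-1$ (no maps within the preinjective component for the first orientation, Lemma~\ref{lem:Szanto} for the second). In both cases the value is independent of $t$, which is all the ``Thus'' clause requires, so your conclusion is reachable by your own method; but the two cases must be interchanged, the claimed values of the summands for $x=\infty$ corrected, and the Euler-form step discarded in favour of the separating property.
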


\subsection{Hall polynomials}

\begin{lem}
\label{lem:Hall-sub-factor}If $M$ and $N$ are preprojective $A$-modules,
then there exists polynomials $\varphi_{*,N}^{M}(T),\varphi_{N,*}^{M}\in\mathbb{Z}[T]$
such that\[
F_{*,N^{K}}^{M^{K}}=\varphi_{*,N}^{M}(|K|)\textrm{ and $F_{N^{K},*}^{M^{K}}=\varphi_{N,*}^{M}(|K|)$}\]
for all $K\in\Omega$.
\end{lem}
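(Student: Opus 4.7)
The plan leverages three features of the preprojective component $\mathcal{P}$ of $\Gamma_A$: (i) submodules of preprojective $A$-modules lie in $\operatorname{add} \mathcal{P}$; (ii) preprojective indecomposables are exceptional with endomorphism algebra $k$; (iii) the set of iso classes of preprojective $A$-modules of a given dimension vector is finite and independent of $K \in \Omega$. Feature (i) follows from the Hom-vanishings in Theorem \ref{thm:AR-quiver}, since every indecomposable summand $N'_i$ of a submodule $N' \subseteq M \in \operatorname{add}\mathcal{P}$ admits a nonzero morphism to $M$, which together with $\Hom_A(\mathcal{R},\mathcal{P})=\Hom_A(\mathcal{I},\mathcal{P})=0$ forces $N'_i \in \mathcal{P}$. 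Features (ii) and (iii) come from Proposition \ref{sub:Positions-of-exceptional}, Lemma \ref{sub:Endo-algebras-of}, and the directedness of $\mathcal{P}$. Consequently, for all $X, Y \in \operatorname{add}\mathcal{P}$, the quantities $|\Aut_A X^K|$, $|\Hom_A(X^K, Y^K)|$, and $|\Ext_A^1(X^K, Y^K)|$ are polynomial in $|K|$.

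For $F_{N, *}^M$: any contributing submodule $N' \subseteq M$ satisfies $\udim N' = \udim M - \udim N$ and lies in $\operatorname{add}\mathcal{P}$ by (i). By (iii) there are only finitely many iso classes $[N']$ possible, so $F_{N^K, *}^{M^K} = \sum_{[N']} F_{N'^K, N^K}^{M^K}$ is a finite sum whose index set is independent of $K$. I would then establish polynomiality of each individual Hall number $F_{N'^K, N^K}^{M^K}$ by restricting to a finite, convex, extension-closed directed subcategory $\mathcal{W} \subseteq \operatorname{add}\mathcal{P}$ containing $N', N, M$ together with all their sub- and quotient-modules; such $\mathcal{W}$ is equivalent to $\mod B$ for a representation-directed algebra $B$, for which Hall polynomials exist by Ringel \cite{Ri90-poly}.

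For $F_{*, N}^M$: here the quotient $M/N'$ may have regular or preinjective summands, so a direct sum over iso classes of quotients is unwieldy. Instead I would count injective homomorphisms,
\[
F_{*, N^K}^{M^K} \cdot |\Aut_A N^K| = |\{ f \in \Hom_A(N^K, M^K) \mid f \text{ is injective}\}|,
\]
and stratify $\Hom_A(N^K, M^K)$ by $\operatorname{Im}(f)$, which by (i) is always a preprojective submodule of $M$. This yields
\[
|K|^{\dim_k \Hom_A(N, M)} = \sum_{[Q]} F_{*, Q^K}^{M^K} \cdot |\operatorname{Surj}(N^K, Q^K)|,
\]
where $[Q]$ runs over a finite, $K$-independent set of preprojective iso classes (those that can be both a submodule of $M$ and a quotient of $N$). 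A joint induction on $\udim N$ in $K_0(A)$ — simultaneously handling $|\operatorname{Surj}(N^K, Q^K)|$ for proper preprojective quotients $Q$ of $N$ by the same sort of image-stratification on the target side — then isolates $F_{*, N^K}^{M^K}$ as a polynomial in $|K|$.

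The main obstacle will be closing the induction for $F_{*, N}^M$ cleanly, because the recursion pulls in the quantities $|\operatorname{Surj}(N^K, Q^K)|$ for proper preprojective quotients $Q$ of $N$, whose polynomiality is equally in need of justification. The cleanest way to handle this is to embed $N$ and all its preprojective sub- and quotient modules into a single finite convex directed subcategory of $\operatorname{add}\mathcal{P}$ equivalent to $\mod B$ with $B$ representation-directed; within $\mod B$ all the surjection, injection, and Hall-number counts are polynomial by Ringel's Hall polynomial existence theorem, and the answer transfers back to $\mod A$ since these counts are computed inside a full extension-closed subcategory.
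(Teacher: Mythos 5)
The weak point is the reduction you lean on twice: the claim that $N$, $N'$, $M$ (and the modules appearing in your recursions) can be placed in a ``finite, convex, extension-closed directed subcategory $\mathcal{W}\subseteq\operatorname{add}\mathcal{P}$ containing \ldots{} all their sub- and quotient-modules,'' with $\mathcal{W}\simeq\mod B$ for a representation-directed $B$. No such $\mathcal{W}$ exists. Quotients of preprojective modules over a domestic canonical algebra are in general regular or preinjective, so they leave $\operatorname{add}\mathcal{P}$, and among the regular quotients one finds one-parameter families (for instance quotients of dimension vector $\delta$), so no \emph{finite} subcategory can contain them. If you drop quotient-closure, $\mathcal{W}$ is not closed under cokernels, hence not an abelian subcategory of $\mod A$, and an equivalence with some $\mod B$ (even if it existed additively) would not identify subobject lattices, so the Hall numbers, surjection counts and $|\Aut|$'s computed in $\mod B$ need not equal those computed in $\mod A$ --- which is exactly what your transfer step requires. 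The device that makes this kind of reduction legitimate elsewhere in the paper (Proposition \ref{prp:Hall-preproj-nonsincere}) is the support algebra: $\mod\supp X$ is a full subcategory closed under submodules, quotients and extensions, and $\supp X$ is representation-directed --- but that only works because $X$ is non-sincere, and preprojective modules here are generically sincere, so it is unavailable in your situation. (A small index slip as well: with the paper's convention $F_{XY}^{Z}$ counts submodules $\cong Y$ with quotient $\cong X$, so the decomposition should read $F_{N,*}^{M}=\sum_{[N']}F_{N\,N'}^{M}$, not $\sum_{[N']}F_{N'\,N}^{M}$.)

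The first half of your plan --- closure of $\mathcal{P}$ under submodules and images, stratification of $\Hom(N^{K},M^{K})$ by images, finiteness and $K$-independence of the index sets, field-independence of $\dim\Hom$, $|\Aut|$ for preprojectives --- is essentially the skeleton of step (4) of Ringel's proof in \cite{Ri90-Banach}, and this is precisely what the paper invokes: it runs that induction \emph{inside} the preprojective component itself (directed, standard, with field-independent combinatorial structure, closed under submodules), rather than transporting the counts into a representation-directed module category. To repair your argument, close the induction the same way: carry out the recursion for the mono-, epi- and Hall-counts directly within $\mathcal{P}$, inducting along the directed order (or on dimension vectors), instead of appealing to an embedding into $\mod B$ that cannot be constructed.
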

\begin{proof}
Since the class of preprojective $A$-modules are closed under submodules,
the images of homomorphisms between preprojective $A$-modules are
again preprojective. Noting this the statement can be shown by the
same argument as in the step (4) of the proof of \cite[Theorem 1]{Ri90-Banach}
if we replace the whole Auslander-Reiten quiver of $A$ by its preprojective
component. The details are left to the reader.
\end{proof}
\begin{prop}
\label{prp:Hall-preproj-smp}If $M$ and $N$ are preprojective $A$-modules
and $S$ is a simple $A$-module, then there exist Hall polynomials
$\varphi_{SN}^{M}$, $\varphi_{NS}^{M}$. 
\end{prop}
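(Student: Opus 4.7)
The plan is to reduce both Hall numbers $F_{SN}^{M}$ and $F_{NS}^{M}$ to the one-sided sub-and-factor counts of Lemma \ref{lem:Hall-sub-factor} by exploiting the rigidity forced by $S$ being simple. First I would dispose of the trivial case when $\udim M \ne \udim N + \udim S$: no short exact sequence of either shape $0 \to N^K \to M^K \to S^K \to 0$ or $0 \to S^K \to M^K \to N^K \to 0$ can exist over any $A^K$, so both $F_{S^K N^K}^{M^K}$ and $F_{N^K S^K}^{M^K}$ vanish identically for every $K \in \Omega$, and the zero polynomial serves as Hall polynomial in each case.

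Assuming $\udim M = \udim N + \udim S$, I would write $S = S_x$ and record the key rigidity statement: any $A^K$-module of dimension vector $e_x$ is isomorphic to $S^K$, because the underlying representation is forced (a one-dimensional $K$-space at $x$, zero at every other vertex, and every arrow of $Q$ acts trivially for dimension reasons). This lets me translate the Hall numbers as follows. For any submodule $N' \le M^K$ with $N' \cong N^K$, the quotient satisfies $\udim (M^K / N') = e_x$ by subtraction, hence $M^K/N' \cong S^K$ automatically; therefore $\mathcal{F}_{S^K N^K}^{M^K} = \mathcal{F}_{*, N^K}^{M^K}$, giving $F_{S^K N^K}^{M^K} = F_{*, N^K}^{M^K}$. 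Dually, for any submodule $M' \le M^K$ with $M^K/M' \cong N^K$, one has $\udim M' = e_x$, hence $M' \cong S^K$; so $F_{N^K S^K}^{M^K} = F_{N^K, *}^{M^K}$.

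At this point I would invoke Lemma \ref{lem:Hall-sub-factor} to conclude that $\varphi_{SN}^M := \varphi_{*, N}^M$ and $\varphi_{NS}^M := \varphi_{N, *}^M$ are the desired Hall polynomials, with the trivial (zero) polynomial covering the dimension-vector-incompatible case. The main obstacle is not located in the present proposition at all but is absorbed into Lemma \ref{lem:Hall-sub-factor}, where Ringel's representation-directed-style argument, transferred from the whole Auslander--Reiten quiver to the preprojective component, does the real work of establishing polynomiality of $F_{*, N^K}^{M^K}$ and $F_{N^K, *}^{M^K}$. Once that lemma is granted, the present proposition requires nothing beyond the simpleness of $S$ (which rigidifies one side of the short exact sequence) and the dimension-vector bookkeeping.
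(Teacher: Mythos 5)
Your argument is correct and is essentially the paper's own proof: take the zero polynomial when the dimension vectors are incompatible, and otherwise use the fact that any module with dimension vector $\udim S$ is isomorphic to $S$ to identify $F_{SN}^{M}$ and $F_{NS}^{M}$ with $F_{*,N}^{M}$ and $F_{N,*}^{M}$, then apply Lemma \ref{lem:Hall-sub-factor}. The extra detail you supply (why the simple is forced, and that the real work is hidden in Lemma \ref{lem:Hall-sub-factor}) is consistent with the paper's reasoning.
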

\begin{proof}
If $\udim M-\udim N\ne\udim S$, then we can take $\varphi_{SN}^{M}=0$,
$\varphi_{NS}^{M}=0$. Assume that $\udim M-\udim N=\udim S$. Then
since any $A$-module having dimension vector $\udim S$ is isomorphic
to $S$, we can take $\varphi_{SN}^{M}=\varphi_{*N}^{M}$ and $\varphi_{NS}^{M}=\varphi_{N*}^{M}$
by using Lemma \ref{lem:Hall-sub-factor}.
\end{proof}
Recall that an algebra is called \emph{representation-directed} if
it is representation-finite and its Auslander-Reiten quiver does not
contain oriented cycles (\cite{Ri90-Banach}). By Corollary \ref{cor:min-rep-inf}
an $A$-module $X$ is non-sincere if and only if $\supp X$ is representation-directed
because the latter is equivalent to saying that $\supp X$ is representation-finite
in our case. Note that regular exceptional $A$-modules are non-sincere,
for which we apply the following later.

\begin{prop}
\label{prp:Hall-preproj-nonsincere}Let $M$ and $N$ be preprojective
$A$-modules. If $X$ is a non-sincere $A$-module, then there exists
a Hall polynomial $\varphi_{XN}^{M}$.
\end{prop}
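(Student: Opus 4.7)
My plan is to reduce the computation of $F^{M}_{X,N}$ to Hall numbers over the representation-directed algebra $B:=\supp X$, which is representation-finite (hence representation-directed) by Corollary \ref{cor:min-rep-inf}, and then to invoke Ringel's theorem \cite[Theorem 1]{Ri90-Banach} for $B$ together with the preprojective-component counting technique already used to prove Lemma \ref{lem:Hall-sub-factor}.

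First I set up the reduction. Put $\mathbf{e}:=\sum_{x\in Q_{0},\,X(x)=0}\mathbf{e}_{x}$, so that $B=A/A\mathbf{e}A$ annihilates $X$. Set $M':=A\mathbf{e}M\subseteq M$ and $\bar M:=M/M'$; then $\bar M$ is canonically a $B$-module. Since any quotient $M/L\cong X$ is killed by $A\mathbf{e}A$, such an $L$ must contain $M'$. Thus, writing $\pi\colon M\twoheadrightarrow\bar M$ for the projection, the assignment $L\mapsto L/M'$ yields a bijection
\[
\mathcal{F}^{M}_{X,N}\ \longleftrightarrow\ \{\bar L\subseteq\bar M:\bar M/\bar L\cong X,\ \pi^{-1}(\bar L)\cong N\text{ as an }A\text{-module}\}.
\]
This identification is valid over every $K\in\Omega$ after base change, since forming $M'$ and $\bar M$ commutes with $-\otimes_{k}K$.

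Next I partition the right-hand side by the $B$-isomorphism class $[\bar Y]$ of $\bar L$; only finitely many such classes contribute (those with $\udim\bar Y=\udim\bar M-\udim X$). For each of them, Ringel's theorem applied to the representation-directed algebra $B$ produces a Hall polynomial $\varphi^{\bar M}_{X,\bar Y}(T)\in\mathbb{Z}[T]$ computing $F^{\bar M^{K}}_{X^{K},\bar Y^{K}}$ for all $K\in\Omega$. Moreover, every preimage $\pi^{-1}(\bar L)$ is a submodule of the preprojective $A$-module $M$ and is therefore itself preprojective, by the separating property of the tubular family (Theorem \ref{thm:AR-quiver}(1)); it sits in the pullback short exact sequence
\[
0\to M'\to\pi^{-1}(\bar L)\to\bar L\to 0
\]
of $0\to M'\to M\to\bar M\to 0$ along the inclusion $\bar L\hookrightarrow\bar M$.

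The main obstacle is to show that, for each iso class $[\bar Y]$, the additional condition $\pi^{-1}(\bar L)\cong N$ cuts out a subset of $\mathcal{F}^{\bar M}_{X,\bar Y}$ whose cardinality is polynomial in $|K|$. My approach is to adapt the inductive counting argument of step (4) of Ringel's proof of \cite[Theorem 1]{Ri90-Banach} already used for Lemma \ref{lem:Hall-sub-factor}. Since $M$, $M'$, $\pi^{-1}(\bar L)$, and $N$ are all preprojective, the orbits of $\Aut\bar Y\times\Aut M'$ on $\Ext^{1}_{A}(\bar Y,M')$ that index the possible middle-term isomorphism classes, together with the orbit-stabilizer sizes and the relevant $\Hom$ and $\Ext$ dimensions, all admit polynomial counts by the same arguments carried out inside the preprojective component. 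Summing the resulting polynomial contributions over the finitely many classes $[\bar Y]$ then produces the desired Hall polynomial $\varphi^{M}_{XN}\in\mathbb{Z}[T]$.
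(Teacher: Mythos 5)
Your reduction to $\bar M:=M/A\mathbf{e}M$ is fine as far as it goes (every $L\subseteq M$ with $M/L\cong X$ indeed contains $M':=A\mathbf{e}M$, and the construction commutes with base change), but the decisive step is missing. After partitioning by the isomorphism class $[\bar Y]$ of $\bar L$, what you must show to be polynomial in $|K|$ is
\[
\#\{\,\bar L\subseteq\bar M \mid \bar L\cong\bar Y,\ \bar M/\bar L\cong X,\ \pi^{-1}(\bar L)\cong N\,\},
\]
and this is exactly as hard as the proposition itself; in your text it is asserted, not proved. The isomorphism class of $\pi^{-1}(\bar L)$ is governed by the image of the fixed class of $0\to M'\to M\to\bar M\to0$ under the map $\Ext^{1}_{A}(\bar M,M')\to\Ext^{1}_{A}(\bar L,M')$ induced by the varying embedding $\bar L\hookrightarrow\bar M$. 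Knowing that the $\Aut\bar Y\times\Aut M'$-orbits on $\Ext^{1}_{A}(\bar Y,M')$, the stabilizer orders and the relevant $\Hom$/$\Ext$ dimensions behave polynomially says nothing about how many of the embeddings with quotient $X$ land in the orbit whose middle term is $N$: you would have to control the fibers of the assignment (embedding) $\mapsto$ (restricted extension class), and no argument is offered. Moreover, Lemma \ref{lem:Hall-sub-factor} and step (4) of Ringel's proof, which you invoke, count submodules with a \emph{prescribed isomorphism class} by an induction along the order of the preprojective component; they do not treat a condition of the form ``the pullback along a fixed extension is isomorphic to $N$'', so ``the same arguments'' do not transfer. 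Note also that the Hall polynomials $\varphi_{X,\bar Y}^{\bar M}$ over $B=\supp X$ are of no direct use here, since the pullback class is not constant on $\mathcal{F}_{X,\bar Y}^{\bar M}$ -- which is precisely why the fiber problem cannot be bypassed.

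For comparison, the paper avoids any such relative counting. It argues by induction on $\udim X$ (all modules with this dimension vector are $\supp X$-modules, and $\supp X$ is representation-directed by Corollary \ref{cor:min-rep-inf}): if $X=X'\oplus X''$ with $X'$ the isotypic part of a summand minimal in the AR-order of $\supp X$, then $\Hom_{A}(X'',X')=0$ and $\Ext^{1}_{A}(X',X'')=0$, and one sets $\varphi_{XN}^{M}:=\sum_{V}\varphi_{X'V}^{M}\varphi_{X''N}^{V}$, the sum over submodules $V\subseteq M$ (again preprojective) of the right dimension vector, using associativity of the Hall multiplication; if $X$ is homogeneous one sets $\varphi_{XN}^{M}:=\varphi_{*,N}^{M}-\sum_{U}\varphi_{UN}^{M}$, where $U$ runs over the remaining isomorphism classes with $\udim U=\udim X$ (all non-homogeneous, hence already handled) and $\varphi_{*,N}^{M}$ comes from Lemma \ref{lem:Hall-sub-factor}. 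To salvage your route you would need to prove a relative counting lemma of the strength displayed above; as written, the proposal has a genuine gap at its central step.
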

\begin{proof}
Since $X$ is non-sincere, $\supp X$ is representation-directed as
explained above. Noting that any $A$-module with dimension vector
$\udim X$ is a module over $\supp X$, the assertion can be shown
by the same argument as in the last step of the proof of \cite[Theorem 1]{Ri90-Banach}.
For the benefit of the reader we outline the proof.

If $\udim M\ne\udim X+\udim N$, then we can take $\varphi_{XN}^{M}=0$.
Therefore we may assume that $\udim M=\udim X+\udim N$. We define
$\varphi_{XN}^{M}$ by induction on $\udim X\in K_{0}(A)$ (here $K_{0}(A)$
is regarded as a poset by the order defined in Definition \ref{sub:K0}).
If $\udim X=0$, then we may take $\varphi_{XN}^{M}=1$. Assume $\udim X\ne0$.
Let $X=\bigoplus_{i=1}^{m}X_{i}^{(t_{i})}$ be a direct sum decomposition
of $X$ into pairwise non-isomorphic indecomposable $A$-modules $X_{i}$.
If $m=1$, $X$ is called \emph{homogeneous}. Note that all $X_{i}$
are modules over $\supp X$ and regarded as vertices in the AR-quiver
$\Gamma$ of $\supp X$. Since $\supp X$ is representation-directed,
we can regard the set $\Gamma_{0}$ of vertices of $\Gamma$ as a
poset by defining an order as follows: For $x,y\in\Gamma$, $x$ is
smaller than $y$ if and only if there exists a path from $x$ to
$y$ in $\Gamma$. We may assume that $X_{1}$ is minimal among all
$X_{i}$ in $\Gamma_{0}$.

Case 1. $X$ is non-homogeneous (i.e., $m>1$). Let $X';=X_{1}^{(t_{1})}$
and $X'':=\bigoplus_{i\ne1}X_{i}^{(t_{i})}$. Then $X=X'\oplus X''$,
$\Hom_{A}(X'',X')=0$ and $\Ext_{A}^{1}(X',X'')=0$. We define $\varphi_{XN}^{M}$
as follows.\[
\varphi_{XN}^{M}:=\sum_{V\in\mathcal{V}}\varphi_{X'V}^{M}\varphi_{X''N}^{V},\]
where $\mathcal{V}$ is a complete set of representatives of isoclasses
of submodules $V$ of $M$ such that $\udim V=\udim X''+\udim N$.
Then all the terms on the right hand side are already defined by the
induction hypothesis because $N,V,M$ are preprojective and $X',X''$
are modules over $\supp X$. Here we can show that $\varphi_{XN}^{M}$
is a Hall polynomial, i.e., that $\varphi_{XN}^{M}(|K|)=F_{X^{K}N^{K}}^{M^{K}}$
for all $K\in\Omega$, by using the associativity of Hall multiplication
and the facts that $\Hom_{A}(X'',X')=0$ and $\Ext_{A}^{1}(X',X'')=0$.

Case 2. $X$ is homogeneous. In this case we define $\varphi_{XN}^{M}$
as follows.\[
\varphi_{XN}^{M}:=\varphi_{*,N}^{M}-\sum_{U\in\mathcal{U}}\varphi_{UN}^{M},\]
where $\varphi_{*,N}^{M}$ is the polynomial defined in Lemma \ref{lem:Hall-sub-factor},
and $\mathcal{U}$ is a complete set of representatives of isoclasses
of modules $U$ over $\supp X$ such that $\udim U=\udim X$ and $U\not\cong X$.
Here we see that each $U\in\mathcal{U}$ cannot be homogeneous because
if $U$ is homogeneous, the condition that $\udim U=\udim X$ implies
$U\cong X$. Hence the right hand side is defined by Case 1, and it
is easy to see that this $\varphi_{XN}^{M}$ is a Hall polynomial.
\end{proof}

Dually we have the following.

\begin{prop}
\label{pro:Hall-preinj-nonsincere}Let $M$ and $N$ be preinjective
$A$-modules. If $X$ is a non-sincere $A$-module, then there exists
a Hall polynomial $\varphi_{NX}^{M}$.\hfil\qed.
\end{prop}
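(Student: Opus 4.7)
My plan is to deduce Proposition \ref{pro:Hall-preinj-nonsincere} from Proposition \ref{prp:Hall-preproj-nonsincere} by applying the standard $k$-duality $D=\Hom_k(-,k)\colon\mod A\to\mod A^{\op}$. The key observation is that $D$ converts a short exact sequence $0\to X\to M\to N\to 0$ in $\mod A$ into the short exact sequence $0\to DN\to DM\to DX\to 0$ in $\mod A^{\op}$, and this bijection between extensions yields the identity
\[
F_{NX}^{M}\;=\;F_{DX,\,DN}^{DM},
\]
where the left-hand side is computed over $A$ and the right-hand side over $A^{\op}$. Since Hall numbers are computed identically after base change to any $K\in\Omega$, the same identity holds at every level $F^{M^{K}}_{N^{K}X^{K}}=F^{(DM)^{K}}_{(DX)^{K},(DN)^{K}}$.

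Before invoking Proposition \ref{prp:Hall-preproj-nonsincere} for the opposite algebra, I would verify that $A^{\op}$ is itself a domestic canonical algebra of the same type $\Delta$: reversing every arrow of the quiver in Figure \ref{quiver-dom-can} and relabeling so that the new source is named $1$ produces an isomorphic quiver, and the canonical relation $\sum_{i}\alpha_{i1}\cdots\alpha_{ip(i)}=0$ is preserved in form. Consequently Proposition \ref{prp:Hall-preproj-nonsincere} is available over $A^{\op}$. I would also check that $\Omega_{A}=\Omega_{A^{\op}}$: for any $A$-module $Y$ one has $\End_{A^{\op}}(DY)\cong\End_A(Y)^{\op}$, and a finite-dimensional $k$-algebra is a field if and only if its opposite is; moreover $D$ sends exceptional $A$-modules to exceptional $A^{\op}$-modules since indecomposability is preserved and $\Ext^1_{A^{\op}}(DY,DY)\cong D\Ext^1_A(Y,Y)$.

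Next I would record that $D$ sends preinjective $A$-modules to preprojective $A^{\op}$-modules (because $D$ exchanges projectives with injectives and intertwines the Auslander--Reiten translations $\tau_A$ and $\tau_{A^{\op}}^{-1}$, so the preinjective component of $\Gamma_A$ is carried onto the preprojective component of $\Gamma_{A^{\op}}$) and that $D$ preserves non-sincerity, since $\dim_k(DZ)(x)=\dim_k Z(x)$ for every vertex $x$. Hence for $M,N$ preinjective over $A$ and $X$ non-sincere over $A$, the triple $(DX,DN,DM)$ consists of preprojective $A^{\op}$-modules $DM$, $DN$ together with a non-sincere $A^{\op}$-module $DX$, so Proposition \ref{prp:Hall-preproj-nonsincere} applied over $A^{\op}$ produces a polynomial $\varphi_{DX,DN}^{DM}(T)\in\mathbb{Z}[T]$ satisfying $F^{(DM)^{K}}_{(DX)^{K},(DN)^{K}}=\varphi_{DX,DN}^{DM}(|K|)$ for all $K\in\Omega_{A^{\op}}$. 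Setting $\varphi_{NX}^{M}:=\varphi_{DX,DN}^{DM}$ and combining with the duality identity proves the claim.

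The only non-trivial point is the first one, namely verifying that $A^{\op}$ lies in the class of algebras to which Proposition \ref{prp:Hall-preproj-nonsincere} applies and that $D$ transports preinjectivity, non-sincerity, and $\Omega$ correctly; once these structural facts are in place, the proof is a straightforward dualization. An alternative route would be to repeat the inductive argument of Proposition \ref{prp:Hall-preproj-nonsincere} verbatim, replacing submodules by factor modules, using the evident preinjective analogue of Lemma \ref{lem:Hall-sub-factor} (preinjective modules are closed under factor modules, so images of homomorphisms between preinjectives are preinjective), but this merely repeats the same combinatorics and I would defer to the duality argument as cleaner.
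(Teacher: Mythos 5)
Your proof is correct and is essentially the paper's own argument: the paper simply asserts the statement ``dually'' to Proposition \ref{prp:Hall-preproj-nonsincere}, and your dualization via $D=\Hom_k(-,k)$ and the opposite algebra (checking that $A^{\op}$ is again a domestic canonical algebra, that $D$ exchanges preinjectives with preprojectives, preserves non-sincerity and exceptionality, and identifies $F_{NX}^{M}$ with $F_{DX,DN}^{DM}$) is exactly the intended justification, spelled out in full. No gaps.
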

This proposition proves the following.

\begin{cor}
\label{cor:prin-Hall}Let $(x_{1},x_{2},\dots,x_{n})$ be a permutation
of elements in $\Delta_{0}$ with $x_{1}=1$ such that $0\ne[F_{x_{1}},F_{x_{2}},\dots,F_{x_{n}}]\in\mathfrak{g}(\Delta)$.
Set $f_{i}:=\delta-e_{x_{i}}$ for all $i=1,\dots,n$. Then there
exist Hall polynomials $\varphi_{M(f_{1}+\cdots+f_{i-1}),M(f_{i})}^{M(f_{1}+\cdots+f_{i-1}+f_{i})}$
for all $i=2,\dots,n$.\hfil\qed
\end{cor}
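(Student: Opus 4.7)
The plan is to apply Proposition \ref{pro:Hall-preinj-nonsincere} with $M := M(f_1 + \cdots + f_i)$, $N := M(f_1 + \cdots + f_{i-1})$, and $X := M(f_i) = T_{x_i}$. Setting $v_j := f_1 + \cdots + f_j = j\delta - \sum_{k\le j} e_{x_k}$, I need to verify three things: (a) $v_{i-1}$ and $v_i$ are positive roots of $\chi_A$, so that $M(v_{i-1})$ and $M(v_i)$ are well-defined; (b) both $M(v_{i-1})$ and $M(v_i)$ are preinjective; and (c) $T_{x_i}$ is non-sincere.

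For (a), positivity of $v_j$ is immediate from a component-wise inspection, since the $x_k$ are pairwise distinct (so no entry drops below $j-1 \ge 0$, and the $\infty$-entry is $j$). For the rootness, I would first compute $\deg v_j = -\sum_{k\le j} e_{x_k}$, using that $\infty \notin \Delta_0$ so $(v_j)_\infty = j$. The nonvanishing of the full nested bracket $[F_{x_1},\dots,F_{x_n}]$ forces every initial segment $[F_{x_1},\dots,F_{x_j}]$ to be nonzero in $\mathfrak{g}(\Delta)$; since such a bracket lies in the root space $\mathfrak{g}(\Delta)_{-\sum_{k\le j}e_{x_k}}$, the vector $-\sum_{k\le j}e_{x_k}$ is a genuine root of $\mathfrak{g}(\Delta)$. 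Via the identifications $\Rt(\mathfrak{g}(\Delta)) = \Rt(\chi_{kQ^l}) \subseteq \Rt(\chi_A)$ from Definition \ref{def:degree}(3) and the delta-string property of Corollary \ref{sub:delta-strings}, I conclude that $v_j = \deg v_j + j\delta \in \Rt_+(\chi_A)$.

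For (b), Theorem \ref{thm:AR-quiver}(2) reduces the question to $\rank v_j < 0$. A short calculation using $\rank \delta = 0$ and $\rank e_{x_k} = \delta_{x_k,1}$ (since $x_k \ne \infty$) gives $\rank v_j = -|\{k\le j : x_k = 1\}|$, which equals $-1$ whenever $j \ge 1$ because $x_1 = 1$ and each vertex occurs at most once in the permutation. For (c), $T_{x_i} = M(\delta - e_{x_i})$ vanishes at the vertex $x_i$, hence is non-sincere. With (a), (b), (c) established, Proposition \ref{pro:Hall-preinj-nonsincere} directly produces the desired Hall polynomial $\varphi_{M(v_{i-1}), M(f_i)}^{M(v_i)}$.

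The main obstacle is step (a): translating the combinatorial nonvanishing hypothesis on the nested bracket in $\mathfrak{g}(\Delta)$ into the representation-theoretic statement that each partial sum $v_j$ is a positive root of $\chi_A$. This is precisely where the assumption on the admissibility of the permutation is essentially used, channeled through the $\deg$-map of Lemma \ref{lem:deg-root}. The rest of the argument is a routine lift of well-known facts about preinjective components and support algebras.
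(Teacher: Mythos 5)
Your proof is correct and takes exactly the route the paper intends: the corollary is stated there as an immediate consequence of Proposition \ref{pro:Hall-preinj-nonsincere}, with the verification left to the reader. Your checks --- that each partial sum $f_{1}+\cdots+f_{j}$ is a positive root of $\chi_{A}$ (via the nonvanishing initial brackets, Lemma \ref{lem:deg-root} and Corollary \ref{sub:delta-strings}) of rank $-1$, hence preinjective, and that $T_{x_{i}}$ is non-sincere --- supply precisely the omitted details.
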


\section{\textbf{Proof of Main Theorem\label{sec:Proof-of-Theorem}}}

In this section we prove our main result Theorem \ref{thm:realization}.
First we show the following.

\subsection{Claim}

\emph{There exists a homomorphism} \[
\phi:\mathfrak{g}(\Delta)\rightarrow L(A)_{1}^{\mathbb{C}}/I(A)^{\mathbb{C}}\]
 \emph{such that} $\phi(E_{x})=\varepsilon_{x},\phi(F_{x})=\zeta_{x},\phi(H_{x})=\eta_{x}$
\emph{for all} $x\in\Delta_{0}$.

Indeed, it is enough to verify the following equations for all $x,y\in\Delta_{0}$:\begin{eqnarray}
[\eta_{x},\eta_{y}] & = & 0\label{et-et}\\
{[\varepsilon_{x},\zeta_{x}]} & = & \eta_{x}\label{ep-ze1}\\
{[\varepsilon_{x},\zeta_{y}]} & = & 0,\quad\textrm{if }x\neq y\label{ep-ze2}\\
{[\eta_{x},\varepsilon_{y}]} & = & a_{xy}\varepsilon_{y}\label{et-ep}\\
{[\eta_{x},\zeta_{y}]} & = & -a_{xy}\zeta_{y}\label{et-ze}\\
(\ad\varepsilon_{x})^{1-a_{xy}}\varepsilon_{y} & = & 0,\quad\textrm{if }x\neq y\label{ep-ep}\\
(\ad\zeta_{x})^{1-a_{xy}}\zeta_{y} & = & 0,\quad\textrm{if }x\neq y\label{ze-ze}\end{eqnarray}

(see (\ref{eq:a-xy}) for $a_{xy}$).

\subsection*{Verification of (\ref{ep-ze1})}

This follows from the construction of $\eta_{x}$'s.

\subsection*{Verification of \emph{}(\ref{et-et})}

\begin{lem}
\label{lem:distinct-tubes}Let $K\in\Omega$ and $(\mathcal{T}_{\rho})_{\rho\in\mathbb{P}^{1}(K)}$
the tubular family in $\mod A^{K}$. If $X$ and $Y$ are indecomposable
$A^{K}$-modules contained in tubes $\mathcal{T}_{\rho}$ and $\mathcal{T}_{\sigma}$,
respectively with $\rho\ne\sigma$, then $[u_{X},u_{Y}]=0$ in $\overline{L}(A^{K})/(|K|-1)$.
\end{lem}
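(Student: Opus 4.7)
The plan is to show that between indecomposables lying in different tubes of the separating tubular family both $\Hom$ and $\Ext^1$ vanish, so that every short exact sequence splits and hence no indecomposable middle term appears in the Hall bracket.

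First I would invoke the standardness/separating property of $\mathcal{R}$ (Definition \ref{dfn:separating}(a)), which tells us that there are no nonzero morphisms between distinct tubes. Applied to $A^{K}$ (the tubular family $(\mathcal{T}_{\rho})_{\rho\in\mathbb{P}^{1}(K)}$ in $\mod A^{K}$ has the same separating property as for $A$, since the structure of the tubular family is stable under base change), this gives $\Hom_{A^{K}}(X,Y)=0$ and $\Hom_{A^{K}}(Y,X)=0$.

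Next I would upgrade this to vanishing of $\Ext^{1}$ using the Auslander--Reiten formula $\Ext^{1}_{A^{K}}(X,Y)\cong D\overline{\Hom}_{A^{K}}(Y,\tau X)$. Since $\mathcal{R}$ is stable (it contains neither projectives nor injectives) and each tube $\mathcal{T}_{\rho}$ is $\tau$-stable by the description in Sect.\ \ref{sub:tubular-family}, the module $\tau X$ again lies in $\mathcal{T}_{\rho}$. Hence $\Hom_{A^{K}}(Y,\tau X)=0$ by the separating property, so $\overline{\Hom}_{A^{K}}(Y,\tau X)=0$, giving $\Ext^{1}_{A^{K}}(X,Y)=0$. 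The symmetric argument yields $\Ext^{1}_{A^{K}}(Y,X)=0$.

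Finally I would combine these two vanishings: every short exact sequence $0\to Y\to Z\to X\to 0$ (resp.\ $0\to X\to Z\to Y\to 0$) splits, so the only possible middle term up to isomorphism is $X\oplus Y$, which is decomposable because $X$ and $Y$ are nonzero indecomposables in different tubes. Since the bracket in $\overline{L}(A^{K})/(|K|-1)$ is
\[
[u_{X},u_{Y}]=\sum_{[Z]\in[\ind A^{K}]}\bigl(F_{XY}^{Z}-F_{YX}^{Z}\bigr)u_{[Z]},
\]
and each $F_{XY}^{Z}$, $F_{YX}^{Z}$ with $Z$ indecomposable vanishes (no indecomposable $Z$ occurs as the middle term), the bracket is zero. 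The only subtle point is checking that the separating property and $\tau$-stability of tubes transfer from $A$ to $A^{K}$, which is standard since the Auslander--Reiten quiver and tubular decomposition are invariant under extension of the base field for canonical algebras (cf.\ Sect.\ \ref{sub:tubular-family} and Remark \ref{rmk:shape-of-T}).
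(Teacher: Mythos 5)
Your proof is correct, but it takes a genuinely different route from the paper's. The paper never computes $\Ext^{1}$: it observes that the bracket in $\overline{L}(A^{K})/(|K|-1)$ only involves \emph{indecomposable} middle terms $Z$, and that any exact sequence $0\to Y\to Z\to X\to 0$ with $Z$ indecomposable forces $\rank Z=\rank X+\rank Y=0$, so $Z$ lies in some tube $\mathcal{T}_{\lambda}$ by Theorem \ref{thm:AR-quiver}(2); the two nonzero maps $Y\to Z$ and $Z\to X$ then give $\sigma=\lambda=\rho$ by the standardness of the separating family, contradicting $\rho\ne\sigma$. You instead upgrade the Hom-vanishing between distinct tubes to $\Ext^{1}_{A^{K}}(X,Y)=0=\Ext^{1}_{A^{K}}(Y,X)$ via the Auslander--Reiten formula together with the $\tau$-stability of the tubes, so every extension splits and the only middle term is the decomposable module $X\oplus Y$. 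Your argument proves something slightly stronger (full $\Ext^{1}$-vanishing between distinct tubes, not merely the absence of indecomposable extensions), at the price of invoking the AR formula and checking that $\tau X$ stays in $\mathcal{T}_{\rho}$; the paper's rank argument is more elementary, using only additivity of $\rank$ on exact sequences and the rank characterization of regular modules. Both proofs rest on the same key input, namely standardness of the separating tubular family (Definition \ref{dfn:separating}), and both apply verbatim to $A^{K}$, since $A^{K}$ is again a domestic canonical algebra of the same type over $K$, as you correctly note.
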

\begin{proof}
First note that there are no nonzero homomorphisms between indecomposable
$A^{K}$-modules contained in distinct tubes because the tubular family
$(\mathcal{T}_{\rho})_{\rho\in\mathbb{P}^{1}(K)}$ is standard (Definition
\ref{dfn:separating} (2)). If there exists an exact sequence of the
form\[
0\to Y\ya{f}M\ya{g}X\to0\]
in $\mod A$ with $M$ indecomposable, then $\rank M=\rank X+\rank Y=0$
shows that $M$ is contained in a tube $\mathcal{T}_{\lambda}$ ($\lambda\in\mathbb{P}^{1}(K)$).
But $f\ne0$ and $g\ne0$ shows that $\rho=\lambda$ and $\lambda=\sigma$,
thus $\rho=\sigma$, a contradiction. Hence there are no exact sequence
of this form. By the symmetry of the argument the same statement still
holds even if we exchange $X$ and $Y$. Hence $[u_{X},u_{Y}]=0$
in $\overline{L}(A^{K})/(|K|-1)$.
\end{proof}
For each $K\in\Omega$ the lemma above shows that $[u_{W_{c}(K)},u_{W_{c'}(K)}]=0$,
$[u_{W_{c}(K)},u_{X_{ij}^{K}}]=0$, $[u_{X_{ij}^{K}},u_{X_{st}^{K}}]=0$
for all $c,c'\in K\setminus E_{\Delta}$, $\alpha_{ij},\alpha_{st}\in Q_{1}$
with $c\neq c'$ and $i\neq s$ (see (\ref{eq:zyogai})). It is trivial
that $[u_{W_{c}(K)},u_{W_{c}(K)}]=0$ for all $c\in K\setminus E_{\Delta}$,
and also by Proposition \ref{prp:im-rt-commute} we have $[u_{X_{ij}^{K}},u_{X_{it}^{K}}]=0$
for all $\alpha_{ij},\alpha_{it}\in Q_{1}$ . Therefore we have $[u_{X},u_{Y}]=0$
for all indecomposable $A^{K}$-modules $X$, $Y$ with dimension
vector $\delta$ and for all $K\in\Omega$. Hence by (\ref{eq:et-1})
and (\ref{eq:et-ij}) we obtain the equation (\ref{et-et}).

\subsection*{Verification of (\ref{ep-ze2})}

Let $x,y\in\Delta_{0}$ with $x\neq y$. Assume that there exists
an indecomposable $A$-module $M$ with $\udim M=\udim S_{x}+\udim T_{y}=e_{x}+\delta-e_{y}$.
Then $M$ is an indecomposable module over its support algebra $B:=\supp M=A/A\mathbf{e}_{y}A$
with dimension vector $w$, where $w_{z}=1$ if $z\neq x$; and $w_{z}=2$
if $z=x$ for all $z\in\Delta_{0}\setminus\{ y\}$. If $y=1$, then
$B$ is a hereditary algebra of Dynkin type $\Delta$. Since $w$
is a dimension vector of an indecomposable $B$-module, we must have
$x=\infty$, a contradiction. Hence if $y=1$, then there exists no
indecomposable $A$-modules of dimension vector $\udim S_{x}+\udim T_{y}$
, and we see that $[\varepsilon_{x},\eta_{y}]=0$. Assume next that
$y\neq1$. Then since $w$ is a dimension vector of an indecomposable
$B$-module, $\Delta$ is not of type $A_{n}$ and also $x=1$. In
this case we see $[u_{S_{x}},u_{T_{y}}]=(q-1)u_{M}$. Thus if we replace
$k$ with any $K\in\Omega$, we have $[u_{S_{x}},u_{T_{y}}]=(|K|-1)u_{M}=0$
in $\overline{L}(A^{K})/(|K|-1)$. Hence in any case we have the equation
(\ref{ep-ze2}).

\subsection*{Verification of (\ref{ep-ep})}

Let $x,y\in\Delta_{0}$ with $x\neq y$.

\subsubsection*{Case 1. $x,y$ are not neighbors in $\Delta$.}

In this case $1-a_{xy}=1$, and we have to show $[\varepsilon_{x},\varepsilon_{y}]=0$.
Now since $x,y$ are not neighbors in $\Delta$, there exist no indecomposable
$A$-modules of dimension vector $\udim S_{x}+\udim S_{y}$. This
shows $[\varepsilon_{x},\varepsilon_{y}]=0$.

\subsubsection*{Case 2. $x,y$ are neighbors in $\Delta$.}

In this case $1-a_{xy}=2$, and we have to show $[\varepsilon_{x},[\varepsilon_{x},\varepsilon_{y}]]=0$.
If this is nonzero, then there exists an indecomposable $A$-module
$M$ with $\udim M=2\udim S_{x}+\udim S_{y}$. But the support algebra
of $M$ is the algebra given by the full subquiver of $Q$ consisting
of the vertices $x,y$, which is of type $A_{2}$. Over this algebra
$M$ is still indecomposable but $\udim M=(2,1)$, which is impossible.

\subsection*{Verification of (\ref{ze-ze})}

Let $x,y\in\Delta_{0}$ with $x\neq y$.

\subsubsection*{Case 1. $x,y$ are not neighbors in $\Delta$. }

By the formula (\ref{eq:B-delta-ex}) we see

\[
\chi(\udim T_{x}+\udim T_{y})=\chi(2\delta-e_{x}-e_{y})=2.\]
Hence there exist no indecomposable $A$-modules of dimension vector
$\udim T_{x}+\udim T_{y}$, which shows that $[\zeta_{x},\zeta_{y}]=0$,
as desired.

\subsubsection*{Case 2. $x,y$ are neighbors in $\Delta$.}

Again by the formula (\ref{eq:B-delta-ex}) we see

\[
\chi(2\udim T_{x}+\udim T_{y})=\chi(3\delta-2e_{x}-e_{y})=7.\]
Hence similarly we have $[\zeta_{x},[\zeta_{x},\zeta_{y}]]=0$, as
desired.

\subsection*{Verification of (\ref{et-ep})}

Let $x,y\in\Delta_{0}$. Since $\chi(\delta+e_{y})=1$, we have an
indecomposable $A$-module $M=M(\delta+e_{y})$. By Proposition \ref{prp:dif-smp}
we have $\overline{\mathbf{u}}_{M}=\varepsilon_{y}$.

\subsubsection*{Case 1. $y\neq1$, say $y=x_{st}$ for some $s,t$ with $t>1$.}

In this case we may assume that $M=(M(z),M(\alpha))_{z\in Q_{0},\alpha\in Q_{1}}$
has the following structure:

\[
M(z)=\left\{ \begin{array}{ll}
k^{2} & \textrm{if $z=x_{st}$; }\\
k & \textrm{otherwise,}\end{array}\right.\]

\[
M(\alpha_{ij})=\left\{ \begin{array}{ll}
(0,\id) & \textrm{if $(i,j)=(s,t-1)$;}\\
{{\id \choose 0}} & \textrm{if $(i,j)=(s,t)$;}\\
-\id & \textrm{if $(i,s,t)\in\{(1,3,1),(2,3,1),(3,2,1)\}$;}\\
\id & \textrm{otherwise}\end{array}\right.\]
 because this gives an indecomposable $A$-module of dimension vector
$\delta+e_{y}$. Now since $\soc M\cong S_{1}\oplus S_{y}$, and $\top M\cong S_{y}\oplus S_{\infty}$,
we have

\[
\mathcal{F}_{S_{y},*}^{M}=\{ N\}=\mathcal{F}_{S_{y},X_{s,t-1}}^{M}\textrm{, and }\mathcal{F}_{*,S_{y}}^{M}=\{ S\}=\mathcal{F}_{X_{st},S_{y}}^{M}\]
for some $N\cong X_{s,t-1}$ and $S\cong S_{y}$. Therefore we have
the following formula: For any $K\in\Omega$,

\begin{eqnarray}
{[u_{X_{ij}^{K}},u_{S_{x_{st}}^{K}}]} & = & \left\{ \begin{array}{ll}
u_{M^{K}} & \textrm{if $(i,j)=(s,t)$;}\\
-u_{M^{K}} & \textrm{if $(i,j)=(s,t-1)$;}\\
0 & \textrm{otherwise; and}\end{array}\right.\label{eq:u-u}\\
{[u_{W_{c}(K)},u_{S_{x_{st}}^{K}}]} & = & 0\textrm{ for all $c\in K\setminus E_{\Delta}$}\nonumber \end{eqnarray}
in $\overline{L}(A^{K})/(|K|-1)$, where $E_{\Delta}$ is as in (\ref{eq:zyogai}).

\paragraph*{Case 1.1. $x\neq1$, say $x=x_{ij}$ for some $i,j$.}

In this case $\eta_{x}$ has the form (\ref{eq:et-ij}).

\subparagraph*{Case 1.1.1. $x=y$.}

In this case $a_{xy}=2$. By the formula (\ref{eq:u-u}) we have

\begin{eqnarray*}
{[u_{X_{ij}},u_{S_{x_{ij}}}]} & = & u_{M}\\
{[u_{X_{i,j-1}},u_{S_{X_{ij}}}]} & = & -u_{M}\end{eqnarray*}
Therefore $[u_{X_{ij}}-u_{X_{i,j-1}},u_{S_{x_{ij}}}]=2u_{M}$. This
shows that $[\eta_{x},\varepsilon_{y}]=2\varepsilon_{y}=a_{xy}\varepsilon_{y}$.

\subparagraph*{Case 1.1.2. $x\neq y$ and $x,y$ are not neighbors in $\Delta$.}

In this case $a_{xy}=0$. Again by (\ref{eq:u-u}) we have $[u_{X_{ij}},u_{X_{y}}]=0=[u_{X_{i,j-1}},u_{S_{y}}]$.
Thus $[\eta_{x},\varepsilon_{y}]=0=a_{xy}\varepsilon_{y}$.

\subparagraph*{Case 1.1.3. $x\neq y$ and $x,y$ are neighbors in $\Delta$.}

In this case $a_{xy}=-1$, and $x\in\{ x_{s,t-1},x_{s,t+1}\}$. When
$x=x_{s,t-1}$, it follows from $[u_{X_{s,t-1}},u_{S_{x_{st}}}]=-u_{M}$
and $[u_{X_{s,t-2}},u_{S_{x_{st}}}]=0$ (by (\ref{eq:u-u})) that
$[\eta_{x},\varepsilon_{y}]=-\varepsilon_{y}=a_{xy}\varepsilon_{y}$.
When $x=x_{s,t+1}$, it follows from $[u_{X_{s,t+1}},u_{S_{x_{st}}}]=0$
and $[u_{X_{st}},u_{S_{x_{st}}}]=u_{M}$ that $[\eta_{x},\varepsilon_{y}]=-\varepsilon_{y}=a_{xy}\varepsilon_{y}$.

\paragraph*{Case 1.2. $x=1$.}

In this case $x\neq y$ and $\eta_{x}$ has the form (\ref{eq:et-1}).

\subparagraph*{Case 1.2.1. $x,y$ are not neighbors in $\Delta$.}

Then $a_{xy}=0$ and $t\geq3$. By (\ref{eq:u-u}) we have $[\eta_{x},\varepsilon_{y}]=0=a_{xy}\varepsilon_{y}$.

\subparagraph*{Case 1.2.2. $x,y$ are neighbors in $\Delta$.}

Then $a_{xy}=-1$ and $t=2$. By (\ref{eq:u-u}) we have $[\eta_{x},\varepsilon_{y}]=-\varepsilon_{y}=a_{xy}\varepsilon_{y}$.

\subsubsection*{Case 2. $y=1$.}

In this case we may assume that $M$ has the following structure:

\[
M(z)=\left\{ \begin{array}{ll}
k^{2} & \textrm{if $z=1$; }\\
k & \textrm{otherwise,}\end{array}\right.\]

\[
M(\alpha_{ij})=\left\{ \begin{array}{ll}
{{\id \choose 0}} & \textrm{if $(i,j)=(1,1)$;}\\
{{0 \choose \id}} & \textrm{if $(i,j)=(2,1)$;}\\
{-{{\id \choose \id}}} & \textrm{if $(i,j)=(3,1)$;}\\
\id & \textrm{otherwise.}\end{array}\right.\]
Then as easily seen for any $K\in\Omega$ we have

\begin{eqnarray}
F_{X,S_{1}^{K}}^{M^{K}} & = & \left\{ \begin{array}{ll}
1 & \textrm{if $X\cong W_{c}(K)$ (for some $c\in K)$ or $X_{11}^{K}$;}\\
0 & \textrm{otherwise.}\end{array}\right.\nonumber \\
F_{S_{1}^{K},X}^{M^{K}} & = & 0\textrm{\quad for all incecomposable }X\textrm{ with $\udim X=\delta$}.\label{eq:F-MXS}\end{eqnarray}

\paragraph*{Case 2.1. $x\neq1$, say $x=x_{ij}$.}

Then $x\neq y$ and $\eta_{x}$ has the form (\ref{eq:et-ij}).

\subparagraph*{Case 2.1.1. $x,y$ are not neighbors in $\Delta$.}

In this case $a_{xy}=0$ and $j\geq3$. By (\ref{eq:F-MXS}) we have
$[\eta_{x},\varepsilon_{y}]=0=a_{xy}\varepsilon_{y}$.

\subparagraph*{Case 2.1.2. $x,y$ are neighbors in $\Delta$.}

In this case $a_{xy}=-1$ and $x=x_{i2}$. By (\ref{eq:F-MXS}) we
have $[u_{X_{i2}},u_{S_{1}}]=0$ and $[u_{X_{i1}},u_{S_{1}}]=u_{M}$
and hence $[\eta_{x},\varepsilon_{y}]=-\varepsilon_{y}=a_{xy}\varepsilon_{y}$.

\paragraph*{Case 2.2. $x=1$.}

Then $x=y=1$ and $a_{xy}=2$. By (\ref{eq:F-MXS}) we have for any
$K\in\Omega$

\[
[\sum_{c\in K}u_{W_{c}(K)}+u_{X_{11}^{K}},u_{S_{1}^{K}}]=(|K|+1)u_{M^{K}}=2u_{M^{K}}\]
in $\overline{L}(A^{K})/(|K|-1)$. Therefore $[\eta_{1},\varepsilon_{1}]=2\varepsilon_{1}=a_{11}\varepsilon_{1}$.

\subsection*{Verification of (\ref{et-ze})}

Set $f_{x}:=\delta-e_{x}$ for all $x\in Q_{0}$. First we show the
following.

\begin{lem}
\label{lem:fx-delta}For each $x\in\Delta_{0}$ we have $\overline{\mathbf{u}}_{m(f_{x}+\delta)}=\overline{\mathbf{u}}_{m(f_{x})}$.
\end{lem}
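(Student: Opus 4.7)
I split the proof based on whether $x = 1$ or $x \in \Delta_0 \setminus \{1\}$, since the Auslander--Reiten component of $M(f_x)$ depends crucially on this choice.

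For $x \in \Delta_0 \setminus \{1\}$, so $x = x_{ij}$ with $j \geq 2$, we have $(f_x)_1 = (f_x)_\infty = 1$, hence $\rank f_x = 0$ and by Theorem~\ref{thm:AR-quiver}(2) both $M(f_x)$ and $M(f_x + \delta)$ are regular. The smallest entry of $f_x$ is $0$ at position $x$, so $\org(f_x) = f_x$; and the smallest entry of $f_x + \delta$ is $1$, again at position $x$, so $\org(f_x + \delta) = (f_x + \delta) - \delta = f_x$. Proposition~\ref{prp:dif-rg} applied to $v := f_x + \delta$ then yields $\mathbf{u}_{m(f_x + \delta)} - \mathbf{u}_{m(f_x)} \in I(A)$, as desired.

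For $x = 1$ and $\Delta = A_1$, the identity $\delta = e_1 + e_\infty$ forces $f_1 = e_\infty$, so the claim reduces to $\overline{\mathbf{u}}_{m(e_\infty + \delta)} = \overline{\mathbf{u}}_{m(e_\infty)}$, which is immediate from the definition of $I(A)$.

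The remaining case $x = 1$, $\Delta \neq A_1$ is genuinely new: $T_1 = M(f_1)$ is preinjective of rank $-1$, so neither Proposition~\ref{prp:dif-rg} nor Proposition~\ref{prp:dif-smp} applies. My plan here is to exhibit a nonzero integer $\alpha$, an element $z \in L(A)_1$, and a vertex $y \in Q_0$ such that $[\mathbf{u}_{m(e_y + \delta)} - \mathbf{u}_{m(e_y)}, z] = \alpha \cdot (\mathbf{u}_{m(f_1 + \delta)} - \mathbf{u}_{m(f_1)})$ in $L(A)_1$; the left-hand side lies in $I(A)^{\mathbb{C}}$ since its first factor is a generator of $I(A)$. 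A natural candidate is $y = x_{i2}$ (a neighbor of $1$ in $Q$) together with $z = \mathbf{u}_N$ where $N := M(\delta - e_1 - e_{x_{i2}})$: this is a well-defined indecomposable since $\chi_A(\delta - e_1 - e_{x_{i2}}) = \chi_A(e_1 + e_{x_{i2}}) = 1$, and non-sincere, so $\mathbf{u}_N \in L(A)_1$ by Proposition~\ref{lem:non-sincere}. Moreover $\udim N + \udim S_{x_{i2}} = \udim T_1$ and $\udim N + \udim M(e_{x_{i2}} + \delta) = \udim M(f_1 + \delta)$, making $N$ compatible with both required brackets; the sequence $0 \to N \to T_1 \to S_{x_{i2}} \to 0$ exists since $S_{x_{i2}}$ is a quotient of $T_1$ (the incoming arrow $\alpha_{i1}$ acts as $0$ on $T_1$ as $T_1(1) = 0$).

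The main obstacle is verifying that the Hall-number computations for the two brackets $[\mathbf{u}_N, \mathbf{u}_{S_{x_{i2}}}]$ and $[\mathbf{u}_N, \mathbf{u}_{m(e_{x_{i2}} + \delta)}]$ yield a common (nonzero) coefficient, so that their difference gives precisely $\alpha(\mathbf{u}_{m(f_1 + \delta)} - \mathbf{u}_{m(f_1)})$; a priori the two Ext-group vanishings ($\Ext_A^1(\mathcal{R},\mathcal{I}) = 0$ in the first bracket and $\Ext_A^1(M(e_{x_{i2}}+\delta), N) = 0$ in the second, by the separating property of Theorem~\ref{thm:AR-quiver}(1) and Definition~\ref{dfn:separating}) contribute with opposite signs, so the choice of $N$ and $y$ may need to be adjusted, or several $i$'s combined, to obtain a clean identity. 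This delicate computation in the preinjective component is the essential technical content of the case $x = 1$, $\Delta \neq A_1$.
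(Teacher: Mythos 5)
Your first two cases are fine: for $x\in\Delta_{0}\setminus\{1\}$ you argue exactly as the paper does (Proposition \ref{prp:dif-rg} applied to $v=f_{x}+\delta$ with $\org(f_{x}+\delta)=f_{x}$), and for $\Delta=A_{1}$ the identity $f_{1}=e_{\infty}$ indeed reduces the claim to a generator of $I(A)$. The problem is the remaining case $x=1$, $\Delta\ne A_{1}$, which is the real content of the lemma, and there you only have a plan, not a proof. Your strategy would give the statement only if the two structure constants $a,b$ in $[\mathbf{u}_{m(e_{y}+\delta)}-\mathbf{u}_{m(e_{y})},\mathbf{u}_{N}]=a\,\mathbf{u}_{m(f_{1}+\delta)}-b\,\mathbf{u}_{m(f_{1})}$ are equal and nonzero; otherwise you only learn that $a\,\mathbf{u}_{m(f_{1}+\delta)}-b\,\mathbf{u}_{m(f_{1})}\in I(A)$, which does not yield $\overline{\mathbf{u}}_{m(f_{1}+\delta)}=\overline{\mathbf{u}}_{m(f_{1})}$. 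You acknowledge yourself that the two coefficients may come out with opposite signs and that ``the choice of $N$ and $y$ may need to be adjusted, or several $i$'s combined'' --- that unresolved computation is precisely the missing step. In addition, for the bracket to be a well-defined integer multiple of $\mathbf{u}_{m(f_{1}+\delta)}$ in $\Pi$ you need the relevant Hall-number differences to be independent of $K\in\Omega$ (a stability/Hall-polynomial statement you never address), and the claim that the kernel of a surjection $T_{1}\twoheadrightarrow S_{x_{i2}}$ is isomorphic to the indecomposable $N=M(\delta-e_{1}-e_{x_{i2}})$ also needs an argument (a root dimension vector alone does not force the kernel to be indecomposable).

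For comparison, the paper avoids all of this by trading $f_{1}$ for $e_{\infty}$ instead of for $e_{x_{i2}}$: using an explicit matrix description of $M(f_{1}+\delta)$ it writes
\[
\mathbf{u}_{m(f_{1}+\delta)}=[\mathbf{u}_{m(e_{\infty}+\delta)},\mathbf{u}_{m(e_{x_{32}})},\mathbf{u}_{m(e_{x_{2,p(2)}})},\ldots,\mathbf{u}_{m(e_{x_{22}})},\mathbf{u}_{m(e_{x_{1,p(1)}})},\ldots,\mathbf{u}_{m(e_{x_{12}})}]
\]
and the same iterated bracket with $\mathbf{u}_{m(e_{\infty})}$ in the first slot gives $\mathbf{u}_{m(f_{1})}$, all intermediate structure constants being $1$. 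By multilinearity the difference is a multibracket whose first argument $\mathbf{u}_{m(e_{\infty}+\delta)}-\mathbf{u}_{m(e_{\infty})}$ is a generator of $I(A)$, so the difference lies in $I(A)$; a similar chain works in the $A_{n}$ case. If you want to salvage your approach, replacing your single bracket by such a chain of simple extensions along the arms (so that each step has coefficient $1$ and the only non-simple ingredient is $e_{\infty}$ versus $e_{\infty}+\delta$) is the natural fix; as written, the key case of the lemma remains unproved.
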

\begin{proof}
If $x\ne1$, then $f_{x}+\delta$ is a positive root of $\chi_{A}$
with $M(f_{x}+\delta)$ a regular module, and $\org(f_{x}+\delta)=f_{x}$.
Hence the assertion holds by Proposition \ref{prp:dif-rg}. Now let
$x=1$. First we assume that $\Delta$ is not of type $A_{n}$. We
may assume that the module $M:=M(f_{1}+\delta)$ has the following
structure:\begin{eqnarray}
M(z) & = & \begin{cases}
k & \textrm{if }z=1;\\
k^{2} & \textrm{otherwise.}\end{cases}\nonumber \\
M(\alpha_{ij}) & = & \begin{cases}
(1,0) & \textrm{if }(i,j)=(1,1);\\
(0,1) & \textrm{if }(i,j)=(2,1);\\
-(1,1) & \textrm{if }(i,j)=(3,1);\\
\id_{k^{2}} & \textrm{otherwise.}\end{cases}\label{eq:str-of-f1+delta}\end{eqnarray}
By looking at this structure we easily see that the following equalities
hold:\begin{multline*}
\mathbf{u}_{m(f_{1}+\delta)}=[\mathbf{u}_{m(e_{\infty}+\delta)},\mathbf{u}_{m(e_{x_{32}})},\mathbf{u}_{m(e_{x_{2,p(2)}})},\mathbf{u}_{m(e_{x_{2,p(2)-1}})},\ldots,\mathbf{u}_{m(e_{x_{22}})},\\
\mathbf{u}_{m(e_{x_{1,p(1)}})},\mathbf{u}_{m(e_{x_{1,p(1)-1}})},\ldots,\mathbf{u}_{m(e_{x_{12}})}];\textrm{ and}\end{multline*}
\begin{multline*}
\mathbf{u}_{m(f_{1})}=[\mathbf{u}_{m(e_{\infty})},\mathbf{u}_{m(e_{x_{32}})},\mathbf{u}_{m(e_{x_{2,p(2)}})},\mathbf{u}_{m(e_{x_{2,p(2)-1}})},\ldots,\mathbf{u}_{m(e_{x_{22}})},\\
\mathbf{u}_{m(e_{x_{1,p(1)}})},\mathbf{u}_{m(e_{x_{1,p(1)-1}})},\ldots,\mathbf{u}_{m(e_{x_{12}})}].\end{multline*}
Hence we have\begin{multline*}
\mathbf{u}_{m(f_{1}+\delta)}-\mathbf{u}_{m(f_{1})}=[\mathbf{u}_{m(e_{\infty}+\delta)}-\mathbf{u}_{m(e_{\infty})},\mathbf{u}_{m(e_{x_{32}})},\mathbf{u}_{m(e_{x_{2,p(2)}})},\mathbf{u}_{m(e_{x_{2,p(2)-1}})},\ldots,\mathbf{u}_{m(e_{x_{22}})},\\
\mathbf{u}_{m(e_{x_{1,p(1)}})},\mathbf{u}_{m(e_{x_{1,p(1)-1}})},\ldots,\mathbf{u}_{m(e_{x_{12}})}]\end{multline*}
is in $I(A)^{\mathbb{C}}$ because so is $\mathbf{u}_{m(e_{\infty}+\delta)}-\mathbf{u}_{m(e_{\infty})}$
by Proposition \ref{prp:dif-smp}.

Finally a similar argument works also in the $A_{n}$ case.
\end{proof}

\subsubsection*{Case 1. $y\ne1$, say $y=x_{ij}$ $(i\in\{1,\ldots,r\},j\in\{2,\ldots,p(i)\})$.}

In this case $\zeta_{y}=\overline{\mathbf{u}}_{m(f_{y})}$ and $M(f_{y})$
is a regular exceptional module contained in a non-homogeneous tube
$\mathcal{T}_{\rho}$. Note that for each $s\in\{1,\ldots,r\}$ and
$t\in\{2,\ldots,p(s)\}$ we have $X_{st}\in\mathcal{T}_{\rho}$ if
and only if $s=i$. By Lemma \ref{lem:distinct-tubes} we have\[
\begin{cases}
[u_{W_{c}(K)},u_{m(f_{y})^{K}}]=0 & \forall c\in K\setminus E_{\Delta}\textrm{; and}\\
{}[u_{X_{xt}^{K}},u_{m(f_{y})^{K}}]=0 & \forall s\in\{1,\ldots,r\}\setminus\{ i\},\forall t\in\{2,\ldots,p(s)\}.\end{cases}\]
If $s=i$, then we can calculate the bracket $[u_{X_{it}^{K}},u_{m(f_{y})^{K}}]$
in $\mathcal{T}_{\rho}$ using (\ref{eq:Lie-bracket-cyclic}) as follows.\[
[u_{X_{it}^{K}},u_{m(f_{y})^{K}}]=\begin{cases}
-u_{m(f_{y}+\delta)^{K}} & \textrm{if }j=t;\\
u_{m(f_{y}+\delta)^{K}} & \textrm{if }j=t+1;\textrm{ and}\\
0 & \textrm{otherwise.}\end{cases}\]
Using this we now calculate $[\eta_{x},\zeta_{y}]$.

\paragraph*{Case 1.1. $x=1$.}

For any $K\in\Omega$, we have\begin{eqnarray*}
[\sum_{c\in K\setminus E_{\Delta}}u_{W_{c}(K)}+u_{X_{11}^{K}}+\cdots+u_{X_{r1}^{K}},u_{m(f_{y})^{K}}] & = & [u_{X_{i1}^{K}},u_{m(f_{y})^{K}}]\\
 & = & \begin{cases}
-u_{m(f_{y}+\delta)^{K}} & \textrm{if }j=1\textrm{ (i.e., }y=1,\textrm{ impossible})\\
u_{m(f_{y}+\delta)^{K}} & \textrm{if }j=2\textrm{ (i.e., }y=x_{i2})\\
0 & \textrm{otherwise (i.e., }y=x_{ij},j\ge3)\end{cases}\\
 & = & -a_{xy}u_{m(f_{y}+\delta)^{K}}.\end{eqnarray*}
Hence we have $[\eta_{x},\zeta_{y}]=-a_{xy}\zeta_{y}$ by Lemma \ref{lem:fx-delta}.

\paragraph*{Case 1.2. $x\ne1$, say $x=x_{st}$$(s\in\{1,\ldots,r\}$ and $t\in\{2,\ldots,p(s)\})$.}

Let $K\in\Omega$. If $s\ne i$, then $a_{xy}=0$ and $[u_{X_{st}^{K}}-u_{X_{s,t-1}^{K}},u_{m(f_{y})^{K}}]=0=-a_{xy}u_{m(f_{y})^{K}}$.
Thus in this case we have $[\eta_{x},\zeta_{y}]=-a_{xy}\zeta_{y}$.
Therefore we may assume that $s=i$.\begin{eqnarray*}
[u_{X_{st}^{K}}-u_{X_{s,t-1}^{K}},u_{m(f_{y})^{K}}] & = & \begin{cases}
-u_{m(f_{y}+\delta)^{K}} & \textrm{if }j=t\\
u_{m(f_{y}+\delta)^{K}} & \textrm{if }j=t+1\\
0 & \textrm{otherwise}\end{cases}\\
 & + & \begin{cases}
-u_{m(f_{y}+\delta)^{K}} & \textrm{if }j=t-1\\
u_{m(f_{y}+\delta)^{K}} & \textrm{if }j=t\\
0 & \textrm{otherwise}\end{cases}\\
 & = & -a_{xy}u_{m(f_{y}+\delta)^{K}}.\end{eqnarray*}
Hence also in this case we have $[\eta_{x},\zeta_{y}]=-a_{xy}\zeta_{y}$
by Lemma \ref{lem:fx-delta}.

\subsubsection*{Case 2. $y=1$.}

In this case $\zeta_{1}=-\overline{\mathbf{u}}_{m(f_{1})}$. Using
the structure of $M(f_{1}+\delta)$ described in (\ref{eq:str-of-f1+delta})
we can calculate Hall numbers as follows: For any $K\in\Omega$\[
F_{M(f_{1})^{K}X_{ij}^{K}}^{M(f_{1}+\delta)^{K}}=\begin{cases}
1 & \textrm{if }j=1;\\
0 & \textrm{otherwise.}\end{cases}\]
\[
F_{M(f_{1})^{K}W_{c}(K)}^{M(f_{1}+\delta)^{K}}=1\quad\textrm{ for all }c\in K.\]
Further since there are no nonzero homomorphisms from preinjectives
to regulars, we have \[
F_{X,M(f_{1})^{K}}^{M(f_{1}+\delta)^{K}}=0\quad\textrm{ for all indecomposable }X\textrm{ with }\udim X=\delta.\]
These enable us to calculate $[\eta_{x},\zeta_{1}]$.

\paragraph*{Case 2.1. $x=1$.}

In this case $a_{11}=2$. For any $K\in\Omega$ we have\[
[\sum_{c\in K}u_{W_{c}(K)}+u_{X_{11}^{K}},u_{m(f_{1})^{K}}]=-(|K|+1)u_{m(f_{1}+\delta)^{K}}=-2u_{m(f_{1}+\delta)^{K}}\]
in $\overline{L}(A^{K})/(|K|-1)$. Hence by Lemma \ref{lem:fx-delta}
we have $[\eta_{1},\zeta_{1}]=-a_{11}\zeta_{1}$.

\paragraph*{Case 2.2. $x,y$ are neighbors in $\Delta$, i.e., $x=x_{ij}$ with
$j=2$.}

In this case $a_{x1}=-1$. For any $K\in\Omega$ we have\[
[u_{X_{i2}^{K}}-u_{X_{i1}^{K}},u_{m(f_{1})^{K}}]=u_{m(f_{1}+\delta)^{K}}.\]
Hence by Lemma \ref{lem:fx-delta} we have $[\eta_{x},\zeta_{1}]=-a_{x1}\zeta_{1}$.

\paragraph*{Case 2.3. $x,y$ are not neighbors in $\Delta$, i.e., $x=x_{ij}$
with $j\ge3$.}

In this case $a_{x1}=0$. For any $K\in\Omega$ we have\[
[u_{X_{ij}^{K}}-u_{X_{i,j-1}^{K}},u_{m(f_{1})^{K}}]=0.\]
Hence by Lemma \ref{lem:fx-delta} we have $[\eta_{x},\zeta_{1}]=-a_{x1}\zeta_{1}$.
This finishes the proof of the claim.

\subsection{Injectivity of $\mathbf{\phi}$\label{sub:Injectivity-of-phi}}

We next show that $\phi:\mathfrak{g}(\Delta)\to L(A)_{1}^{\mathbb{C}}/I(A)^{\mathbb{C}}$
is injective. Since $\mathfrak{g}(\Delta)$ is simple, it is enough
to show that $\Im\phi\ne0$. First we consider the case that $\Delta\ne A_{1}$.
In this case $x_{12}$ exists, and we set $e_{2}:=e_{x_{12}}$, $E_{2}:=E_{x_{12}}$
for short. Then it is enough to show that $\phi(E_{2})\ne0$, i.e.,
that $\mathbf{u}_{m(e_{2})}\otimes1_{\mathbb{C}}\not\in I(A)^{\mathbb{C}}$,
where $1_{\mathbb{C}}$ stands for the identity $1\in\mathbb{C}$.
Denote by $1_{\mathbb{Q}}$ the identity $1\in\mathbb{Q}$. Since
the canonical isomorphism $(L(A)_{1}^{\mathbb{Q}}/I(A)^{\mathbb{Q}})^{\mathbb{C}}\isoto L(A)_{1}^{\mathbb{C}}/I(A)^{\mathbb{C}}$
sends the coset of $\mathbf{u}_{m(e_{2})}\otimes1_{\mathbb{Q}}$ to
that of $\mathbf{u}_{m(e_{2})}\otimes1_{\mathbb{C}}$, we have only
to show that $\mathbf{u}_{m(e_{2})}\otimes1_{\mathbb{Q}}\not\in I(A)^{\mathbb{Q}}$.
Assume that $\mathbf{u}_{m(e_{2})}=\mathbf{u}_{m(e_{2})}\otimes1_{\mathbb{Q}}\in I(A)^{\mathbb{Q}}$.
Then by definition of $I(A)$ there exist a finite set $J\subseteq\bigcup_{n\in\mathbb{N}}Q_{0}^{(n)}$and
an $(a_{i})_{i\in J}\in\mathbb{Q}^{J}$ such that $\mathbf{u}_{m(e_{2})}$
is expressed as a linear combination

\[
\mathbf{u}_{m(e_{2})}=\sum_{i\in J}a_{i}[\mathbf{u}_{m(e_{i(1)})},\mathbf{u}_{m(e_{i(2)})},\ldots,\mathbf{u}_{m(e_{i(t_{i}-1)})},\mathbf{u}_{m(e_{i(t_{i})}+\delta)}-\mathbf{u}_{m(e_{i(t_{i})})}],\]
where we put $i=(i(1),\ldots,i(t_{i}))$ for all $i\in J$. Take an
$a\in\mathbb{N}$ such that $aa_{i}\in\mathbb{Z}$ for all $i\in J$.
By renaming $aa_{i}$ as $a_{i}$, we have\[
a\mathbf{u}_{m(e_{2})}=\sum_{i\in J}a_{i}[\mathbf{u}_{m(e_{i(1)})},\mathbf{u}_{m(e_{i(2)})},\ldots,\mathbf{u}_{m(e_{i(t_{i}-1)})},\mathbf{u}_{m(e_{i(t_{i})}+\delta)}-\mathbf{u}_{m(e_{i(t_{i})})}],\]
with $a_{i}\in\mathbb{Z}$ for all $i\in J$. We put

\[
d_{i}:=\sum_{j=1}^{t_{i}}e_{i(j)}\quad\textrm{and}\quad e(i):=e_{i(t_{i})}\]
for all $i\in J$. Then for each $K\in\Omega$ we have\begin{align*}
au_{m(e_{2})^{K}}= & \sum_{i\in J}a_{i}[u_{m(e_{i(1)})^{K}},u_{m(e_{i(2)})^{K}},\ldots,u_{m(e_{i(t_{i}-1)})^{K}},u_{m(e(i)+\delta)^{K}}-u_{m(e(i))^{K}}]\\
= & \sum_{i\in J_{1}}a_{i}[u_{m(e_{i(1)})^{K}},u_{m(e_{i(2)})^{K}},\ldots,u_{m(e_{i(t_{i}-1)})^{K}},u_{m(e(i)+\delta)^{K}}-u_{m(e(i))^{K}}]\\
+ & \sum_{i\in J_{2}}a_{i}[u_{m(e_{i(1)})^{K}},u_{m(e_{i(2)})^{K}},\ldots,u_{m(e_{i(t_{i}-1)})^{K}},u_{m(e(i)+\delta)^{K}}-u_{m(e(i))^{K}}],\end{align*}
in $\overline{L}(A^{K})/(|K|-1)$ where $J_{1}:=\{ i\in J\mid d_{i}\in\mathbb{Z}\delta+e_{2}\}$
and $J_{2}:=\{ i\in J\mid d_{i}\not\in\mathbb{Z}\delta+e_{2}\}$.
Here since $\overline{L}(A^{K})/(|K|-1)$ is a free $\mathbb{Z}/(|K|-1)$-module
with basis $[\ind A^{K}]$ which is a disjoint union of the subsets
$\{[X]\in[\ind A^{K}]\mid\udim X\in\mathbb{Z}\delta+e_{2}\}$ and
$\{[X]\in[\ind A^{K}]\mid\udim X\not\in\mathbb{Z}\delta+e_{2}\}$,
we have\[
au_{m(e_{2})^{K}}=\sum_{i\in J_{1}}a_{i}[u_{m(e_{i(1)})^{K}},u_{m(e_{i(2)})^{K}},\ldots,u_{m(e_{i(t_{i}-1)})^{K}},u_{m(e(i)+\delta)^{K}}-u_{m(e(i))^{K}}].\]
We can write $d_{i}=s_{i}\delta+e_{2}$ for some $s_{i}\in\mathbb{Z}$.
Now we have\[
[u_{m(e_{i(1)})^{K}},u_{m(e_{i(2)})^{K}},\ldots,u_{m(e_{i(t_{i}-1)})^{K}}]\begin{cases}
\in(\mathbb{Z}/(|K|-1)\mathbb{Z})u_{m(d_{i}-e(i))} & \textrm{if}\:\chi(d_{i}-e(i))\in\{0,1\}\\
=0 & \textrm{otherwise.}\end{cases}\]
Here, \begin{align*}
\chi(d_{i}-e(i)) & =B(s_{i}\delta+e_{2}-e(i),s_{i}\delta+e_{2}-e(i))\\
 & =s_{i}^{2}B(\delta,\delta)+s_{i}\{ B(\delta,e_{2}-e(i))+B(e_{2}-e(i),\delta)\}+B(e_{2}-e(i),e_{2}-e(i))\\
 & =B(e_{2},e_{2})-\{ B(e_{2},e(i))+B(e(i),e_{2})\}+B(e(i),e(e))\\
 & \in2-\{2,0,-1\}=\{0,2,3\}.\end{align*}
Hence $\chi(d_{i}-e(i))\in\{0,1\}$ $\Leftrightarrow$ $\chi(d_{i}-e(i))=0$
$\Leftrightarrow$$B(e_{2},e(i))+B(e(i),e_{2})=2$ $\Leftrightarrow$
$e(i)=e_{2}$. Therefore by putting $J_{0}:=\{ i\in J_{1}\mid e(i)=e_{2}\}=\{ i\in J\mid d_{i}\in\mathbb{Z}\delta+e_{2},e(i)=e_{2}\}$
we have\[
au_{m(e_{2})^{K}}=\sum_{i\in J_{0}}a_{i}[u_{m(e_{i(1)})^{K}},u_{m(e_{i(2)})^{K}},\ldots,u_{m(e_{i(t_{i}-1)})^{K}},u_{m(e_{2}+\delta)^{K}}-u_{m(e_{2})^{K}}].\]
For each $i\in J_{0}$ set $I_{i}:=\{[X]\in[\ind A^{K}]\mid\udim X=s_{i}\delta\}$.
Then noting that $d_{i}-e_{2}=s_{i}\delta$ for all $i\in J_{0}$,
we see for each $i\in J_{0}$ and for each $[X]\in I_{i}$ there exists
some $b_{i,[X]}\in\mathbb{Z}/(|K|-1)\mathbb{Z}$ such that\[
[u_{m(e_{i(1)})^{K}},u_{m(e_{i(2)})^{K}},\ldots,u_{m(e_{i(t_{i}-1)})^{K}}]=\sum_{[X]\in I_{i}}b_{i,[X]}u_{[X]}.\]
Then\begin{align*}
[u_{m(e_{i(1)})^{K}},u_{m(e_{i(2)})^{K}},\ldots,u_{m(e_{i(t_{i}-1)})^{K}},u_{m(e_{2}+\delta)^{K}}] & =\sum_{[X]\in I_{i}}b_{i,[X]}[u_{[X]},u_{m(e_{2}+\delta)^{K}}]\\
 & =\sum_{[X]\in I_{i}}b_{i,[X]}b_{[X],m(e_{2}+\delta)^{K}}u_{m(e_{2}+(s_{i}+1)\delta)^{K}},\end{align*}
and\begin{align*}
[u_{m(e_{i(1)})^{K}},u_{m(e_{i(2)})^{K}},\ldots,u_{m(e_{i(t_{i}-1)})^{K}},u_{m(e_{2})^{K}}] & =\sum_{[X]\in I_{i}}b_{i,[X]}[u_{[X]},u_{m(e_{2})^{K}}]\\
 & =\sum_{[X]\in I_{i}}b_{i,[X]}b_{[X],m(e_{2})^{K}}u_{m(e_{2}+s_{i}\delta)^{K}}.\end{align*}
Here by Proposition \ref{pro:img-stb}, we have $b_{[X],m(e_{2}+\delta)^{K}}=b_{[X],m(e_{2})^{K}}$
for all $[X]\in I_{i}$. Hence we obtain\[
[u_{m(e_{i(1)})^{K}},u_{m(e_{i(2)})^{K}},\ldots,u_{m(e_{i(t_{i}-1)})^{K}},u_{m(e_{2}+\delta)^{K}}-u_{m(e_{2})^{K}}]=c_{i}(K)(u_{m(e_{2}+(s_{i}+1)\delta)^{K}}-u_{m(e_{2}+s_{i}\delta)^{K}}),\]
where we put $c_{i}(K):=\sum_{[X]\in I_{i}}b_{i,[X]}b_{[X],m(e_{2})^{K}}\in\mathbb{Z}/(|K|-1)\mathbb{Z}$.
As a consequence, we have\[
au_{m(e_{2})^{K}}=\sum_{i\in J_{0}}a_{i}c_{i}(K)(u_{m(e_{2}+(s_{i}+1)\delta)^{K}}-u_{m(e_{2}+s_{i}\delta)^{K}}).\]
From this formula, it is easy to see that $a=0$ in $\mathbb{Z}/(|K|-1)\mathbb{Z}$
for all $K\in\Omega$. Hence $a=0$ in $\mathbb{Z}$, a contradiction.
Hence we must have $\mathbf{u}_{m(e_{2})}\not\in I(A)^{\mathbb{Q}}$,
and hence $\Im\phi\ne$0.

Also in the case that $\Delta=A_{1}$, a similar argument works by
Proposition \ref{pro:Krn-stb} to show that $\Im\phi\ne$0.

\subsection{Surjectivity of $\mathbf{\phi}$}

We finally show that $\phi$ is surjective. It is enough to show that
$\varepsilon_{\infty}\in\Im\phi$ because $L(A)_{1}^{\mathbb{C}}/I(A)^{\mathbb{C}}$
is generated by $\{\varepsilon_{x}\mid x\in Q_{0}\}$ and we already
know that $\{\varepsilon_{x}\mid x\in Q_{0}\setminus\{\infty\}\}\subseteq\Im\phi$
by definition of $\phi$. There exists a permutation $(x_{1},\ldots,x_{n})$
of $\Delta_{0}$ such that $[F_{x_{1}},\cdots,F_{x_{n}}]\neq0$ in
$\mathfrak{g}(\Delta)$ (see \ref{sub:notations}). Thus by \ref{sub:Injectivity-of-phi}
we have $[\zeta_{x_{1}},\ldots,\zeta_{x_{n}}]\neq0$ in $L(A)_{1}^{\mathbb{C}}/I(A)^{\mathbb{C}}$.
Set $f_{x}:=\delta-e_{x}=\udim T_{x}$ for all $x\in\Delta_{0}$ and
note that

\begin{equation}
\sum_{x\in\Delta_{0}}f_{x}=\sum_{x\in\Delta_{0}}(\delta-e_{x})=(n-1)\delta+e_{\infty}.\label{eq:sum-T-x}\end{equation}
Since $B(\delta,e_{\infty})=1$, $B(e_{\infty},\delta)=-1$ and $B(e_{\infty},e_{\infty})=1$
we have $\chi((n-1)\delta+e_{\infty})=1$. Hence there exists a unique
indecomposable $A$-module $M$ with $\udim M=(n-1)\delta+e_{\infty}$
up to isomorphisms. Hence we have\[
[\zeta_{x_{1}},\ldots,\zeta_{x_{n}}]=\overline{(c(K)u_{[M^{K}]})_{K\in\Omega}}\]
for some $c(K)\in\mathbb{Z}/(|K|-1)\mathbb{Z}$ for each $K\in\Omega$.
By Corollary \ref{cor:prin-Hall}, there exist Hall polynomials\[
\varphi_{M(f_{x_{1}})M(f_{x_{2}})}^{M(f_{x_{1}}+f_{x_{2}})},\varphi_{M(f_{x_{3}})M(f_{x_{1}}+f_{x_{2}})}^{M(f_{x_{1}}+f_{x_{2}}+f_{x_{3}})},\ldots,\varphi_{M(f_{x_{n}})M(\sum_{i=1}^{n-1}f_{x_{i}})}^{M}.\]
Therefore there is some $c\in\mathbb{Z}$ such that $c\equiv c(K)\pmod{|K|-1}$
for all $K\in\Omega$. Thus $0\ne[\zeta_{x_{1}},\ldots,\zeta_{x_{n}}]=c\overline{\mathbf{u}}_{M}$
and we have $c\ne0$. By Proposition \ref{prp:dif-smp}, we have $\overline{\mathbf{u}}_{M}=\overline{\mathbf{u}}_{m(e_{\infty})}=\varepsilon_{\infty}$.
Hence $\varepsilon_{\infty}=\frac{1}{c}[\zeta_{x_{1}},\ldots,\zeta_{x_{n}}]\in\Im\phi$.

As a consequence, $\phi\colon\mathfrak{g}(\Delta)\to L(A)_{1}^{\mathbb{C}}/I(A)^{\mathbb{C}}$
is an isomorphism. \hfil\qed

\section{\textbf{Root spaces}\label{sec:root-spaces} \textbf{}}

In this section we prove Proposition \ref{prp:root-space}.

\subsection{Gabriel-Roiter submodules}

We first recall the definitions of the Gabriel-Roiter measure and
of Gabriel-Roiter submodules (see Ringel \cite{Ri-GR} for details).

\begin{defn}
Let $M\in\mod A$ and $l$ the length of $M$. 

(1) The \emph{Gabriel-Roiter measure} $\mu(M)\in\mathbb{Q}$ of $M$
is defined by induction on $l$ as follows. If $l=0$, then $\mu(M):=0$.
If $l>0$, then\[
\mu(M):=\max_{M'<M}\mu(M')+\begin{cases}
2^{-l} & \textrm{if $M$ is indecomposable;}\\
0 & \textrm{otherwise,}\end{cases}\]
where $M'$ runs through all proper submodules of $M$ (for the existence
of this maximum see \cite[Section 1]{Ri-GR}).

(2) If $M$ is indecomposable and $M'$ is an indecomposable submodule
of $M$ with $\mu(M')$ maximal, then we call $M'$ a \emph{Gabriel-Roiter}
\emph{submodule} (\emph{GR-submodule} for short) of $M$ and the embedding
$M'\hookrightarrow M$ a \emph{Gabriel-Roiter inclusion} (\emph{GR-inclusion}
for short).

(3) A monomorphism $u$ is called \emph{mono-irreducible} if (i) $u$
is not a section, and (ii) for every factorization $u=u''u'$ with
$u''$ a monomorphism, either $u'$ is a section or $u''$ is an isomorphism.
\end{defn}
We cite the following from \cite[Section 2]{Ri-GR}.

\begin{prop}
\label{prp:GR-monoirr} $(1)$ GR-inclusions are mono-irreducible.

$(2)$ The cokernel of a mono-irreducible monomorphism between indecomposable
$A$-modules is indecomposable.

$(3)$ Let $Y$ be an indecomposable $A$-module with a GR-submodule
$X$ and let $U$ be a submodule of $Y$ isomorphic to $X$. Then
$U$ is also a GR-submodule of $Y$, and hence $Y/U$ is again indecomposable
by $(1)$ and $(2)$ above.
\end{prop}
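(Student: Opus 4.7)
The plan is to establish the three parts in sequence, drawing on the inductive definition of the Gabriel-Roiter measure and elementary facts about indecomposable modules and monomorphisms.

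For part (1), the key observation is the recursive inequality $\mu(Y) \geq \mu(X) + 2^{-l(Y)} > \mu(X)$ whenever $X \subsetneq Y$ with $Y$ indecomposable, which follows immediately from the definition of $\mu$. Consequently, a GR-submodule $X$ of $M$ cannot be properly contained in any indecomposable submodule of $M$. To establish mono-irreducibility of a GR-inclusion $u \colon X \hookrightarrow M$, consider a factorization $X \xrightarrow{u'} X' \xrightarrow{u''} M$ with $u''$ a monomorphism; after replacing $X'$ by $u''(X')$ we may regard $u'$ as the corestriction of the inclusion $X \subseteq X' \subseteq M$. If $X' = M$ then $u''$ is an isomorphism; otherwise $X' \subsetneq M$, and one must show that $X$ is a direct summand of $X'$. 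When $X'$ is indecomposable the observation above forces $X = X'$; when $X'$ is decomposable, one uses that $X$ remains a GR-submodule of $X'$ (since every indecomposable submodule of $X'$ is also one of $M$) and analyses the components of $X \hookrightarrow \bigoplus_i X'_i$ to exhibit an indecomposable summand $X'_i$ with $X \xrightarrow{\sim} X'_i$, thereby producing a retraction.

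For part (2), suppose for contradiction that $M/X = N_1 \oplus N_2$ with both $N_1$ and $N_2$ nonzero. Let $X' \subseteq M$ be the preimage of $N_1 \oplus 0$ under the canonical projection $M \twoheadrightarrow M/X$; then $X \subsetneq X' \subsetneq M$ strictly. The factorization $X \hookrightarrow X' \hookrightarrow M$ of the mono-irreducible inclusion forces $X \hookrightarrow X'$ to be a section (since $X' \neq M$), so $X' = X \oplus X''$ with $X'' \cong X'/X \cong N_1$. The composite $X'' \hookrightarrow M \twoheadrightarrow M/X \twoheadrightarrow N_1$ is then an isomorphism, exhibiting $N_1$ as a direct summand of $M$. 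Since $M$ is indecomposable and $N_1 \neq 0$, this forces $M \cong N_1$, which contradicts $N_2 \neq 0$.

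For part (3), as $U \cong X$ we have $\mu(U) = \mu(X)$, which is still maximal among indecomposable submodules of $Y$; hence $U$ is itself a GR-submodule of $Y$. Applying parts (1) and (2) to the inclusion $U \hookrightarrow Y$ yields its mono-irreducibility and the indecomposability of $Y/U$. The main technical obstacle will be the decomposable-intermediate case in part (1), where a careful analysis of the inclusion of an indecomposable module into a direct sum is required in order to exhibit a retraction onto one of the summands.
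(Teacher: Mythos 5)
Your parts (2) and (3) are correct as written: in (2) the preimage $X'$ of $N_1$, the splitting of $X\hookrightarrow X'$ forced by condition (ii) of mono-irreducibility, and the resulting retraction $M\to N_1$ give a complete argument (it even uses only that $X\neq 0$ and $M$ is indecomposable), and (3) follows from the isomorphism-invariance of $\mu$ exactly as you say (you should also record the trivial verification of condition (i), that a GR-inclusion is not a section, which your write-up skips). For calibration: the paper gives no proof of this proposition at all --- it is quoted from Section 2 of Ringel's Gabriel-Roiter paper \cite{Ri-GR} --- so there is no in-paper argument to align with; the question is only whether your proof stands on its own.

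It does not yet, because of part (1) in the decomposable-intermediate case, which is exactly the step you defer. After reducing to $X\subseteq X'=\bigoplus_i X'_i\subsetneq M$ with every $X'_i$ a proper indecomposable submodule of $M$, maximality of $\mu(X)$ only yields $\mu(X'_i)\le\mu(X)$ for all $i$; to conclude that $X\hookrightarrow X'$ splits one needs the \emph{main property} of the Gabriel-Roiter measure (Gabriel, Ringel): for a monomorphism from an indecomposable module $X$ into a finite direct sum of indecomposables one has $\mu(X)\le\max_i\mu(X'_i)$, and in case of equality the monomorphism splits. This is a genuine induction on length (it is the engine behind Roiter's proof of the first Brauer--Thrall conjecture), and your sketch neither proves nor cites it. Moreover, the route you indicate --- ``exhibit an indecomposable summand $X'_i$ with $X\cong X'_i$, thereby producing a retraction'' --- does not follow as stated: an abstract isomorphism $X\cong X'_i$ produces no retraction of the given inclusion. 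What one actually needs is that some component $X\hookrightarrow X'\twoheadrightarrow X'_i$ is injective; then monotonicity of $\mu$ gives $\mu(X)\le\mu(X'_i)\le\mu(X)$, the strict inequality for proper submodules of indecomposables forces this component to be an isomorphism, and the retraction follows. But for a general embedding of an indecomposable module into a direct sum no single component need be injective (this can fail already when $\operatorname{soc}X$ is not simple), and ruling this out under the hypothesis $\mu(X'_i)\le\mu(X)$ is precisely the content of the main property. So either supply a proof of that statement or cite it, as the paper does via \cite{Ri-GR}; without it, part (1) --- and hence the proposition --- is not established by your argument.
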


\begin{lem}
Let $Y$ be an indecomposable $A$-module with a GR-submodule $X$.
If $\udim Y/X\not\in\mathbb{Z}\delta$, then $\mathcal{F}_{*,X}^{Y}=\mathcal{F}_{Y/X,X}^{Y}$.
\end{lem}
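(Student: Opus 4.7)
The plan is to prove the nontrivial inclusion $\mathcal{F}_{*,X}^{Y} \subseteq \mathcal{F}_{Y/X,X}^{Y}$, since the reverse is immediate from the definitions. So I would fix $U \in \mathcal{F}_{*,X}^{Y}$, i.e., a submodule $U$ of $Y$ with $U \cong X$, and show that $Y/U \cong Y/X$.

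The first step is to use Proposition \ref{prp:GR-monoirr}(3), which tells us that $U$ is itself a GR-submodule of $Y$, and therefore the inclusion $U \hookrightarrow Y$ is mono-irreducible by part (1); by part (2) its cokernel $Y/U$ is indecomposable. The same reasoning applied to the original GR-inclusion $X \hookrightarrow Y$ shows that $Y/X$ is indecomposable as well. Both quotients have the same dimension vector $v := \udim Y - \udim X = \udim(Y/X)$, and by hypothesis $v \notin \mathbb{Z}\delta$.

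The second step is to observe that $v$ is a positive root of $\chi_A$. Since $Y/X$ is indecomposable, Theorem \ref{thm:AR-quiver}(3)(a) gives $\chi_A(v) \in \{0,1\}$. The value $0$ would force $v \in \rad\chi_A = \mathbb{Z}\delta$, contradicting the assumption that $v \notin \mathbb{Z}\delta$. Hence $\chi_A(v) = 1$, so $v$ is a positive root. The third step invokes Theorem \ref{thm:AR-quiver}(3)(b): there is a unique isoclass of indecomposable $A$-modules of dimension vector $v$. Therefore $Y/U \cong Y/X$, i.e., $U \in \mathcal{F}_{Y/X,X}^{Y}$, which is what we wanted.

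There is no real obstacle here; the proof is essentially a bookkeeping argument assembling Proposition \ref{prp:GR-monoirr}, the characterization of $\rad\chi_A$, and the control of $\mod A$ by $\chi_A$ from Theorem \ref{thm:AR-quiver}. The only subtlety to keep in mind is that the hypothesis $\udim(Y/X)\not\in\mathbb{Z}\delta$ is used exactly once, but in an essential way, to rule out the radical case where uniqueness of indecomposables fails.
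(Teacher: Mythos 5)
Your proof is correct and follows essentially the same route as the paper: apply Proposition \ref{prp:GR-monoirr}(3) to see that $Y/U$ and $Y/X$ are both indecomposable, then use $\udim Y/U=\udim Y/X\not\in\mathbb{Z}\delta$ together with Theorem \ref{thm:AR-quiver}(3) to conclude $Y/U\cong Y/X$. The only difference is that you spell out the step (root versus radical vector) that the paper leaves implicit, which is fine.
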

\begin{proof}
It is enough to show that $\mathcal{F}_{*,X}^{Y}\subseteq\mathcal{F}_{Y/X,X}^{Y}$.
Let $U\in\mathcal{F}_{*,X}^{Y}$. Then since $Y\ge U\cong X$, both
$Y/X$ and $Y/U$ are indecomposable by the proposition above. But
since $\udim Y/U=\udim Y/X\not\in\mathbb{Z}\delta$, we have $Y/U\cong Y/X$.
Hence $U\in\mathcal{F}_{Y/X,X}^{Y}$.
\end{proof}
\begin{prop}
\label{pro:preproj-GR-Hall}Let $Y$ be a preprojective indecomposable
$A$-module with a GR-submodule $X$. If $\udim Y/X\not\in\mathbb{Z}\delta$,
then there exists a Hall polynomial $\varphi_{Y/X,X}^{Y}=\varphi_{*,X}^{Y}$.
\end{prop}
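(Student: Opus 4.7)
The plan is to show that the polynomial $\varphi_{*,X}^{Y}$ produced by Lemma \ref{lem:Hall-sub-factor} already serves as the desired Hall polynomial $\varphi_{Y/X,X}^{Y}$. The driver is the set-theoretic identity of the preceding lemma, applied over every base change $K \in \Omega$ rather than only over $k$.

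First I would verify that $X$ itself is preprojective, so that Lemma \ref{lem:Hall-sub-factor} is actually applicable. Since $X$ is an indecomposable submodule of the preprojective module $Y$, the inclusion $X \hookrightarrow Y$ is a nonzero morphism into $\mathcal{P}$. Because the tubular family separates $\mathcal{P}$ from $\mathcal{I}$ we have $\Hom_A(\mathcal{R},\mathcal{P}) = \Hom_A(\mathcal{I},\mathcal{P}) = 0$ (Definition \ref{dfn:separating}(b)), and the classification $\ind A = \mathcal{P} \cup \mathcal{R} \cup \mathcal{I}$ forces $X \in \mathcal{P}$. Hence $X$ is exceptional (Proposition \ref{sub:Positions-of-exceptional}), so $\End_A(X) \cong k$ by Lemma \ref{sub:Endo-algebras-of}, and $X^K$ is indecomposable for every $K \in \Omega$. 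Lemma \ref{lem:Hall-sub-factor} then produces $\varphi := \varphi_{*,X}^{Y} \in \mathbb{Z}[T]$ with $F_{*,X^K}^{Y^K} = \varphi(|K|)$ for all $K \in \Omega$.

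Next, for each $K \in \Omega$ I would invoke the preceding lemma over the domestic canonical algebra $A^K$ applied to the pair $(Y^K, X^K)$. This yields $\mathcal{F}_{*,X^K}^{Y^K} = \mathcal{F}_{Y^K/X^K,\, X^K}^{Y^K}$; since base change is exact we have $Y^K/X^K \cong (Y/X)^K$, and taking cardinalities gives
\[
F_{(Y/X)^K,\, X^K}^{Y^K} \;=\; F_{*,X^K}^{Y^K} \;=\; \varphi(|K|),
\]
exhibiting $\varphi$ as the required Hall polynomial $\varphi_{Y/X,X}^{Y}$.

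The main obstacle is justifying that the hypotheses of the preceding lemma hold over $A^K$: one must verify that $X^K$ is a GR-submodule of $Y^K$ and that $\udim Y^K/X^K \notin \mathbb{Z}\delta$. The latter is immediate since dimension vectors are preserved under extension of scalars. For the former I would argue that the Gabriel--Roiter measure is computed from composition lengths and indecomposability of iterated submodules, and both are preserved under $k \hookrightarrow K$ in the present setup: every indecomposable submodule $Z$ of $Y$ lies in $\mathcal{P}$ by the same separating argument, is therefore exceptional with $\End_A(Z) \cong k$, and hence remains indecomposable with the same length over $A^K$. Consequently the poset of isoclasses of indecomposable submodules of $Y^K$ together with their lengths coincides with that of $Y$, so a maximizer of the GR-measure for $Y$ remains a maximizer for $Y^K$, and $X^K$ is a GR-submodule of $Y^K$ as required.
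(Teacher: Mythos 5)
Your route is the same as the paper's: observe that $X$ is again preprojective, obtain $\varphi_{*,X}^{Y}$ from Lemma \ref{lem:Hall-sub-factor}, and then use the set-theoretic identity $\mathcal{F}_{*,X}^{Y}=\mathcal{F}_{Y/X,X}^{Y}$ of the preceding lemma to conclude $\varphi_{Y/X,X}^{Y}=\varphi_{*,X}^{Y}$. You are also right that, since a Hall polynomial must satisfy $F_{(Y/X)^{K}X^{K}}^{Y^{K}}=\varphi(|K|)$ for \emph{every} $K\in\Omega$, this identification requires the set-theoretic lemma over $A^{K}$ for each $K$, hence some guarantee that $X^{K}$ is a GR-submodule of $Y^{K}$ (or at least that every submodule $U\subseteq Y^{K}$ with $U\cong X^{K}$ has indecomposable cokernel); the paper's two-line proof is silent on this point, so making it explicit is a genuine improvement in precision.

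The problem is that your justification of this transfer does not go through as written. You prove only one direction: an indecomposable submodule $Z\subseteq Y$ is preprojective, hence exceptional with $\End_{A}(Z)\cong k$, so $Z^{K}$ is an indecomposable submodule of $Y^{K}$ of the same length. From this you cannot conclude that ``the poset of isoclasses of indecomposable submodules of $Y^{K}$ together with their lengths coincides with that of $Y$''. Two things are missing. First, $Y^{K}$ could a priori have indecomposable submodules not of the form $Z^{K}$ with $Z\subseteq Y$: every indecomposable preprojective $A^{K}$-module is isomorphic to some $Z^{K}$, but the existence of a monomorphism $Z^{K}\to Y^{K}$ over $K$ does not formally yield a monomorphism $Z\to Y$ over $k$ (over a finite field one cannot argue that the nonempty open locus of injective homomorphisms has a rational point), and you give no substitute argument. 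Second, even granting the coincidence of the posets of submodules, your ``maximizer remains a maximizer'' step needs $\mu_{A^{K}}(Z^{K})=\mu_{A}(Z)$ for the Gabriel--Roiter measures, which is a statement of exactly the same kind one level down and is likewise only bounded from below by base-changing chains of submodules defined over $k$. So the key assertion ``$X^{K}$ is a GR-submodule of $Y^{K}$'' is claimed rather than proved; an indecomposable submodule of $Y^{K}$ invisible over $k$ could in principle have larger measure. (In fairness, the paper's own proof makes the same tacit assumption when it invokes its lemma to get the Hall-number identity at every $K$; but since you set out to close precisely this point, the one-directional comparison is a real gap in your argument.)
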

\begin{proof}
Since $Y$ is preprojective, so is $X$. Then by Lemma \ref{lem:Hall-sub-factor}
the polynomial $\varphi_{*,X}^{Y}$ exists. By the lemma above we
have $\varphi_{Y/X,X}^{Y}=\varphi_{*,X}^{Y}$.
\end{proof}
\begin{prop}
\label{pro:GR-M}Let $X$ and $Y$ be indecomposable preprojective
$A$-modules. Assume that $X$ is a GR-submodule of $Y$. If $\rank Y\ge2$,
then
\end{prop}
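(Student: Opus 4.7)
The plan is to exploit the short exact sequence
\[
0\to X\to Y\to Y/X\to 0
\]
coming from the GR-inclusion and to show that both terms $X$ and $Y/X$ are indecomposable preprojective modules of strictly smaller rank than $Y$, with $\udim Y/X\notin\mathbb{Z}\delta$. This is the structural statement one wants in order to feed the Hall-polynomial result of Proposition \ref{pro:preproj-GR-Hall} into an induction on $\rank Y$ when analyzing $\mathbf{u}_{m(\udim Y)}$ in the proof of Proposition \ref{prp:root-space}.

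First I would check that $X$ itself is preprojective. Any indecomposable direct summand $X_0$ of $X$ admits a nonzero map $X_0\hookrightarrow Y$, and the separating property of the tubular family (Definition \ref{dfn:separating}(2)(b)) forces $\Hom_A(\mathcal{R},\mathcal{P})=\Hom_A(\mathcal{I},\mathcal{P})=0$, so $X_0\in\mathcal{P}$; hence $X\in\mathcal{P}$ (and is indecomposable, being a GR-submodule). Next I would observe that $Y/X$ is indecomposable: by Proposition \ref{prp:GR-monoirr}(1) the inclusion $X\hookrightarrow Y$ is mono-irreducible, and by part (2) of the same proposition its cokernel is indecomposable.

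The main work, and the real obstacle, is then to rule out $Y/X\in\mathcal{R}\cup\mathcal{I}$ under the hypothesis $\rank Y\ge 2$, and to show $\udim Y/X\notin\mathbb{Z}\delta$. The identity $\rank Y=\rank X+\rank Y/X$ combined with the bounds $1\le\rank X\le 6$ and $1\le\rank Y\le 6$ of Corollary \ref{sub:delta-strings} gives the numerical room to argue: if $Y/X$ were regular, one would have $\rank X=\rank Y\ge 2$, and I would derive a contradiction by comparing dimension vectors at the vertices $1$ and $\infty$, using Corollary \ref{sub:min-dim} (which forces $\dim Y(\infty)$ to be the minimum entry of $\udim Y$ and $\dim X(\infty)$ to be the minimum of $\udim X$); if $Y/X$ were preinjective, then $\Hom_A(\mathcal{P},\mathcal{I})$ would have to factor through a tube by Definition \ref{dfn:separating}(2)(c), which contradicts the mono-irreducibility of the GR-inclusion together with the standardness of $\langle\mathcal{R}\rangle$. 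Hence $Y/X$ is preprojective.

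Finally, $\rank Y/X\ge 1$ together with $\rank X\ge 1$ gives $\rank Y/X<\rank Y$ and $\rank X<\rank Y$, and $\udim Y/X\notin\mathbb{Z}\delta$ follows because $Y/X\in\mathcal{P}$ (so $\chi_A(\udim Y/X)=1$, ruling out radical vectors). I expect the delicate point to be the preinjective case: one must extract a contradiction purely from the structure of the separating tubular family and the fact that a GR-inclusion admits no nontrivial proper factorization through a monomorphism, and this is where a careful use of Definition \ref{dfn:separating}(2)(c) combined with the quasi-length analysis of Sect.\ \ref{sub:tubular-family} will be needed.
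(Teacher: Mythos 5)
The heart of this proposition is statement (1), namely that $\udim Y/X\not\in\mathbb{Z}\delta$, and this is precisely where your sketch has a genuine gap. In the problematic case one has $\udim Y=\udim X+m\delta$ for some $m\ge1$, and the contradiction you propose to extract ``by comparing dimension vectors at the vertices $1$ and $\infty$, using Corollary \ref{sub:min-dim}'' does not exist: both $\udim X$ and $\udim X+m\delta$ attain their minimum at the vertex $\infty$, so the numerical data are perfectly consistent. Indeed, submodules $U\le Y$ with $U\cong X$ and $\udim Y/U\in\mathbb{Z}\delta$ genuinely occur; the point is not that such inclusions are numerically impossible, but that they cannot be Gabriel--Roiter inclusions when $\rank Y\ge2$. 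The paper proves this by a concrete structural argument: it takes the Kussin--Meltzer normal forms \cite{KM} for the modules $M(v+r\delta)$, constructs explicit monomorphisms $M(v+r\delta)\hookrightarrow M(v+r'\delta)$, uses mono-irreducibility (via Proposition \ref{prp:GR-monoirr}, which allows one to replace the given GR-submodule by the image of the constructed embedding) to force $t=s+1$, and then computes the cokernel of that embedding explicitly, finding that $(Y/X)(\alpha_{1j})=0$ for at least two values of $j$ as soon as $v_{1}=\rank Y\ge2$, so the cokernel is decomposable, contradicting Proposition \ref{prp:GR-monoirr}(2). Note also that your argument never genuinely uses the hypothesis $\rank Y\ge2$, whereas the conclusion fails for $\rank Y=1$ (in the rank-one $\tau$-orbits the GR-quotient does have dimension vector $\delta$, which is why the proof of Proposition \ref{prp:root-space} treats rank $1$ separately); any correct proof must therefore use the rank hypothesis in an essential, structural way, as the cokernel computation does.

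Two smaller points. The statement only asserts $\udim Y/X\not\in\mathbb{Z}\delta$; it does not claim $Y/X$ is preprojective, so your attempts to exclude $Y/X$ regular or preinjective outright are stronger than needed, and the argument offered for the preinjective case (factorization through a tube contradicting mono-irreducibility) is likewise unsubstantiated, since mono-irreducibility constrains factorizations of the inclusion $X\hookrightarrow Y$, not of the epimorphism $Y\to Y/X$; also, $X$ being preprojective is a hypothesis, so there is nothing to verify there. The parts of your sketch that do work -- indecomposability of $Y/X$ via Proposition \ref{prp:GR-monoirr} and the deduction of (2) from (1) via Proposition \ref{pro:preproj-GR-Hall} -- coincide with the paper's reduction, but the core case is left unproved.
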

\begin{enumerate}
\item $\udim Y/X\not\in\mathbb{Z}\delta$; and
\item There exists a Hall polynomial $\varphi_{Y/X,X}^{Y}$.
\end{enumerate}
\begin{proof}
Assume that $\rank Y\ge2$. Then $\Delta$ is not of type $A_{n}$. 

(1) Assume that $\udim Y/X\in\mathbb{Z}\delta$ and set $v:=\udim X-\dim X(\infty)\delta$.
Then $v_{\infty}=0$, $X=M(v+s\delta)$ and $Y=M(v+t\delta)$ for
some $s<t$ in $\mathbb{N}$, and $\rank X=v_{1}=\rank Y$. For each
$r\in\mathbb{N}$ we may assume that $M:=M(v+r\delta)$ has the following
structure by \cite[Theorems 2, 3]{KM}: $M(x_{ij})=k^{v_{x_{ij}}+r}$
for all $x_{ij}\in Q_{0}$; $M(\alpha_{1j})$ has the form $\left[\begin{array}{c}
\id_{v_{x_{1,j+1}}+r}\\
0\end{array}\right]$ for all $1\le j\le p(1)$; $M(\alpha_{2j})$ has the form $\left[\begin{array}{c}
0\\
\id_{v_{x_{2,j+1}}+r}\end{array}\right]$ for all $1\le j\le p(2)$; $M(\alpha_{32})$ has the form $\left[\begin{array}{c}
0\\
\id_{r}\end{array}\right]$; and $M(\alpha_{31})=-Z_{r}$, where $Z_{r}$ is the $r$-th \emph{enlargement}
(see \cite[Section 2]{KM} for the definition) of a matrix $Z$ listed
in \cite[Theorem 3, Table 1]{KM} that is determined by $v$ not depending
on $r$ (only here we use the assumption that $\chr k\ne2$). For
all $r<r'$ in $\mathbb{N}$ we can define a monomorphism $f:M(v+r\delta)\to M(v+r'\delta)$
by setting $f_{x}:=\left[\begin{array}{c}
\id_{v_{x}+r}\\
0\end{array}\right]$ for all $x\in Q_{0}$, which we can regard the inclusion $M(v+r\delta)\hookrightarrow M(v+r'\delta)$.
Now if $t-s>1$, then we have strict inclusions of indecomposable
modules $X=M(v+s\delta)\hookrightarrow M(v+(s+1)\delta)\hookrightarrow M(v+t\delta)=Y$,
which contradicts the fact that the inclusion $X\hookrightarrow Y$
is mono-irreducible (Proposition \ref{prp:GR-monoirr}). Hence we
must have $t=s+1$. Then a direct calculation shows that $\Cok f=Y/X$
has the following structure: $(Y/X)(x)=k$ for all $x\in Q_{0}$;
and \[
(Y/X)(\alpha_{ij})=\begin{cases}
0 & \textrm{if $i=1$ and $v_{1j}=v_{1,j+1}$};\\
-\id & \textrm{if $(i,j)=(3,1)$};\\
\id & \textrm{otherwise}\end{cases}\]
for all $\alpha_{ij}\in Q_{1}$. Since $v_{1}=\rank Y\ge2$, we see
$(Y/X)(\alpha_{1j})=0$ for at least two distinct values of $j$,
which shows that $Y/X$ is decomposable, a contradiction to Proposition
\ref{prp:GR-monoirr}. Hence we must have $\udim Y/X\not\in\mathbb{Z}\delta$.

(2) This follows from (1) by Proposition \ref{pro:preproj-GR-Hall}.
\end{proof}

\subsection{Proof of Proposition \ref{prp:root-space}}

We will make full use of the following fundamental facts on simple
Lie algebras below: Let $0\ne x\in\mathfrak{g}(\Delta)_{\alpha}$
and $0\ne y\in\mathfrak{g}(\Delta)_{\beta}$ for some roots $\alpha,\beta$
of $\mathfrak{g}(\Delta)$, and assume that $\alpha+\beta$ is a root
of $\mathfrak{g}(\Delta)$. Then $0\ne[x,y]\in\mathfrak{g}(\Delta)_{\alpha+\beta}$.

First we show that Proposition \ref{prp:root-space} has a slightly
stronger form for a positive root $v$ of $\chi_{A}$ with $M(v)$
a regular module.

\begin{lem}
\label{lem:non-sincere-root}Let $M$ be a non-sincere indecomposable
$A$-module. Then $\mathbf{u}_{[M]}\in L(A)_{1}$ and $0\ne\phi^{-1}(\overline{\mathbf{u}}_{[M]})\in\mathfrak{g}(\Delta)_{\deg M}$.
\end{lem}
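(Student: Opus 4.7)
The first claim is Proposition \ref{lem:non-sincere}, so my task is to show, by induction on $\dim M$, that $\phi^{-1}(\overline{\mathbf{u}}_{[M]})$ is a nonzero element of $\mathfrak{g}(\Delta)_{\deg M}$.

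For the base case $\dim M = 1$, $M \cong S_{x}$ for some $x \in Q_{0}$. If $x \in \Delta_{0}$, then by definition of $\phi$ we have $\phi^{-1}(\overline{\mathbf{u}}_{[S_{x}]}) = \phi^{-1}(\varepsilon_{x}) = E_{x}$, a nonzero vector in $\mathfrak{g}(\Delta)_{e_{x}} = \mathfrak{g}(\Delta)_{\deg S_{x}}$. The case $x = \infty$ is more delicate because $\overline{\mathbf{u}}_{[S_{\infty}]}$ is not directly among the generators used to define $\phi$; I will reuse the surjectivity argument of Section \ref{sec:Proof-of-Theorem}, which produces a permutation $(x_{1}, \dots, x_{n})$ of $\Delta_{0}$ and a nonzero integer $c$ with $\overline{\mathbf{u}}_{[S_{\infty}]} = \varepsilon_{\infty} = c^{-1}[\zeta_{x_{1}}, \dots, \zeta_{x_{n}}]$. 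Applying $\phi^{-1}$ yields the nonzero element $c^{-1}[F_{x_{1}}, \dots, F_{x_{n}}] \in \mathfrak{g}(\Delta)_{-\sum_{x \in \Delta_{0}} e_{x}} = \mathfrak{g}(\Delta)_{\deg S_{\infty}}$.

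For the inductive step $\dim M > 1$, I will extract from the proof of Proposition \ref{lem:non-sincere} an identity $\mathbf{u}_{[M]} = a [\mathbf{u}_{[S]}, \mathbf{u}_{[N]}]$ in $L(A)_{1}$ with $a \in \{\pm 1\}$, $S$ a simple $A$-module, and $N$ an indecomposable $A$-module that is a sub- or quotient module of $M$. In the preprojective or preinjective case $\supp M$ is a connected representation-finite simply-laced hereditary algebra and the recursion of Proposition \ref{lem:rep-fin-hered} produces such $S$ and $N$; in the regular case $M$ lies in a non-homogeneous tube with $\udim M$ consisting of zeros and ones, and the short exact sequence displayed in the proof of Proposition \ref{lem:non-sincere} does the same. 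Since $N$ inherits non-sincereness from $M$ and has $\dim N < \dim M$, the induction hypothesis yields nonzero vectors $\phi^{-1}(\overline{\mathbf{u}}_{[S]}) \in \mathfrak{g}(\Delta)_{\deg S}$ and $\phi^{-1}(\overline{\mathbf{u}}_{[N]}) \in \mathfrak{g}(\Delta)_{\deg N}$. By additivity of dimension vectors on exact sequences, $\deg M = \deg S + \deg N$, so $\phi^{-1}(\overline{\mathbf{u}}_{[M]})$ lies in $\mathfrak{g}(\Delta)_{\deg M}$; Lemma \ref{lem:deg-root} confirms that $\deg M$ is a root of the simply-laced $\mathfrak{g}(\Delta)$, and for simply-laced types the identity $[\mathfrak{g}(\Delta)_{\alpha}, \mathfrak{g}(\Delta)_{\beta}] = \mathfrak{g}(\Delta)_{\alpha+\beta}$ holds whenever $\alpha + \beta$ is a root, which forces the bracket to be nonzero and closes the induction. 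The main obstacle I anticipate is precisely the base case $M = S_{\infty}$, which cannot be read off directly from the generators of $\phi$ and has to be routed through the end of the surjectivity argument in the proof of Theorem \ref{thm:realization}.
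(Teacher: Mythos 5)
Your proposal is correct and follows essentially the same route as the paper: cite Proposition \ref{lem:non-sincere} for $\mathbf{u}_{[M]}\in L(A)_{1}$, induct on $\dim M$ with the decomposition $\overline{\mathbf{u}}_{[M]}=\pm[\overline{\mathbf{u}}_{[S]},\overline{\mathbf{u}}_{[N]}]$ extracted from that proposition's proof, handle the base case $M\cong S_{\infty}$ through the surjectivity argument giving $\varepsilon_{\infty}=\frac{1}{c}[\zeta_{x_{1}},\ldots,\zeta_{x_{n}}]$, and conclude nonvanishing from the fact that brackets of nonzero root vectors are nonzero when the sum of the roots is a root (using Lemma \ref{lem:deg-root}).
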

\begin{proof}
We already know that $\mathbf{u}_{[M]}\in L(A)_{1}$ by Proposition
\ref{lem:non-sincere}. By induction on $\dim M$ we show that $0\ne\phi^{-1}(\overline{\mathbf{u}}_{[M]})\in\mathfrak{g}(\Delta)_{\deg M}$.
Assume first that $\dim M=1$. Then $\overline{\mathbf{u}}_{[M]}=\varepsilon_{x}$
for some $x\in Q_{0}$. If $x\ne\infty$ then $0\ne\phi^{-1}(\overline{\mathbf{u}}_{[M]})=E_{x}\in\mathfrak{g}(\Delta)_{e_{x}}=\mathfrak{g}(\Delta)_{\deg M}$
and the assertion holds. If $x=\infty$, then we know that $\varepsilon_{\infty}=\frac{1}{c}[\zeta_{x_{1}},\ldots,\zeta_{x_{n}}]$
for some $c\in\mathbb{Z}^{\times}$ and for some permutation $(x_{1},\ldots,x_{n})$
of $\Delta_{0}$ as in the proof of surjectivity of $\phi$. Hence
\[
\phi^{-1}(\overline{\mathbf{u}}_{[M]})=\frac{1}{c}[F_{x_{1}},\ldots,F_{x_{n}}]\in\mathfrak{g}(\Delta)_{e_{\infty}-\delta}\setminus\{0\}=\mathfrak{g}(\Delta)_{\deg M}\setminus\{0\},\]
and the assertion holds in this case. Assume next that $\dim M>1$.
Then as in the proof of Proposition \ref{lem:non-sincere} there is
a non-sincere indecomposable $A$-module $N$ (with $\dim N=\dim M-1$)
and a simple $A$-module $S$ such that $\mathbf{\overline{u}}_{[M]}=\pm[\mathbf{\overline{u}}_{[S]},\mathbf{\overline{u}}_{[N]}]$
in any case. Here $0\ne\phi^{-1}(\overline{\mathbf{u}}_{[S]})\in\mathfrak{g}(\Delta)_{\deg S}$
, and by the induction hypothesis $0\ne\phi^{-1}(\overline{\mathbf{u}}_{[N]})\in\mathfrak{g}(\Delta)_{\deg N}$.
Since $\deg S+\deg N=\deg M$ is a root of $\mathfrak{g}(\Delta)$
by Lemma \ref{lem:deg-root}, we have $0\ne\phi^{-1}(\overline{\mathbf{u}}_{[M]})=\pm[\phi^{-1}(\overline{\mathbf{u}}_{[S]}),\phi^{-1}(\overline{\mathbf{u}}_{[N]})]\in\mathfrak{g}(\Delta)_{\deg M}$.
\end{proof}
\begin{prop}
Let $v$ be a positive root of $\chi_{A}$. If $M(v)$ is regular,
then
\begin{enumerate}
\item $\mathbf{u}_{m(v)}\in L(A)_{1}\setminus I(A)$; 
\item $\phi^{-1}(\overline{\mathbf{u}}_{m(v)})\in\mathfrak{g}(\Delta)_{\deg v}$;
and
\item $\overline{\mathbf{u}}_{m(v+\delta)}=\overline{\mathbf{u}}_{m(v)}$.
\end{enumerate}
\end{prop}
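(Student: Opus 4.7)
The plan is to reduce all three assertions to results already established in the paper, exploiting the fact that regular indecomposables with root dimension vectors are automatically non-sincere.

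First I would observe that if $v$ is a positive root of $\chi_{A}$ and $M(v)$ is regular, then $M(v)$ is an exceptional regular module. Indeed, any regular indecomposable with $\chi_{A}(\udim M)=1$ must have quasi-length strictly less than the rank of its (necessarily non-homogeneous) tube, by Theorem \ref{thm:AR-quiver}(3) and Proposition \ref{sub:Positions-of-exceptional}; such a module is visibly non-sincere, since a regular indecomposable in a non-homogeneous tube is sincere only when its quasi-length is at least the rank of the tube. Thus $M(v)$ falls within the scope of Lemma \ref{lem:non-sincere-root}.

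For parts (1) and (2), I apply Lemma \ref{lem:non-sincere-root} directly: it tells us both that $\mathbf{u}_{m(v)}\in L(A)_{1}$ and that $\phi^{-1}(\overline{\mathbf{u}}_{m(v)})$ is a \emph{nonzero} element of $\mathfrak{g}(\Delta)_{\deg v}$. The nonzeroness forces $\overline{\mathbf{u}}_{m(v)}\ne 0$ in $L(A)_{1}^{\bbC}/I(A)^{\bbC}$, hence $\mathbf{u}_{m(v)}\notin I(A)$, completing (1); and (2) is precisely the remaining content of the lemma.

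For part (3), I would invoke Proposition \ref{prp:dif-rg}. Since $v$ is a regular root, by Corollary \ref{sub:delta-strings} the vector $v+\delta$ is also a positive root of $\chi_{A}$, and $M(v+\delta)$ lies in the same tube as $M(v)$, so it too is regular. The key observation is that $\org(v+\delta)=\org(v)$: because $M(v)$ is non-sincere the smallest coordinate of $v$ is $0$, so $\org v=v$, while the smallest coordinate of $v+\delta$ is $1$, giving $\org(v+\delta)=v+\delta-\delta=v$. Proposition \ref{prp:dif-rg} then yields $\mathbf{u}_{m(v+\delta)}-\mathbf{u}_{m(v)}=\mathbf{u}_{m(v+\delta)}-\mathbf{u}_{m(\org(v+\delta))}\in I(A)$, which is the desired equality $\overline{\mathbf{u}}_{m(v+\delta)}=\overline{\mathbf{u}}_{m(v)}$. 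There is no real obstacle here; the entire argument is a straightforward assembly of Lemma \ref{lem:non-sincere-root} and Proposition \ref{prp:dif-rg}, which were set up precisely for this purpose, so the only thing requiring care is verifying the equality $\org(v+\delta)=\org(v)$ via non-sincereness.
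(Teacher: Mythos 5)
Your reduction rests on a false claim: that a regular indecomposable $M$ with $\chi_{A}(\udim M)=1$ must have quasi-length strictly less than the rank $p$ of its tube and hence be non-sincere. This is not so. If $S$ is a quasi-simple module in a non-homogeneous tube of rank $p$, then the module $S[p+1]$ of quasi-length $p+1$ has dimension vector $\udim S+\delta$, which is a positive root of $\chi_{A}$ by Corollary \ref{sub:delta-strings}, and by Theorem \ref{thm:AR-quiver}(3) it is exactly the module $M(v)$ for that root. Yet its quasi-length exceeds $p$, so it is not exceptional (Proposition \ref{sub:Positions-of-exceptional}), and its dimension vector is $\ge\delta$ entrywise, so it is sincere. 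Hence Lemma \ref{lem:non-sincere-root} cannot be applied to $M(v)$ itself, and your argument for (1) and (2) collapses precisely in the cases the proposition is meant to cover, namely $v=\org v+t\delta$ with $t\ge1$. The same false premise is used in your verification of $\org(v+\delta)=\org v$ in (3); that identity does hold in general (both $v$ and $v+\delta$ have the same exceptional origin $\org v$ in their coset modulo $\mathbb{Z}\delta$), so (3) is repairable, but not by the reasoning you give.

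The paper's proof avoids this trap by never invoking Lemma \ref{lem:non-sincere-root} for $M(v)$: it passes to the unique regular exceptional module $M(v')$ with $v'=\org v$ and $v-v'\in\mathbb{Z}\delta$, applies Proposition \ref{prp:dif-rg} to conclude that both $\mathbf{u}_{m(v)}-\mathbf{u}_{m(v')}$ and $\mathbf{u}_{m(v+\delta)}-\mathbf{u}_{m(v')}$ lie in $I(A)$ (which yields (3), and, since $I(A)\subseteq L(A)_{1}$ and $\mathbf{u}_{m(v')}\in L(A)_{1}$ by Remark \ref{rem:rg-cmp}, also $\mathbf{u}_{m(v)}\in L(A)_{1}$), and only then applies Lemma \ref{lem:non-sincere-root} to the genuinely non-sincere module $M(v')$ to get $\overline{\mathbf{u}}_{m(v)}=\overline{\mathbf{u}}_{m(v')}\ne0$ and $\phi^{-1}(\overline{\mathbf{u}}_{m(v)})\in\mathfrak{g}(\Delta)_{\deg v'}=\mathfrak{g}(\Delta)_{\deg v}$. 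You need this detour through $\org v$; without it your proof does not cover the sincere regular modules with root dimension vector.
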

\begin{proof}
There exists a unique regular exceptional module $X$ such that $v':=\udim X$
has the property that $v-v'\in\mathbb{Z}\delta$. Then $\deg v=\deg v'=\deg(v+\delta)$.
By Proposition \ref{prp:dif-rg} both $\mathbf{u}_{m(v)}-\mathbf{u}_{m(v')}$
and $\mathbf{u}_{m(v+\delta)}-\mathbf{u}_{m(v')}$ are in $I(A)$.
By Remark \ref{rem:rg-cmp} we have $\mathbf{u}_{m(v')}\in L(A)_{1}$
because $M(v')$ is non-sincere. Hence $\mathbf{u}_{m(v)},\mathbf{u}_{m(v')},\mathbf{u}_{m(v+\delta)}\in L(A)_{1}$
and $\overline{\mathbf{u}}_{m(v+\delta)}=\overline{\mathbf{u}}_{m(v)}=\overline{\mathbf{u}}_{m(v')}$.
By the lemma above we have $\mathbf{u}_{m(v)}\not\in I(A)$ and $\phi^{-1}(\overline{\mathbf{u}}_{m(v)})=\phi^{-1}(\overline{\mathbf{u}}_{m(v')})\in\mathfrak{g}(\Delta)_{\deg v'}=\mathfrak{g}(\Delta)_{\deg v}$.
\end{proof}
\begin{rem}
The statements above clearly hold also for $v=e_{x}+t\delta$ for
all $x\in Q_{0}$ and $t\in\mathbb{N}_{0}$.
\end{rem}

\subsection*{Proof of Proposition \ref{prp:root-space} in general.}

Let $v$ be a positive root of $\chi_{A}$. We have to prove the following:

\begin{enumerate}
\item $\mathbf{\overline{u}}_{m(v)}\in L(A)_{1}^{\mathbb{C}}/I(A)^{\mathbb{C}}$;
\item $0\ne\phi^{-1}(\mathbf{\overline{u}}_{m(v)})\in\mathfrak{g}(\Delta)_{\deg v}$;
and
\item $\mathbb{C}\mathbf{\bar{u}}_{m(v+\delta)}=\mathbb{C}\mathbf{\bar{u}}_{m(v)}$.
\end{enumerate}
If both (1) and (2) are shown, then we have $\mathbb{C}\mathbf{\overline{u}}_{m(v)}=\phi(\mathfrak{g}(\Delta)_{\deg v})$.
Then the equality $\deg(v+\delta)=\deg v$ proves the statement (3).
Hence it is enough to show the statements (1) and (2) by induction
on $\dim M(v)$. Set $M:=M(v)$. If $\dim M=1$, then $M$ is non-sincere
and both (1) and (2) hold by Lemma \ref{lem:non-sincere-root}. Suppose
next that $\dim M>1$. Assume that both (1) and (2) hold for all positive
root $w$ of $\chi_{A}$ with $\dim M(w)<\dim M$. 

Case 1. $M$ is regular. In this case the assertion is already proved
in the previous proposition.

Case 2. $M$ is preprojective. If $\rank M=1$, then by looking at
the structure of $M$ described in \cite{KM} it is easy to see that
there exists an indecomposable maximal submodule $X$ of $M$. Set
$S:=M/X$. Then $X$ is also preprojective and $\rank X=1$. By setting
$v':=\udim X$ we may write $X=M(v')$. Since $\rank S=\rank M-\rank X=0$,
$S$ is a regular simple $A$-module, and has the form $S=M(e_{x})$
for some $x\in Q_{0}\setminus\{1,\infty\}$. A direct calculation
shows that $\mathbf{u}_{[M]}=[\mathbf{u}_{[S]},\mathbf{u}_{[X]}]$.
Thus $\mathbf{\overline{u}}_{m(v)}=[\mathbf{\overline{u}}_{m(e_{x})},\mathbf{\overline{u}}_{m(v')}]$.
By the induction hypothesis we have $\mathbf{\overline{u}}_{m(e_{x})},\mathbf{\overline{u}}_{m(v')}\in L(A)_{1}^{\mathbb{C}}/I(A)^{\mathbb{C}}$
and $0\ne\phi^{-1}(\mathbf{\overline{u}}_{m(e_{x})})\in\mathfrak{g}(\Delta)_{\deg e_{x}}$,
$0\ne\phi^{-1}(\mathbf{\overline{u}}_{m(v')})\in\mathfrak{g}(\Delta)_{\deg v'}$.
Hence $\mathbf{\overline{u}}_{m(v)}\in L(A)_{1}^{\mathbb{C}}/I(A)^{\mathbb{C}}$
and $\phi^{-1}(\overline{\mathbf{u}}_{m(v)})=[\phi^{-1}(\mathbf{\overline{u}}_{m(e_{x})}),\phi^{-1}(\mathbf{\overline{u}}_{m(v')})]\in\mathfrak{g}(\Delta)_{\deg v}\setminus\{0\}$
because $\deg e_{x}+\deg v'=\deg v$ is a root of $\mathfrak{g}(\Delta)$
by Proposition \ref{lem:deg-root}. Hence both (1) and (2) hold. Therefore
we may assume that $\rank M\ge2$.

Let $L$ be a GR-submodule of $M$ and $N:=M/L$. Then both $L$ and
$N$ are indecomposable. Set $v':=\udim L$, $v'':=\udim N$. In this
case $L$ is also preprojective and $L=M(v')$. By Proposition \ref{pro:GR-M}
we have $v''\not\in\mathbb{Z}\delta$ and there exists a Hall polynomial
$\varphi_{NL}^{M}$. Hence $a\mathbf{u}_{m(v)}=[\mathbf{u}_{m(v'')},\mathbf{u}_{m(v')}]$
for some $a\in\mathbb{Z}$. Here both $v'$ and $v''$ are positive
roots of $\chi_{A}$ with $\dim M(v')$, $\dim M(v'')<\dim M$. Therefore
by the induction hypothesis, both (1) and (2) hold for $v'$, $v''$.
Then by the statement (1) for $v'$, $v''$ we have $a\mathbf{\overline{u}}_{m(v)}=[\mathbf{\overline{u}}_{m(v'')},\mathbf{\overline{u}}_{m(v')}]\in L(A)_{1}^{\mathbb{C}}/I(A)^{\mathbb{C}}$,
and by (2) for $v'$, $v''$ we obtain $a\phi^{-1}(\mathbf{\overline{u}}_{m(v)})=[\phi^{-1}(\mathbf{\overline{u}}_{m(v'')}),\phi^{-1}(\mathbf{\overline{u}}_{m(v')})]\in\mathfrak{g}(\Delta)_{\deg v}\setminus\{0\}$
because $\deg v''+\deg v'=\deg v$ is a root of $\mathfrak{g}(\Delta)$
by Proposition \ref{lem:deg-root}. Thus $a\ne0$ and we finally have
both (1) and (2) for $v$.

Case 3. $M$ is preinjective. The dual argument works to show the
assertion.\hfil\qed

\section{\textbf{Example}}

\subsection{Basis vectors\label{sub:exm}}

For $\Delta=D_{5}$ we exhibit basis vectors of positive and negative
parts of $L(A)_{1}^{\mathbb{C}}/I(A)^{\mathbb{C}}\cong\mathfrak{g}(\Delta)$
in the Auslander-Reiten quiver of $A$. The positive part has 20 basis
vectors: 15 vectors are in the preprojective component (Fig.$\,$\ref{cap:preproj})
\begin{figure}
\includegraphics[%
  bb=50bp 470bp 596bp 742bp]{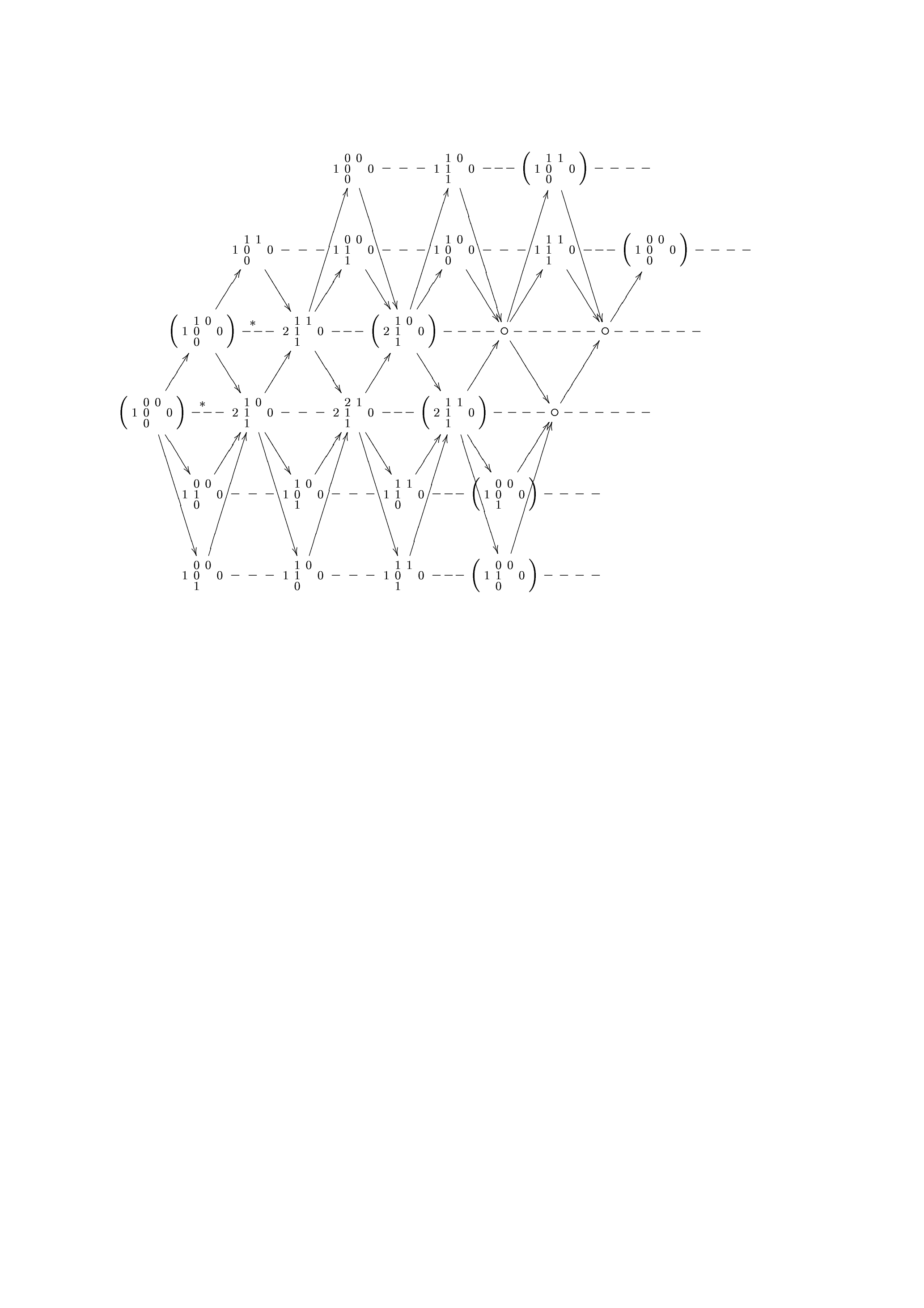}

\caption{\label{cap:preproj}15 basis vectors in the preprojective component}
\end{figure}
 and 5 vectors are in the non-homogeneous tubes (Figs.$\,$\ref{cap:tube3}
and \ref{cap:tube2}). Similarly the negative part has also 20 basis
vectors: 15 vectors are in the preinjective component (Fig.$\,$\ref{cap:preinj})
\begin{figure}
\includegraphics[%
  bb=140bp 463bp 500bp 750bp]{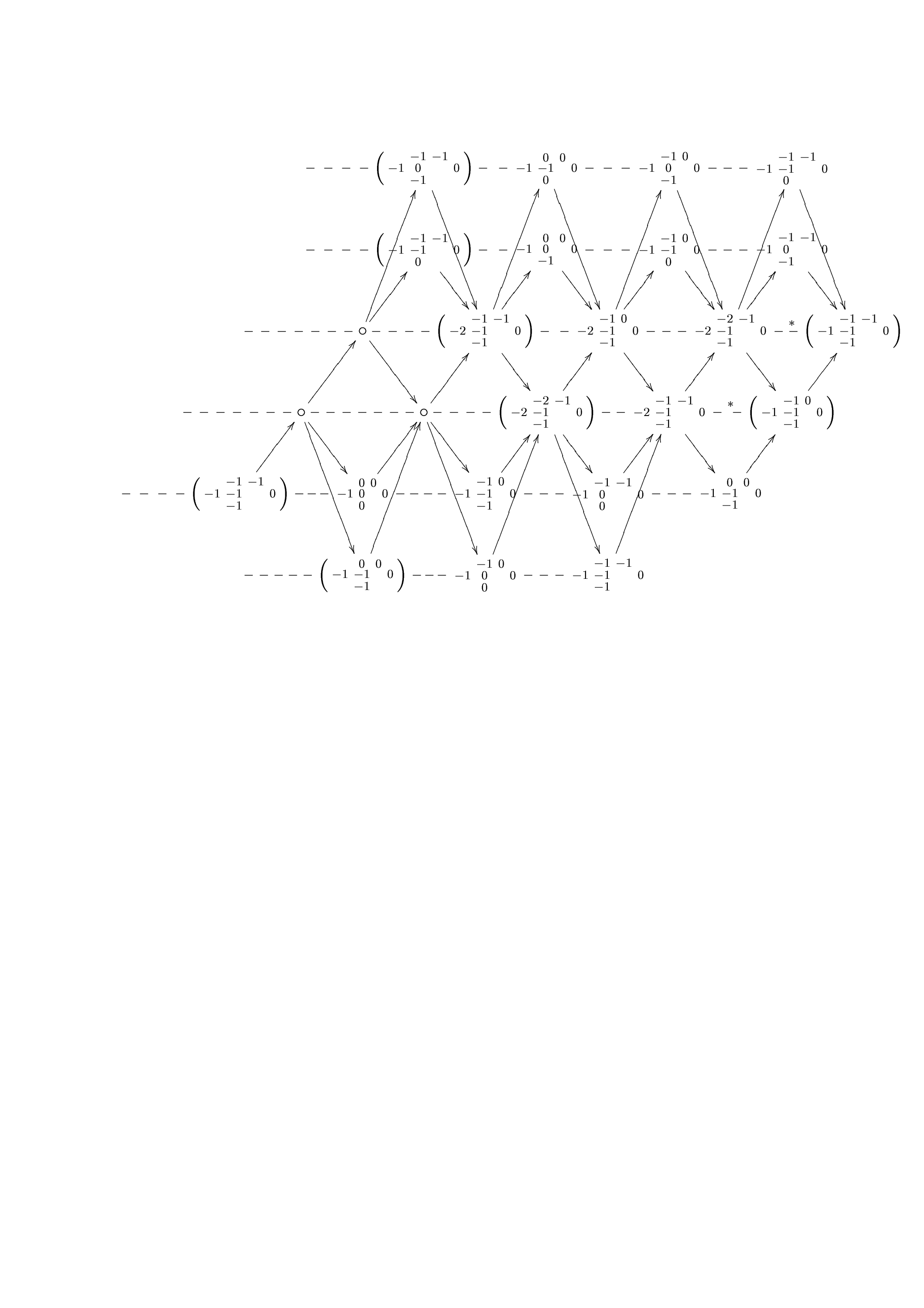}

\caption{\label{cap:preinj}15 basis vectors in the preinjective component}
\end{figure}
and 5 vectors are in the non-homogeneous tubes. In Fig$\,$\ref{cap:preproj}
vectors corresponding to indecomposable $A$-modules $M$ are given
by their degrees $\udim M-\dim M(\infty)\delta$ (see Definition \ref{def:degree}),
the broken lines stand for the Auslander-Reiten translation (from
the right to the left as usual); those with $*$ indicate that the
transformations from the left to the right are not given by the matrix
$\Phi^{-1}$, whereas those without $*$ indicate that the same transformations
are given by $\Phi^{-1}$. Vectors that are not chosen as representatives
of basis elements are written in parentheses. The vectors in parentheses
such that the same appear already on their left show us the action
of $\Phi^{-1}$ on $\udim\mathcal{P}_{r}$ (see \ref{pro:tau-delta}).
The dual remarks work for Fig.$\,$\ref{cap:preinj}. %
\begin{figure}
\includegraphics[%
  bb=0bp 590bp 596bp 729bp]{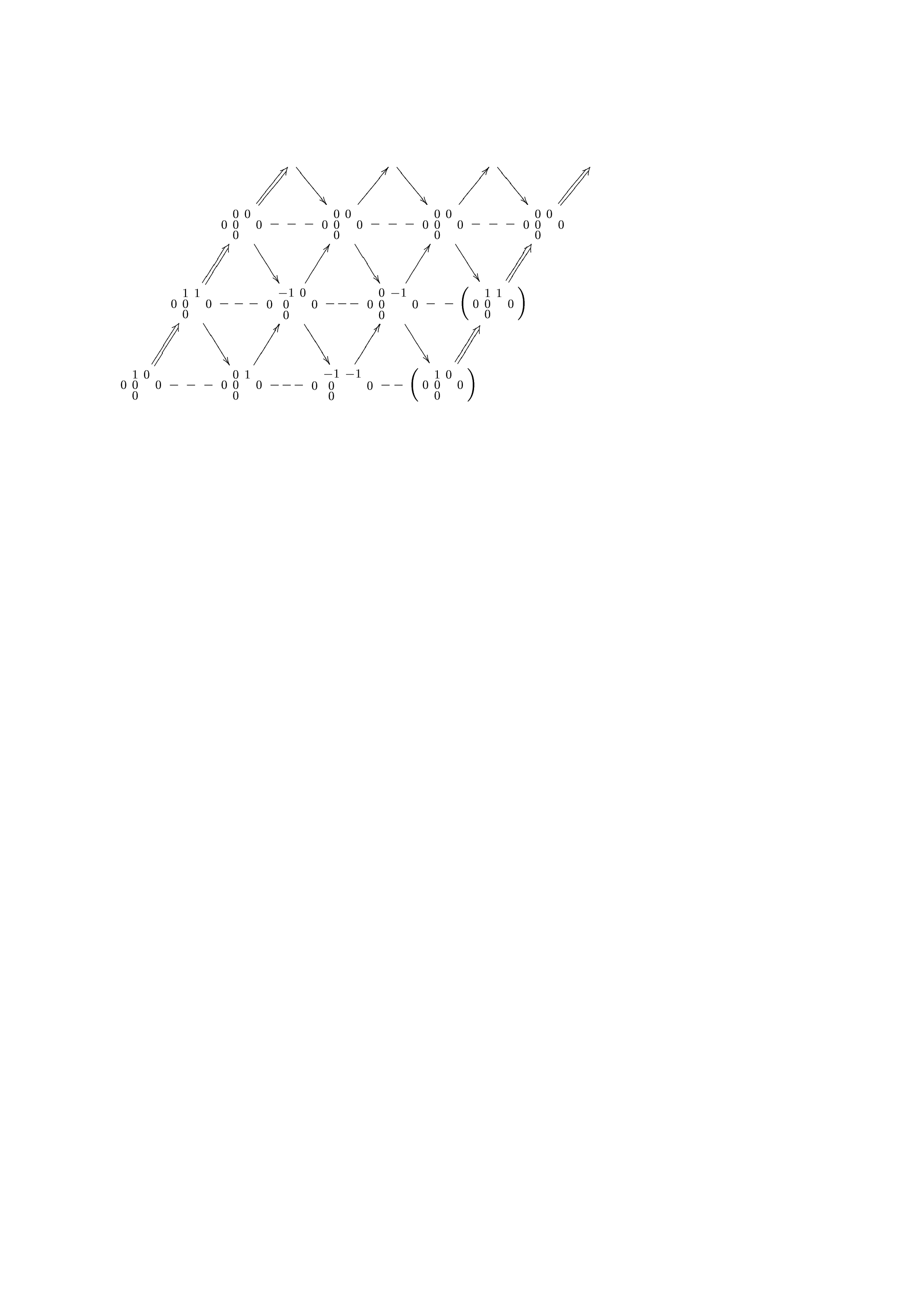}

\caption{\label{cap:tube3}Tube of rank 3}
\end{figure}
\begin{figure}
\includegraphics[%
  bb=80bp 650bp 596bp 750bp]{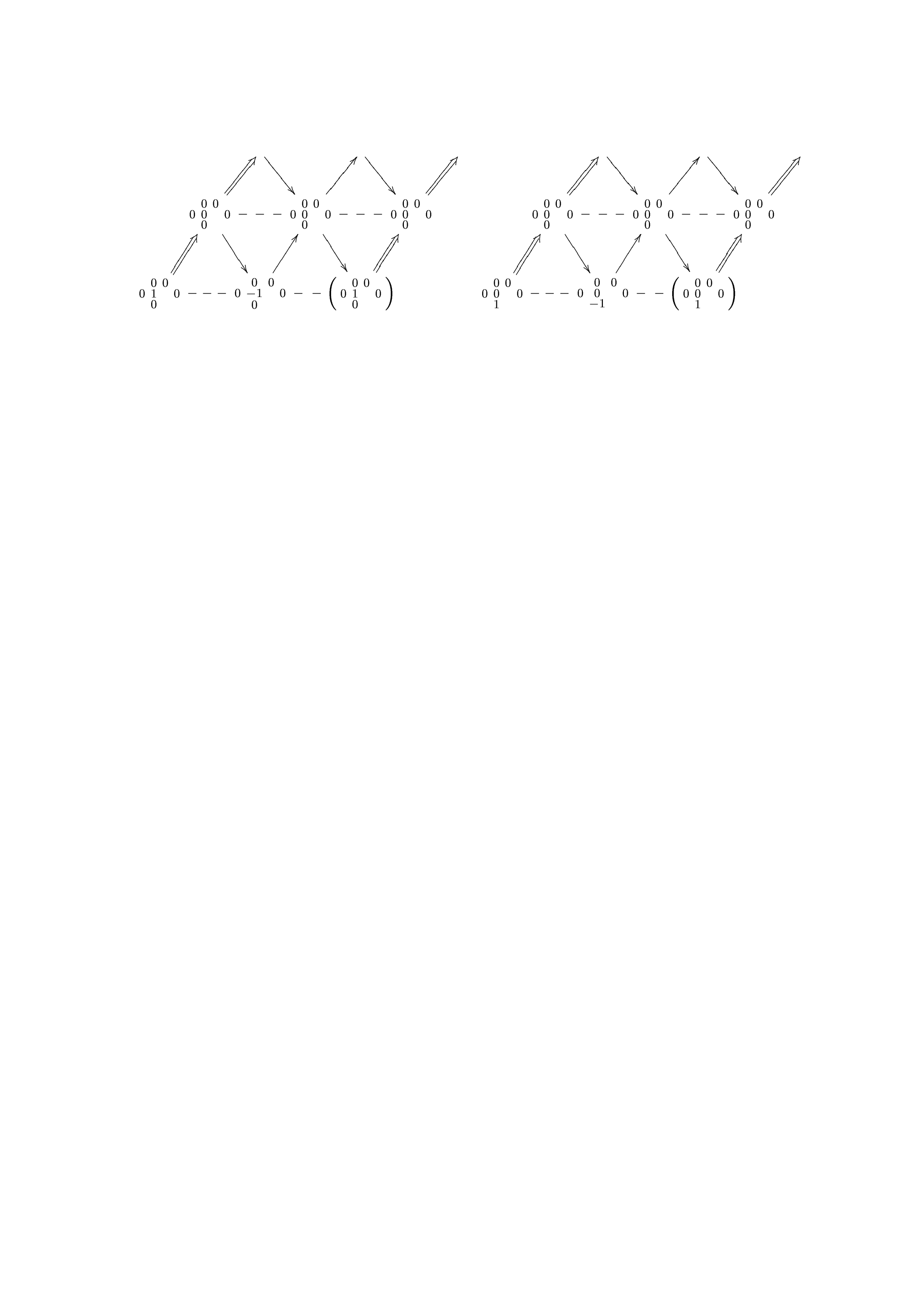}

\caption{\label{cap:tube2}Tubes of rank 2}
\end{figure}
 In Figures \ref{cap:tube3} and \ref{cap:tube2} the parallel arrows
drawn by double lines should be identified to form tubes.

\begin{rem}
\label{sub:Basis-vectors}In general the preprojective (resp.$\,$preinjective)
component over a domestic canonical algebra contains only basis vectors
of the positive (resp.$\,$negative) part because the dimension vector
of each preprojective (resp.$\,$preinjective) module takes the minimum
(resp.$\,$maximum) value at the vertex $\infty$ (see Remark \ref{rem:pos-neg}
for detail).
\end{rem}

\subsection{$E_{8}$ case}

By Proposition \ref{prp:root-space} (3) we see that if $v$ is a
positive root of $\chi_{A}$, then $\mathbf{\bar{u}}_{m(v+\delta)}=r_{v}\mathbf{\bar{u}}_{m(v)}$
for some $r_{v}\in\mathbb{C}^{\times}$. For a positive root $v$
of $\chi_{A}$ with $M(v)$ simple or regular we know that $r_{v}=1$.
Here we exhibit an example for $\Delta=E_{8}$ showing that this is
not always the case.

Let $v=\left[\begin{array}{cccccc}
 & 5 & 4 & 3 & 2\\
6 &  & 4 &  & 2 & 0\\
 &  & 3\end{array}\right]\in K_{0}(A)$. Then $v$ is a positive root of $\chi_{A}$ (with $M(v)$ exceptional),
and so are $v+e_{\infty}=\left[\begin{array}{cccccc}
 & 5 & 4 & 3 & 2\\
6 &  & 4 &  & 2 & 1\\
 &  & 3\end{array}\right]$ and $\deg(v+e_{\infty})=v+e_{\infty}-\delta=\left[\begin{array}{cccccc}
 & 4 & 3 & 2 & 1\\
5 &  & 3 &  & 1 & 0\\
 &  & 2\end{array}\right]$. A direct calculation shows that $[\mathbf{u}_{m(e_{\infty})},\mathbf{u}_{m(v)}]=\mathbf{u}_{m(v+e_{\infty})}$
but $[\mathbf{u}_{m(e_{\infty})},\mathbf{u}_{m(v+\delta)}]=-\mathbf{u}_{m(v+e_{\infty}+\delta)}$.
Hence we have $r_{v+e_{\infty}}=-r_{v}$, and at least one of these
cannot be 1.

In the first version of this paper we assumed that $I(A)$ contains
the differences $\mathbf{u}_{m(v+t\delta)}-\mathbf{u}_{m(v)}$ (namely
we assumed that $r_{v+(t-1)\delta}=1$) for all $v$ with $M(v)$
exceptional and $t\in\mathbb{N}$, and we found a serious error that
$L(A)_{1}^{\mathbb{C}}/I(A)^{\mathbb{C}}=0$ in this case. The present
version corrects this error.

\section*{\textbf{Acknowledgments}}

The first version of this work was done while I was visiting the University
of Bielefeld in 2003/2004. I would like to thank Professor Claus M.$\,$Ringel
for suggesting me the usefulness of canonical algebras, and for answering
my questions on their representations. I am grateful to Osaka City
University for the support of the visit. I would also like to thank
Bangming Deng, Christof Geiß, Andrew Hubery, and Dirk Kussin for helpful
discussions to prove Theorem 6.1 in the first version although that
is not used in this version. The first version was announced at ICRA
XI held in Patzcuaro, Mexico in 2004. An outline of the present version
was announced at ``Conference on Representation Theory and related
topics'' held at ICTP in Trieste, Italy in 2006.

\address{\noindent Department of Mathematics,\\
Faculty of Science,\\
Shizuoka University,\\
836 Ohya, Suruga-ku,\\
Shizuoka, 422-8529, Japan}
\end{document}